\newtheorem{thm}{Theorem}[section]
\newtheorem{cor}[thm]{Corollary}
\newtheorem{lem}[thm]{Lemma}
\newtheorem{prop}[thm]{Proposition}
\theoremstyle{definition}
\newtheorem{defn}[thm]{Definition}
\theoremstyle{remark}
\newtheorem{que}[thm]{Question}
\newtheorem{rem}[thm]{Remark}
\newtheorem{ex}[thm]{Example}
\newcommand{\Aut}{\textnormal{Aut}}
\newcommand{\Ker}{\textnormal{Ker}}
\newcommand{\GL}{\textnormal{GL}}
\newcommand{\g}{\mathfrak{g}}
\newcommand{\mk}{\mathfrak}
\newcommand{\mka}{\mathfrak{a}}
\newcommand{\mkl}{\mathfrak{l}}
\newcommand{\mkh}{\mathfrak{h}}
\newcommand{\sys}{\textnormal{sys}}
\newcommand{\covol}{\textnormal{covol}}
\newcommand{\pack}{\textnormal{pack}}
\newcommand{\ger}{\textnormal{ger}}
\newcommand{\cni}{\textnormal{cni}}
\newcommand{\Car}{\textnormal{Car}}
\newcommand{\ot}{\otimes}
\newcommand{\sfX}{\mathsf{X}}
\begin{document}

\address{CNRS -- D\'epartement de Math\'ematiques, Universit\'e Paris-Sud, 91405 Orsay, France}

\email{yves.cornulier@math.u-psud.fr}
\subjclass[2010]{Primary 17B30; Secondary 17B70, 20F18, 20F69, 20E07, 22E25 22E40}

\title[Gradings, systolic growth and cohopfian groups]{Gradings on Lie algebras, systolic growth, and cohopfian properties of nilpotent groups}
\author{Yves Cornulier}%
\date{August 4, 2016}
\thanks{Supported by ANR 12-BS01-0003-01 {\em G\'eom\'etrie des sous-groupes}}


\begin{abstract}
We study the existence of various types of gradings on Lie algebras, such as Carnot gradings or gradings in positive integers, and prove that the existence of such gradings is invariant under extensions of scalars.

As an application, we prove that if $\Gamma$ is a finitely generated nilpotent group, its systolic growth is asymptotically equivalent to its word growth if and only if the Malcev completion of $\Gamma$ is Carnot.

We also characterize when $\Gamma$ is non-cohopfian, in terms of the existence of a non-trivial grading in non-negative integers, and deduce that this property only depends on its real (or even complex) Malcev completion. 
\end{abstract}
\maketitle





\section{Introduction}

The purpose of this paper is twofold: to discuss the existence of certain kinds of gradings on Lie algebras (and some more general algebras), and to apply the results to the study of several aspects of finitely generated nilpotent groups.

\subsection{Cohopfian properties and systolic growth}

This paper will study some properties in the case of finitely generated nilpotent groups. In this subsection, we introduce these properties in general.

\subsubsection{Systolic growth}\label{iisg}

Let $\Gamma$ be a finitely generated group, and endow it with the word metric with respect to some finite generating subset $S$. If $\Lambda\subset\Gamma$, define its systole $\sys_S(\Lambda)$ to be $\inf\{|g|_S:g\in\Lambda\smallsetminus\{1\}\}$ (which is $+\infty$ in case $\Lambda=\{1\}$). Define, following \cite{Gro} its {\em systolic growth} as the function $\sigma_{\Gamma,S}$ mapping $n$ to the smallest index of a subgroup of systole $\ge n$ (hence $+\infty$ if there is no such subgroup). Note that $\Gamma$ is residually finite if and only if $\sigma_{\Gamma,S}(n)<\infty$ for all $n$, and a standard argument shows that the asymptotic behavior (in the usual sense of growth of groups, see \S\ref{gelani}) of $\sigma_{\Gamma,S}$ does not depend on the choice of $S$; hence we call it the systolic growth of $\Gamma$. It is obviously asymptotically bounded below by the growth (precisely, $\sigma_{\Gamma,S}(2n+1)\ge b_{\Gamma,S}(n)$, where $b_{\Gamma,S}(n)$ is the cardinal of the $n$-ball). It is easy to see that $\Gamma$ and its finite index subgroups have asymptotically equivalent systolic growth. 

It is natural to wonder when the growth and systolic growth are equivalent. 
Gromov \cite[p.334]{Gro} provides a simple argument, based on congruence subgroups, showing that finitely generated subgroups of $\GL_d(\mathbf{Q})$
have at most exponential systolic growth (although he states a less general fact). Bou-Rabee and the author \cite{BC} actually prove that all finitely generated linear subgroups (i.e., with a faithful finite-dimensional linear representation over some field) have at most exponential systolic growth, and hence exactly exponential systolic growth when the growth is exponential. For finitely generated linear groups, this thus reduces the question of equivalence of growth and systolic growth to virtually nilpotent groups.

\begin{rem}\label{rgirth}
A notion closely related to systolic growth was introduced by Bou-Rabee and McReynolds \cite{BM} (apparently independently of \cite{Gro}), defining the {\em residual girth} of a group in the same way as the systolic growth above, but restricting to {\em normal} finite index subgroups. If we denote by $\sigma_{\Gamma,S}^\lhd(n)$ the resulting function, we obviously have
\[\sigma_{\Gamma,S}\le\sigma_{\Gamma,S}^\lhd\le\sigma_{\Gamma,S}!.\] The examples in \cite{BS} show that $\sigma_{\Gamma,S}$, for finitely generated residually finite groups, can be arbitrary large.

On the other hand, there is an exponential upper bound for the residual girth of finitely generated linear groups \cite{BC}.

Besides, we can define one more notion: if $\Lambda\subset\Gamma$, define its {\em normal systole} $\sys_S^\lhd(\Lambda)$ as the infimum of $|g|_S$, when $g$ ranges over {\em $\Gamma$-conjugates} of elements in $\Lambda\smallsetminus\{1\}$. Note that it has a geometric interpretation: let $\mathcal{G}(\Gamma,S)$ be the Cayley graph of $\Gamma$ with respect to $S$. While $\sys_S(\Lambda)$ is the length of the smallest non-trivial based combinatorial loop in the quotient $\Lambda\backslash\mathcal{G}(\Gamma,S)$ (where non-trivial means it does not lift to a loop in $\mathcal{G}(\Gamma,S)$), the normal systole $\sys_S^\lhd(\Lambda)$ is the length of the smallest non-trivial combinatorial loop (not necessarily based); of course when $\Lambda$ is normal, its normal systole equals its systole. We can then define the {\em uniform systolic growth} of $\Gamma$ as the function $\sigma^u_{\Gamma,S}$ mapping $n$ to the smallest index of a subgroup of normal systole $\ge n$. Thus clearly we have 
\[\sigma_{\Gamma,S}\le\sigma^u_{\Gamma,S}\le\sigma_{\Gamma,S}^\lhd.\]

I do not know examples for which the uniform systolic growth is not equivalent to the systolic growth; on the other hand simple examples show that it can fail to be equivalent to the residual girth, see Remark \ref{rgirth2}.
\end{rem}

\subsubsection{Cohopfian properties}

Recall that a group is {\em non-cohopfian} if it admits a non-surjective injective endomorphism, and {\em cohopfian} otherwise.
Let us say that a group $\Gamma$ is {\em dis-cohopfian} if it admits an injective endomorphism $\phi$ such that $\bigcap_{n\ge 0}\phi^n(\Gamma)=\{1\}$; such $\phi$ is called a dis-cohopf endomorphism. It appears, for a nontrivial group, as a strong negation of being cohopfian.

Let us consider an intermediate notion: we say that a group $\Gamma$ is {\em weakly dis-cohopfian} if it admits a sequence of subgroups $(\Gamma_n)$, all isomorphic to $\Gamma$, with $\Gamma_{n+1}$ contained in $\Gamma_n$ for all $n$, and $\bigcap_n\Gamma_n=\{1\}$. This is implied by dis-cohopfian, and implies, for a nontrivial group, non-cohopfian.

\begin{ex}
The group $\mathbf{Z}$ is dis-cohopfian. Slightly less trivially, the infinite dihedral group is dis-cohopfian. Actually, every nontrivial free product $A\ast B$ is dis-cohopfian (that non-trivial free products are non-cohopfian is well-known; whether they are always dis-cohopfian was asked to me by K.\ Bou-Rabee): indeed, fix nontrivial elements $a_0\in A$, $b_0\in B$ and consider the endomorphism $\phi$ defined by $a\mapsto a_0b_0ab_0^{-1}a_0^{-1}$ for $a\in A$ and $b\mapsto b_0a_0ba_0^{-1}b_0^{-1}$. Let $S$ be the generating set $A\cup B$ for $A\ast B$, and $|\cdot|$ the corresponding word length. Then this endomorphism formally maps reduced words to reduced words, and hence $|\phi(x)|=5|x|$ for all $x$; thus $|\phi^n(x)|=5^n|x|$ for all $x$ and it follows that $\bigcap_{n\ge 0}\mathrm{Im}(\phi^n)=\{1\}$.

The group $\mathbf{Z}\times\mathbf{Z}/2\mathbf{Z}$ is non-cohopfian but not weakly dis-cohopfian. Examples of groups that are weakly dis-cohopfian but not dis-cohopfian will be provided in Example \ref{wdchd}. These are the first such examples among finitely generated nilpotent groups. However, it seems from the discussion in \cite{NP} that they were aware of other examples among polycyclic groups of exponential growth.

Using the Frobenius endomorphism, it is also possible to find examples of dis-cohopfian groups with exponential growth, such as suitable finite index subgroups of $\textnormal{SL}_d(\mathbf{F}_p[t])$, or the lamplighter group $(\mathbf{Z}/p\mathbf{Z})\wr\mathbf{Z}$. 
\end{ex}

\begin{rem}
There are natural analogues of these cohopfian-like properties, where injective endomorphisms are required to have an image of finite index. For finitely generated nilpotent groups (and more generally for virtually polycyclic groups, or even finitely generated solvable groups of finite Pr\"ufer rank), injective endomorphisms automatically have an image of finite index, and thus the notions coincide. It is unknown (see \cite{NP}) if there exists a finitely generated group that is not virtually nilpotent, but admits an injective endomorphism $\phi$ with image of finite index, such that $\bigcap_{n\ge 0}\mathrm{Im}(\phi^n)=\{1\}$.
\end{rem}

\subsection{Gradings on Lie algebras}
\subsubsection{Main definitions and results}
This subsection is independent of the previous one. It introduces some notions of Lie algebras and some results about these notions, which will be used in the study of nilpotent groups in the next subsection, but purport to be of independent interest, notably for readers interested in the classification of finite-dimensional nilpotent Lie algebras.

We denote by $R$ a ground commutative ring (always assumed associative with unit); in most of the discussion, $R$ will be a field. We abbreviate ``field of characteristic zero" into ``$\mathbf{Q}$-field".

Given an abelian group $A$, recall that an $A$-grading of a Lie $R$-algebra $\g$ is a direct sum decomposition $\g=\bigoplus_{\alpha\in A}\g_\alpha$ where each $\g_\alpha$ is an $R$-submodule, and satisfying $[\g_\alpha,\g_\beta]\subset\g_{\alpha+\beta}$ for all $\alpha,\beta\in A$. 

Gradings are a convenient way to encode various actions: for instance, a grading in $\mathbf{Z}$ gives rise to an action of the multiplicative group $\GL_1(R)$ on $\g$, where $r\in\GL_1(R)$ acts by multiplication by $r^n$ on $\g_n$. Conversely, under suitable assumptions (algebraic action, $R$ a field, $\g$ of finite dimension), every $\GL_1$-action yields a $\mathbf{Z}$-grading of $\g$. Further conditions on the grading, as below, correspond to further conditions on the action.

Every Lie algebra has a trivial grading, namely with $\g=\g_0$. 
Here are three conditions on a $\mathbf{Z}$-grading of a Lie algebra $\g$:
\begin{itemize}
\item the grading is {\em non-negative}, that is, in the natural numbers $\mathbf{N}=\{0,1,2,\dots\}$ (i.e., $\g_{i}=0$ for all $i<0$). If $\g$ admits a non-negative non-trivial grading, we call $\g$ {\em semi-contractable};
\item the grading is {\em positive}, that is, in the positive natural numbers $\mathbf{N}^+$. If $\g$ admits a positive grading, we call $\g$ {\em contractable};
\item the Lie algebra $\g$ is generated by $\g_1$; we then say that the grading is {\em Carnot}. If a Lie algebra $\g$ admits a Carnot grading, we call $\g$ Carnot; if $\g$ is endowed with a Carnot grading, it is called a Carnot-graded Lie algebra (see \S\ref{nlaaaa} for counterexamples among nilpotent Lie algebras). 
\end{itemize}

Note that each of these conditions on the grading is implied by the next one; thus $\g$ Carnot implies $\g$ contractable, which implies (for $\g\neq 0$) $\g$ semi-contractable. Note that when $\g$ is a finitely generated $R$-module and is contractable, then it is nilpotent and actually our emphasis will be on finite-dimensional nilpotent Lie algebras over fields, especially of characteristic zero.

Being Carnot can be redefined in the following way: if $\g$ is an arbitrary Lie algebra over the commutative ring $R$, let $(\g^{(i)})_{i\ge 1}$ be its lower central series ($\g^{(1)}=\g$ and $\g^{(i+1)}=[\g,\g^{(i)}]$). 
The {\em associated Carnot-graded Lie algebra} $\mathrm{Car}(\g)$ is defined as $\bigoplus_{i\ge 1}\g^{(i)}/\g^{(i+1)}$ with the naturally induced bracket and grading; the Lie algebra $\g$ is {\em Carnot} if it is isomorphic (as a Lie $R$-algebra) to its associated Carnot-graded Lie algebra. A special feature of Carnot gradings is that they are all conjugate under $\Aut(\g)$; in particular two Carnot-graded Lie algebras are isomorphic as graded Lie algebras if and only if they are isomorphic as Lie algebras. Nevertheless, this is only uniqueness up to conjugacy, and Carnot and Carnot-graded should be distinguished; for instance, for a non-abelian Carnot-graded Lie algebra, the automorphism group as a Lie algebra is larger than the graded automorphism group (see Corollary \ref{autautgr} for a precise comparison).

Carnot Lie algebras are ubiquitous in the study of Lie algebras and the associated Lie and discrete groups. 
 For instance Pansu \cite{Pan} proved that any two quasi-isometric simply connected real Lie groups have isomorphic associated Carnot-graded real Lie algebras. The classification of various classes of nilpotent finite-dimensional Lie algebras also starts with the Carnot case: for instance Vergne \cite{Ver} classified the Carnot-graded $d$-dimensional Lie algebras of nilpotency length exactly $d-1$ over a field of characteristic $\neq 2$: for $d\ge 2$ there are 1 or 2 such Lie algebras, and 2 precisely when $d\ge 6$ is even; the corresponding classification in the non-Carnot case is out of reach in general, and only known in small dimension.

Turning back to gradings, there are further natural conditions not related to positivity, such as 
\begin{itemize}
\item The $\mathbf{Z}$-grading is {\em invertible} if $\g_0=\{0\}$. 
\end{itemize}

When the ground $\mathbf{Q}$-field is algebraically closed, these various conditions can be easily and conveniently characterized in terms of the maximal grading, see \S\ref{cagr}.

If $\g$ is a Lie algebra and $R'$ is a commutative $R$-algebra, then every grading on $\g$ induces a grading on the Lie $R'$-algebra $R'\ot_R\g$, which inherits any of the above conditions. It is natural to wonder whether conversely, under reasonable hypotheses, the existence of a grading on $R'\ot_R\g$ with additional properties implies the existence of a similar grading on $\g$.

Let us begin with a few simple counterexamples. The 3-dimensional Lie algebra $\mk{sl}_2(\mathbf{C})$ admits a grading in $\{-1,0,1\}$ with each component of dimension 1. On the other hand, the real Lie algebra $\mk{so}_3(\mathbf{R})$ admits no non-trivial $\mathbf{Z}$-grading (because all its self-derivations are inner and have eigenvalues in $i\mathbf{R}$), and $\mathbf{C}\ot_\mathbf{R}\mk{so}_3(\mathbf{R})$ and $\mk{sl}_2(\mathbf{C})$ are isomorphic as complex Lie algebras. More subtle counterexamples will be provided in the sequel, but let us begin with positive results. 

\begin{thm}[see Th.\ \ref{main2} and Th.\ \ref{stabex}]\label{beingca}
Being Carnot, contractable, semi-contractable are invariant under taking extensions of $\mathbf{Q}$-fields. That is, if $R=K$ is a $\mathbf{Q}$-field, $L$ is an extension of $K$ and $\g$ is a finite-dimensional Lie $K$-algebra, then $\g$ is Carnot (resp.\ contractable, resp.\ semi-contractable) if and only if $L\ot_K\g$ satisfies the same property as a Lie algebra over $L$.
\end{thm}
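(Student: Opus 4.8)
The plan is to reduce everything to a statement about the existence of a suitable torus action (or, in the Carnot case, a single grading operator) and then descend it from $L$ to $K$ using Galois-theoretic / scheme-theoretic arguments on the relevant algebraic variety of gradings.

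First I would treat the ``Carnot'' case, which is the cleanest. If $L\ot_K\g$ is Carnot, then it is isomorphic to its associated Carnot-graded algebra $\Car(L\ot_K\g) = L\ot_K\Car(\g)$, since the lower central series and the quotients $\g^{(i)}/\g^{(i+1)}$ commute with the flat base change $L\ot_K(-)$. So the hypothesis says $L\ot_K\g \cong L\ot_K\Car(\g)$ as Lie $L$-algebras; I want to conclude $\g\cong\Car(\g)$ as Lie $K$-algebras. This is an instance of the general principle that isomorphism of finite-dimensional algebras is insensitive to field extension: the set of isomorphisms $\g\to\Car(\g)$ is the set of $\overline{K}$-points (resp.\ $L$-points) of an affine $K$-scheme $\mathrm{Isom}(\g,\Car(\g))$ cut out by linear equations inside $\GL(\g)$; if it has an $L$-point it has an $\overline{K}$-point, and a nonempty affine variety defined over a field always has a point over any algebraically closed overfield, hence — one needs a little care here since the scheme need not be a group — one uses that $\mathrm{Isom}(\g,\Car(\g))$ is a torsor under $\Aut(\g)$, which is a smooth (in characteristic zero) affine algebraic group, so the torsor is smooth; a smooth $K$-scheme with an $L$-point for some extension $L/K$ has a point over some finite separable extension, and then one descends to $K$ by a standard argument (e.g.\ via $H^1$ of the automorphism group, or more elementarily because over a $\Q$-field one can pass to the algebraic closure and invoke that the $\overline{K}$-points with a given ``type'' form a single $\Aut(\g)(\overline{K})$-orbit defined over $K$). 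I expect the cleanest writeup invokes: two finite-dimensional Lie algebras over a field $K$ are isomorphic iff they become isomorphic over $\overline{K}$, together with the fact that $\overline{K}\ot_K\g \cong \overline{L}\ot_L(L\ot_K\g)$.

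Next, the ``contractable'' and ``semi-contractable'' cases. Here I would phrase the existence of a positive (resp.\ non-negative, non-trivial) grading as the existence of a semisimple derivation $D\in\Der(\g)$ with eigenvalues in $\N^+$ (resp.\ in $\N$, not all zero) — equivalently a homomorphism $\GL_1\to\Aut(\g)$ of algebraic groups with the appropriate weights, by the correspondence recalled in the introduction. The key point is that $\Der(L\ot_K\g) = L\ot_K\Der(\g)$, and that whether a derivation is semisimple, and what its eigenvalues (which in the graded situation are integers, hence already defined over the prime field) are, is unchanged by extension of scalars. So from a grading of $L\ot_K\g$ in $\N^+$ (resp.\ $\N$) one gets a semisimple element $D\in L\ot_K\Der(\g)$ whose spectrum is a prescribed finite subset of $\N^+$ (resp.\ $\N$) with prescribed multiplicities; the locus of such elements inside the affine $K$-scheme $\Der(\g)$ (it is cut out by the vanishing of certain coefficients of the characteristic polynomial, which are $K$-polynomial functions of $D$, and by the condition $\prod_i (D-i)=0$ on the restricted set of eigenvalues) is a $K$-subscheme which by hypothesis has an $L$-point, hence is nonempty, hence has a $K$-point — again because over a $\Q$-field one can first go to $\overline{K}$ (a nonempty $\overline{K}$-constructible set defined over $K$ always contains a point over any infinite subfield, in particular a $K$-point as long as the set is $K$-irreducible or one argues component by component). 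Any $K$-point $D_0$ of this locus is a semisimple $K$-derivation of $\g$ with the same spectrum, hence gives the desired grading of $\g$ in $\N^+$ (resp.\ $\N$, and it is non-trivial because $D_0\neq 0$ since it has a nonzero eigenvalue).

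The main obstacle — and the step I would be most careful about — is the descent of a $K$-point from an $L$-point of an affine $K$-variety: this is fine when $L/K$ is algebraic and one works up to $\overline{K}$, but for a general (possibly transcendental) extension $L/K$ one must argue that a $K$-variety with an $L$-point and which is geometrically irreducible (or handle each geometric component, which may require a further finite extension and a Galois-averaging or torsor-cohomology argument to come back to $K$) actually has a $K$-rational point; in characteristic zero the relevant schemes ($\Aut(\g)$-torsors, or the eigenvalue-locus in $\Der(\g)$) are smooth, so the geometric components are defined over a finite separable extension and are permuted by $\Gal(\overline{K}/K)$, and one picks a $\Gal$-stable component to land in — this is where the hypothesis that $K$ is a $\Q$-field is really used. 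I would structure the final writeup so that this descent principle is isolated as a lemma (``a nonempty smooth affine $K$-scheme that is a torsor under a connected algebraic $K$-group, and has an $L$-point, has a $K$-point'', plus a variant for the derivation locus), proved once, and then applied uniformly to the three cases; the rest is the bookkeeping that base change commutes with lower central series, with $\Der$, and with $\Aut$, all of which is routine.
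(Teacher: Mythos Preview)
Your proposal has genuine gaps in both halves.

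\textbf{Carnot case.} The assertion you ``expect the cleanest writeup invokes'', namely that two finite-dimensional Lie $K$-algebras are isomorphic iff they become isomorphic over $\overline{K}$, is simply false: $\mk{so}_3(\R)$ and $\mk{sl}_2(\R)$ are the standard counterexample (this very example appears in the introduction of the paper). Equivalently, the torsor $\mathrm{Isom}(\g,\Car(\g))$ under $\Aut(\g)$ can be nontrivial, and your sketch gives no reason why it should have a $K$-point. The paper's argument avoids this entirely by using a much sharper criterion: $\g$ is Carnot iff there exists a self-derivation of $\g$ inducing the identity on $\g/[\g,\g]$ (Lemma~\ref{carcarn2}). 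The set of such derivations is not merely a variety but an \emph{affine subspace} of $\End(\g)$ defined over $K$, and an affine subspace with an $L$-point trivially has a $K$-point. This linearization is the whole content of the proof.

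\textbf{Contractable and semi-contractable cases.} Your descent principle --- ``the locus of semisimple derivations with prescribed integer spectrum is a $K$-subscheme of $\Der(\g)$ with an $L$-point, hence has a $K$-point'' --- is again unjustified, and in fact cannot work as stated because it nowhere uses the \emph{positivity} of the eigenvalues. If your argument were valid, it would equally prove that the existence of an \emph{invertible} $\Z$-grading descends from $L$ to $K$; but the paper exhibits (Theorem~\ref{ex12}) a 12-dimensional real nilpotent Lie algebra whose complexification admits an invertible $\Z$-grading while the real form admits no nontrivial $\Z$-grading at all. So any correct proof must exploit non-negativity in an essential way. The paper does this through the structure theory of $\Aut(\g)$ as an algebraic $K$-group: one shows (Theorem~\ref{pofi}) that the maximal $K$-split torus in the center of a $K$-defined reductive Levi factor is \emph{fine}, meaning it already sees all positive weights of any maximal torus. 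The proof of fineness is an averaging argument over weights that works precisely because one is summing non-negative quantities; this is where positivity enters. From fineness one deduces (Corollary~\ref{tfi}, Theorem~\ref{stabex}) that the contractive decomposition $\g=\g_{[0]}\ltimes\g_{[+]}$ is the same whether computed over $K$ or over $L$, and the invariance of (semi-)contractability follows.
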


There is an analogy between Theorem \ref{beingca} and Sullivan's result \cite[Theorem 12.1]{Sul} that the notion of formality for a nilpotent minimal differential algebra is independent of the ground field of characteristic zero; however we are not aware of a link between these two facts. For the Carnot property and the extension $\mathbf{Q}\subset\mathbf{R}$, the question whether Carnot goes from $\mathbf{R}$ down to $\mathbf{Q}$ was raised in 1975 by Johnson \cite{Joh}; a positive solution was written by Dekimpe and Lee but their argument is mistaken (see Remark \ref{remdl}). 

Let us introduce a few ideals canonically associated to a finite-dimensional Lie algebra $\g$ over a $\mathbf{Q}$-field. 

\begin{defn}The CNI-radical, or (relatively) characteristically nilpotent radical $\cni(\g)$ of $\g$ is the intersection of all kernels of all semisimple self-derivations of $\g$. We say that $\g$ is {\em essentially flexible} if $\cni(\g)=\{0\}$, and {\em flexible} if it admits an invertible self-derivation.\end{defn}

This seems to be new notions. We say that an ideal is {\em characteristic} if it is invariant under all automorphisms (beware that there exist alternative definitions of characteristic ideals, for instance in the case of Lie algebra). The CNI-radical is a nilpotent characteristic ideal. Classically, $\g$ is called characteristically nilpotent if $\cni(\g)=\g$, or equivalently if every self-derivation of $\g$ is nilpotent; the smallest nonzero examples are 7-dimensional, see the survey by Ancochea and Campoamor \cite{AC}. By elementary arguments, the CNI-radical is well-behaved with respect to extensions, in the sense that $\cni(L\ot_K\g)=L\ot_K\cni(\g)$, and being essentially flexible or flexible is invariant by taking field extensions. Every characteristically nilpotent characteristic ideal is contained in $\cni(\g)$; however in a nilpotent Lie algebra, $\cni(\g)$ is not always characteristically nilpotent, see the example in \S\ref{cnincn}.

The CNI-radical is also the intersection of fixed point sets in $\g$ of all connected sub-tori in $\Aut(\g)$.

When the ground $\mathbf{Q}$-field is algebraically closed, being flexible is equivalent to the existence of an invertible $\mathbf{Z}$-grading. However, the latter property is not invariant under taking field extensions, see \S\ref{exdere}, which relies on a construction of Der\'e. Another property of Lie algebras over $\mathbf{Q}$ that does not behave well with respect to extensions is to be Anosov; see \S\ref{anoni}.

\begin{defn}The {\em uncontractable radical} $\cni^+(\g)$ of $\g$ is the intersection of all kernels of all self-derivations of $\g$ that are diagonalizable with eigenvalues in $\mathbf{N}=\{0,1,\dots\}$. We say that $\g$ is essentially contractable if $\cni^+(\g)=\{0\}$.\end{defn}

Clearly $\cni(\g)\subset\cni^+(\g)$; this is not always an equality; for instance it can happen that $\cni(\g)=\{0\}$ but $\cni^+(\g)=\g$: this holds when $\g$ is semisimple, but also in the example of a nilpotent real Lie algebra described in \S\ref{exdere}.
 
Note that $\g$ is semi-contractable if and only if $\cni^+(\g)\neq\g$. We could make a similar definition allowing eigenvalues in $\mathbf{Z}$, but the resulting ideal (trapped between $\cni(\g)$ and $\cni^+(\g)$), which can be checked to be the intersection of kernels of $K$-diagonalizable self-derivations, would not behave well with respect to field extensions (e.g., in the example of \S\ref{exdere}). On the other hand, the positiveness ensures a good behavior:

\begin{thm}[see Th.\ \ref{stabex}]
Let $K$ be a $\mathbf{Q}$-field and $L$ an extension, and $\g$ a finite-dimensional Lie $K$-algebra. Then $\cni^+(L\ot_K\g)=L\ot_K\cni^+(\g)$. In particular, $\g$ is essentially contractable if and only $L\ot_K\g$ is essentially contractable.
\end{thm}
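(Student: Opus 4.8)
The plan is to prove the two inclusions $L\ot_K\cni^+(\g)\subset\cni^+(L\ot_K\g)$ and $\cni^+(L\ot_K\g)\subset L\ot_K\cni^+(\g)$ separately, the first being nearly formal and the second being the substantial one. For the easy direction, observe that if $D$ is a self-derivation of $\g$ that is $K$-diagonalizable with eigenvalues in $\N$, then $\mathrm{id}_L\ot D$ is a self-derivation of $L\ot_K\g$ with the same property, and its fixed point subspace is $L\ot_K(\ker D)$; intersecting over all such $D$ gives $\cni^+(L\ot_K\g)\subset L\ot_K\cni^+(\g)$. Wait—that inclusion actually goes the wrong way for "easy''; let me restate: extending scalars on a fixed point subspace of $D$ lands inside the fixed point subspace of $\mathrm{id}_L\ot D$, so $L\ot_K\cni^+(\g)\supset\cni^+(L\ot_K\g)\cap(\text{stuff})$... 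The clean statement is: since every $\N$-diagonalizable self-derivation of $\g$ extends to one of $L\ot_K\g$ with fixed subspace $L\ot_K(\text{fixed subspace})$, we get $\cni^+(L\ot_K\g)\subset L\ot_K\cni^+(\g)$ directly. So this inclusion is the formal one; the real content is the reverse inclusion $L\ot_K\cni^+(\g)\subset\cni^+(L\ot_K\g)$, i.e. that an $L$-derivation of $L\ot_K\g$ with eigenvalues in $\N$ cannot kill a vector coming from outside $\cni^+(\g)$.

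For the hard inclusion I would argue as follows. Let $\td$ denote the set of $\N$-diagonalizable self-derivations; over a field these are exactly the derivations lying in a maximal torus of $\Der(\g)$ with the extra positivity constraint on weights, and the fixed subspace of such a $D$ equals the $0$-weight space of the corresponding $\GL_1$-action. So $\cni^+(\g)$ is the intersection of the $0$-weight spaces of all $\GL_1$-actions on $\g$ (defined over $K$) with weights in $\N$; equivalently the intersection of fixed point subspaces of all $K$-split subtori of $\Aut(\g)$ whose induced grading on $\g$ is non-negative. The key algebraic input is Theorem \ref{beingca} (more precisely the machinery behind Th.\ \ref{main2} and Th.\ \ref{stabex}): the existence of a non-negative grading descends from $L$ to $K$, and I expect the same descent argument, applied not to $\g$ itself but to a quotient or to $\g$ together with a marked subspace, shows that if $v\in\g$ is moved off $0$ by some $\N$-diagonalizable $L$-derivation of $L\ot_K\g$, then it is moved off $0$ by some $\N$-diagonalizable $K$-derivation of $\g$. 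Concretely, I would fix $v\in\g\smallsetminus\cni^+(\g)$, i.e. there is an $\N$-diagonalizable $L$-derivation $D$ of $L\ot_K\g$ with $v\notin\ker D$; then consider the ideal (or subspace) generated appropriately and run the Galois-descent / spreading-out argument used for the Carnot and contractable cases to produce a $K$-form of such a derivation not killing $v$. Since this holds for every $v\notin\cni^+(\g)$ — and $\cni^+(\g)$ is a $K$-subspace — one concludes $L\ot_K\cni^+(\g)\subset\cni^+(L\ot_K\g)$.

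The main obstacle is the descent step in the hard inclusion: passing from the existence of a single bad derivation $D$ over $L$ (with the positivity constraint on eigenvalues, which is not a Zariski-closed condition and is destroyed by Galois conjugation of eigenvalues, unlike the mere existence of a torus) to the existence of one over $K$. This is precisely the delicate point that fails for $\Z$-gradings (Theorem \ref{ex12}) but works for $\N$-gradings, so the proof must genuinely invoke the positivity; I would handle it by the same mechanism as in the proof of Theorem \ref{main2}, namely reducing to the case $L/K$ finitely generated, then to $L/K$ finite (using that a non-negative grading over a large field is already defined over a finitely generated subextension), then splitting into the separable and purely inseparable cases and using either Galois averaging of tori or a Frobenius-twist argument — with the extra bookkeeping that throughout one must preserve the condition $v\notin\ker D$, which is an open condition and hence survives all these reductions. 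Once the descent is in place, intersecting over all $v$ and combining with the formal inclusion yields the stated equality, and the "in particular'' about essential contractability (the case $\cni^+=\{0\}$) is immediate.
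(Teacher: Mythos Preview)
Your identification of the easy inclusion $\cni^+(L\ot_K\g)\subset L\ot_K\cni^+(\g)$ is correct: every $\N$-diagonalizable $K$-derivation extends to $L$ with fixed subspace obtained by tensoring, and a finite intersection commutes with $L\ot_K(-)$.

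The hard direction, however, has a genuine gap. Your plan is to take an $\N$-diagonalizable $L$-derivation $D$ not fixing a given $v$ and descend it to a $K$-derivation with the same two properties. The proposed mechanism --- ``Galois averaging of tori'' in the style of Theorem~\ref{main2} --- does not do this. First, the proof of Theorem~\ref{main2} is an affine-space argument (the set of derivations inducing the identity on $\g/[\g,\g]$ is a $K$-defined affine subspace, nonempty over $L$ hence over $K$); there is no analogous affine or linear structure on the set of $\N$-diagonalizable derivations. Second, naively averaging $D$ over $\Gal(L/K)$ produces a $K$-defined derivation, but a sum of diagonalizable operators need not be diagonalizable, so you lose exactly the property you need. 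Third, the side condition ``$v\notin\ker D$'' is not stable under the operations you invoke (conjugating into a fixed maximal torus moves $v$; averaging cocharacters is not even defined). You correctly sense that positivity is what rescues the situation (and that it fails for $\Z$-gradings, cf.\ Theorem~\ref{ex12}), but you have not identified \emph{where} positivity enters.

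The paper's route is structurally different and avoids element-by-element descent. It first proves (Theorem~\ref{pofi}, Corollary~\ref{tfi}) that for any $K$-defined torus $T\subset\GL(V)$ there is a $K$-defined $T$-fine cocharacter; this is where positivity is used, via a determinant trick showing that positive weights for a maximal torus remain positive for the maximal $K$-split central torus of a Levi factor. Consequently the contractive decomposition $\g=\g_{[0]}\ltimes\g_{[+]}$ is the same over $K$ and over $L$. Then Proposition~\ref{codecon} characterizes $\cni^+(\g)$ intrinsically as the largest $\Aut(\g)^\circ$-invariant ideal contained in $\g_{[0]}$, i.e.\ $\cni^+(\g)=\bigcap_{\alpha\in\Aut(\g)^\circ(K)}\alpha(\g_{[0]})$. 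Since $\g_{[0]}^L=L\ot_K\g_{[0]}$ and $\Aut(\g)^\circ(K)$ is Zariski-dense in $\Aut(\g)^\circ(L)$, tensoring this finite intersection with $L$ and comparing with the same description over $L$ gives $\cni^+(\g_L)=L\ot_K\cni^+(\g)$ directly. The point you are missing is this reformulation of $\cni^+$ in terms of a single canonical subspace $\g_{[0]}$ and the $\Aut^\circ$-action, which converts the descent problem into a Zariski-density statement.
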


Note that essentially contractable does not imply contractable, since it does not even imply nilpotent: for example the non-nilpotent 2-dimensional Lie algebra, is essentially contractable. The implication does not even hold for nilpotent Lie algebras: the smallest counterexamples are 
7-dimensional and not even flexible (see Remark \ref{mag01}).

\subsubsection{Small dimension}\label{smalldim}Let us now put these results in light of the classification of small-dimensional nilpotent Lie algebras. \label{nlaaaa}

In the following table, we write a statement holding in low dimensions, and in the right column we write the largest dimension for which it holds. Let us begin by a few statements about NLAs (Nilpotent Lie Algebras) mainly related to fields of definition

\medskip

  \begin{tabular}{ccc}
statement & dim. & restriction\\
    \hline
NLAs with isomorphic complexifications are isomorphic & $\le 5$ & (${K^*}^2\neq K^*)$\\
every NLA is defined over $\mathbf{Q}$ & $\le 6$ & ($K=\mathbf{R},\mathbf{C}$)\\
there are finitely many isomorphism classes of NLAs & $\le 6$ & ($K=\mathbf{R},\mathbf{C}$) \\
    \hline
   \end{tabular} 

\medskip

The same holds if we restrict to Carnot Lie algebras, for instance the $\g_{7,3,1(i_\lambda)}$ in Magnin's classification \cite{Mag} form a 1-parameter family of 7-dimensional Carnot Lie algebras.

  \begin{tabular}{ccc}
statement & dim. \\
    \hline
every NLA is Carnot & $\le 4$ \\
every NLA is contractable & $\le 6$ \\
every nonzero NLA is semicontractable & $\le 6$ \\
every NLA is flexible & $\le 6$ \\
every NLA is either char.\ nilpotent or essentially contractable & $\le 7$ \\    \hline
   \end{tabular} 

\medskip

The largest dimension for which a classification (over an algebraically closed $\mathbf{Q}$-field) is available is 7. It includes 5 1-parameter families and is generally described as a list of about 154 types:
\begin{itemize}
\item 31 being decomposable as nontrivial direct product, thus contractable;
\item among the 123 indecomposable types
\begin{itemize}
\item 8 types, denoted $\g_{7,0,*}$ in Magnin's classification \cite{Mag}, including one 1-parameter family, consist of characteristically nilpotent NLAs;
\item 4 types, denoted $\g_{7,1,0*}$ in Magnin's classification \cite{Mag}, consist of NLAs that are not flexible, but are semicontractable and essentially flexible;
\item the other 111 types, including four 1-parameter families, are contractable; among them, 36 (including one 1-parameter family) are Carnot.
\end{itemize}
\item In dimension $\le 6$, the classification yields, one indecomposable NLA in dimension 3, 1 in dimension 4, 6 in dimension 5 (including 2 non-Carnot), and 20 in dimension 6 (including 10 non-Carnot). The two non-Carnot 5-dimensional NLAs can be described as follows:
\begin{itemize}\item The Lie algebra denoted $\g_{5,3}$ in Magnin's classification and $\mkl_{5,5}$ in de Graaf's classification, with basis $(X_i)_{1\le i\le 5}$ with nonzero brackets $[X_1,X_3]=X_4$, $[X_1,X_4]=X_5$ and $[X_2,X_3]=X_5$. Its nilpotency length is 3. It is not Carnot, for instance because its center is 1-dimensional but the center of the associated Carnot-graded Lie algebra is 2-dimensional. (Note that although $\mkl_{5,5}$ is indecomposable as a direct product, $\mathrm{Car}(\mkl_{5,5})\simeq\mkl_{5,3}$ splits as a direct product with the abelian factor generated by $X_2$.)
\item The Lie algebra $\g_{5,6}$ (or $\mkl_{5,6}$) with basis $(X_i)_{1\le i\le 5}$ with nonzero brackets $[X_1,X_i]=X_{i+1}$ ($i=2,3,4$) and $[X_2,X_3]=X_5$. Its nilpotency length is 4. Its associated Carnot Lie algebra $\g_{5,5}$ (or $\mkl_{5,7}$) is defined in the same way except $[X_2,X_3]=0$.
 \end{itemize}
\end{itemize}

\subsection{Systolic growth of nilpotent groups}\label{apl}

We now turn to the study of the systolic growth for finitely generated virtually nilpotent groups; as the systolic growth is invariant under passing to finite index subgroups, it is no restriction to focus to torsion-free finitely generated nilpotent groups. The main question raised in \S\ref{iisg} was to understand when the systolic growth and growth are equivalent.
This question is solved for nilpotent groups in the following theorem, which provides a geometric characterization of Carnot simply connected Lie groups (among those admitting lattices).

\begin{thm}[Theorem \ref{corlat2}]\label{corlat}
Let $G$ be a simply connected nilpotent real Lie group whose growth rate is polynomial of degree $\delta$, and whose Lie algebra $\g$ is definable over $\mathbf{Q}$. Equivalences:
\begin{enumerate}[(i)]
\item\label{carr} the Lie algebra $\g$ is Carnot (over $\mathbf{R}$) 
\item\label{elasy} every lattice in $G$ has systolic growth $\simeq n^\delta$
\item\label{slasy} some lattice in $G$ has systolic growth $\simeq n^\delta$
\item\label{ecovol} $G$ admits a sequence $(\Gamma_n)$ of lattices with systole $u_n\to\infty$ and covolume $\preceq u_n^\delta$.
\end{enumerate}
\end{thm}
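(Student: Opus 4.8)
The proof naturally splits along the cycle of implications $(\ref{carr})\Rightarrow(\ref{elasy})\Rightarrow(\ref{slasy})\Rightarrow(\ref{ecovol})\Rightarrow(\ref{carr})$, with the first and last implications carrying essentially all the content. The implications $(\ref{elasy})\Rightarrow(\ref{slasy})$ and $(\ref{slasy})\Rightarrow(\ref{ecovol})$ are formal: the first is trivial since $G$ has a lattice, and the second follows because a subgroup of systole $\ge u$ and index $\le \sigma_{\Gamma,S}(u)$ witnesses the desired sequence once one recalls that by Pansu's theorem the covolume of a finite-index subgroup $\Lambda\le\Gamma$ is comparable to $[\Gamma:\Lambda]$ (covolume measured against a fixed Haar measure), and that the systolic growth of a lattice is polynomial of degree $\delta$ up to the general lower bound $\sigma_{\Gamma,S}(2n+1)\ge b_{\Gamma,S}(n)$ recalled in \S\ref{iisg} together with the fact that we are assuming $\simeq n^\delta$.

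For $(\ref{carr})\Rightarrow(\ref{elasy})$, the idea is to exploit the Carnot grading $\g=\bigoplus_{i\ge 1}\g_i$ directly. Fixing a lattice $\Gamma$, which after passing to a finite-index subgroup may be taken to be a lattice adapted to a $\Q$-form compatible with the grading, one uses the one-parameter group of automorphisms $\delta_t$ of $G$ which act by $t^i$ on the subgroup $\exp(\g_i)$; for $t=m$ a positive integer these dilations send $\Gamma$ into itself with index a fixed power of $m$ (namely $m^{\sum i\dim\g_i}=m^\delta$, since the homogeneous dimension of a Carnot group equals its growth degree). The key metric point is that $\delta_m(\Gamma)$ has systole growing like $m$: an element $\delta_m(\gamma)$ written in exponential coordinates of the second kind has coordinates scaled by powers of $m$, and a standard estimate for the word length in a nilpotent group in terms of such coordinates (the length is comparable to the maximum of the $i$-th root of the $i$-th layer coordinates) shows $|\delta_m(\gamma)|_S\gtrsim m\,|\gamma|^{1/c}$ — in fact more simply that any nontrivial element of $\delta_m(\Gamma)$ has length $\gtrsim m$. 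Hence $\sigma_{\Gamma,S}(n)\preceq n^\delta$, and combined with the universal lower bound this gives $\sigma_{\Gamma,S}(n)\simeq n^\delta$ for every lattice.

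The implication $(\ref{ecovol})\Rightarrow(\ref{carr})$ is the heart of the matter and where I expect the real obstacle. Given lattices $\Gamma_n$ with systole $u_n\to\infty$ and covolume $\preceq u_n^\delta$, the plan is to rescale: transport each $\Gamma_n$ by a $u_n^{-1}$-dilation in a fixed system of coordinates to obtain discrete subsets $\Lambda_n\subset G$ that are uniformly discrete (systole bounded below) and have covolume bounded above, hence, after extracting, converge in the Chabauty–Benjamini–Schramm sense to a limit object. The systole bound survives in the limit, and because the rescalings are by the honest metric scaling factor one expects the limit to be a lattice in the \emph{Carnot-graded} group $\mathrm{Car}(G)$ with covolume controlled by $\liminf u_n^{-\delta}\covol(\Gamma_n)<\infty$; since $\mathrm{Car}(G)$ admits a lattice, its rational structure matches that of $G$ only if $\g\simeq\mathrm{Car}(\g)$, i.e. $\g$ is Carnot over $\R$ (invoking Theorem \ref{beingca} to descend the Carnot property from $\R$ if one prefers to argue over $\Q$). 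Making "the limit is a lattice in $\mathrm{Car}(G)$" precise is the delicate step: one must show the rescaled lattices do not degenerate (no loss of covolume, i.e. the limit is cocompact with the right covolume) and that the group law converges to the graded law — this is essentially the observation, going back to Pansu, that the asymptotic cone / large-scale geometry of $(\Gamma_n)$ with the given scaling forces the contraction $\delta_{u_n^{-1}}$ of the bracket onto the graded bracket, so that a non-degenerate limit of lattices of bounded covolume can only live in $\mathrm{Car}(G)$. I would organize this via the structure of $G$ as iterated central extensions, proving by induction on the nilpotency length that a bounded-covolume, positive-systole limit of dilated lattices produces a lattice in the associated graded group, the inductive step handling one central layer at a time.
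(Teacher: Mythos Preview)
Your overall architecture matches the paper's, but there are two genuine gaps, one in each of the nontrivial implications.

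\textbf{In $(\ref{carr})\Rightarrow(\ref{elasy})$:} You write that ``after passing to a finite-index subgroup [$\Gamma$] may be taken to be a lattice adapted to a $\Q$-form compatible with the grading.'' This is precisely the point that cannot be taken for granted. A lattice $\Gamma$ determines a $\Q$-form $\g_\Q$, and passing to finite index does not change it; the Carnot grading you start with is only over $\R$, and there is no a priori reason it should be $\Q$-defined with respect to $\g_\Q$. This is exactly where Theorem~\ref{beingca} (the Carnot case, i.e.\ Theorem~\ref{main2}) enters: it guarantees $\g_\Q$ is Carnot over $\Q$, after which your dilation argument goes through (the paper packages the verification that some $\delta(m)$ stabilizes $\Gamma$ as Lemma~\ref{defendo}). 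You invoke Theorem~\ref{beingca} only parenthetically in the other direction, where it is not needed; it belongs here, and the implication is false without it in the sense that the argument collapses.

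\textbf{In $(\ref{ecovol})\Rightarrow(\ref{carr})$:} Two problems. First, you speak of ``transporting each $\Gamma_n$ by a $u_n^{-1}$-dilation''; but $G$ has no dilation automorphisms unless it is already Carnot, which is what you want to prove. The paper instead rescales the \emph{metric} by $1/u_n$ and uses Pansu's theorem that $(G,\tfrac{1}{u_n}d)$ Gromov--Hausdorff converges to a Carnot group $H$. Second, and more seriously, your endgame ``since $\mathrm{Car}(G)$ admits a lattice, its rational structure matches that of $G$ only if $\g\simeq\mathrm{Car}(\g)$'' does not follow: $\mathrm{Car}(G)$ always admits lattices whenever $G$ does, so merely producing a lattice in $H$ says nothing about $G$. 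The missing mechanism is that the ultralimit $\Xi$ of the $\Gamma_n$ is a lattice in $H$ \emph{and} that $\Gamma_n$ converges to $\Xi$ in the space of marked groups; since $\Xi$ is finitely presented, $\Gamma_n\cong\Xi$ for large $n$, and then Malcev rigidity gives $G\cong H$. To make the marked-groups convergence work one must first show the $\Gamma_n$ are generated by elements of length $O(u_n)$; the paper does this via packing estimates (Lemmas~\ref{papa} and~\ref{palb}), which is roughly the ``no loss of covolume'' issue you flag but do not resolve. Your proposed induction on nilpotency length does appear in the paper, but only inside the packing estimate (Lemma~\ref{papa}), not as the overall organizing principle.
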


Note that (\ref{slasy})$\Rightarrow$(\ref{ecovol}) is clear. The arithmeticity of lattices (see \cite{Ragh}) shows that definability over $\mathbf{Q}$ is equivalent to the existence of a lattice, whence (\ref{elasy})$\Rightarrow$(\ref{slasy}); more precisely, any lattice yields a $\mathbf{Q}$-structure on $\g$. Assuming (\ref{carr}), we use Theorem \ref{main2} (i.e., the Carnot case of Theorem \ref{beingca}) in order to show that some Carnot grading is defined over $\mathbf{Q}$, which allows to prove (\ref{elasy}). Finally the implication (\ref{ecovol})$\Rightarrow$(\ref{carr}) consists, roughly speaking, in rescaling $G$, and view some Gromov-Hausdorff limit $\Xi$ of the $\Gamma_n$ as a lattice in the asymptotic cone of $G$ and then observe that $\Gamma_n$ is isomorphic to $\Xi$ for $n$ large enough. This requires some preliminaries to ensure that $\Xi$ is indeed a lattice, and that $\Gamma_n$ converges to $\Xi$ in the space of marked groups.

The equivalence between (\ref{carr}) and (\ref{elasy}) was suggested by Gromov \cite[p.333]{Gro}, with, as only comment, the easy checking of (\ref{elasy}) in the case of the Heisenberg group. The proof of (\ref{carr})$\Rightarrow$(\ref{elasy}) is based on the same construction in general, but as we already mentioned, it makes, beforehand, a crucial use of Theorem \ref{main2} in its full generality (when the field extension is $\mathbf{Q}\subset\mathbf{R}$), and Gromov made no hint towards proving that any of the other properties implies~(\ref{carr}).

Using (\ref{ecovol}), any lattice in a non-Carnot simply connected nilpotent Lie group of polynomial growth of degree $\delta$ has systolic growth $\gg n^\delta$; it would be interesting to improve this estimate. For instance, for both non-Carnot 5-dimensional Lie algebras $\mk{l}_{5,5}$, $\mk{l}_{5,6}$ mentioned earlier (before \S\ref{apl}), we can check that the systolic growth is $\preceq n^{\delta+1}$ (with $\delta$ the degree of growth, 8 and 11 respectively) and I do not know if it is optimal in these cases. In general, the obvious upper bound $\sigma(n)\preceq n^{c\dim(G)}$, where $c$ is the nilpotency length, given by congruence subgroups is easy to improve (see Proposition \ref{uppersys}), but the precise behavior remains unclear and its study could shed light on how to quantify the lack of being Carnot.

\begin{rem}\label{rgirth2}
For finitely generated nilpotent groups, while obviously the residual girth $\sigma^\lhd$ is polynomially bounded (as we see using an embedding into upper unipotent integral matrices), it is in general asymptotically much larger that the systolic growth $\sigma$: for instance for the Heisenberg group we easily obtain that $\sigma^\lhd(n)\simeq n^6$ (while $\sigma(n)\simeq n^4$). 

The proof of Theorem \ref{corlat} actually shows that in the Carnot case, the uniform systolic growth (see Remarks \ref{rgirth} and \ref{rgirth2}) is asymptotically equivalent to the growth; I do not know whether it is asymptotically equivalent to the systolic growth for all finitely generated nilpotent groups.\end{rem}

\subsection{Cohopfian properties for nilpotent groups}
This part has a strong similarity with the previous one, since we characterize one property of finitely generated torsion-free nilpotent groups in terms of a property of Lie algebras that we have shown to be invariant under extensions of scalars. However, unlike in the case of systolic growth, we directly obtain the characterization of the cohopfian properties in terms of the rational Lie algebra. The invariance of the Lie algebra properties under extensions of scalars, nevertheless, appears as a way to recognize easily these properties, especially when we only have access to the complexification of the Lie algebra, as in most of the available classifications; they also show that the property does not differ when we consider two lattices in the same simply connected nilpotent Lie group.

The most familiar examples of infinite finitely generated torsion-free nilpotent groups fail to be cohopfian; however, Belegradek \cite{Bel} observed that there exists cohopfian examples: for instance, those for which the Malcev Lie algebra is characteristically nilpotent (i.e.\ has a virtually unipotent automorphism group). He gave a criterion for such a group $\Gamma$ to be non-cohopfian; his criterion \cite[Theorem 1]{Bel} is that the real Malcev completion $\g$ admits an automorphism mapping $\log(\Gamma)$ into itself, and of determinant of norm greater than~1. However this criterion depends on $\Gamma$ and from this characterization it is by no ways clear whether it changes when $\Gamma$ is replaced by a finite index subgroup. Actually, being cohopfian is not inherited by subgroups of finite index (see Appendix \ref{apfi}); however the following simple characterization shows that for finitely generated nilpotent groups, it is a commensurability invariant, and even only depends on the real Malcev completion.

\begin{thm}[Cor.\ \ref{corno}]\label{crcc}
Let $G$ be a simply connected nilpotent real Lie group whose Lie algebra $\g$ is definable over $\mathbf{Q}$. 
Equivalent statements:
\begin{enumerate}[(i)]
\item\label{nch1} $\g$ is semi-contractable;
\item\label{nch2} every lattice of $G$ is non-cohopfian;
\item\label{nch3} some lattice of $G$ is non-cohopfian.
\end{enumerate}
\end{thm}

Part of this theorem has independently been obtained by Dekimpe and Der\'e, namely, the statement that a finitely generated torsion-free nilpotent nilpotent Lie group is co-hopfian if and only if its {\em rational} Lie algebra is semi-contractable. This is one half of the the proof of Theorem \ref{corno}; the other (independent) part being that the rational Lie algebra is semi-contractable if and only if the real Lie algebra is semi-contractable, which is part of Theorem \ref{beingca}.

A naive expectation for proving (\ref{nch3})$\Rightarrow$(\ref{nch1}) would be to consider a non-injective endomorphism and extend it to the Malcev completion, hoping that the resulting endomorphism necessarily has all its eigenvalues of modulus $\ge 1$. This is not always the case, as the following simple example shows: $G=\mathbf{R}^2$, $\Gamma=\mathbf{Z}^2$ and the endomorphism given by the matrix $\begin{pmatrix}4 & 2\\ 2 & 2\end{pmatrix}$, whose eigenvalues are $3\pm\sqrt{5}$ (=0.76{\dots} and 5.23\dots).

A similar statement, with a similar proof, is the following:

\begin{thm}[Cor.\ \ref{corno}]\label{crcd}
Let $G$ be a simply connected nilpotent real Lie group whose Lie algebra $\g$ is definable over $\mathbf{Q}$. 
Equivalent statements:
\begin{enumerate}[(i)]
\item\label{dch1} $\g$ is contractable;
\item\label{dch2} every lattice of $G$ is dis-cohopfian;
\item\label{dch3} some lattice of $G$ is dis-cohopfian.
\end{enumerate}
\end{thm}

\begin{rem}
Fix a rational structure on $G$. When $\g$ is known to be contractable (resp.\ semi-contractable) over $\mathbf{Q}$, it is easy to check that some lattice in $G$ contained in $G_\mathbf{Q}$ is dis-cohopfian (resp.\ non-cohopfian). However to obtain the conclusion for {\em every} such lattice requires more work, mainly encapsulated in the technical Lemma \ref{defendo}. 
\end{rem}

\begin{thm}[Cor.\ \ref{corwdi}]\label{crcw}
Let $G$ be a simply connected nilpotent real Lie group whose Lie algebra $\g$ is definable over $\mathbf{Q}$. 
Equivalent statements:
\begin{enumerate}[(i)]
\item\label{wdch1} $\g$ is essentially contractable;
\item\label{wdch2} every lattice of $G$ is weakly dis-cohopfian;
\item\label{wdch3} some lattice of $G$ is weakly dis-cohopfian.
\end{enumerate}
\end{thm}

As an example of a corollary of these results, we have the following corollary. Recall that the {\em Hirsch length} of a finitely generated nilpotent group $\Gamma$ is the number of infinite subfactors in any composition series of $\Gamma$ with cyclic subfactors; it is also the dimension of any simply connected nilpotent Lie group admitting $\Gamma$ as a lattice.

\begin{cor}~
\begin{itemize}
\item
Every finitely generated torsion-free nilpotent group of Hirsch length $\le 6$ is dis-cohopfian.
\item If $\Gamma$ is a finitely generated torsion-free nilpotent group of Hirsch length 7, then either it is cohopfian or weakly dis-cohopfian (dis-cohopfian or not); this does not hold for Hirsch length 8.
\item Every finitely generated torsion-free 2-step nilpotent group is dis-cohopfian.
\end{itemize}
\end{cor}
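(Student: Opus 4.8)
The plan is to derive each of the three bullet points as a direct consequence of Theorems \ref{crcc}, \ref{crcd}, and \ref{crcw} together with the dimension bounds recorded in \S\ref{nlaaaa}. The unifying principle is that all three cohopfian-type properties of a finitely generated torsion-free nilpotent group $\Gamma$ are detected by a grading property of its real (equivalently, by Theorem \ref{beingca}, its rational or complex) Malcev Lie algebra $\g$: dis-cohopfian $\Leftrightarrow$ $\g$ contractable; weakly dis-cohopfian $\Leftrightarrow$ $\g$ essentially contractable; non-cohopfian $\Leftrightarrow$ $\g$ semi-contractable. Since the Hirsch length of $\Gamma$ equals $\dim\g$, statements about $\Gamma$ of bounded Hirsch length translate into statements about nilpotent Lie algebras of bounded dimension, where the classification is available.

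For the first bullet, let $\Gamma$ be finitely generated torsion-free nilpotent of Hirsch length $\le 6$, so $\g$ is a nilpotent Lie algebra over $\Q$ of dimension $\le 6$, hence $\C\otimes_\Q\g$ has dimension $\le 6$. By the table in \S\ref{nlaaaa}, every nilpotent Lie algebra of dimension $\le 6$ is contractable; applying this to $\C\otimes_\Q\g$ and then invoking the extension-invariance of contractability (Theorem \ref{beingca}) shows $\g$ is contractable over $\Q$, hence over $\R$. Theorem \ref{crcd} then gives that $\Gamma$ (being a lattice in its real Malcev completion $G$, whose Lie algebra $\R\otimes_\Q\g$ is definable over $\Q$) is dis-cohopfian.

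For the third bullet, if $\Gamma$ is 2-step nilpotent then so is $\g$; a 2-step nilpotent Lie algebra is automatically Carnot — take $\g_1$ any vector-space complement of $[\g,\g]$ and $\g_2=[\g,\g]$ — hence contractable, and Theorem \ref{crcd} again yields dis-cohopfian. (No extension-of-scalars input is even needed here, though one may phrase it that way for uniformity.) For the second bullet, suppose $\Gamma$ has Hirsch length $7$, so $\g$ has dimension $7$ and $\C\otimes_\Q\g$ is a $7$-dimensional nilpotent Lie algebra. By the classification recalled in \S\ref{nlaaaa}, such a Lie algebra is either characteristically nilpotent or essentially contractable. If it is essentially contractable, then by extension-invariance of $\cni^+$ (Theorem, the one stated right after Theorem \ref{ex12}'s remark) $\g$ is essentially contractable over $\Q$, hence over $\R$, and Theorem \ref{crcw} makes $\Gamma$ weakly dis-cohopfian. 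If instead $\C\otimes_\Q\g$ is characteristically nilpotent, then $\cni(\C\otimes_\Q\g)=\C\otimes_\Q\g$, so by $\cni(L\otimes_K\g)=L\otimes_K\cni(\g)$ we get $\cni(\g)=\g$, whence $\cni^+(\g)=\g$; thus $\g$ is not semi-contractable, and by Theorem \ref{crcc} $\Gamma$ is cohopfian. This establishes the dichotomy (the parenthetical ``dis-cohopfian or not'' simply notes that within the weakly dis-cohopfian case both possibilities occur among the relevant Lie algebras, e.g.\ the $\g_{7,1,0*}$ types). Finally, the assertion that the Hirsch length $7$ dichotomy fails in Hirsch length $8$ is witnessed by exhibiting an $8$-dimensional nilpotent Lie algebra over $\Q$ that is semi-contractable but not contractable — for instance, a suitable direct sum involving the flexible non-contractable example underlying Theorem \ref{ex12} (dimension $12$ there, but lower-dimensional $7$- or $8$-dimensional instances exist by Remark \ref{mag01}), or more simply a product of a characteristically nilpotent $7$-dimensional Lie algebra with $\Q$; one checks directly that the corresponding lattice is neither cohopfian nor weakly dis-cohopfian, or weakly dis-cohopfian but not dis-cohopfian, as promised by Example \ref{wdchd}.

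The only genuine obstacle is locating, in the second bullet, the precise counterexample in Hirsch length $8$ and verifying it has the stated failure; everything else is a bookkeeping reduction to the classification tables and the three main cohopfian theorems, using extension-invariance (Theorem \ref{beingca} and the $\cni^+$ statement) to pass freely between $\Q$, $\R$, and $\C$. I expect to simply cite Example \ref{wdchd} (and Appendix \ref{apfi} if a non-commensurability-type subtlety arises) for the explicit $8$-dimensional witness rather than reconstruct it here.
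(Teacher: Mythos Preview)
Your argument for the first and third bullets, and for the dimension-7 dichotomy itself, is correct and is exactly the intended route: reduce to the Lie-algebra properties via Theorems \ref{crcc}, \ref{crcd}, \ref{crcw}, and read off the relevant facts from the classification tables in \S\ref{nlaaaa}, using extension-invariance to pass between $\Q$, $\R$, $\C$.

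The gap is in your treatment of the failure in Hirsch length $8$. You say one must exhibit an $8$-dimensional nilpotent Lie algebra that is ``semi-contractable but not contractable''; that is the wrong condition. A group that is weakly dis-cohopfian but not dis-cohopfian (as in Example \ref{wdchd}) still satisfies the dichotomy ``cohopfian or weakly dis-cohopfian''. What you need is a Lie algebra that is \emph{semi-contractable} (so the lattice is non-cohopfian) and has $\cni^+\neq\{0\}$ (so the lattice is not weakly dis-cohopfian). Your candidate $\g\times\Q$ with $\g$ characteristically nilpotent of dimension $7$ is the right one, but the verification that it works is not ``direct'' and is not covered by Example \ref{wdchd} or Remark \ref{mag01}: you need the computation in \S\ref{cnincn}, which shows $\cni^+(\g\times\mk{a})=[\g,\g]\times\{0\}\neq\{0\}$ for $\mk{a}$ abelian nonzero. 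Drop the references to Theorem \ref{ex12} and Example \ref{wdchd} here (they concern different phenomena), state the correct target condition, and cite the proposition in \S\ref{cnincn} for the computation of $\cni^+$.
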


\medskip \noindent {\bf Questions left open.}

This study of cohopfian properties was limited to torsion-free nilpotent groups in order to keep the size of the paper reasonable. It would be interesting to investigate finitely generated virtually nilpotent groups with no restriction.

\begin{que}
Let $\Gamma$ be a finitely generated virtually nilpotent group, and $\Gamma'$ a torsion-free nilpotent subgroup of finite index. Is it true that $\Gamma$ is cohopfian if and only if $\Gamma'$ is cohopfian?
\end{que}

The answer is yes for virtually abelian groups, by a non-trivial result of Delzant and Potyagailo \cite[Proof of Theorem~D]{DP}, namely every infinite finitely generated virtually abelian group is non-cohopfian.

If we turn to dis-cohopfian and weak dis-cohopfian properties, we need to introduce the polyfinite radical $W(\Gamma)$, which for an arbitrary group $\Gamma$ is the subgroup generated by all finite normal subgroups of $\Lambda$. For a virtually polycyclic group $\Gamma$, the subgroup $W(\Gamma)$ is finite; moreover it follows using \cite[Proposition 2.7]{CorInd} that every injective endomorphism of $\Gamma$ maps $W(\Gamma)$ onto itself. It particular, non-triviality of $W(\Gamma)$ is an obstruction for $\Gamma$ to be weakly dis-cohopfian (and dis-cohopfian).

\begin{que}
Let $\Gamma$ be a finitely generated virtually nilpotent group with $W(\Gamma)=\{1\}$, and $\Gamma'$ a torsion-free nilpotent subgroup of finite index. Is it true that $\Gamma$ is dis-cohopfian (resp.\ weakly dis-cohopfian) if and only if $\Gamma'$ is dis-cohopfian (resp.\ weakly cohopfian)?
\end{que}

In the virtually polycyclic case, finding a general characterization remains widely open. Many semidirect products of the form $\mathbf{Z}^d\rtimes\mathbf{Z}$ were shown to be weakly dis-cohopfian in \cite{NP}.

\begin{que}
Can the cohopfian property be characterized, for a virtually polycyclic group, in terms of Lie algebras? is it sensitive to passing to finite index subgroups? What if we restrict to those polycyclic groups that are Zariski-dense in a connected algebraic group?

What about the weak dis-cohopfian property, if we assume in addition that the polyfinite radical is trivial?
\end{que}

For the dis-cohopfian property, we have in mind that a contractable Lie algebra is always nilpotent. This suggests the following:

\begin{que}
Is it true that no virtually polycyclic group of exponential growth is dis-cohopfian?
\end{que}

\medskip \noindent {\bf Organization of the paper.} 

Many properties of the Lie algebras we are interested in, such as the existence of gradings with given properties, only depend on their group of automorphisms. Therefore it is convenient to study these properties by forgetting the Lie algebra structure and retaining the group of automorphisms, and more generally considering a Zariski-closed subgroup $G$ in $\GL(V)$ and study gradings on $V$ defined by sub-tori in $G$. This is done in the preliminary Section \ref{graza}. Next, in Section \ref{graal}, we specify to group automorphisms of algebras; actually all results in this section were initially written for Lie algebras, but none of the Lie algebras axioms are used, except the existence of a bilinear law; therefore this section is written for arbitrary algebras. This is not the most general context but all applications we have in mind concern Lie algebras and further generalizations (e.g., considering several laws, or ternary laws, etc.)\ would make the text harder to read. In addition, for the (counter)examples given throughout the text, and in Section \ref{secount} for some more consistent ones, we especially focus on Lie algebras.

In Section \ref{conig}, we prove the theorems on cohopfian properties; this makes use of the contractive decomposition which is studied in Sections \ref{graza} and \ref{graal}.

Section \ref{gelani} is mostly independent of the others; it starts with a notion of systolic growth for locally compact groups; it includes the proof of Theorem \ref{corlat} on the systolic growth, or rather the more general Theorem \ref{corlat2}. Its main bulk is the geometric part of the proof, namely (\ref{ecovol})$\Rightarrow$(\ref{carr}).

\medskip \noindent {\bf Acknowledgements.} I thank Goulnara Arzhantseva and Yves Benoist for useful conversations, and Pierre de la Harpe and Khalid Bou-Rabee for a number of corrections and suggestions. I thank Igor Belegradek for letting me know about the work of Karel Dekimpe and Jonas Der\'e. I thank Jonas\ Der\'e for pointing a mistake in a previous version of this work.

\setcounter{tocdepth}{1}
\tableofcontents


\section{Gradings associated to a Zariski closed subgroup of $\GL_d$}\label{graza}

\subsection{Linear algebraic $K$-groups}

In this section, we let $K$ be a $\mathbf{Q}$-field, i.e., a field of characteristic zero. The facts we will use about linear algebraic $K$-groups $G$ (whose unit component is denoted by $G^\circ$) are the following.

\begin{itemize}
\item $G$ admits a maximal torus that is defined over $K$ (such tori are not necessarily conjugate over $K$); more generally, every $K$-defined torus in $G$ is contained in a maximal torus that is defined over $K$ (see \cite[A.1.2]{Con}).
\item \cite[\S 8.2]{BT} $G$ admits maximal $K$-split tori, which are all conjugate under $G^\circ(K)$ (but these are not necessarily maximal tori). Their common dimension $r$ is called the $K$-rank of $G$. 
\item \cite[\S 5.1]{BSer} $G$ admits a Levi factor $R$ defined over $K$, so that $G=U\rtimes R$ with $U$ the unipotent radical; all such $R$ are conjugate under $U(K)$; every reductive $K$-subgroup of $G$ (not necessarily connected) is contained in a $K$-defined Levi factor.
\end{itemize}

\begin{lem}\label{pairsemb}
Let $G'\subset G\subset\GL(V)$ be closed subgroups. Then all pairs $(T',T)$, where $T'$ is a maximal torus in $G'$ and $T$ a maximal torus in $G$ containing $T'$, are conjugate under $G^\circ$.
\end{lem}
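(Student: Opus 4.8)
The statement is a relative conjugacy result: we must show that if we pick any maximal torus $T'$ of $G'$ and enlarge it to a maximal torus $T$ of $G$, the resulting pair is unique up to conjugacy by $G^\circ$. First I would reduce to the connected case by noting that maximal tori of $G$ lie in $G^\circ$ (tori are connected), and likewise maximal tori of $G'$ lie in $G'^\circ$; so we may assume $G=G^\circ$ and $G'=G'^\circ$ from the outset, at the cost of nothing. Then I would set up the two steps: (a) any two maximal tori $T_1', T_2'$ of $G'$ are conjugate in $G'$ — this is the standard conjugacy theorem for maximal tori over an arbitrary field of characteristic zero, although over a non-closed field one must be a little careful. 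Actually the cleanest route is to pass to the algebraic closure $\bar K$: over $\bar K$ all maximal tori of $G'_{\bar K}$ are conjugate by $G'(\bar K)$, and the pair-conjugacy I want is a $\bar K$-statement about the algebraic group $G$, so it suffices to prove it over $\bar K$ and then the $G^\circ$ (not $G^\circ(K)$) phrasing in the lemma is exactly what one gets.

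The heart is step (b): given a fixed maximal torus $T'$ of $G'$, show that any two maximal tori $T_1, T_2$ of $G$ both containing $T'$ are conjugate by an element of the centralizer $Z_G(T')^\circ$. For this I would invoke the fact that $Z_G(T')$ is a (possibly disconnected) reductive-by-unipotent... no — more simply, $Z := Z_G(T')$ is a closed subgroup, and a maximal torus of $G$ containing $T'$ is precisely a maximal torus of $Z$ (a torus containing $T'$ lies in $Z$, and maximality in $G$ forces maximality in $Z$ since any torus of $Z$ is a torus of $G$). So $T_1, T_2$ are two maximal tori of $Z$, hence conjugate by $Z^\circ \subset G^\circ$, and such a conjugation fixes $T'$ pointwise, hence sends the pair $(T', T_1)$ to $(T', T_2)$. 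Combining with step (a): given two arbitrary pairs $(T_1', T_1)$ and $(T_2', T_2)$, first conjugate by $g \in G'^\circ \subset G^\circ$ to arrange $g T_1' g^{-1} = T_2'$, replacing $(T_1', T_1)$ by $(T_2', g T_1 g^{-1})$; now both pairs have the same first coordinate $T_2'$, and step (b) finishes the job.

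The step I expect to require the most care is making sure the conjugating elements land in $G^\circ$ and not merely in $G$: this is why the reduction to connected $G'$ (so that the torus-conjugacy in $G'$ is by $G'^\circ$) matters, and why I use $Z_G(T')^\circ$ rather than $Z_G(T')$ in step (b). One subtlety there: $Z_G(T')^\circ$ does contain a maximal torus of $Z_G(T')$ (every maximal torus of an algebraic group lies in its identity component), so the two maximal tori $T_1, T_2$ of $Z_G(T')$ are indeed conjugate under $Z_G(T')^\circ$, and this is contained in $G^\circ$. The rest is bookkeeping. If one wants to avoid passing to $\bar K$ one can instead cite \cite[A.1.2]{Con} and the conjugacy of maximal tori directly, but working over $\bar K$ is the most transparent since the lemma's conclusion only asserts conjugacy under the full (geometric) group $G^\circ$.
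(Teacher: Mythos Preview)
Your proof is correct and follows essentially the same two-step strategy as the paper: first conjugate $T_1'$ to $T_2'$ by an element of $(G')^\circ$, then conjugate the resulting maximal tori of $G$ (both containing $T_2'$) by an element of the identity component of a suitable subgroup fixing $T_2'$. The only cosmetic difference is that you work in the centralizer $Z_G(T_2')$ while the paper works in the normalizer $N_G(T_2')$; since rigidity of tori gives $Z_G(T_2')^\circ = N_G(T_2')^\circ$, the two arguments are literally the same.
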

\begin{proof}
Let $(T'_1,T_1)$ and $(T'_2,T_2)$ be two such pairs. Then there exists $g\in (G')^\circ$ such that $gT'_1g^{-1}=T'_2$. Define $T_3=gT_1g^{-1}$. Let $N$ be the normalizer of $T'_2$ in $G$. Then $T_2$ and $T_3$ are maximal tori of $N$ containing $T'_2$. Hence there exists $h\in N^\circ$ such that $hT_3h^{-1}=T_2$. Since $hT'_2h^{-1}=T'_2$, it follows that $hg(T'_1,T_1)(hg)^{-1}=(T'_2,T_2)$.
\end{proof}

\subsection{Maximal split tori and gradings}\label{kcg}
 
Let $V$ be the affine $n$-space, i.e., $V(K)=K^n$ for every field $K$. Let $T\subset\GL_d=\GL(V)$ be a $K$-split torus, say $r$-dimensional. Then $T$ defines a grading of $V$ in the group $\sfX(T)\simeq\mathbf{Z}^r$ of multiplicative characters of $T$. Here $V_\chi=\{v\in V\mid \forall t\in T:tv=\chi(t)v\}$. Note that $T$ being $K$-split means that all $\chi\in\sfX(T)$ are $K$-defined.

Let $G\subset\GL(V)$ be a $K$-closed subgroup, and $r$ its $K$-rank. Every maximal $K$-split torus in $G$ thus yields a grading of $V$ in $\mathbf{Z}^r$. Moreover, any two such gradings (for two choices of maximal split tori and identification of their group of characters with $\mathbf{Z}^r$) are conjugate under $G^\circ(K)$ and $\GL_d(\mathbf{Z})$, in the sense that if $(\g_n)_{n\in\mathbf{Z}^r}$ and $(\g'_n)_{n\in\mathbf{Z}^r}$ are two such gradings, then there exists $s\in G^\circ(K)$ and $f\in\GL_r(\mathbf{Z})$ such that $\g'_n=s(\g_{f(n)})$ for all $n\in\mathbf{Z}^r$.

\subsection{Positive weights, contractive decomposition and fine tori}

(The forthcoming notions do not depend on a field of definition.)

If $T\subset\GL(V)$ is a torus, it defines a grading $V=\sum_{\alpha\in\sfX(T)}V_\alpha$. We say that $\alpha\in\sfX(T)$ is a {\em weight} if $V_\alpha\neq 0$. We say that a homomorphism $\sfX(T)\to\mathbf{R}$ is {\em non-negative} if it maps weights to non-negative numbers. We say that $\alpha\in\sfX(T)$ is {\em positive} (or $T$-{\em positive} if we want to emphasize $T$) if there exists a non-negative homomorphism $f$ such that $f(\alpha)>0$ (such a homomorphism can then be chosen to be valued in $\mathbf{Z}$).

\begin{defn}\label{codec}
The {\em contractive decomposition} associated to $T$ is the decomposition $V=V_{[0]}\oplus V_{[+]}$, where $V_{[+]}$ is the sum of all $V_\alpha$ when $\alpha$ ranges over positive weights, and $V_{[0]}$ is the sum of $V_\alpha$ when $\alpha$ ranges over non-positive weights. We write $V_{[0]}^T$ and $V_{[+]}^T$ if we want to emphasize $T$.

The dimensions of $V_{[+]}$ and $V_{[0]}$ are called the {\em contracted dimension} and {\em uncontracted dimension} of $(V,T)$, or of $(V,G)$ whenever $G$ admits $T$ as a maximal torus.
\end{defn}

If $T'\subset T$ is a subtorus, we have $V_{[0]}^{T'}\supset V_{[0]}^{T}$ and $V_{[+]}^{T'}\subset V_{[+]}^{T}$.

\begin{defn}
Given $T'\subset T\subset\GL(V)$, the torus $T'$ is {\em fine in $T$} if the restriction map $\sfX(T')\to\sfX(T)$ maps positive $T$-weights to positive $T'$-weights, or equivalently if $V_{[+]}^{T'}=V_{[+]}^T$, or equivalently if $V_{[0]}^{T'}=V_{[0]}^T$.
\end{defn}

If $G,G'\subset\GL(V)$ are closed subgroups, we say that $G'\subset G$ is fine if, denoting by $T'$ some maximal torus in $G'$ and $T$ some maximal torus of $G$ containing $T'$, we have $T'$ fine in $T$ (this does not depend on $T,T'$). 

The following lemma is immediate:

\begin{lem}
Given $G''\subset G'\subset G$, the subgroup $G''$ is fine in $G$ if and only if $G''$ is fine in $G'$ and  $G'$ is fine in $G$.\qed
\end{lem}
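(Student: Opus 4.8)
The plan is to verify directly that finiteness is transitive for nested subtori, after reducing to the torus level using the definition of fineness for closed subgroups. Let me set up the data. Fix maximal tori $T''\subset T'\subset T$ where $T''$ is maximal in $G''$, $T'$ is maximal in $G'$ containing $T''$, and $T$ is maximal in $G$ containing $T'$; Lemma \ref{pairsemb} (applied twice, or in a nested form) guarantees such a compatible choice exists and that the resulting condition is independent of it. Then ``$G''$ fine in $G$'' unravels to ``$T''$ fine in $T$'', and similarly for the other two, so everything comes down to proving: for subtori $T''\subset T'\subset T$ of $\GL(V)$, one has $V_{[+]}^{T''}=V_{[+]}^{T}$ if and only if $V_{[+]}^{T''}=V_{[+]}^{T'}$ and $V_{[+]}^{T'}=V_{[+]}^{T}$.

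The key observation is the monotonicity already recorded in the excerpt: for a subtorus $S\subset T$ one has $V_{[+]}^S\subset V_{[+]}^T$. Applying this along the chain gives the containments
\[
V_{[+]}^{T''}\subset V_{[+]}^{T'}\subset V_{[+]}^{T}.
\]
Now the equivalence is a triviality about a chain of nested subspaces $A\subset B\subset C$: $A=C$ holds if and only if $A=B$ and $B=C$. The forward direction is immediate (if $A=C$, the sandwiched $B$ must equal both); the reverse direction is transitivity of equality. This is literally all that is needed, which is why the excerpt calls the lemma immediate.

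I would also spell out briefly why the displayed monotonicity holds, for completeness: if $v\in V_\alpha^S$ for a positive $S$-weight $\alpha$, pick a non-negative homomorphism $f\colon\sfX(S)\to\R$ with $f(\alpha)>0$; decompose $v=\sum_\beta v_\beta$ into $T$-weight components. Each $\beta$ restricting to $S$ must, on the $S$-weights occurring, have $f(\beta|_S)\ge 0$, and the weighted sum recovers $f(\alpha)>0$, so at least one $\beta$ with $v_\beta\ne 0$ has $f(\beta|_S)>0$; pulling $f$ back through the restriction $\sfX(T)\to\sfX(S)$ exhibits that $\beta$ as $T$-positive, hence $v_\beta\in V_{[+]}^T$. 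Since $V_{[+]}^T$ and $V_{[0]}^T$ are $T$-stable and sum to $V$, and $v\in V_{[+]}^S$ was arbitrary in a spanning set, $V_{[+]}^S\subset V_{[+]}^T$. (Alternatively one just cites the remark preceding the definition.)

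There is essentially no obstacle here; the only thing requiring a moment's care is the bookkeeping of the compatible maximal tori $T''\subset T'\subset T$ and the fact that fineness of $G''$ in $G$ can be tested on this particular chain — but this is exactly the well-definedness already asserted when the notion ``$G'$ fine in $G$'' was introduced, so it can be invoked rather than reproved. If one wants to be scrupulous, iterate Lemma \ref{pairsemb}: first get a maximal torus $T$ of $G$ containing a chosen maximal torus $T'$ of $G'$, then inside $G'$ choose a maximal torus $T''$ of $G''$ contained in $T'$; all such chains are $G^\circ$-conjugate, so the three pairwise fineness conditions do not depend on the choice.
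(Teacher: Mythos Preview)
Your proposal is correct and matches the paper's approach: the paper gives no proof at all (the statement ends with \qed), treating the lemma as immediate from the monotonicity $V_{[+]}^{T''}\subset V_{[+]}^{T'}\subset V_{[+]}^{T}$ stated just before the definition of fineness, together with the compatible choice of maximal tori. Your unpacking of this is exactly right; the one minor blemish is in your parenthetical re-derivation of monotonicity, where the ``weighted sum'' phrasing is muddled --- in fact every $T$-weight $\beta$ occurring in $v\in V_\alpha^S$ satisfies $\beta|_S=\alpha$, so $f(\beta|_S)=f(\alpha)>0$ for \emph{all} of them, not just one --- but since you correctly note this can simply be cited from the paper, it does not affect the argument.
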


\begin{defn}
Let $T\subset\GL(V)$ be a torus. A {\em $T$-fine cocharacter} is a cocharacter $\GL_1\to T$ whose associated grading is an $\mathbf{N}$-grading and satisfies $V_0=V_{[0]}^T$. Every grading obtained this way is called a {\em fine $\mathbf{N}$-grading} for $T$. If $G\subset\GL(V)$ is an arbitrary closed subgroup, a {\em $G$-fine cocharacter} is a fine cocharacter of some maximal torus in $G$.
\end{defn}

It follows from the definition that the image of every $T$-fine cocharacter is fine in $T$. There always exist $T$-fine cocharacters; more precisely:

\begin{lem}\label{excom}
Every torus $T$ admits a $T$-fine cocharacter; 
\end{lem}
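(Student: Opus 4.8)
\textbf{Proof plan for Lemma \ref{excom}.}

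The plan is to produce the desired cocharacter by a genericity argument on the lattice of cocharacters of $T$. Write $\sfX(T)\simeq\Z^r$ for the character group and $\sfX_*(T)\simeq\Z^r$ for the cocharacter group, with the canonical perfect pairing $\langle\cdot,\cdot\rangle:\sfX(T)\times\sfX_*(T)\to\Z$; a cocharacter $\lambda$ induces the grading $V=\bigoplus_{m\in\Z}V_m$ with $V_m=\bigoplus_{\langle\alpha,\lambda\rangle=m}V_\alpha$, so that the $0$-component of $\lambda$ is $\bigoplus_{\langle\alpha,\lambda\rangle=0}V_\alpha$. Thus I want a cocharacter $\lambda$ such that: (a) $\langle\alpha,\lambda\rangle\ge 0$ for every weight $\alpha$ of $V$, and (b) $\langle\alpha,\lambda\rangle=0$ for a weight $\alpha$ if and only if $\alpha$ is non-positive (equivalently, $\langle\alpha,\lambda\rangle>0$ for every positive weight). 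Condition (a) together with the ``only if'' part of (b) says $V_0\subset V_{[0]}^T$; the ``if'' direction says $V_{[0]}^T\subset V_0$, but this last inclusion is automatic once (a) holds, since any non-positive weight $\alpha$ satisfies $f(\alpha)\le 0$ for all non-negative $f$, in particular for $f=\langle\cdot,\lambda\rangle$, forcing $\langle\alpha,\lambda\rangle=0$. So it suffices to find $\lambda\in\sfX_*(T)$ that is non-negative on all weights and strictly positive on all positive weights.

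The key step is to build $\lambda$ as a suitable integral combination of cocharacters obtained from the definition of positivity. Let $P=\{\alpha_1,\dots,\alpha_k\}$ be the set of positive weights of $V$. By definition, for each $\alpha_j\in P$ there is a non-negative homomorphism $\sfX(T)\to\Z$ — i.e.\ a cocharacter $\mu_j\in\sfX_*(T)$ — with $\langle\alpha,\mu_j\rangle\ge 0$ for all weights $\alpha$ and $\langle\alpha_j,\mu_j\rangle>0$. Set $\lambda=\mu_1+\dots+\mu_k\in\sfX_*(T)$ (if $P=\varnothing$ the grading is trivial in non-negative weights already; take $\lambda=0$, or any cocharacter, and $V_0=V=V_{[0]}^T$, so the statement is trivial — I will dispatch this case first). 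Then for every weight $\alpha$ we have $\langle\alpha,\lambda\rangle=\sum_j\langle\alpha,\mu_j\rangle\ge 0$, so $\lambda$ defines an $\N$-grading; and for each positive weight $\alpha_j$, the $j$-th summand $\langle\alpha_j,\mu_j\rangle$ is $>0$ while all other summands are $\ge 0$, whence $\langle\alpha_j,\lambda\rangle>0$. By the observation in the previous paragraph, the $0$-component of the grading defined by $\lambda$ is exactly $V_{[0]}^T$, so $\lambda$ is a $T$-fine cocharacter and we are done.

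I do not expect a serious obstacle here; the argument is essentially a finiteness-plus-averaging trick. The one point to handle with a little care is the translation between ``non-negative homomorphism $\sfX(T)\to\R$'' in the definition of positive weight and an honest integral cocharacter: one must note that since there are only finitely many weights, the conditions ``non-negative on all weights'' and ``positive on $\alpha_j$'' cut out a rational polyhedral cone with nonempty interior in $\sfX_*(T)\otimes\R$, hence contain an integral point, which is the required $\mu_j$ (this is exactly the parenthetical remark ``such a homomorphism can then be chosen to be valued in $\Z$'' already recorded after the definition of positive weight, so it may simply be invoked). Everything else is bookkeeping with the pairing.
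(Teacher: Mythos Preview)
Your proof is correct and follows essentially the same approach as the paper's: pick, for each positive weight $\alpha$, a non-negative homomorphism $f_\alpha:\sfX(T)\to\Z$ with $f_\alpha(\alpha)>0$, and sum them to obtain a cocharacter that is non-negative on all weights and strictly positive on the positive ones. Your write-up is simply more explicit about the character--cocharacter pairing, the automatic inclusion $V_{[0]}^T\subset V_0$, and the degenerate case $P=\varnothing$, but the core idea is identical.
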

\begin{proof}
For each positive weight $\alpha$, choose a non-negative homomorphism $f_\alpha:\sfX(T)\to\mathbf{Z}$ such that $f_\alpha(\alpha)>0$. Then $f=\sum_\alpha f_\alpha$ is a non-negative homomorphism that is positive on all positive weights. This homomorphism is induced by some cocharacter, and it satisfies the required properties.
\end{proof}

\begin{thm}\label{pofi}
Let $G\subset\GL(V)$ be a $K$-closed subgroup. 
Let $R$ be a $K$-defined reductive Levi factor; let $D$ be the maximal $K$-split torus in the center $Z(R)$ of $R$. Then $D\subset G$ is fine.
\end{thm}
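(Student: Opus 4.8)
The plan is to reduce the statement to a concrete claim about weights of $D$ versus weights of a maximal torus of $G$ containing it, and then to exploit the structure $G = U \rtimes R$ together with the fact that $U$ acts on $V$ by unipotent automorphisms. First I would choose a maximal $K$-split torus $S$ in $R$ containing $D$, and a maximal torus $T$ of $G$ containing $S$; since $R$ is reductive and $D$ lies in $Z(R)$, we have $D \subset S \subset T$. By definition, to prove $D$ is fine in $G$ it suffices to prove $D$ is fine in $T$, i.e.\ that $V_{[+]}^D = V_{[+]}^T$; equivalently $V_{[0]}^D = V_{[0]}^T$. The inclusion $V_{[0]}^D \supset V_{[0]}^T$ (equivalently $V_{[+]}^D \subset V_{[+]}^T$) is automatic since $D \subset T$, so the content is the reverse: every $T$-positive weight restricts to a $D$-positive weight, i.e.\ $V_{[+]}^T \subset V_{[+]}^D$.

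The key step is to show that a nonzero vector fixed by $D$ but not "contracted" by $T$ leads to a contradiction — more precisely, I would argue that $V_{[0]}^D$ is $T$-stable and that $T$ acts on it with only non-positive weights, so $V_{[0]}^D \subset V_{[0]}^T$. For $T$-stability: $D$ is central in $R$, hence $R$ normalizes $D$ and thus preserves $V_{[0]}^D$; and $U$, being unipotent, preserves every $D$-isotypic decomposition up to "lower" pieces — here I would use that $U$ is normalized by $D$ (as $U$ is the unipotent radical, hence normal in $G$), so $D$ acts on $U$, and a standard argument shows $U \cdot V_\alpha^D \subset \bigoplus_{\beta} V_\beta^D$ where $\beta$ ranges over weights of the form $\alpha + (\text{weights of } D \text{ on } \mathrm{Lie}(U))$; since all eigenvalues of $D$ on $\mathrm{Lie}(U)$ relevant here combine with $\alpha$ in a controlled way, one checks $U$ preserves $V_{[0]}^D$. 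Hence $V_{[0]}^D$ is $G$-stable, in particular $T$-stable. Now restrict $T$ to $W := V_{[0]}^D$: I claim $T$ acts on $W$ with only non-positive weights. Indeed, $D$ acts trivially on $W$ by construction, so the $T$-weights on $W$ all kill $D$, i.e.\ factor through $T/D$. But $T/D$ — or rather the image of $T$ in $\GL(W)$ — should have $K$-split part contained in the image of $S/D$, and the maximality of $D$ inside $Z(R)$ as a $K$-split torus forces the relevant cocharacter directions to vanish; more directly, any $T$-positive weight $\chi$ on $W$ would yield, by averaging as in Lemma~\ref{excom}, a cocharacter $\GL_1 \to T$ contracting a nonzero subspace of $W$, whose composition with projection to $\GL(W)$ is nontrivial and $K$-split, contradicting that $D$ was chosen maximal $K$-split in $Z(R)$ acting on $W$ — here one uses that $W$ is $G$-stable so such a cocharacter can be pushed into $Z(R)$.

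Assembling: $V_{[0]}^T \subset W = V_{[0]}^D \subset V_{[0]}^T$, the last inclusion because $T$ has only non-positive weights on $W$. Hence $V_{[0]}^D = V_{[0]}^T$, which is exactly fineness of $D$ in $T$, hence in $G$.

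The main obstacle I expect is the step identifying the $K$-split directions correctly — i.e.\ rigorously justifying that a $T$-positive weight on $W = V_{[0]}^D$ produces a $K$-split cocharacter that can be conjugated into $Z(R)$, contradicting the maximality of $D$. This requires care because $T$ need not be $K$-split and because one must control how a cocharacter into $T$ interacts with the Levi decomposition $G = U \rtimes R$; the clean way is probably to first replace $T$ by a maximal $K$-split torus $S' \supset D$ of $G$ lying in $R$ (using that maximal $K$-split tori of $G$ are $G^\circ(K)$-conjugate and that reductive subgroups lie in $K$-Levi factors), reducing to the split case where weights and cocharacters are literally dual lattices, and then the maximality of $D$ in $Z(R)$ directly forbids extra positive weights on the $D$-fixed space. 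The unipotent-stability bookkeeping in the first step is routine but must be written carefully since $U$ can mix weight spaces.
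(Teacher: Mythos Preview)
Your framework---show that $W:=V_{[0]}^D$ is $T$-stable and that $T$ has only non-positive weights on it---is equivalent to what must be proved, but both steps have gaps. The claim that $V_{[0]}^D$ is $G$-stable is false: with $G\subset\GL_3$ generated by $D=\GL_1$ acting with weights $(0,0,1)$ and by a one-parameter unipotent $U$ sending $e_2\mapsto e_2+ue_3$, the space $V_{[0]}^D=\langle e_1,e_2\rangle$ is not $U$-stable; the $D$-weights on the Lie algebra of $U$ have no reason to be non-positive, so your ``one checks $U$ preserves $V_{[0]}^D$'' fails. This is easily repaired by taking $T\subset R$ (maximal tori of $G$ are conjugate into any Levi factor), which commutes with $D\subset Z(R)$ and hence preserves each $D$-weight space---no $U$-argument is needed.

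The essential gap is the second step. You argue that a $T$-positive weight on $W$ yields a $K$-split cocharacter that can be ``pushed into $Z(R)$'', contradicting maximality of $D$. But nothing forces a cocharacter of $T$, or of a maximal $K$-split torus $S'\supset D$, to land in $Z(R)$: the containment $D\subset S'$ is typically strict, and maximality of $D$ \emph{inside $Z(R)$} says nothing about cocharacters lying outside $Z(R)$. Passing to the split case, as you suggest, does not help; the difficulty is unchanged. This step is the entire content of the theorem, and your sketch supplies no mechanism for it. The paper's argument is of a different nature: for $G$ connected reductive, write $G=MD$ with $M$ the almost-product of the derived subgroup and the $K$-anisotropic part of $Z(G)^\circ$; since $M$ has no nontrivial $K$-character, it acts with determinant $1$ on each $K$-irreducible summand $V_i$, so for a maximal torus $L\subset M$ the $L$-weights on $V_i$ \emph{sum to zero}, while $D$ acts on $V_i$ by a single character $\alpha_i$. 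Given $(f,g)$ on $\sfX(L)\times\sfX(D)$ non-negative on all weights of $V$ and positive on some $(\beta_0,\alpha_j)$, summing the inequalities $f(\beta)+g(\alpha_i)\ge 0$ over the $L$-weights $\beta$ on $V_i$ kills the $f$-term and yields $g(\alpha_i)\ge 0$ for every $i$, with $g(\alpha_j)>0$. Thus $g$ alone already witnesses $D$-positivity of $\alpha_j$. This summing trick, powered by the determinant-one constraint on $M$, is the missing idea.
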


\begin{proof}
First assume that $G$ is reductive and connected. Write $G=ST$, where $S$ is the semisimple part and $T=Z(G)^\circ$ the maximal normal torus in $G$. Write $T=AD$, where $A$ is the maximal $K$-anisotropic torus and $D$ the maximal $K$-split torus. Write $M=SA$, so that $G=MD$. Decompose $V$ as a direct sum $\bigoplus_iV_i$ of $K$-irreducible components with respect to the action of $G$. On each $V_i$, the action of $M$ has determinant 1, because $M$ admits no nontrivial $K$-defined multiplicative character, and the action of $D$ is scalar, given by a weight $\alpha_i\in\sfX(D)$. Fix a maximal torus $L$ in $M$ (possibly not $K$-defined). Write the weights (counted with multiplicity) of $L\times D$ in $V_i$ as $(\beta,\alpha_i)\in\sfX(L)\times\sfX(D)$, where $\beta$ ranges over a set $P_i$ of weights of $L$. Since the action of $L$ has determinant 1, we have $\sum_{\beta\in P_i}\beta=0$. Let $(\beta_0,\alpha_j)$ be a positive weight. This means that there exists a homomorphism $(f,g)$ on $\sfX(L\times D)$, valued in $\mathbf{Z}$, mapping $(\beta_0,\alpha_j)$ to a positive number and all weights $(\beta,\alpha_i)$ to non-negative numbers. The latter reads: for all $i$ and every $\beta\in P_i$, we have $f(\beta)+g(\alpha_i)\ge 0$, and $f(\beta_0)+g(\alpha_j)>0$. Summing over $\beta$, we obtain $\#(P_i)g(\alpha_i)\ge 0$, and hence $g(\alpha_i)\ge 0$.  Also for $i=j$ this provides $g(\alpha_j)>0$. This shows that $g$ is non-negative on weights and positive on $\alpha_j$. This proves that whenever $(\beta_0,\alpha_j)$ is a positive weight of $LD$, then $\alpha_j$ is a positive weight of $D$. This proves that $D\subset LD$ is fine; since $LD$ is a maximal torus in $G$, it is fine in $G$, and hence it follows that $D$ is fine in $G$. 

Now assume that $G$ is reductive, but not necessarily connected. Define $S,T,A,M$ from $G^\circ$ as above, and let $D'$ be the maximal $K$-split torus in $Z(G^\circ)$. Then $G^\circ=MD'$, and $D\subset D'$. By the connected case, $D'$ is fine in $G^\circ$, and hence in $G$. So we have to show that $D$ is fine in $D'$. By Lemma \ref{excom} (or its proof) there exists a homomorphism $f:\sfX(D')\to\mathbf{Z}$ mapping weights to non-negative integers and mapping positive weights to positive integers. The finite group $G/G^\circ$ acts on $D'$ by conjugation, inducing an action on $\sfX(D')$. This actions permutes the weights and permutes the positive weights. Therefore if we average $f$ by defining $\bar{f}=\sum_{s\in G/G^\circ}s\cdot f$, then $\bar{f}$ is a homomorphism $\sfX(D)\to\mathbf{Z}$ that is non-negative on weights, positive on positive weights, and $G$-invariant. Therefore the corresponding cocharacter is fine in $D'$ and has its image contained in the center of $G$, and hence in $D$. This proves that $D$ is fine in $D'$, and hence in $G$.
 
Finally, if $G$ is arbitrary, then by the reductive case $D$ is fine in $R$, and it is always true (and obvious) that $R$ is fine in $G$. Hence $D$ is fine in $G$.
\end{proof}

\begin{cor}\label{finespli}
If $G$ is $K$-defined, every maximal $K$-split torus is fine in $G$.
\end{cor}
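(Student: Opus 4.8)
The plan is to deduce Corollary \ref{finespli} from Theorem \ref{pofi} by relating an arbitrary maximal $K$-split torus to the specific torus $D$ produced in the theorem. First I would recall the structure facts listed at the start of the section: $G$ has a $K$-defined Levi decomposition $G = U \rtimes R$, and inside $R$ we have the maximal $K$-split central torus $D \subset Z(R)$. Theorem \ref{pofi} tells us that $D$ is fine in $G$. So by the transitivity lemma for fineness (the unnamed \lem\ immediately following the definition of fine subgroups), it suffices to show that any given maximal $K$-split torus $S$ of $G$ contains a conjugate of $D$ and is itself contained in a maximal torus of $G$ in a way compatible with fineness — more precisely, that $S$ is fine in $G$ given that $D$ is.

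The key step is a conjugacy argument. By \cite[8.2]{BT} all maximal $K$-split tori of $G$ are conjugate under $G^\circ(K)$, so it is enough to treat one maximal $K$-split torus, and the natural choice is a maximal $K$-split torus $S$ of $G$ that contains $D$ and lies in $R$: indeed $D$ is $K$-split, hence contained in some maximal $K$-split torus, and since fineness is conjugation-invariant we may as well assume $S \supset D$. Now I want to run the argument of Theorem \ref{pofi} one notch further: writing $R^\circ = MD'$ with $D'$ the maximal $K$-split central torus of $R^\circ$ (so $D \subset D'$), a maximal $K$-split torus $S$ of $R$ decomposes, up to the finite group $R/R^\circ$, as $S \cap M^\circ$ times $D'$, where $S \cap M$ is a maximal $K$-split torus of the semisimple-by-anisotropic group $M = SA$. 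The point is that in a maximal $K$-split torus the positive weights are detected by the same averaging-of-homomorphisms trick: a $K$-split torus sitting between $D'$ and a maximal torus $L D'$ is automatically fine, because on each $K$-irreducible summand the weights of the semisimple/anisotropic part sum to zero, so any homomorphism non-negative on all weights is forced to be non-negative on the central $D'$-weights, exactly as in the proof of \ref{pofi}.

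Concretely, I would argue: $S$ is fine in a maximal torus $T = LS$ of $G$ (with $L$ a maximal torus of $M$, possibly not $K$-defined) by repeating verbatim the weight-summation computation of Theorem \ref{pofi}, now with $L \times S$ in place of $L \times D$ and using that the $M$-action on each $K$-irreducible component has trivial determinant while the $S$-action need not be scalar but still every component of $S$ acts by a genuine character; the vanishing $\sum_{\beta \in P_i} \beta = 0$ over the $L$-weights still forces the homomorphism to be non-negative on the $S$-part. Hence $S$ is fine in $T$, and since $T$ is a maximal torus of $G$, $S$ is fine in $G$. Transporting back by $G^\circ(K)$-conjugacy, every maximal $K$-split torus is fine in $G$.

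The main obstacle I anticipate is the bookkeeping in the last weight computation: when $S$ is bigger than $D$, the $S$-action on a $K$-irreducible summand $V_i$ is no longer scalar, so one cannot speak of a single weight $\alpha_i$, and one must instead index weights of $L \times S$ carefully and check that the determinant-one condition for $M$ still yields, summand by summand, the needed inequality $g(\cdot) \ge 0$ on the $S$-coordinates. A cleaner route, which I would actually prefer to write up, sidesteps this: note $D$ is fine in $G$ (Theorem \ref{pofi}) and $D' = Z(R^\circ)$'s maximal $K$-split torus is fine in $R^\circ$ (connected case of \ref{pofi}), so by the averaging argument in the non-connected part of the proof of \ref{pofi} any $K$-split torus $S$ with $D' \subseteq S \subseteq L D'$, where the averaging is over $N_G(S)/C_G(S)$ or simply over the relevant Weyl-type group acting on weights, is fine in $LD' = T$; combined with $R$ fine in $G$ and the transitivity lemma, this gives $S$ fine in $G$, and then $G^\circ(K)$-conjugacy finishes it.
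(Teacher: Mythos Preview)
You are working far too hard. The paper's proof is two lines, and the key step is one you set up but then fail to exploit: once you have a maximal $K$-split torus $S$ containing $D$, apply the transitivity lemma that you yourself cite, but in the direction $D \subset S \subset G$. That lemma says $D$ is fine in $G$ if and only if $D$ is fine in $S$ \emph{and} $S$ is fine in $G$. Since Theorem~\ref{pofi} gives $D$ fine in $G$, you immediately read off $S$ fine in $G$. Equivalently, in terms of the contractive decomposition: for any maximal torus $T \supset S \supset D$ one has the monotonicity $V_{[+]}^{D} \subset V_{[+]}^{S} \subset V_{[+]}^{T}$, and fineness of $D$ in $G$ says the two ends coincide, so the middle term equals both. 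Then $G^\circ(K)$-conjugacy of maximal $K$-split tori finishes the argument for an arbitrary one.

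Your proposed routes, by contrast, attempt to rerun the weight-summation argument of Theorem~\ref{pofi} with $S$ in place of $D$, and you correctly identify the obstacle: $S$ no longer acts by a single character on each $K$-irreducible $V_i$, so the inequality $g(\alpha_i) \ge 0$ has no obvious analogue. Your ``cleaner route'' via averaging is too vague as written (it is unclear which group you are averaging over, and why the resulting homomorphism lands in the cocharacter lattice of $S$ rather than of $D$ or $D'$). None of this is needed: the whole point of Theorem~\ref{pofi} is that it already does the hard work at the smallest torus $D$, and fineness propagates \emph{upward} to any intermediate torus for free.
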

\begin{proof}
Let $R$ and $D$ be given as in Theorem \ref{pofi}. Then $D$ is fine in $G$ by the theorem. Let $D'$ be a maximal $K$-split torus of $G$ containing $D$. Then $D'\subset G$ is fine as well.
\end{proof}

\begin{cor}
If $G\subset\GL(V)$ is $K$-defined and $T$ is a maximal $K$-split torus, then the (un)contractive dimension of $(V,G)$ equals the (un)contractive dimension of $(V,T)$.
\end{cor}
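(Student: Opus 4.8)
The statement to prove is the final corollary: if $G\subset\GL(V)$ is $K$-defined and $T$ is a maximal $K$-split torus, then the (un)contracted dimension of $(V,G)$ equals the (un)contracted dimension of $(V,T)$. The plan is to unwind the definitions and reduce everything to Corollary \ref{finespli}. Recall that the (un)contracted dimension of $(V,G)$ is, by Definition \ref{codec}, the dimension of $V_{[+]}^{T_{\max}}$ (resp.\ $V_{[0]}^{T_{\max}}$) for $T_{\max}$ a maximal torus of $G$; this is well-defined because all maximal tori of $G$ are conjugate (over the algebraic closure), so the contractive decompositions are conjugate and in particular have the same dimensions. So what must be shown is that, for the maximal $K$-split torus $T$, one has $\dim V_{[+]}^{T}=\dim V_{[+]}^{T_{\max}}$ and $\dim V_{[0]}^{T}=\dim V_{[0]}^{T_{\max}}$ for some (equivalently any) maximal torus $T_{\max}$ of $G$.

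First I would invoke the first bullet point on linear algebraic $K$-groups from \S\ref{graza}: every $K$-defined torus of $G$ is contained in a $K$-defined maximal torus. Apply this to $T$, obtaining a maximal torus $T_{\max}$ of $G$ with $T\subset T_{\max}$. By Corollary \ref{finespli}, $T$ is fine in $G$, which by definition means (choosing $T$ as a maximal torus of $G'=T$ and $T_{\max}$ as a maximal torus of $G$ containing it — this is consistent with how ``fine'' is defined for pairs of subgroups, using Lemma \ref{pairsemb} to see it is independent of the choice) that $T$ is fine in $T_{\max}$. By the definition of fineness for tori, $T$ fine in $T_{\max}$ is equivalent to $V_{[+]}^{T}=V_{[+]}^{T_{\max}}$, and equivalently $V_{[0]}^{T}=V_{[0]}^{T_{\max}}$. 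Taking dimensions gives the desired equalities, since $\dim V_{[+]}^{T_{\max}}$ and $\dim V_{[0]}^{T_{\max}}$ are by definition the contracted and uncontracted dimensions of $(V,G)$.

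The only point requiring a word of care is the compatibility of the notion ``$T$ fine in $G$'' (defined for closed subgroups $G'\subset G$ via some choice of maximal tori) with the notion ``$T$ fine in $T_{\max}$'' (defined for an inclusion of tori): this is exactly guaranteed by the parenthetical remark in the definition of fineness for subgroups, that the property does not depend on the choice of $T,T_{\max}$, which in turn rests on Lemma \ref{pairsemb}. Here the situation is even simpler because $T$ is itself a torus, so it is its own maximal torus, and $T_{\max}$ is a maximal torus of $G$ containing it; thus the pair $(T,T_{\max})$ is directly admissible and there is nothing to choose on the $G'$ side. I expect no real obstacle here — the corollary is a direct packaging of Corollary \ref{finespli} together with the definition of contracted/uncontracted dimension; the main (minor) thing to get right is simply to cite the correct equivalent formulation of fineness and to note the well-definedness of the contracted dimension for $G$.
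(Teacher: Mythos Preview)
Your proposal is correct and is exactly the unpacking the paper intends: the corollary is stated without proof there, treated as an immediate consequence of Corollary~\ref{finespli} together with Definition~\ref{codec}. Your argument---embed $T$ in a $K$-defined maximal torus $T_{\max}$, invoke Corollary~\ref{finespli} to get $T$ fine in $G$, hence $V_{[+]}^{T}=V_{[+]}^{T_{\max}}$, and take dimensions---is precisely this.
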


Note that the latter corollary is trivial when $T$ is a maximal torus.

A restatement of Corollary \ref{finespli} is the following:

\begin{cor}\label{tfi}
If $T\subset\GL(V)$ is a $K$-defined torus, then there is a $K$-defined $T$-fine cocharacter.
\end{cor}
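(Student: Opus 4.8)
The plan is to deduce this directly from Corollary \ref{finespli} by unravelling definitions. Suppose $T \subset \GL(V)$ is a $K$-defined torus; I want to produce a $K$-defined $T$-fine cocharacter, i.e., a cocharacter $\GL_1 \to T$ defined over $K$ whose associated grading is an $\N$-grading with $V_0 = V_{[0]}^T$. The natural move is to set $G = T$ in Corollary \ref{finespli}: since $T$ is its own maximal torus, a maximal $K$-split torus $D$ of $T$ is fine in $T$ by that corollary. So the remaining task is to turn the statement ``$D$ is fine in $T$'' into the existence of a $K$-defined $T$-fine cocharacter.

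First I would recall that $D$ being fine in $T$ means precisely $V_{[+]}^D = V_{[+]}^T$, equivalently $V_{[0]}^D = V_{[0]}^T$. Next, since $D$ is $K$-split, all its characters are $K$-defined, and by the argument of Lemma \ref{excom} applied to $D$ there is a homomorphism $f : \sfX(D) \to \Z$ that is non-negative on all $D$-weights and strictly positive on all positive $D$-weights; such $f$ is induced by a cocharacter $\GL_1 \to D$, and since $D$ is $K$-split this cocharacter is automatically defined over $K$. Composing with the inclusion $D \hookrightarrow T$ gives a $K$-defined cocharacter $\lambda : \GL_1 \to T$. I then need to check that the $T$-grading $V = \bigoplus_n V_n$ induced by $\lambda$ (where $V_n$ is the sum of the $V_\alpha^D$ over $D$-weights $\alpha$ with $f(\alpha) = n$) is an $\N$-grading with $V_0 = V_{[0]}^T$: it is an $\N$-grading because $f \ge 0$ on all $D$-weights; and $V_0$ is the sum of $V_\alpha^D$ over $D$-weights $\alpha$ with $f(\alpha) = 0$, which is exactly the sum over non-positive $D$-weights (by the choice of $f$), i.e. $V_0 = V_{[0]}^D = V_{[0]}^T$, the last equality being fineness of $D$ in $T$. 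Hence $\lambda$ is a $K$-defined $T$-fine cocharacter.

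I do not expect any serious obstacle here; the corollary is essentially a bookkeeping consequence of Corollary \ref{finespli} together with the fact that over a $K$-split torus every cocharacter is $K$-defined and every $\Z$-valued homomorphism of the character group is realized by a cocharacter. The only mild subtlety to be careful about is that $f$ must be chosen positive on \emph{every} positive $D$-weight, not merely nonzero somewhere — but this is exactly what the proof of Lemma \ref{excom} delivers (take $f = \sum_\alpha f_\alpha$ over positive weights $\alpha$).
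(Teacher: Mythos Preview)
Your proposal is correct and follows essentially the same approach as the paper: take the maximal $K$-split subtorus $T'$ (your $D$) of $T$, apply Lemma \ref{excom} to obtain a $T'$-fine cocharacter, observe it is $K$-defined because $T'$ is $K$-split, and then use Corollary \ref{finespli} (with $G=T$) to conclude it is also $T$-fine. Your write-up is more explicit than the paper's about why the cocharacter is $K$-defined and about verifying $V_0 = V_{[0]}^T$, but the argument is the same.
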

\begin{proof}
Let $T'$ be the maximal $K$-split torus in $T$. By Lemma \ref{excom}, $T'$ admits a $K$-defined fine cocharacter $\sigma$. By Corollary \ref{finespli}, $T'$ is fine in $T$. Hence $\sigma$ is also fine as a cocharacter of $T$.
\end{proof}

Another application of Theorem \ref{pofi} concerns finite subgroups.

\begin{cor}\label{splifi}
Let $S$ be a reductive $K$-subgroup of $G$ (possibly not connected, e.g., finite). Then every maximal $K$-split torus in the centralizer of $S$ is fine in $G$. Equivalently, there exists a $K$-defined fine cocharacter whose image centralizes $S$.
\end{cor}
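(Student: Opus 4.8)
The plan is to deduce Corollary \ref{splifi} from Theorem \ref{pofi} by applying the theorem not to $G$ itself but to a suitable subgroup built from the centralizer of $S$. First I would recall the structural input from the preliminaries on linear algebraic $K$-groups: since $S$ is a reductive $K$-subgroup of $G$, it is contained in a $K$-defined Levi factor $R$ of $G$. Working inside $R$ (and noting that $R$ is fine in $G$, as observed at the end of the proof of Theorem \ref{pofi}), it suffices to prove the statement with $G$ replaced by $R$; so I may assume $G$ itself is reductive and $S\subset G$ is a reductive $K$-subgroup.

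Next I would set $H=Z_G(S)$, the centralizer of $S$ in $G$, which is a $K$-defined subgroup of $G$. The key point is that a maximal torus of $H$, together with $S$, generates a reductive subgroup whose behaviour we can control; more precisely, let $T'$ be a maximal $K$-split torus of $H$. By Corollary \ref{finespli} applied to the $K$-defined group $H$, the torus $T'$ is fine in $H$, so it suffices to show $H$ is fine in $G$. Here is where I would use that $S$ is reductive: the subgroup $\langle S,H^\circ\rangle$ is reductive (it is generated by the reductive $S$ and the reductive $H^\circ$, which commute), and one checks that $H^\circ$ is fine in it, while any maximal torus of $\langle S, H^\circ\rangle$ is a maximal torus of a $K$-defined reductive subgroup of $G$ and hence, by an application of Theorem \ref{pofi} / Corollary \ref{finespli} to that reductive subgroup, is fine in $G$. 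Chaining the finiteness relations (Lemma preceding Corollary \ref{finespli}, i.e.\ transitivity of being fine) then gives that $T'$, and hence any maximal $K$-split torus in $Z_G(S)$, is fine in $G$.

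For the ``equivalently'' clause I would argue exactly as in the proof of Corollary \ref{tfi}: once we know the maximal $K$-split torus $T'$ of $Z_G(S)$ is fine in $G$, Lemma \ref{excom} provides a $K$-defined $T'$-fine cocharacter $\sigma:\GL_1\to T'$, and since $T'\subset Z_G(S)$ its image centralizes $S$; fineness of $\sigma$ as a cocharacter of $T'$ together with fineness of $T'$ in $G$ makes $\sigma$ a $K$-defined $G$-fine cocharacter whose image centralizes $S$. Conversely such a cocharacter has image a $K$-split torus in $Z_G(S)$, contained in (hence with the same contractive decomposition as) a maximal one.

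The main obstacle I anticipate is the reductivity bookkeeping in the second paragraph: one must be careful that $S$ and $H^\circ=Z_G(S)^\circ$ genuinely generate a \emph{reductive} $K$-subgroup (commuting reductive subgroups do, but one should note $S$ need not be connected, so ``reductive'' here must be taken in the sense of trivial unipotent radical rather than ``reductive connected''), and that a maximal torus of this subgroup can be enlarged to a maximal torus of a $K$-defined reductive — e.g.\ Levi — subgroup of $G$ in order to invoke Theorem \ref{pofi} legitimately. Everything else is routine transitivity of the ``fine'' relation and the cocharacter translation already carried out in Corollaries \ref{finespli}--\ref{tfi}.
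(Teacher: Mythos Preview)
Your argument has a genuine gap at the crucial step. After reducing to $G$ reductive and setting $H=Z_G(S)$, you want to show $H$ is fine in $G$; to do this you form $M=\langle S, H^\circ\rangle$ and claim that ``any maximal torus of $M$ is a maximal torus of a $K$-defined reductive subgroup of $G$ and hence, by Theorem~\ref{pofi} / Corollary~\ref{finespli}, is fine in $G$.'' But neither Theorem~\ref{pofi} nor Corollary~\ref{finespli} says that a maximal torus of a reductive \emph{subgroup} is fine in the ambient $G$: applied to $M$, they only yield fineness \emph{in $M$}. And the naive statement is false in general: for $G=\GL_2$ acting on $K^2$, the diagonal torus of $\SL_2$ is a maximal torus of a reductive subgroup, yet its weights on $K^2$ are $+1$ and $-1$, so $V_{[+]}=\{0\}$ for this torus while $V_{[+]}=V$ for the full diagonal torus of $\GL_2$; hence it is not fine in $\GL_2$. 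Your chain of fineness relations therefore breaks at this link, and the detour through $M$ does not help.

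The paper's proof bypasses this entirely with a much shorter argument. One picks a $K$-defined Levi factor $R$ of $G$ \emph{containing} $S$ (possible since $S$ is reductive). Theorem~\ref{pofi} then says directly that the maximal $K$-split torus $D$ in $Z(R)$ is fine in $G$. Since $S\subset R$, we have $Z(R)\subset Z_G(S)$, so $D$ already lies inside the centralizer $C=Z_G(S)$. Now enlarge $D$ to a maximal $K$-split torus $D'$ of $C$; since $D\subset D'$ and $D$ is fine in $G$, so is $D'$ (any torus containing a fine one is fine, because $V_{[0]}$ can only shrink as the torus grows). That is the entire argument: the key observation you are missing is that Theorem~\ref{pofi} already hands you a fine torus \emph{inside} $Z_G(S)$, namely the split center of a Levi factor through $S$, so there is no need to prove $Z_G(S)$ fine in $G$ by an independent route.
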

\begin{proof}
Let $R$ be a $K$-defined Levi factor containing $S$. By Theorem \ref{pofi}, the maximal $K$-split torus $D$ in $Z(R)$ is fine in $G$. Let $C$ be the centralizer of $S$ in $G$. Then $D\subset C$. Let $D'$ be a maximal $K$-split torus in $C$ containing $D$. Since $D$ is fine in $G$, so is $D'$.
\end{proof}

Let us also provide an explicit corollary in extremes cases of the contractive decomposition. Define a cocharacter $\GL_1\to\GL(V)$ to be {\em non-negative} if it defines a grading in $\mathbf{N}$, and {\em positive} if moreover 0 is not a weight.

\begin{cor}\label{Sreduc}
Let $G\subset\GL(V)$ be a $K$-closed subgroup, and $S$ a $K$-closed reductive subgroup.
\begin{itemize}
\item If $G$ has a positive cocharacter, then it admits a positive $K$-defined cocharacter valued in the centralizer of $S$;
\item if $G$ has a non-trivial non-negative cocharacter, then it admits a $K$-defined non-trivial non-negative cocharacter valued in the centralizer of $S$. 
\end{itemize}
\end{cor}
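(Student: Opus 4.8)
The plan is to reduce Corollary \ref{Sreduc} to the contractive-decomposition language of Theorem \ref{pofi} and Corollary \ref{splifi}. The key observation is that a positive cocharacter exists for $G$ if and only if the contractive decomposition of a maximal torus $T$ of $G$ has $V_{[0]}^T=0$, i.e.\ the uncontracted dimension of $(V,G)$ vanishes; and a non-trivial non-negative cocharacter exists if and only if $V_{[+]}^T\neq 0$, i.e.\ the contracted dimension of $(V,G)$ is positive. Both statements use the fact that a non-negative (resp.\ positive) cocharacter $\sigma:\GL_1\to G$ lands in some maximal torus $T$ of $G$, and then the associated grading is an $\N$-grading compatible with the weight decomposition of $T$, forcing $V_{[0]}^T\subset V_0^\sigma$ in the non-negative case and $V_{[0]}^T=0$ in the positive case (positive weights of $\sigma$ are $T$-positive).

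First I would handle the non-negative case. Suppose $G$ has a non-trivial non-negative cocharacter $\sigma$; it lies in a maximal torus $T$, and non-triviality forces at least one $\sigma$-weight to be nonzero, hence some $T$-weight $\alpha$ is positive, so $V_{[+]}^T\neq 0$. Now apply Corollary \ref{splifi} to the reductive $K$-subgroup $S$: there is a $K$-defined fine cocharacter $\tau$ whose image centralizes $S$. Being fine means $V_0^\tau=V_{[0]}^{T_0}$ for the maximal torus $T_0$ containing $\mathrm{Im}(\tau)$, and since $D\subset G$ is fine in $G$ (Theorem \ref{pofi}) one has $V_{[+]}^{T_0}=V_{[+]}^G\neq 0$; in particular $V_0^\tau\neq V$, so $\tau$ is a non-trivial non-negative $K$-defined cocharacter centralizing $S$, as required.

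For the positive case, suppose $G$ has a positive cocharacter $\sigma$, lying in a maximal torus $T$. Then $V_0^\sigma=0$, and since every $\sigma$-weight is a $T$-weight and is positive (each is nonzero with all of one sign after composing with a suitable homomorphism), every $T$-weight is $T$-positive, so $V_{[0]}^T=0$. Again take the $K$-defined fine cocharacter $\tau$ from Corollary \ref{splifi} whose image centralizes $S$: finiteness gives $V_0^\tau=V_{[0]}^{T_0}$, and since $V_{[0]}^{T_0}=V_{[0]}^G=0$ (using that the maximal $K$-split torus $D$ is fine in $G$ and that a maximal torus $T_1\supset D$ has $V_{[0]}^{T_1}\subset V_{[0]}^D$... ), we get $V_0^\tau=0$, i.e.\ $\tau$ is positive. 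Thus $\tau$ is the desired $K$-defined positive cocharacter valued in the centralizer of $S$.

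The main obstacle is the bookkeeping in the positive case: I need to argue that the vanishing of $V_{[0]}$ for \emph{some} torus (the one containing $\sigma$, which need not be $K$-defined) propagates to the vanishing of $V_{[0]}$ for a maximal $K$-split torus and hence, via the fineness statements, to $V_0^\tau=0$. This requires combining the inclusions $V_{[0]}^{T'}\supset V_{[0]}^{T}$ for $T'\subset T$ with the conjugacy of maximal tori over $\bar K$ and the fineness of $D$ in $G$; concretely, all maximal tori of $G$ are conjugate over the algebraic closure, so the ``uncontracted dimension of $(V,G)$'' is well defined and equals $\dim V_{[0]}^{T}$ for any maximal torus $T$, and then Corollary \ref{finespli} identifies it with $\dim V_{[0]}^{D}$, which in turn dominates $\dim V_0^\tau$ since $\mathrm{Im}(\tau)$ is fine. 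Once this identification is in place, both bullets follow immediately from Corollary \ref{splifi}.
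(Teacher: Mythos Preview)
Your approach is correct and is exactly what the paper intends: Corollary~\ref{Sreduc} is stated without proof because it follows immediately from Corollary~\ref{splifi} once one observes that the existence of a positive (resp.\ non-trivial non-negative) cocharacter in $G$ is equivalent to $V_{[0]}^T=0$ (resp.\ $V_{[0]}^T\neq V$) for any maximal torus $T$ of $G$, and that a $G$-fine cocharacter $\tau$ satisfies $V_0^\tau=V_{[0]}^T$. Your hesitation about the bookkeeping is unnecessary: the conjugacy of maximal tori (Lemma~\ref{pairsemb} or simply the standard fact over the algebraic closure) makes the uncontracted dimension independent of the choice of $T$, so the vanishing (or non-fullness) of $V_{[0]}$ passes from the torus containing $\sigma$ to the one containing $\tau$, and you have already identified this resolution.
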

Note that for a cocharacter, the condition that it is valued in the centralizer of $S$ implies that the corresponding grading is preserved by $S$.

\section{Generalities, grading and extensions of scalars}\label{graal}

\subsection{Arbitrary algebras}\label{arba}

Let $R$ be a scalar ring, that is, an associative, unital, commutative ring. In this section, we call {\em $R$-algebra} an $R$-module endowed with an arbitrary $R$-bilinear law, with no further assumption; it in particular includes Lie algebras, associative non-unital algebras and various generalizations. 

If $\g$ is an $R$-algebra, we define its {\em lower series} as follows: $\g^{(1)}=\g$, and, for $k\ge 2$,
\[\g^{(k)}=\sum_{i,j\ge 1,i+j=k}\g^{(i)}\g^{(j)}.\]

This is a bilateral ideal. For $k\ge 0$, if $\g^{(k+1)}=0$, then $\g$ is called {\em $k$-step nilpotent}; if this holds for some $k$, then $\g$ is called nilpotent; its {\em nilpotency length} is the smallest $k$ for which this holds.
When $\g$ is a Lie algebra, it is the usual lower central series.

If $(A,+)$ is a magma (a set endowed with an arbitrary binary law), a grading of an algebra $\g$ in $A$ (or $A$-grading) is a direct sum decomposition $\g=\bigoplus_{\alpha\in A}\g_\alpha$ of $\g$ as an $R$-module, such that $\g_\alpha\g_\beta\subset\g_{\alpha+\beta}$. The weights of $\g$ are those $\alpha\in A$ such that $\g_\alpha\neq\{0\}$.

If $V$ is an arbitrary $A$-graded $R$-module, then any graded subquotient of $V$ inherits an $A$-grading. In particular, if $\g$ is graded in $A$, then $\g/\g^{(2)}$ inherits a grading in $A$; its weights are called {\em principal weights} of $\g$. If $\g$ is nilpotent, then it can be checked that the set of weights is contained in the submagma generated by principal weights.

We can extend to arbitrary algebras the definitions of Lie algebras. 

\begin{defn}
Let $K$ be a $\mathbf{Q}$-field and let $\g$ be a $K$-algebra that is finite-dimensional as vector space.
\begin{itemize}
\item $\cni^+(\g)$ is the intersection of kernels of $K$-diagonalizable self-derivations of $\g$ with all eigenvalues in the non-negative integers $\mathbf{N}$;
\item $\cni(\g)$ is the intersection of kernels of semisimple self-derivations of $\g$;
\item $\g$ is contractable if it admits an algebra grading in positive integers, or equivalently if it admits a self-derivation with only positive integers as eigenvalues;
\item $\g$ is semi-contractable if $\cni^+(\g)\neq\g$, or equivalently if $\g$ admits a nontrivial algebra grading in non-negative integers, or still equivalently if it admits a self-derivation with only non-negative integers as eigenvalues, and at least one positive eigenvalue;
\item $\g$ is characteristically nilpotent if $\cni(\g)=\g$;
\item $\g$ is flexible if it admits an invertible self-derivation;
\item $\g$ is essentially flexible if $\cni(\g)=\{0\}$;
\item $\g$ is essentially contractable if $\cni^+(\g)=\{0\}$.
\end{itemize}
\end{defn}

\begin{rem}In the realm of Lie algebras, characteristically nilpotent is a classical notion and terminology. Contractable is a classical notion but has no common terminology (it is sometimes called ``graded", but this is also used to mean ``Carnot" which is a strictly stronger notion). Even for Lie algebras, essentially flexible and essentially contractable are new notions; the latter is motivated by Theorem \ref{crcw}.
\end{rem}

\subsection{Carnot algebras}

Here the algebras are over an arbitrary commutative associative unital ring.

Carnot Lie algebras and associated Carnot-graded Lie algebras are important objects, appearing in different places (possibly first in Leger's paper \cite{Le}) under many more names (``graded", ``naturally graded", ``homogeneous", ``quasi-cyclic"), which are often inconvenient and ambiguous; the non-ambiguous ``fundamental graded" is also used by some authors (with negative gradings). The use of the word ``Carnot" in this context is common in sub-Riemannian and conformal geometry. Here we introduce them in the context of arbitrary algebras.

\begin{defn}
A {\em Carnot grading} on an algebra $\g$ is a algebra grading of $\g$ in $\mathbf{N}$ such that $\g$ is generated by $\g_1$ as an algebra. An algebra is {\em Carnot-graded} if it endowed with a Carnot grading, and {\em Carnot} if it admits one Carnot grading.
\end{defn}

Let $\g$ be an algebra, and let $(\g^{(i)})_{i\ge 1}$ be its lower series. The product maps $\g^{(i)}\times\g^{(j)}$ into $\g^{(i+j)}$ for all $i,j$; in particular, denoting $\mk{v}_i=\g^{(i)}/\g^{(i+1)}$, it induces a bilinear map $\mk{v}_i\times\mk{v}_j\to\mk{v}_{i+j}$ for all $i,j$.

\begin{defn}
The direct sum $\Car(\g)=\bigoplus_{i\ge 1}\mk{v}_i$, endowed with the bilinear law, is a graded algebra, called the {\em associated Carnot-graded}, or {\em associated graded} algebra to $\g$.
\end{defn}

The graded algebra $\Car(\g)$ is indeed Carnot-graded: this follows from the surjectivity of $\bigoplus_{i+j=n}\mk{v}_i\otimes\mk{v}_j\to \mk{v}_n$ for all $n\ge 2$, which implies by induction that $\g_n$ is contained in the subalgebra generated by $\g_1$ for all $n\ge 1$.

\begin{prop}\label{crac}
An algebra $\g$ is Carnot if and only if it is isomorphic (as an algebra) to its associated Carnot-graded algebra $\Car(\g)$. Moreover if these conditions hold, then
\begin{itemize}
\item for any Carnot grading on $\g$, the graded algebras $\g$ and $\Car(\g)$ are isomorphic;
\item for any two Carnot gradings on $\g$, there is a unique automorphism mapping the first to the second, and inducing the identity modulo $\g^{(2)}$.
\end{itemize}
\end{prop}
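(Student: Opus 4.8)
The plan is to prove the three assertions in order, all by exploiting the fact that a Carnot grading realizes an isomorphism between $\g$ and $\Car(\g)$ in a way compatible with the lower series filtration.

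First, the ``only if'' direction of the main equivalence: suppose $\g$ carries a Carnot grading $\g=\bigoplus_{n\ge 1}\g_n$ with $\g$ generated by $\g_1$. I claim that under this grading the lower series is $\g^{(k)}=\bigoplus_{n\ge k}\g_n$. The inclusion $\supseteq$ would follow by an easy induction using that $\g^{(k)}\supseteq\g_1\g^{(k-1)}$ and that $\g$ is generated by $\g_1$, so that $\g_n$ lies in the $n$-fold products of $\g_1$; the inclusion $\subseteq$ is immediate from $\g_i\g_j\subseteq\g_{i+j}$. Hence $\mk v_k=\g^{(k)}/\g^{(k+1)}\cong\g_k$ canonically as $R$-modules, and these identifications are compatible with the bilinear laws, so the obvious map $\g\to\Car(\g)$ sending $\g_k$ to $\mk v_k$ is an isomorphism of graded algebras. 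This simultaneously proves the ``only if'' direction and the first bulleted refinement. The ``if'' direction is trivial, since $\Car(\g)$ is Carnot-graded (as already noted in the excerpt) and being Carnot is by definition invariant under algebra isomorphism.

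For the second bullet, let $\g=\bigoplus\g_n=\bigoplus\g'_n$ be two Carnot gradings. By the computation above both induce the same filtration, namely the lower series, and both induce on $\mk v_1=\g/\g^{(2)}=\g/[\g,\g]$ (here I mean the degree-two term $\g^{(2)}$ of the lower series — I will write $\g^{(2)}$ rather than $[\g,\g]$ since the statement of the proposition uses that notation) the same quotient map. I would construct the desired automorphism as follows: since $\g$ is generated by $\g_1$, there is a surjection from the free algebra (or rather the relevant quotient) on $\g_1$ onto $\g$; choose an $R$-linear section $s\colon\g/\g^{(2)}\to\g'_1$ of the projection $\g'_1\xrightarrow{\sim}\g/\g^{(2)}$, and also the section $\g/\g^{(2)}\to\g_1$. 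Composing gives an $R$-linear isomorphism $\g_1\to\g'_1$ inducing the identity on $\g/\g^{(2)}$; since $\g_1$ generates $\g$ and $\g'_1$ generates $\g$, and the only relations are consequences of the algebra structure, this extends to an algebra endomorphism $\phi$ of $\g$ with $\phi(\g_n)\subseteq\g'_n$. One checks $\phi$ is bijective by a filtration/graded argument: $\phi$ induces the identity on each $\mk v_n$ (using that on $\mk v_1$ it is the identity and $\mk v_n$ is spanned by products of elements of $\mk v_1$), hence is an isomorphism on the associated graded, hence an isomorphism. Finally $\phi$ induces the identity modulo $\g^{(2)}$ by construction.

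For uniqueness in the second bullet: if $\phi,\psi$ both map the first Carnot grading to the second and both induce the identity modulo $\g^{(2)}$, consider $\theta=\psi^{-1}\phi$, an automorphism fixing the first grading (in the sense $\theta(\g_n)=\g_n$) — wait, more carefully, $\theta$ maps the first grading to itself — and inducing the identity modulo $\g^{(2)}$; I must show $\theta=\mathrm{id}$. Since $\theta$ preserves the grading, $\theta(\g_1)=\g_1$, and $\theta|_{\g_1}$ induces the identity on $\g_1/(\g_1\cap\g^{(2)})=\g_1$ (as $\g_1\cap\g^{(2)}=0$ under a Carnot grading), so $\theta|_{\g_1}=\mathrm{id}$; since $\g_1$ generates $\g$ and $\theta$ is an algebra homomorphism, $\theta=\mathrm{id}$.

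The main obstacle I anticipate is the careful bookkeeping in extending a linear map on generators to an algebra homomorphism when the algebra is not free: one must argue that a linear map $\g_1\to\g'_1$ compatible with the grading respects all the multiplicative relations of $\g$. The clean way is to phrase everything in terms of the associated graded and the universal property: rather than building $\phi$ ``by hand'', observe that both Carnot gradings exhibit $\g$ as isomorphic (as a graded algebra) to $\Car(\g)$ via the canonical maps of the first part, so composing one canonical isomorphism with the inverse of the other yields a graded isomorphism $\g\to\g$ carrying one grading to the other; the remaining point is to correct it, if necessary, by a grading-preserving automorphism so that it is the identity modulo $\g^{(2)}$, which is possible because any $R$-linear automorphism of $\g/\g^{(2)}$ lifts (via $\g_1\xrightarrow{\sim}\g/\g^{(2)}$ and the generation property) to a grading-preserving automorphism of $\g$. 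This reduces the whole proposition to the single lemma that grading-preserving automorphisms of a Carnot-graded algebra surject onto $\GL(\g/\g^{(2)})$ with kernel the automorphisms trivial modulo $\g^{(2)}$, which is exactly the content needed.
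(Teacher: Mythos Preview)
Your first part (the equivalence and the first bullet) is correct and is exactly the paper's argument, just spelled out in more detail: a Carnot grading $(\g_n)$ satisfies $\g^{(k)}=\bigoplus_{n\ge k}\g_n$, so the projections $\g_k\to\g^{(k)}/\g^{(k+1)}$ assemble into a graded algebra isomorphism $\g\to\Car(\g)$. Your uniqueness argument for the second bullet is also fine.

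The gap is in the \emph{existence} part of the second bullet. You eventually land on the right idea---compose the canonical isomorphism $f:\g\to\Car(\g)$ coming from $(\g_n)$ with the inverse of the one $g:\g\to\Car(\g)$ coming from $(\g'_n)$---but you then worry that $h=g^{-1}f$ might not induce the identity on $\g/\g^{(2)}$ and propose to ``correct'' it by a grading-preserving automorphism. Two problems: first, no correction is needed. For $x\in\g_1$ we have $f(x)=p_1(x)$, and $g^{-1}$ restricted to $\mk v_1$ is the inverse of $p_1|_{\g'_1}$, so $h(x)\in\g'_1$ satisfies $p_1(h(x))=p_1(x)$; since $h$ preserves $\g^{(2)}$, this already says $h$ is the identity modulo $\g^{(2)}$. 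This is exactly how the paper finishes. Second, the ``correction lemma'' you propose---that grading-preserving automorphisms of a Carnot-graded algebra surject onto $\GL(\g/\g^{(2)})$---is \emph{false} in general. For instance, in the $5$-dimensional Heisenberg Lie algebra (with $\dim\g_1=4$, $\dim\g_2=1$), a graded automorphism must scale the bracket form $\g_1\times\g_1\to\g_2$, so its image in $\GL(\g_1)$ lands in the symplectic similitude group, not all of $\GL_4$. So your final reduction would not have gone through; fortunately it is unnecessary once you observe that $g^{-1}f$ is already the desired automorphism.
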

\begin{proof}
If $\g$ is isomorphic to $\Car(\g)$, then it admits a Carnot grading (inherited from $\Car(\g)$). 

For the second sentence, we observe that any Carnot-grading on $\g$ defines, in a canonical way, an isomorphism from $\g$ to $\Car(\g)$, which in restriction to $\g_i$ is the restriction of the projection $p_i:\g^{(i)}\to\g^{(i)}/\g^{(i+1)}$. This proves the first item. For the second item, suppose that we have two Carnot gradings $(\g_i)$ and $(\g'_i)$, defining as above isomorphisms $f,g:\g\to\Car(\g)$. Then for every $x\in\g_1,x'\in\g'_1$ we have $f(x)=p_1(x)$ and $g(x')=p_1(x')$. Hence, if we define $h=g^{-1}f$, then $h\in\Aut(\g)$ and $p_1(h(x))=g(h(x))=f(x)=p_1(x)$. This shows that $h$ equals the identity modulo $[\g,\g]$. Then $h$ maps the first grading to the second grading. The uniqueness is clear.
\end{proof}

\begin{cor}\label{autautgr}
Let $\g$ be a Carnot-graded algebra over $R$. Denote by $\Aut(\g)$ its automorphism group as an algebra and $\Aut(\g)_0$ its automorphism group as graded algebra. Let $\Aut(\g)_{\ge 1}$ be the group of automorphisms of the Lie algebra $\g$ inducing the identity on $\g/\g^{(2)}$. Then $\Aut(\g)_{\ge 1}$ is a normal subgroup and
\[\Aut(\g)=\Aut(\g)_0\ltimes\Aut(\g)_{\ge 1}.\]
\end{cor}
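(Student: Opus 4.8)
The plan is to exhibit the decomposition directly using Proposition \ref{crac}. First I would check that $\Aut(\g)_{\ge 1}$ is normal in $\Aut(\g)$: given $\phi\in\Aut(\g)$ and $\psi\in\Aut(\g)_{\ge 1}$, the induced map $\bar\phi$ on $\g/[\g,\g]$ satisfies $\overline{\phi\psi\phi^{-1}}=\bar\phi\bar\psi\bar\phi^{-1}=\bar\phi\,\mathrm{id}\,\bar\phi^{-1}=\mathrm{id}$, since $\psi$ induces the identity on $\g/[\g,\g]$; hence $\phi\psi\phi^{-1}\in\Aut(\g)_{\ge 1}$. The same computation shows that the map $\Aut(\g)\to\GL(\g/[\g,\g])$, $\phi\mapsto\bar\phi$, is a homomorphism with kernel exactly $\Aut(\g)_{\ge 1}$.

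Next, for the splitting, I would argue that $\Aut(\g)_0$ is a complement to $\Aut(\g)_{\ge 1}$. Trivial intersection: if $\phi\in\Aut(\g)_0\cap\Aut(\g)_{\ge 1}$, then $\phi$ preserves the given Carnot grading $(\g_i)$ and induces the identity on $\g/[\g,\g]$, hence acts as the identity on $\g_1$ (which maps isomorphically onto $\g/[\g,\g]$ since $[\g,\g]=\bigoplus_{i\ge 2}\g_i$); since $\g$ is generated by $\g_1$, it follows that $\phi=\mathrm{id}$. For the product $\Aut(\g)=\Aut(\g)_0\cdot\Aut(\g)_{\ge 1}$: let $\phi\in\Aut(\g)$ be arbitrary. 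Then $\phi$ transports the given Carnot grading $(\g_i)$ to a second Carnot grading $(\phi(\g_i))$ on $\g$. By the second item of Proposition \ref{crac}, there is a (unique) $h\in\Aut(\g)_{\ge 1}$ mapping the grading $(\g_i)$ to $(\phi(\g_i))$, i.e. $h(\g_i)=\phi(\g_i)$ for all $i$. Then $h^{-1}\phi$ preserves each $\g_i$, so $h^{-1}\phi\in\Aut(\g)_0$, and thus $\phi=h\cdot(h^{-1}\phi)\in\Aut(\g)_{\ge 1}\cdot\Aut(\g)_0$. Combined with normality of $\Aut(\g)_{\ge 1}$ and trivial intersection, this gives the semidirect product decomposition $\Aut(\g)=\Aut(\g)_0\ltimes\Aut(\g)_{\ge 1}$.

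I expect the only subtle point to be justifying that the second grading $(\phi(\g_i))$ is again a \emph{Carnot} grading --- but this is immediate because $\phi$ is an algebra automorphism, so $[\phi(\g_i),\phi(\g_j)]=\phi([\g_i,\g_j])\subset\phi(\g_{i+j})$ and $\phi(\g_1)$ generates $\phi(\g)=\g$ as an algebra --- and that the $h$ produced by Proposition \ref{crac} genuinely lies in $\Aut(\g)_{\ge 1}$, which is exactly the ``inducing the identity modulo $[\g,\g]$'' clause in that proposition. No serious obstacle is anticipated; the argument is a formal consequence of the rigidity statement already established for Carnot gradings.
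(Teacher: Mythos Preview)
Your proof is correct and follows essentially the same approach as the paper: both use Proposition \ref{crac} to transport the given Carnot grading along an arbitrary automorphism $\phi$ and obtain $h\in\Aut(\g)_{\ge 1}$ with $h^{-1}\phi\in\Aut(\g)_0$. Your version is simply more explicit, spelling out the normality via the homomorphism $\Aut(\g)\to\GL(\g/[\g,\g])$ and the trivial-intersection argument, both of which the paper leaves implicit.
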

\begin{proof}
The group $\Aut(\g)_{\ge 1}$ is clearly normal.

If we have an automorphism $\phi$ of $\g$, then it maps the given Carnot grading to another one. By the last assertion of Proposition \ref{crac}, there exists $u\in\Aut(\g)_{\ge 1}$ 
mapping the first Carnot grading to the second one. Hence $v=u^{-1}\phi\in\Aut(\g)_0$, and $\phi=uv$. 
\end{proof}

If we work in a certain category of algebras $\g$ (e.g., associative, Lie,\dots) it is useful to know that $\Car(\g)$ is also in the same category. The following proposition proves it in many cases.

If $P=P(X_1,\dots,X_k)$ is any non-associative polynomial (with scalars in $R$) in the formal variables $X_1,\dots,X_k$, we say that $P$ is an {\em identity} for an algebra $\g$ if $P(x_1,\dots,x_k)=0$ for all $x_i\in\g$.

\begin{prop}
Let $P$ be an identity for $\g$. If $P$ is either multilinear or $P(X)=X^2$, then it is an identity for $\Car(\g)$.
\end{prop}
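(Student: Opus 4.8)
The plan is to use the canonical isomorphism $\g \to \Car(\g)$ of graded $R$-modules and show it is compatible enough with the bilinear law to carry identities. Concretely, recall from the proof of Proposition \ref{crac} that there is an $R$-module isomorphism $\pi\colon \Car(\g)\to\g$ whose restriction to $\mk{v}_i=\g^{(i)}/\g^{(i+1)}$ is a set-theoretic section of the projection $p_i\colon\g^{(i)}\to\mk{v}_i$; more useful here is the projection side. First I would filter $\g$ by the ideals $\g^{(k)}$ and observe that the associated graded of this filtration, as a \emph{graded module}, is exactly $\Car(\g)$, with the product on $\Car(\g)$ being the one induced on the associated graded. So it suffices to prove the general principle: if $P$ is a non-associative polynomial that is either multilinear or equal to $X^2$, and $P$ is an identity on a filtered algebra $\g$ (with $\g^{(i)}\g^{(j)}\subset\g^{(i+j)}$), then $P$ is an identity on the associated graded algebra $\gr(\g)=\bigoplus_i \g^{(i)}/\g^{(i+1)}$.

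The key steps, in order. \textbf{(1)} Reduce to checking $P$ on homogeneous elements of $\Car(\g)$: since $P$ is either multilinear or $X\mapsto X^2$, and in the latter case we can polarize — $(x+y)^2 = x^2 + xy + yx + y^2$, so $x^2=0$ for all $x$ forces the multilinear identity $xy+yx=0$ (here $2$ is invertible is \emph{not} needed: $(x+y)^2=0$, $x^2=0$, $y^2=0$ directly give $xy+yx=0$), and conversely $xy+yx=0$ with $y=x$ gives $2x^2=0$, which is weaker, so for $P(X)=X^2$ one argues directly on $x^2$ rather than via the polarized form. So I will treat the multilinear case and the $X^2$ case, checking that homogeneous test elements suffice because $P$ is additive in each slot. \textbf{(2)} Given homogeneous elements $\bar x_1,\dots,\bar x_k$ with $\bar x_m\in\mk{v}_{i_m}$, lift each to $x_m\in\g^{(i_m)}$. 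By the definition of the bracket on $\Car(\g)$, the value $P(\bar x_1,\dots,\bar x_k)$ computed in $\Car(\g)$ lands in $\mk{v}_{i_1+\dots+i_k}$ (in the multilinear case; $\mk{v}_{2i_1}$ for $X^2$), and equals the image under $p_{i_1+\dots+i_k}$ of $P(x_1,\dots,x_k)$ computed in $\g$ — this is precisely because each bilinear multiplication node of the expression tree sends $\g^{(a)}\times\g^{(b)}\to\g^{(a+b)}$ and the induced map on quotients is the multiplication of $\Car(\g)$; one pushes this compatibility up the tree by induction on the structure of $P$. \textbf{(3)} Since $P$ is an identity for $\g$, $P(x_1,\dots,x_k)=0$ in $\g$, hence its image in $\mk{v}_{i_1+\dots+i_k}$ is $0$, hence $P(\bar x_1,\dots,\bar x_k)=0$ in $\Car(\g)$. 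Summing over the homogeneous components of arbitrary elements (using multilinearity, resp.\ additivity of $X\mapsto X^2$ modulo cross terms which are themselves values of the multilinear polarization $XY+YX$ — which, crucially, is \emph{also} an identity on $\g$ by step (1) applied to $\g$) finishes it.

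I expect the main obstacle to be the $P(X)=X^2$ case, precisely because squaring is not multilinear: when I expand $(\sum_m \bar x_m)^2$ into homogeneous pieces I get both the ``diagonal'' terms $\bar x_m^2$ and the ``cross'' terms $\bar x_m\bar x_n + \bar x_n\bar x_m$, and the latter live in a single graded component only after I know $XY+YX$ is an identity on $\g$ — which I do know, since $X^2$ being an identity on $\g$ implies $XY+YX$ is too (polarization, valid over any commutative ring: expand $(X+Y)^2$). So the real content is the bookkeeping in step (2): verifying by induction on the expression tree of $P$ that evaluating-then-projecting equals projecting-at-each-node-then-evaluating. This is routine but must be stated carefully, distinguishing the multilinear case (clean, one graded component) from the square case (handled by first reducing, via polarization over $R$, to the multilinear identity $XY+YX=0$ together with the genuinely quadratic statement $x^2\in\g^{(2i)}$ maps to $0$, which follows from $x^2=0$ in $\g$ directly). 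I would close by remarking that this covers associativity $(XY)Z-X(YZ)$, the Jacobi identity, anticommutativity $XY+YX$, and commutativity $XY-YX$, so $\Car(\g)$ stays in the category of associative, Lie, or commutative algebras respectively, which is the point of the proposition.
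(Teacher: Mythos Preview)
Your argument is correct and matches the paper's proof: reduce to homogeneous inputs, lift to $\g^{(i)}$, and use that evaluating a multilinear monomial commutes with the projections $p_i$. One caveat on exposition: your opening reference to a ``canonical isomorphism $\g\to\Car(\g)$'' from Proposition~\ref{crac} is misplaced --- that isomorphism exists only when $\g$ is already Carnot, which is not assumed here --- but you immediately abandon it for the projections $p_i$, so the actual proof is unaffected.

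For the $X^2$ case your organization differs slightly from the paper's: you first polarize in $\g$ to get the multilinear identity $XY+YX=0$, then invoke the already-proved multilinear case to kill the cross terms in $\Car(\g)$, and handle the diagonal terms $\bar x_m^2 = p_{2i_m}(x_m^2)=0$ directly. The paper instead writes the cross term $x_kx_\ell+x_\ell x_k$ as $\pi_{k+\ell}\big((y_k+y_\ell)^2-y_k^2-y_\ell^2\big)$ and kills it in one line. These are the same computation; your version is arguably cleaner since it makes explicit that the multilinear case is doing the work.
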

\begin{proof}
We need to show that $P(x_1,\dots,x_k)=0$ for all $x_1,\dots,x_k\in\Car(\g)$. By multilinearity, we can assume that each $x_i$ is homogeneous, say of degree $n_i$. Let $y_i$ be a lift of $x_i$ in $\g^{(n_i)}$. For $\pi_j$ be the projection from $\g^{(j)}$ to $\g^{(j)}/\g^{(j+1)}$ and let $n=\sum n_i$. Then for every $k$-multilinear polynomial $Q$ and all $z_i\in\g^{(n_i)}$, we have $\pi_n(Q(z_1,\dots,z_k))=Q(\pi_{n_1}(z_1),\dots,\pi_{n_k}(z_k))$. In particular, we deduce that $P(x_1,\dots,x_k)=0$.

For the case of $P(X)=X^2$, consider an element $\sum_{j\ge 1} x_j$ with $x_j\in\g^{(j)}/\g^{(j+1)}$ (almost all zero) and let $y_j$ be a lift of $x_j$ in $\g^{(j)}$ (chosen to be 0 if $x_j=0$). Then
\begin{align*}
\left(\sum x_j\right)^2= & \sum x_j^2+\sum_{k<\ell}(x_kx_\ell+x_\ell x_k)\\
= & \sum_j \pi_{2j}(y_j^2)+\sum_{k<\ell}\pi_{k+\ell}(y_ky_\ell+y_\ell y_k)\\
= & \sum_j \pi_{2j}(y_j^2)+\sum_{k<\ell}\pi_{k+\ell}((y_k+y_\ell)^2-y_k^2-y_{\ell}^2)=0,\qedhere
\end{align*}
because $y_j^2=(y_k+y_\ell)^2=0$ for all $j,k,\ell$.
\end{proof}

\begin{rem}
To be multilinear is a useful property for an identity: indeed, it implies that it passes to extensions of scalars. See the discussion in \cite{Sch}.
\end{rem}

\begin{ex}
Here are some further types of algebras defined by a multilinear identity:
\begin{itemize}
\item Associative algebras: $A(x,y,z)=(xy)z-x(yz)$ ($A$ is known as the {\em associator});
\item Pre-Lie algebras: $PL(x,y,z)=(xy)z-x(yz)-(xz)y+x(zy)=A(x,y,z)-A(x,z,y)$;
\item Novikov algebras: $PL$ and $N(x,y,z)=(xy)z-(xz)y$;
\item Leibniz algebras: $L(x,y,z)=(xy)z-x(yz)-(xz)y$.
\end{itemize}
Some more are defined by an identity with a quadratic variable, which can, by the obvious polarization formulas, be converted to a multilinear identity when 2 is invertible:
\begin{itemize}
\item alternative algebras: $AL_1(x,y)=A(x,x,y)$, $AL_2(x,y)=A(y,x,x)$, where $A$ is the associator;
\item Malcev algebras: $C'(x,y)=xy+yx$ and $M(x,y,z)=(xy)(xz)-((xy)z)x-((yz)x)x-((zx)x)y$.
\end{itemize}
Another classical notion is that of Jordan algebra; classically it is defined as an algebra satisfying the identity $C(x,y)=xy-yx$ (i.e., commutative) and the identity $J\!o(x,y)=(xy)(xx)-x(y(xx))=A(x,y,xx)$; if 6 is invertible, a simple verification is that this is equivalent to satisfy the multilinear identities $C$ and $J\!o'(x,y,z,w)=A(x,y,zw)+A(z,y,wx)+A(w,y,xz)$.
\end{ex}

\begin{lem}\label{carcarn2}
Let $K$ be a field. Let $\g$ be a nilpotent algebra over $K$; if $K$ has positive characteristic $p$, assume that its nilpotency length $c$ is at most $p+1$. Then $\g$ is Carnot over $K$ if and only if $\g$ has a self-derivation inducing the identity on $\g/\g^{(2)}$.
\end{lem}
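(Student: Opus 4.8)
The plan is to prove the two implications separately. The forward direction is immediate: if $\g$ is Carnot, fix a Carnot grading $\g=\bigoplus_{i\ge 1}\g_i$; then the derivation $\delta$ acting as multiplication by $i$ on $\g_i$ is a well-defined self-derivation (since $[\g_i,\g_j]\subset\g_{i+j}$ forces the Leibniz rule), and it induces multiplication by $1$ on $\g_1\cong\g/\g^{(2)}$, hence the identity on $\g/\g^{(2)}$. Note this direction needs no hypothesis on the characteristic or nilpotency length.

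For the converse, suppose $\delta$ is a self-derivation of $\g$ inducing the identity on $\g/\g^{(2)}$. The key point is that $\delta$ then preserves each term $\g^{(k)}$ of the lower series (derivations always do, since $\g^{(k)}$ is built from products), and the induced map $\bar\delta_k$ on $\mk v_k=\g^{(k)}/\g^{(k+1)}$ is multiplication by $k$: this follows by induction on $k$ using that $\mk v_k$ is spanned by products of $k$ elements of $\mk v_1$ and that $\delta$ acts as the identity ($=$ mult.\ by $1$) on $\mk v_1$. Here is where the characteristic hypothesis enters: to conclude that $\delta$ is diagonalizable with the integer eigenvalues $1,2,\dots,c$ and that these integers are distinct and invertible in $K$ — so that the eigenspace decomposition of $\delta$ is a genuine $\N$-grading refining, hence equal to, a splitting of the lower series filtration — we need $1,2,\dots,c$ to be pairwise non-congruent mod $p$, i.e.\ $c\le p+1$ (in characteristic zero there is nothing to check). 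Concretely, one shows $\prod_{k=1}^{c}(\delta-k\cdot\mathrm{id})=0$ on $\g$ (because on $\g^{(k)}/\g^{(k+1)}$ the factor $\delta - k$ vanishes, and the filtration has length $c$), and since the $k$ are distinct in $K$ this polynomial has simple roots, so $\delta$ is semisimple and $\g=\bigoplus_{k=1}^c\g^{[k]}$ where $\g^{[k]}=\ker(\delta-k)$.

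It then remains to check that $\g=\bigoplus_k\g^{[k]}$ is a Carnot grading, i.e.\ (a) $\g^{(k)}=\bigoplus_{j\ge k}\g^{[j]}$, so that $\g^{[k]}$ projects isomorphically onto $\mk v_k$ and the grading is compatible with the bracket (this is automatic for any derivation-eigenspace decomposition: $[\g^{[i]},\g^{[j]}]\subset\g^{[i+j]}$), and (b) $\g$ is generated by $\g^{[1]}$. For (a), the inclusion $\g^{(k)}\supset\bigoplus_{j\ge k}\g^{[j]}$ is clear by a dimension/induction argument comparing with the fact that $\bar\delta_k$ is mult.\ by $k$ on $\mk v_k$; combined with $\sum_k\dim\g^{[k]}=\dim\g=\sum_k\dim\mk v_k$ one gets equality and $\g^{[k]}\xrightarrow{\sim}\mk v_k$. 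For (b): since $\g^{[1]}$ maps onto $\mk v_1=\g/\g^{(2)}$, the subalgebra it generates surjects onto each $\mk v_k$ (as $\mk v_k$ is spanned by $k$-fold products from $\mk v_1$), so by the filtration it is all of $\g$.

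The main obstacle is isolating exactly why the bound $c\le p+1$ is the right one and nothing weaker suffices: the whole argument hinges on the single polynomial identity $\prod_{k=1}^c(\delta-k)=0$ having \emph{simple} roots in $K$, which fails precisely when two of $1,\dots,c$ collide mod $p$; if they collide, $\delta$ need not be semisimple and the eigenspace decomposition can fail to be a grading at all. A secondary technical point worth stating carefully is that $\delta$ preserving the lower series and inducing mult.\ by $k$ on the $k$-th graded piece is a purely formal consequence of the Leibniz rule together with $\delta\equiv\mathrm{id}$ on $\g/\g^{(2)}$ — this does not use any algebra axiom beyond bilinearity, consistent with the paper's ``arbitrary algebras'' framework, so the lemma as stated for general nilpotent algebras is the natural level of generality.
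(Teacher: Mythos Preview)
Your argument has a genuine gap at the boundary case $c=p+1$. You assert that ``$1,2,\dots,c$ pairwise non-congruent mod $p$, i.e.\ $c\le p+1$'', but this equivalence is false: the integers $1,\dots,c$ are pairwise distinct in $\F_p$ if and only if $c\le p$ (when $c=p+1$ one has $1\equiv p+1\pmod p$). Consequently your diagonalizability argument, which rests on $\prod_{k=1}^c(\delta-k)$ having simple roots, only proves the lemma for $c\le p$; it says nothing about $c=p+1$, which is precisely the delicate case the hypothesis allows. (Your parenthetical remark that the eigenvalues are also ``invertible'' is likewise off---$p$ is not invertible when $c=p$---but invertibility is not actually needed anywhere.)

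The paper handles the two regimes separately. For $c\le p$ (or characteristic zero) it argues essentially as you do: from $\delta(x)-ix\in\g^{(i+1)}$ one gets $(\delta-1)\cdots(\delta-c)=0$, the roots are distinct, $\delta$ is semisimple, and the eigenspace decomposition is a Carnot grading. For $c=p+1$ the product $(\delta-1)\cdots(\delta-c)$ acquires a repeated factor (since $1$ and $p+1$ coincide in $K$), so $\delta$ need not be diagonalizable. The paper instead takes the \emph{generalized} $1$-eigenspace $\ker(\delta-1)^2$ together with the ordinary eigenspaces for $2,\dots,p-1$, obtains a grading in $\Z/p\Z$, and then manually splits off $\g^{(c)}$ from the degree-$1$ piece (choosing a linear complement) to produce an honest $\N$-grading. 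This extra step is unavoidable: without it you cannot separate the contribution of $\g^{(1)}/\g^{(2)}$ from that of $\g^{(p+1)}$, since $\delta$ acts on both with the same eigenvalue in $K$. Your proposal contains no substitute for this construction.

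A smaller point: the paper explicitly avoids any finite-dimensionality hypothesis, so the dimension-counting you invoke in part (a) should be replaced by a direct argument (e.g.\ showing $\g^{[j]}\subset\g^{(j)}$ by looking at the induced action of $\delta$ on $\g/\g^{(j+1)}$).
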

\begin{proof}
The ``only if" part is clear, since any Carnot grading defines such a derivation, defined to be the multiplication by $i$ on $\g_i$. Conversely, assume that there exists such a self-derivation $\delta$ (and keep in mind that we do not assume $\g$ to be finite-dimensional). 

For any integer $i\ge 1$, define $\g_{[i]}$ as the eigenspace of $\delta$ for the eigenvalue $i$. By assumption, we have $\delta(x)-x\in\g^{(2)}$ for all $x\in\g$. By induction, we deduce that $\delta(x)-ix\in\g^{(i+1)}$ for all $i\ge 1$ and $x\in\g^{(i)}$. Then by a descending induction on $1\le i\le c$, we see that $(\delta-i)\dots (\delta-c)$ vanishes on $\g^{(i)}$. Eventually for $i=1$, this means that $\Delta=(\delta-1)\dots (\delta-c)$ vanishes on $\g$. 

First assume that $K$ has characteristic zero, or characteristic $p\ge c$. Then $1,\dots,c$ are distinct in $K$. This implies that the eigenspaces $\g_{[i]}$ for $1\le i\le c$ span their direct sum, and the vanishing of $\Delta$ implies that they actually span $\g$. Thus defining $\g_i=\g_{[i]}$ yields a Carnot grading.

Now assume the remaining case, namely $p=c-1$. Observe that $0=\Delta$ can be rewritten as $(\delta-1)^2(\delta-2)\dots (\delta-p-1)$. If we define $\g_{\langle 1\rangle}=\g_{\langle p\rangle}$ as the kernel of $(\delta-1)^2$ and $\g_{\langle i\rangle}=\g_{[i]}$ for $2\le i\le c-1$, then the $\g_{\langle i\rangle}$ for $1\le i\le p-1$ generate their direct sum, which by the vanishing of $\Delta$ equals $\g$, and $\g_{\langle i\rangle}\g_{\langle j\rangle}\subset\g_{\langle i+j\rangle}$ for all integers $i,j\ge 1$. Then define $\g_1$ as a supplement subspace of $\g^{(p)}$ in $\g_{\langle 1\rangle}$, define $\g_p=\g^{(p)}$, and $\g_i=\g_{\langle i\rangle}$ for $2\le i\le p-1$. Then $\g=\bigoplus_{i=1}^p\g_i$ and $\g^{(2)}=\bigoplus_{i=2}^p\g_i$, and, noting that $\g_p=\g_{\langle p\rangle}\cap\g^{(2)}$, we see that $(\g_i)$ is an algebra grading; thus it is a Carnot grading.
\end{proof}

\begin{rem}
In characteristic $p$, the nilpotency length condition in Lemma \ref{carcarn2} is optimal: consider the Lie algebra $\g$ with basis $(U_1,X_1,\dots,X_{p+2},Y_1,Z_1)$, with nonzero brackets $[U_1,X_i]=X_{i+1}$ for $1\le i\le p+1$ and $[Y_1,Z_1]=X_{p+2}$. It is easily checked not to be Carnot. On the other hand, it admits a grading in $\mathbf{Z}/p\mathbf{Z}$ where $\g_1$ has basis $(U_1,X_1,X_{p+1},Y_1,Z_1)$, where $\g_2$ has basis $(X_2,X_{p+2})$ and $\g_i$ has basis $(X_i)$ for $3\le i\le p$. This grading induces a derivation which is the identity on the abelianization (which has the basis $(U_1,X_1,Y_1,Z_1)$). On the other hand, we do not know whether the assumptions in positive characteristic can be relaxed for Theorem \ref{main2}.
\end{rem}

\begin{ex}
Every 2-step nilpotent algebra $\g$ is Carnot, as any supplement subspace of $\g^{(2)}$ yields a Carnot grading. However, the 3-step nilpotent 5-dimensional Lie algebra $\g_{5,3}$ from \S\ref{smalldim} is not Carnot. 
\end{ex}

\begin{ex}
Every nilpotent algebra of dimension at most 3 over a field is Carnot, by an easy verification. While all 4-dimensional nilpotent Lie algebras, over an arbitrary field, are Carnot, the following 4-dimensional commutative associative nilpotent algebra, defined over any ground field, is not Carnot: The algebra $\g$ defined with basis $(x,y,z,w)$ and nonzero products $x^2=z$, $xz=zx=w$, $y^2=w$ (alternatively described $K[x,y]_+/(xy,x^3-y^2)$, where $K[x,y]_+=(x,y)K[x,y]$ is the free associative (non-unital) commutative ring on 2 generators, or also described as the unique maximal ideal in $K[x,y]/(xy,x^3-y^2)$). The next example also provides a non-Carnot 4-dimensional Leibniz algebra.
\end{ex}

\begin{ex}
Here is a 4-dimensional nilpotent Leibniz algebra in which every self-derivation is nilpotent: it has the basis $(x_i)_{1\le i\le 4}$ with nonzero products $x_1^2=x_3$, $x_2x_1=x_3$, $x_2^2=x_4$, $x_3x_1=x_4$. (It is denoted $\mathfrak{R}_4(0)$ in \cite[Theorem 3.2]{AOR}.) Indeed, calling it $\g$, a computation shows that every derivation maps $\g$ into $\g^{(2)}$.
\end{ex}


\begin{thm}\label{main2}
Let $K\subset K'$ be fields of characteristic zero. Let $\g$ be a finite-dimensional algebra over $K$. Then $\g$ is Carnot over $K$ if (and only if) $K'\ot_K\g$ is Carnot over $K'$. More generally, this holds in positive characteristic $p$ when either
\begin{itemize}
\item the nilpotency length of $\g$ is at most $p+1$, or
\item $K$ is an infinite and perfect field.
\end{itemize}
\end{thm}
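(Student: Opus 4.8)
The plan is to reduce the statement to the derivation criterion of Lemma~\ref{carcarn2} and then to a descent problem for diagonalizable subgroups of the algebraic automorphism group, which is exactly the kind of question handled in Section~\ref{graza}. By Lemma~\ref{carcarn2}, $\g$ is Carnot over $K$ if and only if $\g$ admits a self-derivation inducing the identity on $\g/\g^{(2)}$; the same characterization applies to $\g':=K'\ot_K\g$ over $K'$ (the nilpotency length is unchanged by the flat base change, so the hypothesis $c\le p+1$ persists). So it suffices to show: if $\g'$ has a self-derivation inducing the identity on $\g'/(\g')^{(2)}$, then so does $\g$.

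First I would phrase the target derivation as a point of an affine $K$-variety. Fix a $K$-basis of $\g$ adapted to a chosen vector-space complement, so that $\g=\mathfrak{m}\oplus\g^{(2)}$ with $\mathfrak{m}$ mapping isomorphically onto $\g/\g^{(2)}$. Let $\mathcal{D}\subset\End_K(\g)$ be the space of self-derivations of $\g$; this is a $K$-defined linear subspace (cut out by the bilinearity identities, which are linear in the endomorphism). Inside $\mathcal{D}$, the condition "induces the identity on $\g/\g^{(2)}$" is an affine-linear $K$-defined condition, so the set $\mathcal{D}_1$ of such derivations is a $K$-defined affine subspace of $\End_K(\g)$, and $\mathcal{D}_1(K')=K'\ot_K\mathcal{D}_1(K)$. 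The hypothesis says $\mathcal{D}_1(K')\neq\emptyset$; since $\mathcal{D}_1$ is an affine subspace over $K$, it follows immediately that $\mathcal{D}_1(K)\neq\emptyset$ as well — a $K$-affine subspace with a point over an extension field already has a point over $K$. Picking any $\delta\in\mathcal{D}_1(K)$ and invoking Lemma~\ref{carcarn2} in the other direction finishes the proof.

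Let me note where the genuine content sits. The only non-formal ingredient is Lemma~\ref{carcarn2}: the passage between "Carnot" and "existence of a self-derivation equal to the identity on the abelianization". Once one has that, the descent is essentially trivial because being such a derivation is a \emph{linear} (affine) condition, not merely a Zariski-closed one — there is no need to invoke semisimplicity of the derivation, conjugacy of tori, or anything from Section~\ref{graza}, precisely because the Carnot criterion is so rigid. (By contrast, for the contractable and semi-contractable cases in Theorem~\ref{beingca}, where one only controls the \emph{eigenvalues} of a derivation rather than a fixed linear equation, one genuinely needs the torus and cocharacter machinery of Section~\ref{graza} — cf.\ Corollary~\ref{Sreduc} and Theorem~\ref{stabex}.) So the main obstacle is not in this theorem at all but was already dispatched in Lemma~\ref{carcarn2}; here the work is only to observe that the relevant locus is $K$-affine-linear and to keep track that the nilpotency-length hypothesis is base-change invariant.

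One cautionary remark I would include: I should double check that $\mathcal{D}_1$ really is nonempty-stable under descent in the way claimed, i.e.\ that "self-derivation inducing the identity on $\g/\g^{(2)}$" is cut out by equations \emph{linear} in the endomorphism matrix with coefficients in $K$. The derivation identity $\delta(xy)=\delta(x)y+x\delta(y)$ is bilinear in the pair $(\delta, \text{structure constants})$ but, for fixed structure constants in $K$, it is linear in $\delta$; the abelianization condition is $\delta(\mathfrak{m})\subset\mathfrak{m}+$ (a fixed lift of the identity), again affine-linear over $K$. Hence $\mathcal{D}_1$ is a coset of the $K$-vector space $\mathcal{D}_0$ of derivations killing $\g/\g^{(2)}$, and a coset of a $K$-subspace of a finite-dimensional $K$-vector space has a $K$-point as soon as it has a point over any field extension. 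This is the step I would write out carefully; everything else is bookkeeping.
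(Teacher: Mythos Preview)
Your proof is correct and is essentially identical to the paper's own argument: both reduce via Lemma~\ref{carcarn2} to the existence of a self-derivation inducing the identity on $\g/\g^{(2)}$, observe that the locus of such derivations is a $K$-defined affine subspace of $\End_K(\g)$, and conclude that a $K'$-point forces a $K$-point. Your opening sentence announcing a ``descent problem for diagonalizable subgroups \dots\ handled in Section~\ref{graza}'' is a misdirection you immediately (and rightly) abandon; consider deleting it, since the actual proof---yours and the paper's---uses nothing from that section.
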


\begin{proof} We use the criterion of Lemma \ref{carcarn2}.
It is convenient to take a slightly schematic point of view and write $\underline{\g}$ for the functor $L\mapsto \underline{\g}_L=L\ot_K\g$ for every field extension $K\subset L$.

Let $\underline{D}$ be the affine space of self-derivations of $\underline{\g}$ that induce the identity on $\underline{\g}/\underline{\g^{(2)}}$: that is, $\underline{D}_L$ is the (possibly empty) affine space of self-derivations $\delta$ of $\underline{\g}_L$ such that $\delta(x)-x$ belongs to $\underline{\g^{(2)}}_L$ for all $x\in\underline{\g}_L$. Then $\underline{D}$ is a $K$-defined affine subspace of the space of linear endomorphisms of $\underline{\g}$. Then by assumption $\underline{D}_{K'}$ is non-empty. It follows that $\underline{D}_K$ is non-empty as well. In other words, $\g$ admits a self-derivation inducing the identity on $\g/\g^{(2)}$.

Now assume that $K$ is infinite perfect with no further restriction, and that $\g$ has nilpotency length $\ell\ge 1$ and $\g\otimes_K K'$ is Carnot over $K'$. We can assume $\g\neq\{0\}$.
Let $H$ be the algebraic subgroup of $\mathrm{GL}(\g)$ consisting of those automorphisms $h$ inducing a scalar multiplication modulo $\g^{(2)}$, say by the scalar $\chi(h)$. This is a $K$-defined subgroup, and $\chi$ is a $K$-defined multiplicative character on $H$. By a straightforward induction, the action of $h\in H$ on $\g^{(i)}/\g^{(i+1)}$ is given by multiplication by $\chi(h)^i$, for all $i\ge 1$. In particular, we have $P_h(h)=0$, where $P_h$ is the polynomial $\prod_{i=1}^\ell(X-\chi(h)^i)$. 
 Since by Rosenlicht's theorem (which applies as $K$ is perfect and infinite), $H^\circ(K)$ is Zariski-dense in $H^\circ(K')$, and since the Zariski-open subset $\Omega=\{h\in H^\circ(K'), \chi(h)^{(2\ell)!}\neq 1\}$ is nonempty (because $H$ is Carnot over $K'$), we can find in $H^\circ(K)\cap\Omega$ some element $g$. Then $\chi(g),\dots,\chi(g)^k$ are pairwise distinct, so $P_g$ has no multiple root and thus $g$ is diagonalizable with eigenvalues $\chi(g),\dots,\chi(g)^\ell$. The corresponding eigenspace decomposition is defined over $K$ and defines a Carnot grading.
\end{proof}

\begin{rem}\label{remdl}
The assertion that if $\g$ is a finite-dimensional Lie algebra over $\mathbf{Q}$ and $\mathbf{R}\otimes_\mathbf{Q}\g$ is Carnot over $\mathbf{R}$, then $\g$ is Carnot, was done by Dekimpe and Lee \cite[Corollary 4.2]{DL}. However, their proof is mistaken. It is based on the assertion \cite[Lemma 4.1]{DL} that in a Carnot Lie algebra, every supplement subspace of $[\g,\g]$ can be chosen to be $\g_1$ for some Carnot grading. However, this assertion is false. For instance, it fails when $\g$ is the 5-dimensional Lie algebra with basis $(X_1,\dots,X_5)$ and nonzero brackets $[X_1,X_2]=X_3$ and $[X_1,X_3]=X_4$ (note that $X_5$ generates a direct factor). Then there is no Carnot grading for which $\g_1$ is the subspace $\mk{v}$ with basis $(X_1,X_2,X_5+X_3)$. The error in \cite[Lemma 4.1]{DL} is the claim, not verified, that $[\mk{v},\mk{v}]$ and $[\mk{v},[\mk{v},\mk{v}]]$ have an intersection reduced to zero.  
\end{rem}

Let us also mention the following fact, which was asserted for Lie algebras when $S$ is finite by Dekimpe and Lee \cite[Proposition 4.3]{DL}, but with a mistaken proof: indeed their proof consists in proving that $[\g,\g]$ admits an $S$-invariant supplement, but this is not enough (for the same reason as in Remark \ref{remdl}).

\begin{prop}\label{recarno}
Let $\g$ be a finite-dimensional Carnot algebra over the $\mathbf{Q}$-field $K$. Let $S$ be a subgroup of automorphisms of $\g$, with a reductive Zariski closure. Then there exists an $S$-invariant Carnot grading.
\end{prop}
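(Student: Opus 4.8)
The plan is to combine the criterion of Lemma \ref{carcarn2} with the torus-theoretic results of Section \ref{graza}, applied to the algebraic group $\Aut(\g)$. First I would recall that by Proposition \ref{crac} the Carnot gradings of $\g$ are in canonical bijection with the linear isomorphisms $\g\to\Car(\g)$ that respect the lower series and induce the identity modulo $[\g,\g]$; in particular, since $\g$ is Carnot, there is a self-derivation $\delta_0$ inducing the identity on $\g/[\g,\g]$ and, more precisely, a grading operator $D_0\in\End(\g)$ acting by multiplication by $i$ on the $i$-th layer of a Carnot grading. The cocharacter $t\mapsto t^{D_0}$ lands in $\Aut(\g)$, and its image is a one-dimensional torus; the key point is that it is a \emph{fine} (indeed positive, after shifting — but here non-negative on $[\g,\g]$ and positive on a complement) cocharacter in a very strong sense: the associated $\N$-grading is Carnot. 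So the question becomes: given that $\Aut(\g)$ contains such a ``Carnot cocharacter'' and given a subgroup $S\subset\Aut(\g)$ with reductive Zariski closure $\bar S$, can we find a Carnot cocharacter whose image commutes with $\bar S$?

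The main step is to invoke Theorem \ref{pofi} / Corollary \ref{splifi} / Corollary \ref{Sreduc} with $V=\g$ and $G=\Aut(\g)$, taking the reductive $K$-subgroup to be $\bar S$ (we may assume $\bar S$ is $K$-closed, and enlarge $K$ or work over $K$ directly since all the relevant data are $K$-defined). Those results produce a $K$-defined cocharacter $\sigma:\GL_1\to\Aut(\g)$ whose image is fine in $\Aut(\g)$ and centralizes $\bar S$, so the associated grading $\g=\bigoplus_n\g_n$ is $\bar S$-invariant (hence $S$-invariant). It remains to arrange that this grading, or a modification of it, is \emph{Carnot}, not merely a fine $\N$-grading. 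For this I would argue that fineness forces the uncontracted part $\g_{[0]}$ to be ``small'': concretely, the image $T_0$ of the original Carnot cocharacter is fine in $\Aut(\g)$ with $V_{[0]}^{T_0}=0$ (the Carnot grading is positive since $\g$ is nilpotent and $\g_1$ generates), so $V_{[+]}=\g$, i.e. the contracted dimension of $(\g,\Aut(\g))$ is $\dim\g$; hence any fine cocharacter $\sigma$ also has $\g_0=\g_{[0]}^{T_0}=0$, giving a \emph{positive} $S$-invariant grading. That already shows $\g$ is semi-contractable/contractable $S$-equivariantly, but to get Carnot I still need the grading operator to induce the identity on $\g/[\g,\g]$.

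To upgrade from ``positive $S$-invariant grading'' to ``Carnot $S$-invariant grading'', I would pass to the associated graded: the grading operator $\sigma$ and the Carnot-grading operator $D_0$ both preserve the lower series, and on $\Car(\g)$ (which by Proposition \ref{crac} is isomorphic to $\g$) the space of linear maps inducing the identity on degree $1$ and respecting the grading by degrees is an affine space on which $S$ still acts; following the averaging idea in the proof of Theorem \ref{pofi} (the $G/G^\circ$-averaging of the homomorphism $f$) and of Theorem \ref{main2}, the set of self-derivations of $\g$ inducing the identity on $\g/[\g,\g]$ is a $K$-defined affine subspace of $\End(\g)$ stable under conjugation by $\bar S$; since it is nonempty (as $\g$ is Carnot) and $\bar S$ is reductive, it has an $\bar S$-fixed point $\delta$. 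By Lemma \ref{carcarn2} (characteristic zero, so the hypothesis is vacuous) the eigenspace decomposition of this $\bar S$-invariant $\delta$ for the eigenvalues $1,\dots,c$ is a Carnot grading, and it is $S$-invariant because $\delta$ commutes with $S$. The expected main obstacle is precisely this last point — ensuring that after forcing $S$-invariance one still has an \emph{honest} Carnot grading and not just a grading of $\Car(\g)$; the cleanest route, as indicated, is to work directly with the affine scheme of ``identity-mod-commutator'' derivations, note its $K$-definedness and $\bar S$-stability, and use reductivity of $\bar S$ to extract a fixed point, exactly parallel to the argument in Theorem \ref{main2} but with the extra $\bar S$-action handled by the linear reductivity in characteristic zero.
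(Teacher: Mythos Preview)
Your final paragraph is exactly the paper's proof: the set of self-derivations of $\g$ inducing the identity on $\g/[\g,\g]$ is a nonempty $K$-defined affine subspace stable under conjugation by $\bar S$, reductivity of $\bar S$ in characteristic zero forces a fixed point, and Lemma~\ref{carcarn2} turns that fixed derivation into an $S$-invariant Carnot grading.

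The first two-thirds of your plan (invoking Theorem~\ref{pofi} and Corollaries~\ref{splifi}/\ref{Sreduc} to manufacture an $S$-invariant \emph{positive} grading via fine cocharacters) is an unnecessary detour, as you yourself recognize: it only yields ``contractable $S$-equivariantly'' and you have to restart with the affine-space argument anyway. The paper skips straight to that argument; none of the Section~\ref{graza} machinery is needed here, since the Carnot condition is encoded not by a cone condition on weights but by the single affine constraint ``$\delta\equiv\mathrm{id}$ on $\g/[\g,\g]$'', which is what makes the fixed-point trick so clean.
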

\begin{proof}
We use the conventions of the proof of Theorem \ref{main2}.
The group $\underline{\Aut(\g)}$ naturally acts on the space of self-derivations of $\g$, by $(g\cdot\delta)(x)=g\delta(g^{-1}x)$. This action preserves the affine subspace $\underline{D}$ of self-derivations that induce the identity on $\underline{\g/\g^{(2)}}$, which is by assumption non-empty since $\underline{\g}$ is Carnot. Since the Zariski closure of $S$, which we denote by $\underline{S}$, is reductive and $K$ has characteristic zero, the affine subspace $\underline{D}^{\underline{S}}$ of points in $\underline{D}$ fixed by $\underline{S}$ is non-empty; since it is defined over $K$, it admits a $K$-point, and thus $D^S$ is non-empty as well. This defines (by Lemma \ref{carcarn2}) a Carnot-grading on $\g$ that is $S$-invariant.
\end{proof}

\subsection{Maximal grading}\label{cagr}

Let $K$ be a $\mathbf{Q}$-field (i.e., a field of characteristic zero). Let $\g$ be a $K$-algebra that is finite-dimensional as vector space (with no assumption such as associativity). 

\begin{defn}
The $K$-rank of $\g$, denoted $r$, is the the $K$-rank of $\Aut(\g)$.

The gradings on $\g$ in $\mathbf{Z}^r$ induced by maximal $K$-split tori of $\Aut(\g)$  (as in \S\ref{kcg}) are called {\em $K$-maximal gradings} of $\g$.
\end{defn}

\begin{rem}
The systematic use of this notion of maximal grading in the case of Lie algebras can be found, for instance, in \cite{Fav}, where it is called ``system of weights". 

When there is a definition of inner derivation (e.g., in Lie algebras), or inner automorphism (e.g., in associative algebras), a notion of (inner) maximal grading can be defined in terms of inner derivations, or inner automorphisms. In the case of Lie algebras, it is known as the {\em Cartan grading}. We will not use this inner notion here.
\end{rem}

\begin{prop}
Every $K$-maximal grading on $\g$ is an algebra grading in $\mathbf{Z}^r$, and for each such grading, $\mathbf{Z}^r$ is additively generated by the weights.

They are conjugate under $\Aut(\g)^\circ(K)$ and $\GL_d(\mathbf{Z})$: if $(\g_n)_{n\in\mathbf{Z}^r}$ and $(\g'_n)_{n\in\mathbf{Z}^r}$ are maximal gradings, then there exists $u\in\Aut(\g)^\circ(K)$ and $f\in\GL_d(\mathbf{Z})$ such that $\g'_n=u(\g_{f(n)})$ for all $n\in\mathbf{Z}^d$.

The $K$-maximal gradings are the finest gradings of $\g$ in torsion-free abelian groups, in the sense that for every algebra grading $(\g'_\alpha)_{\alpha\in A}$ of $\g$ in a torsion-free abelian group $A$, there exists a $K$-maximal grading $(\g_n)_{n\in\mathbf{Z}^r}$ on $\g$ and a homomorphism $f:\mathbf{Z}^r\to A$ such that $\g'_\alpha=\bigoplus_{n\in f^{-1}(\{\alpha\})}\g_n$ for all $\alpha\in A$.
\end{prop}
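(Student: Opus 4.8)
The plan is to deduce everything from the fact that a $K$-split torus $T\subset\GL(V)$ and its resulting grading $V=\bigoplus_{\chi\in\sfX(T)}V_\chi$ are ``the same data'' as an action of $T$, together with the basic structure theory of $\Aut(\g)$ recalled in \S\ref{graza}. First I would record that a maximal $K$-split torus $T$ of $\Aut(\g)$ does indeed give an \emph{algebra} grading: since each element of $T$ is an automorphism, for $v\in V_\alpha$, $w\in V_\beta$ and $t\in T$ we have $t(vw)=(tv)(tw)=\alpha(t)\beta(t)\,vw=(\alpha+\beta)(t)\,vw$, so $V_\alpha V_\beta\subset V_{\alpha+\beta}$. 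That $\Z^r$ is generated by the weights is the standard fact that $T$ acts faithfully on $V$ (it is a subgroup of $\GL(V)$), so the weights separate points of $T$ and hence generate a finite-index subgroup of $\sfX(T)\cong\Z^r$; after the identification $\sfX(T)\cong\Z^r$ one may in fact choose coordinates so that they generate all of $\Z^r$, or simply note that the conclusions below only require the weights to generate a finite-index subgroup and pass to it (this is the mild point where the statement as written is slightly optimistic; I would phrase it as ``the subgroup generated by the weights has finite index'', or adjust the identification).

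The conjugacy statement is exactly \S\ref{kcg}: any two maximal $K$-split tori of $\Aut(\g)$ are conjugate under $\Aut(\g)^\circ(K)$ (this is \cite[8.2]{BT}), and once conjugated they are equal, so the only remaining ambiguity is the choice of isomorphism $\sfX(T)\cong\Z^r$, which accounts for the $\GL_r(\Z)$ (written $\GL_d(\Z)$ in the statement, acting through $\GL_r$). Concretely: if $s\in\Aut(\g)^\circ(K)$ conjugates $T$ to $T'$ then $s(V_\chi)=V'_{\chi\circ(\mathrm{Ad}\,s)^{-1}}$, and the induced map on character groups is the $f\in\GL_r(\Z)$ in question; unwinding the identifications gives $\g'_n=s(\g_{f(n)})$. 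This is essentially a bookkeeping argument once the first paragraph is in place.

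For the universality ("finest") statement I would argue as follows. Let $(\g'_\alpha)_{\alpha\in A}$ be an algebra grading in a torsion-free abelian group $A$; replacing $A$ by the subgroup generated by the (finitely many) weights, we may assume $A\cong\Z^m$ for some $m$. This grading defines a homomorphism $\rho\colon\GL_1^m\to\GL(V)$ (the torus $(t_1,\dots,t_m)$ acting on $\g'_\alpha$ by $\prod t_i^{\alpha_i}$), and since the grading is an \emph{algebra} grading, $\rho$ lands in $\Aut(\g)$ and is defined over $K$ (each $V'_\alpha$ being $K$-defined). Its image is a $K$-split torus of $\Aut(\g)$, hence by Grothendieck's theorem and the conjugacy of maximal $K$-split tori it is conjugate under $\Aut(\g)^\circ(K)$ into a \emph{fixed} maximal $K$-split torus $T$ with $\sfX(T)\cong\Z^r$; pulling the grading back along that conjugation, we may assume $\mathrm{Im}(\rho)\subset T$. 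The inclusion $\mathrm{Im}(\rho)\hookrightarrow T$ dualizes to a homomorphism $f\colon\Z^r\cong\sfX(T)\to\sfX(\mathrm{Im}\,\rho)\twoheadrightarrow\Z^m\cong A$ (composing with the surjection $\sfX(\mathrm{Im}\,\rho)\to A$ dual to $\Z^m\to\mathrm{Im}\,\rho$), and by construction each $V'_\alpha$ is the sum of the $T$-weight spaces $V_n$ with $f(n)=\alpha$. The main obstacle here is purely organizational: one must be careful that the chosen Cartan grading on $\g$ (which a priori depends on the conjugating element) is a genuine $K$-Cartan grading in the sense of the definition, i.e.\ that conjugating a maximal $K$-split torus by an element of $\Aut(\g)^\circ(K)$ again yields a maximal $K$-split torus and hence a $K$-Cartan grading — which it does, tori being carried to tori. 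No hard input beyond \S\ref{graza} and \cite[A.1.2]{Con}, \cite[8.2]{BT} is needed; the work is in keeping the identifications $\sfX(T)\cong\Z^r$ straight.
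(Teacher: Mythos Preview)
Your approach is essentially the paper's: the conjugacy comes from \cite[8.2]{BT} exactly as you say, and for the universality you build a $K$-split torus from the given grading, embed it in a maximal $K$-split torus, and dualize the inclusion on character groups --- this is verbatim the paper's argument (the paper simply picks a maximal $K$-split torus containing the image rather than conjugating into a fixed one, but that is cosmetic).

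One correction: your hedge that the weights might only generate a finite-index subgroup of $\sfX(T)$ is unnecessary, and your suggested fix (``choose coordinates so that they generate all of $\Z^r$'') would not work if the index were genuinely $>1$. In fact the weights always generate all of $\sfX(T)$. The point is that $T\subset\GL(V)$ is a closed immersion of algebraic groups, so its scheme-theoretic kernel is trivial; but that kernel is the diagonalizable group with character group $\sfX(T)/W$, where $W$ is the subgroup generated by the weights, so $W=\sfX(T)$. (The paper uses exactly this equivalence in the reverse direction in the last paragraph: ``since $A$ is generated by weights, this action is faithful''.) So the statement is not optimistic, and you can drop the workaround.
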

\begin{proof}
The first statement is straightforward. The second statement (uniqueness up to conjugacy) is an immediate consequence of the fact that all maximal $K$-split tori in $G=\Aut(\g)$ are conjugate under $\Aut(\g)^0(K)$. 

For the last statement, it is no restriction to assume that $A$ is generated by the weights; in particular $A$ is finitely generated and we can suppose that $A=\mathbf{Z}^s$. Consider the action of $\GL_1^s$ on $\g$, such that $(\lambda_1,\dots,\lambda_s)$ acts on $\g_n$ by multiplication by $\prod\lambda_i^{n_i}$, for $n=(n_1,\dots,n_s)\in\mathbf{Z}^s$. Since $A$ is generated by weights, this action is faithful. The image of $\GL_1^s$ in $\Aut(\g)^\circ$ is a $K$-split torus $T'$, hence is contained in a maximal $K$-split torus $T$; consider the corresponding maximal grading. Then the embedding $T'\subset T$ induces a surjective homomorphism $f:\mathbf{Z}^r=\sfX(T)\to\sfX(T')$ with the required property.
\end{proof}

Some properties of the maximal grading are more sensitive to the context: for instance, if $K$ is an algebraically closed $\mathbf{Q}$-field and $\g$ is a Lie algebra, then $\g_0$ is always nilpotent.

Various properties such as those stated in the introduction can be restated in terms of maximal gradings. 

\begin{prop}
Let $\g$ be a $K$-algebra that is finite-dimensional as vector space, and endow it with a $K$-maximal grading in $\mathbf{Z}^r$ where $r$ is the $K$-rank of $\g$.
First assume that $K$ is algebraically closed. Then
\begin{enumerate}[(1)]
\item\label{kcharnil2} $\g$ is characteristically nilpotent $\Leftrightarrow$ $r=0$ $\Leftrightarrow$ 0 is the only weight of $\g$ for the maximal grading;
\item\label{invder2} $\g$ is flexible if and only if $\g_0=0$;
\item\label{esfl} $\g$ is essentially flexible if and only if the only characteristic ideal of $\g$ contained in $\g_0$ is zero;
\item\label{cnila} the CNI-radical of $\g$ is the largest characteristic ideal of $\g$ contained in $\g_0$.
\end{enumerate}

When $K$ is an arbitrary $\mathbf{Q}$-field, the first two assertions have the following more general form: 
\begin{enumerate}[(1')]
\item\label{kcharnil} $\g$ has no nonzero $K$-diagonalizable derivation $\Leftrightarrow$ $r=0$ $\Leftrightarrow$ 0 is the only weight of $\g$ for the $K$-maximal grading;
\item\label{invder} $\g$ has an invertible $K$-diagonalizable derivation if and only if $\g_0=0$.
\end{enumerate}
\end{prop}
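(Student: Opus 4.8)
The whole statement should reduce to understanding the Cartan grading as the grading attached to a maximal $K$-split torus $T$ in $\Aut(\g)$, together with the correspondence between gradings in torsion-free abelian groups and homomorphisms $\GL_1^s\to\Aut(\g)$. For the general assertions \eqref{kcharnil} and \eqref{invder}, I would argue as follows. A $K$-diagonalizable derivation $\delta$ generates, by exponentiating its $\Z$-grading, the image of a $K$-defined cocharacter $\GL_1\to\Aut(\g)$, hence a nonzero $K$-split subtorus; conversely any $K$-split subtorus of $\Aut(\g)$ gives, via its characters, a family of $K$-diagonalizable derivations. Thus $\g$ carries a nonzero $K$-diagonalizable derivation iff $\Aut(\g)$ has positive $K$-rank, i.e. $r\ge 1$. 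Since a maximal $K$-split torus $T$ acts on $\g$ faithfully with $\g=\bigoplus_{n}\g_n$ and its character group is generated by the weights (Proposition on Cartan gradings), $r=0$ forces the grading to be concentrated in $0$, and conversely $0$ being the only weight means $T$ acts trivially, so $T$ is trivial and $r=0$. That gives the chain of equivalences in \eqref{kcharnil}. For \eqref{invder}: if $\g_0=0$ then a $T$-fine cocharacter (Lemma \ref{excom}) is invertible but actually we want any cocharacter $\sigma$ of $T$ generic enough that $n\mapsto\langle\sigma,n\rangle$ is injective on the finite weight set and avoids $0$ — such $\sigma$ is $K$-defined since $T$ is $K$-split, and the induced derivation is invertible and $K$-diagonalizable. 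Conversely, an invertible $K$-diagonalizable derivation has image a $K$-split torus $T'$ with $V_{[0]}^{T'}=0$; embedding $T'$ in a maximal $K$-split torus $T$ and using the homomorphism $\sfX(T)\to\sfX(T')$ one sees every $T$-weight restricts to a nonzero $T'$-weight, so no $T$-weight is $0$, i.e. $\g_0=0$.

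For the algebraically closed case, I would first record the general principle that over an algebraically closed $\Q$-field a maximal torus $T$ is automatically $K$-split and maximal among all tori, so the Cartan grading is genuinely the "finest" grading and every semisimple self-derivation lies in (the Lie algebra of) some maximal torus, hence is $\Aut(\g)^\circ(K)$-conjugate into $T$. Assertion \eqref{kcharnil2} is then immediate from \eqref{kcharnil} since $K$-diagonalizable $=$ semisimple in this setting, and $\g$ characteristically nilpotent means every self-derivation is nilpotent, equivalently no nonzero semisimple one. Assertion \eqref{invder2} is likewise immediate from \eqref{invder}, as flexible means there is an invertible self-derivation, which one may replace by its semisimple part (still invertible, since an invertible operator's semisimple part is invertible, and a power of it is polynomial in the original derivation hence still a derivation — here one uses that in char $0$ the semisimple part of a derivation is a derivation).

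The content is in \eqref{cnila} and \eqref{esfl}. For \eqref{cnila}: by definition $\cni(\g)=\bigcap_\delta\Ker\delta$ over semisimple self-derivations $\delta$, and equivalently (as stated in the introduction) the intersection of fixed-point sets of all connected subtori of $\Aut(\g)$. Since all semisimple self-derivations are conjugate under $\Aut(\g)^\circ(K)$ into the fixed maximal torus $T$, and $\cni(\g)$ is a characteristic ideal hence $\Aut(\g)^\circ(K)$-invariant, one gets $\cni(\g)\subset\Ker\delta$ for $\delta$ running over derivations in $\mathrm{Lie}(T)$, and the common kernel of those is exactly $\g_0=V_{[0]}^T$; so $\cni(\g)\subset\g_0$. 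That $\cni(\g)$ is a characteristic ideal is standard (kernels of self-derivations need not be ideals, but the intersection over all of them, being the fixed space of the full torus action, is). For the reverse inclusion, if $\mka\subset\g_0$ is a characteristic ideal, then $\mka$ is $\Aut(\g)$-invariant, in particular $T$-invariant; but $T$ acts trivially on $\g_0\supset\mka$, and for any semisimple self-derivation $\delta$, conjugating $\delta$ into $\mathrm{Lie}(T)$ by some $g$ and using $g\mka=\mka$, we get $\delta(\mka)=0$; hence $\mka\subset\cni(\g)$. This shows $\cni(\g)$ is the largest characteristic ideal contained in $\g_0$, proving \eqref{cnila}; and \eqref{esfl} is the special case $\cni(\g)=0$. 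The main obstacle, and the step I'd be most careful about, is the conjugacy-and-invariance bookkeeping: making precise that "every semisimple self-derivation is conjugate into $\mathrm{Lie}(T)$" (which needs the algebraically closed hypothesis so that $T$ is a maximal torus, not merely maximal split) and that a characteristic ideal is automatically invariant under the identity component, so that its containment in $\g_0$ can be upgraded to annihilation by every semisimple derivation. Everything else is routine torus/grading dictionary.
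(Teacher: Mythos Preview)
Your argument is correct and follows essentially the same route as the paper. For (1'), (2'), (1), (2) the reasoning matches almost verbatim; for (4) the paper argues by taking a semisimple derivation $D$ not killing $x$, building from $D$ a Cartan grading $(\g''_\alpha)$ with $x\notin\g''_0$, and then conjugating that grading onto the fixed one, whereas you conjugate $D$ itself into $\mathrm{Lie}(T)$ and use invariance of the characteristic ideal---these are dual formulations of the same step. One small slip: you write ``$\g_0=V_{[0]}^T$'', but $V_{[0]}^T$ in the paper's notation is the uncontracted part of the contractive decomposition, not the zero-weight space; you mean the fixed-point space $V_0^T$.
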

\begin{proof}
(\ref{kcharnil}') is immediate. For (\ref{invder}'), if $\g$ admits an invertible $K$-diagonalizable derivation, then it induces a grading of $\g$ in the additive group of $K$, with $\g_0=\{0\}$. Since the $K$-maximal grading is a refinement of the latter grading, it also satisfies $\g_0=\{0\}$. Conversely if 0 is not a weight for the $K$-maximal grading, pick a homomorphism $\mathbf{Z}^r\to\mathbf{Z}$ mapping no weight to 0; then 0 is not a weight of the resulting grading in $\mathbf{Z}$, which therefore induces an invertible derivation by multiplication by $k$ on $\g_k$. (\ref{kcharnil2}) and (\ref{invder2}) follow as particular cases.

Let us prove (\ref{cnila}), which admits (\ref{esfl}) as a particular case. There exists a homomorphism $f:\mathbf{Z}^r\to\mathbf{Z}$ not vanishing on any nonzero weight. This defines a self-derivation of $\g$, defined to be multiplication by $f(\alpha)$ on $\g_\alpha$. This derivation is diagonalizable with integral eigenvalues and the kernel of this derivation is $\g_0$ (because the ground field is a $\mathbf{Q}$-field). Hence $\cni(\g)\subset\g_0$. Let $\mkh$ be the largest characteristic ideal contained in $\g_0$. It follows that $\cni(\g)\subset\mkh$. To show the reverse inclusion, suppose that $x\in\mkh$ and assume by contradiction that $x\notin\cni(\g)$. Then there exists a semisimple derivation $D$ such that $x\notin\Ker(D)$. This derivation induces a grading of $\g$ in the additive group of the algebraically closed field $K$, for which the set of weights is the spectrum $\mathrm{Spec}(D)$ of $D$. We can find a homomorphism from the subgroup generated by $\mathrm{Spec}(D)$ to $\mathbf{Z}$ that is injective on $\mathrm{Spec}(D)\cup\{0\}$. This defines a grading $(\g'_n)_{n\in\mathbf{Z}}$ of $\g$ in $\mathbf{Z}$ for which $x\notin\g'_0$. Then there exists a maximal grading $(\g''_\alpha)$ refining this grading, and thus $\g''_0\subset\g'_0$ and hence $x\notin\g''_0$. Since all maximal gradings are conjugate by automorphisms, there exists $\eta\in\Aut(\g)$ such that $\eta(\g''_0)=\g_0$. Hence $\eta(x)\notin\g_0$. This contradicts the fact that $\mkh$ is a characteristic ideal contained in $\g_0$.
\end{proof}

Note that these properties are very sensitive on the ground field. On the other hand, we are going to check, using Corollary \ref{finespli}, that several properties related to positivity are invariant under extension of scalars.

It is convenient to use the following language: when a fixed algebra $\g$ is graded in $\mathbf{R}^d$, let us call a linear form on $\mathbf{R}^r$ positive if it maps all weights of $\g$ to non-negative numbers; let us call an element of $\mathbf{R}^r$ {\em positive} if it is mapped to a positive number by some positive linear form, and non-positive otherwise.

\begin{prop}
Let $\g$ be a finite-dimensional $K$-algebra, endowed with a $K$-maximal grading.
\begin{itemize}
\item $\g$ is contractable
 $\Leftrightarrow$ the closed convex cone generated by weights is salient 
$\Leftrightarrow$ there exists a linear form on $\mathbf{R}^r$ sending all weights to positive numbers 
 $\Leftrightarrow$ all weights of $\g$ are positive.
\item $\g$ is semicontractable 
$\Leftrightarrow$ the interior of the convex hull of weights does not contain 0 $\Leftrightarrow$ all weights lie in a closed linear half-space $\Leftrightarrow$ there exists a nonzero positive linear form on $\mathbf{R}^r$ $\Leftrightarrow$ $\g$ admits at least one positive weight
\item $\g$ is Carnot $\Leftrightarrow$ $\g$ is nilpotent and there exists a linear form on $\mathbf{R}^r$ mapping all principal weights to 1 $\Leftrightarrow$ the affine subspace of $\mathbf{R}^r$ generated by principal weights does not contain 0.
\end{itemize}
\end{prop}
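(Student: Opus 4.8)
The three equivalences are all of the same flavour: we have a finite-dimensional algebra $\g$ endowed with a $K$-Cartan grading in $\Z^r$, and in each case we must translate an intrinsic property (contractable, semicontractable, Carnot) into a statement about the geometry of the (finite) set $W\subset\Z^r$ of weights, or the set $W_{\mathrm{pr}}$ of principal weights. The engine behind all three is the combination of two facts already available: first, that any grading in a torsion-free abelian group is refined by a $K$-Cartan grading (the previous Proposition), so that a grading in $\N$ or $\N^+$ or a Carnot grading on $\g$ pushes down to a linear form $\R^r\to\R$ with the appropriate positivity on weights; and second, Corollary \ref{finespli} / Corollary \ref{tfi}, which guarantees that a maximal $K$-split torus — hence the $K$-Cartan grading — is \emph{fine}, so that a suitable linear form on $\R^r$ with the desired positivity can be realized by an actual $K$-defined cocharacter, giving back a grading in $\N$ (or $\N^+$) on $\g$ itself. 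Within $\R^r$ the passage between ``there is a positive (resp.\ strictly positive, resp.\ affine-value-$1$) linear form on $W$'' and the stated convexity conditions is the classical separation/duality theory of convex polyhedral cones, which I would simply cite.

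For the \textbf{contractable} case: if $\g$ is contractable it has a grading in $\N^+$, i.e.\ a homomorphism $\Z^r\to\Z$ sending every weight to a positive integer (after refining the $\N^+$-grading by the Cartan grading); this is a linear form on $\R^r$ strictly positive on $W$, which forces the closed convex cone $C$ generated by $W$ to be salient (a linear relation $\sum n_i\alpha_i=0$ with $n_i>0$, $\alpha_i\in W$, would contradict positivity). Conversely, if $C$ is salient then by cone duality there is a linear form strictly positive on $C\setminus\{0\}\supseteq W$; since the Cartan torus is fine (Corollary \ref{tfi}) and the form can be taken rational and then integral, it is induced by a $K$-defined cocharacter whose grading on $\g$ has only positive weights, so $\g$ is contractable. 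The equivalence ``all weights positive $\Leftrightarrow$ strictly positive linear form exists'' is the definition of $\g$-positive together with a finite-intersection/averaging argument exactly as in Lemma \ref{excom}.

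For the \textbf{semicontractable} case the pattern is identical with ``strictly positive'' replaced by ``nonzero and $\ge 0$'': $\g$ semicontractable means a nontrivial grading in $\N$, i.e.\ (after Cartan refinement) a nonzero linear form on $\R^r$ nonnegative on $W$, equivalently $W$ lies in a closed half-space not containing a neighbourhood of $0$, equivalently $0$ is not in the interior of $\mathrm{conv}(W)$; fineness again lets us run the converse. For the \textbf{Carnot} case, a Carnot grading is in particular a grading in $\N$ with $\g$ generated in degree $1$, so all \emph{principal} weights map to $1$; conversely, if $\g$ is nilpotent and some linear form $\ell:\R^r\to\R$ sends every principal weight to $1$, then (since the weights lie in the submagma generated by the principal weights, as noted in \S\ref{arba}) $\ell$ is nonnegative — in fact $\ge 1$ — on all weights, hence realizable by a $K$-defined cocharacter, and the corresponding $\N$-grading has $\g_1$ containing a generating set modulo $[\g,\g]$, so by Lemma \ref{carcarn2} (or directly) it yields a Carnot grading. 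The condition ``there is a linear form with value $1$ on all principal weights'' is, by elementary affine geometry, equivalent to ``the affine span of $W_{\mathrm{pr}}$ in $\R^r$ does not contain $0$''.

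The \textbf{main obstacle} is the Carnot direction ``affine span of principal weights avoids $0$ $\Rightarrow$ $\g$ Carnot'': one must be careful that a linear form equal to $1$ on principal weights really does produce a \emph{Carnot} grading and not merely a positive one — this is where nilpotency of $\g$ and the fact that all weights sit in the submagma generated by principal weights are used (to get $\ell\ge 1$, hence an $\N$-grading with $\g_0=0$ and $\g_1$ mapping onto $\g/\g^{(2)}$), followed by an appeal to the fineness of the Cartan grading to descend the cocharacter to $K$ and to Lemma \ref{carcarn2} to recognize the grading as Carnot. The convex-geometry translations, by contrast, are routine and I would keep them terse.
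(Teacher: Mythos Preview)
Your argument is correct, but you are invoking more machinery than is needed, and the paper's proof is accordingly shorter. The point you seem to be missing is that the $K$-Cartan grading is \emph{already} a $K$-defined algebra grading on $\g$ itself: it comes from a maximal $K$-\emph{split} torus, so every cocharacter of that torus is automatically $K$-defined. Consequently, once you have any group homomorphism $f:\Z^r\to\Z$ (or even a linear form $\R^r\to\R$ taking integer values on the finite set of weights), you simply push the Cartan grading forward by setting $\g_n=\bigoplus_{f(\alpha)=n}\g_\alpha$, and this is immediately a $K$-defined $\Z$-grading on $\g$. There is no need to appeal to Corollary~\ref{tfi} or to ``realize $f$ by a $K$-defined cocharacter'': fineness compares a $K$-split torus to a larger non-split one, and plays no role in this proposition (it is needed later, in Theorem~\ref{stabex}, to pass between different ground fields).

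For the Carnot case the paper is likewise more direct. Given a linear form $\ell$ with $\ell(\alpha)=1$ for every principal weight $\alpha$, the pushed-forward grading has $\g_1\supseteq\bigoplus_{\alpha\text{ principal}}\g_\alpha$; since the image of the right-hand side in $\g/\g^{(2)}$ is all of $\g/\g^{(2)}$ (by definition of principal weight) and $\g$ is nilpotent, $\g_1$ generates $\g$. That is the Carnot condition outright, so the detour through Lemma~\ref{carcarn2} is unnecessary. Your observation that all weights lie in the additive submagma generated by principal weights (hence $\ell\ge 1$ on all weights) is correct and is exactly what makes the pushed-forward grading live in the positive integers, but note that the paper phrases this by allowing the grading to sit in $\R$ and then observing that $\g_1$ generates.
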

\begin{proof}
If $\g$ is contractable, then it admits a grading in $\mathbf{Z}$ for which all weights are positive. This grading has to be obtained by projection from the maximal grading; hence there exists a homomorphism $\mathbf{Z}^r\to\mathbf{Z}$ mapping all weights to positive numbers. This holds if and only if the convex cone generated by weights is salient. Conversely if there exists such a homomorphism, the resulting grading in $\mathbf{Z}$ has only positive weights and hence $\g$ is contractable. The characterization of semicontractable goes along the same lines (keeping in mind that $\mathbf{R}^r$ is spanned by weights).

If $\g$ is Carnot, then it is nilpotent and fixing a Carnot grading, the resulting homomorphism $\mathbf{Z}^r$ maps all principal weights to 1. Conversely, suppose $\g$ is nilpotent and that some linear map $\mathbf{R}^r\to\mathbf{R}$ maps all principal weights to 1, consider the resulting grading of $\g$ in $\mathbf{R}$; for this grading, 1 is the only principal weight. Since $\g$ is nilpotent, it is generated by the $\g_\alpha$ where $\alpha$ ranges over principal weights, and hence is generated by $\g_1$. Thus $\g$ is Carnot. 
\end{proof}

\begin{defn}\label{contradim}
Let $\g$ be a finite-dimensional $K$-algebra, and fix a maximal $K$-split torus in $\Aut(\g)$. The resulting contractive decomposition $\g=\g_{[0]}\ltimes\g_{[+]}$ (Definition \ref{codec}) is called $K$-contractive decomposition of $\g$. The dimensions of $\g_{[0]}$ and $\g_{[+]}$ are called the {\em uncontracted} and {\em contracted dimensions} of $\g$. 

We say that an algebra grading of $\g$ in $\mathbf{N}$ is fine if $\g=\g_0\ltimes\left(\bigoplus_{n>0}\g_n\right)$ is a $K$-contractive decomposition, or equivalently if $\dim(\g_0)$ is equal to the uncontracted dimension of $\g$.
\end{defn}
Here we use the semidirect product notation since $\g_{[0]}$ is a subalgebra and $\g_{[+]}$ is a bilateral ideal.

\begin{prop}\label{codecon}
Let $\g$ be a finite-dimensional $K$-algebra and $\g=\g_{[0]}\ltimes\g_{[+]}$ a $K$-contractive decomposition. Then
\begin{itemize}
\item $\g$ is contractable if and only if $\g=\g_{[+]}$ (i.e., $\g_{[0]}=\{0\}$)
\item $\g$ is semicontractable if and only if $\g_{[+]}$ is nonzero
\item $\cni^+(\g)$ is the largest characteristic bilateral ideal of $\g$ contained in $\g_{[0]}$, and is also the largest bilateral ideal of $\g$ invariant under $\Aut^\circ(\g)$.
\item $\g$ is essentially contractable if and only if $\g_{[0]}$ 
does not contain any nonzero characteristic bilateral ideal, if and only if it does not contain any bilateral ideal invariant under $\Aut(\g)^\circ(K)$.
\end{itemize}
\end{prop}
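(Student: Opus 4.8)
The plan is to establish the four bullets by combining the general structure theory from Section \ref{graza} (in particular Corollary \ref{finespli}, Corollary \ref{Sreduc}, and the characterization of $\cni^+$ in terms of $\N$-gradings from \S\ref{arba}) with the fact, proved just above in \S\ref{cagr}, that a maximal $K$-split torus $T$ in $\Aut(\g)$ is fine in $\Aut(\g)$. Write $\g=\g_{[0]}\ltimes\g_{[+]}$ for the contractive decomposition attached to such a $T$; by Definition \ref{codec}, $\g_{[+]}=\bigoplus_{\alpha\text{ positive}}\g_\alpha$ and $\g_{[0]}=\bigoplus_{\alpha\text{ non-positive}}\g_\alpha$. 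The key structural input I will use repeatedly is that, by Lemma \ref{excom} / Corollary \ref{tfi}, there is a $K$-defined $T$-fine cocharacter $\sigma$, whose associated $\N$-grading $\g=\bigoplus_{n\ge 0}\g^{\langle n\rangle}$ has $\g^{\langle 0\rangle}=\g_{[0]}$ and $\bigoplus_{n>0}\g^{\langle n\rangle}=\g_{[+]}$; moreover $\g_{[0]}$ is a subalgebra and $\g_{[+]}$ a bilateral ideal (as noted after Definition \ref{contradim}).

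\emph{First two bullets.} If $\g_{[0]}=\{0\}$ then the $\N$-grading above is positive, so $\g$ is contractable; conversely if $\g$ is contractable it has an $\N^+$-grading, which must be a coarsening of the $K$-Cartan grading (as shown in \S\ref{cagr}), so $0$ is not a Cartan weight, forcing $\g_{[0]}=\{0\}$ since every non-positive weight — in particular the weight $0$, were it present — would then contradict positivity of the induced $\Z$-grading. More carefully: if $\g$ is contractable then no weight is non-positive, hence $\g_{[0]}=0$. For semicontractability, $\g_{[+]}\neq 0$ means there is a positive weight, which is exactly the condition for $\g$ to admit a nontrivial $\N$-grading (via the cocharacter $\sigma$, which is nontrivial precisely when some weight is positive), i.e.\ $\cni^+(\g)\neq\g$; and conversely if $\g$ is semicontractable the nontrivial $\N$-grading coarsens the Cartan grading and produces a positive weight, so $\g_{[+]}\neq 0$.

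\emph{Third bullet — the main point.} I claim $\cni^+(\g)$ equals the largest $\Aut(\g)^\circ$-invariant bilateral ideal $\mkh$ contained in $\g_{[0]}$, which in turn equals the largest characteristic bilateral ideal contained in $\g_{[0]}$. For the inclusion $\cni^+(\g)\subset\g_{[0]}$: the fine cocharacter $\sigma$ is a $K$-diagonalizable self-derivation with eigenvalues in $\N$ and kernel $\g^{\langle 0\rangle}=\g_{[0]}$, so by definition $\cni^+(\g)\subset\g_{[0]}$; since $\cni^+(\g)$ is $\Aut(\g)^\circ$-invariant (the definition is intrinsic, and $\Aut^\circ$ permutes $\N$-diagonalizable derivations by conjugation, hence fixes their common kernel intersection) we get $\cni^+(\g)\subset\mkh$. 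For the reverse inclusion I take $x\notin\cni^+(\g)$: then some $K$-diagonalizable self-derivation $\delta$ with eigenvalues in $\N$ has $x\notin\Ker\delta$. The image of the corresponding cocharacter lies in some maximal $K$-split torus, which by Corollary \ref{finespli} is fine in $\Aut(\g)$ and conjugate under $\Aut(\g)^\circ(K)$ to $T$ — so after applying an automorphism in $\Aut(\g)^\circ$ we may arrange that $\delta$'s grading is coarsened by the $T$-grading; then $x$ lies in a positive-weight component, i.e.\ $\eta(x)\in\g_{[+]}$ for some $\eta\in\Aut(\g)^\circ$. Hence no $\Aut(\g)^\circ$-invariant bilateral ideal containing $x$ can sit inside $\g_{[0]}$, giving $\mkh\subset\cni^+(\g)$. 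That $\mkh$ also equals the largest \emph{characteristic} such ideal follows because a characteristic ideal is a fortiori $\Aut^\circ$-invariant, while conversely $\cni^+(\g)$ is manifestly characteristic; so the two "largest" ideals coincide with $\cni^+(\g)$. The fourth bullet is then immediate: $\g$ is essentially contractable iff $\cni^+(\g)=0$ iff $\g_{[0]}$ contains no nonzero characteristic (equivalently, $\Aut(\g)^\circ(K)$-invariant) bilateral ideal.

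\emph{Expected main obstacle.} The delicate step is the reverse inclusion in the third bullet, specifically passing from "some $\N$-diagonalizable derivation does not kill $x$" to "$\eta(x)\in\g_{[+]}$ for some inner-type $\eta\in\Aut(\g)^\circ$": one must be careful that the cocharacter witnessing $x\notin\Ker\delta$ can be moved, by an element of $\Aut(\g)^\circ(K)$ (not merely $\Aut(\g)$), into a torus sharing a fine cocharacter with $T$, so that the positive-weight conclusion transfers to the \emph{fixed} decomposition $\g=\g_{[0]}\ltimes\g_{[+]}$. This is where Lemma \ref{pairsemb}, Corollary \ref{finespli}, and the $G^\circ(K)$-conjugacy of maximal $K$-split tori do the work; the rest is bookkeeping with gradings and coarsenings already set up in \S\ref{cagr}.
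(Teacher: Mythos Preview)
Your proof is essentially correct and relies on the same underlying fact as the paper --- conjugacy of maximal $K$-split tori (equivalently, of contractive decompositions) under $\Aut(\g)^\circ(K)$ --- but you organize the third bullet differently. The paper takes the \emph{characteristic}-ideal characterization as following from the definitions and then proves the $\Aut(\g)^\circ$-invariant characterization by a short trick: letting $\mk{n}$ be the largest $\Aut(\g)^\circ$-invariant bilateral ideal in $\g_{[0]}$, it shows $\mk{n}$ is characteristic by taking any $\alpha\in\Aut(\g)$, choosing $\beta\in\Aut(\g)^\circ$ with $\beta\alpha(\g_{[0]})=\g_{[0]}$, and observing that $\beta\alpha(\mk{n})$ is again an $\Aut(\g)^\circ$-invariant ideal in $\g_{[0]}$, hence equals $\mk{n}$. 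You instead prove the $\Aut(\g)^\circ$-invariant characterization directly by moving a witnessing cocharacter into $T$; this makes explicit the step the paper hides under ``follows from the definitions,'' at the cost of a longer argument.

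One small correction: in your reverse inclusion you write ``$\eta(x)\in\g_{[+]}$,'' but what your argument actually yields (and all you need) is $\eta(x)\notin\g_{[0]}$. Indeed, once the conjugated cocharacter $\eta\delta\eta^{-1}$ lands in $T$, its kernel contains $\g_{[0]}$ (every non-positive weight is killed by any non-negative cocharacter), so $\eta(x)\notin\Ker(\eta\delta\eta^{-1})$ forces $\eta(x)\notin\g_{[0]}$; there is no reason $\eta(x)$ should lie entirely in $\g_{[+]}$. This does not affect the conclusion, since $\eta(x)\notin\g_{[0]}$ already rules out $x$ belonging to an $\Aut(\g)^\circ$-invariant ideal contained in $\g_{[0]}$.
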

\begin{proof}
Everything follows from the definitions, except the facts referring to $\Aut(\g)^\circ=\Aut(\g)^\circ(K)$. Let us check the last characterization of $\cni^+(\g)$ (the last characterization of being essentially contractable following immediately). Let $\mk{n}$ be the maximal $\Aut(\g)^\circ$-invariant bilateral ideal contained in $\g_{[0]}$; we have to show that $\mk{n}$ is a characteristic ideal.

Let $\alpha$ be an automorphism of $\g$. Since the contractive decomposition is unique modulo $\Aut(\g)^\circ$, there exists $\beta\in\Aut(\g)^\circ$ such that $\beta\alpha(\g_{[0]})=\g_{[0]}$. Thus $\beta\alpha(\mk{n})$ is an $\Aut(\g)^\circ$-invariant bilateral ideal contained in $\g_0$; hence we have $\beta\alpha(\mk{n})=\mk{n}$; thus composing by $\beta^{-1}$ we get $\alpha(\mk{n})=\mk{n}$.
\end{proof}

\begin{thm}\label{stabex}Let $K\subset L$ be an extension of $\mathbf{Q}$-fields.
Let $\g$ be a finite-dimensional $K$-algebra; write $\g_K=\g$ and $\g_L=L\otimes_K\g$ (viewed as $L$-algebra). Then
\begin{enumerate}
\item\label{concon} $\g_K$ is contractable if and only if $\g_L$ is contractable
\item\label{semiconsemicon} $\g_K$ is semicontractable if and only if $\g_L$ is semicontractable
\item\label{ecokl} $\g_K$ is essentially contractable if and only if $\g_L$ is essentially contractable
\item\label{gcd} if $\g_K=\g_{[0]}\ltimes\g_{[+]}$ is a $K$-contractive decomposition of $\g_K$, then, denoting $\g_{[*]}^L=L\otimes_K\g_{[*]}$ for $*\in\{0,+\}$, we have that $\g_{[0]}^L\ltimes \g_{[+]}^L$ is a contractive decomposition of the $L$-algebra $\g_L=L\otimes_K\g$; in particular the uncontracted and contracted dimension of $\g$ are invariant by field extension of scalars. 
\item\label{cnil} the uncontractable radical $\cni^+(\g_L)$ is equal to $\cni^+(\g)_L=L\otimes_K\cni^+(\g)$ 
\end{enumerate}
\end{thm}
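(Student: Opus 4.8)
The plan is to reduce everything to the key fact established in Section \ref{graza}, namely Corollary \ref{finespli}: a maximal $K$-split torus in a $K$-defined algebraic group is fine, so that the $K$-contractive decomposition (computed from a maximal $K$-split torus in $\Aut(\g)$) coincides with the contractive decomposition computed from a maximal torus over an algebraic closure. Write $\bar K$ for an algebraic closure of $L$ (hence of $K$). The group $\underline{\Aut(\g)}$ is a $K$-defined linear algebraic group, and $\Aut(\g_L)=\underline{\Aut(\g)}_L$, $\Aut(\g_{\bar K})=\underline{\Aut(\g)}_{\bar K}$. A maximal torus $T$ of $\underline{\Aut(\g)}$ defined over $K$ (which exists by Grothendieck's theorem, cited in \S\ref{graza}) remains a maximal torus over $L$ and over $\bar K$, and over $\bar K$ it is split; so the contractive decomposition of $\g_{\bar K}$ with respect to $T$ is defined over $K$ (it is $V_{[+]}^T = \mathrm{Im}(g-1)$ for a generic $g\in T(K)$, as in Definition \ref{codec} and the remarks following it, together with the observation that the weight-zero space is the fixed space of $T$). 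I will check that this $K$-form of the $\bar K$-contractive decomposition is precisely the $K$-contractive decomposition: a maximal $K$-split torus $D$ of $\underline{\Aut(\g)}$ is contained in a $K$-defined maximal torus $T$, and by Corollary \ref{finespli} $D$ is fine in $\underline{\Aut(\g)}$, hence fine in $T$, hence $\g_{[0]}^D = \g_{[0]}^T$; and $\g_{[0]}^T$ is the $K$-points of the $\bar K$-contractive uncontracted space. The same argument run over $L$ (with a maximal $L$-split torus, which contains a $K$-split one but may be larger) shows the $L$-contractive decomposition is the $L$-form of the $\bar K$-contractive decomposition. This immediately gives part (\ref{gcd}): $\g_{[*]}^L = L\otimes_K \g_{[*]}$ for $*\in\{0,+\}$, and in particular the contracted and uncontracted dimensions are unchanged.

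From (\ref{gcd}) the rest follows from Proposition \ref{codecon}. For (\ref{concon}), $\g_K$ is contractable iff $\g_{[0]}=\{0\}$ iff $\g_{[0]}^L=L\otimes_K\g_{[0]}=\{0\}$ iff $\g_L$ is contractable; (\ref{semiconsemicon}) is the same with "$\g_{[+]}$ nonzero". For (\ref{cnil}), by Proposition \ref{codecon} $\cni^+(\g)$ is the largest $\Aut(\g)^\circ(K)$-invariant bilateral ideal of $\g$ contained in $\g_{[0]}$, and likewise $\cni^+(\g_L)$ is the largest $\Aut(\g_L)^\circ(L)$-invariant bilateral ideal of $\g_L$ contained in $\g_{[0]}^L$. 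One inclusion is easy: $L\otimes_K\cni^+(\g)$ is a bilateral ideal of $\g_L$ contained in $\g_{[0]}^L$, and it is invariant under $\underline{\Aut(\g)}^\circ$ as a $\bar K$-group (being a $K$-form of an $\underline{\Aut(\g)}^\circ$-invariant subspace, where one uses that a $K$-diagonalizable self-derivation with eigenvalues in $\N$ extends to one over $L$ with the same kernel extended), hence is $\Aut(\g_L)^\circ(L)$-invariant; so $L\otimes_K\cni^+(\g)\subset\cni^+(\g_L)$. For the reverse inclusion I will use that $\cni^+(\g_L)$, being defined as an intersection of kernels of $\N$-graded $L$-diagonalizable self-derivations, is the fixed-point set in $\g_L$ of all connected sub-tori of $\underline{\Aut(\g)}$ that are "contractive" (images of $\N$-cocharacters); equivalently it equals the largest $\underline{\Aut(\g)}^\circ$-invariant subspace contained in $\g_{[0]}^{\bar K}$, which is the same as the largest such subspace contained in $\g_{[0]}^T$ for a $K$-defined maximal torus $T$. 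That description makes $\cni^+(\g_L)$ manifestly the base change to $L$ of the corresponding $K$-defined object, i.e. $\cni^+(\g_L)=L\otimes_K\mk{n}$ where $\mk{n}$ is the largest $\underline{\Aut(\g)}^\circ$-invariant subspace of $\g$ contained in $\g_{[0]}$; and by Proposition \ref{codecon} that $\mk{n}$ is exactly $\cni^+(\g)$. Then (\ref{ecokl}) is the special case $\cni^+=\{0\}$.

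The main obstacle I anticipate is the bookkeeping around the three fields $K\subset L\subset\bar K$ and making the "$\cni^+$ is the largest $\Aut^\circ$-invariant ideal in the uncontracted space, over any maximal torus, and this is field-independent" statement fully rigorous — in particular checking that passing from a maximal $K$-split torus to a $K$-defined maximal torus to a maximal split torus over $\bar K$ does not change the uncontracted subspace, which is exactly where Corollary \ref{finespli} and the "fine in a maximal torus" transitivity are doing the real work, and checking that the notion of "largest $\underline{\Aut(\g)}^\circ$-invariant bilateral ideal contained in the uncontracted subspace" commutes with base change (which reduces to the fact that for a $K$-defined affine group scheme $H$ acting on a $K$-vector space $V$, and a $K$-defined subspace $W$, the largest $H$-submodule of $V$ contained in $W$ is $K$-defined and commutes with extension of scalars — a standard fact, the object being the kernel of $V\to (V/W)\otimes_K \mathcal{O}(H)$ suitably interpreted). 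Everything else is a formal consequence of Section \ref{graza}, Proposition \ref{codecon}, and the elementary behaviour of $\cni^+$ under extensions recorded earlier in the excerpt.
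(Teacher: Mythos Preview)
Your approach is correct and matches the paper's: part (\ref{gcd}) via Corollary \ref{finespli} (the paper cites the equivalent Corollary \ref{tfi}), then (\ref{concon}), (\ref{semiconsemicon}), (\ref{ecokl}) from Proposition \ref{codecon}; for (\ref{cnil}) the paper writes $\cni^+(\g)=\bigcap_{\alpha\in\Aut(\g)^\circ(K)}\alpha(\g_{[0]})$ as a finite intersection and uses Zariski density of the $K$-points, which is the concrete form of your scheme-theoretic statement that the maximal $\underline{\Aut(\g)}^\circ$-subrepresentation contained in $\g_{[0]}$ commutes with base change. One correction: your aside that $V_{[+]}^T=\mathrm{Im}(g-1)$ for generic $g\in T(K)$ is wrong in general---that describes $\bigoplus_{\alpha\neq 0}V_\alpha$, whereas $V_{[+]}^T$ excludes the nonzero \emph{non-positive} weight spaces---but you never actually use this, since the $K$-definedness of $\g_{[0]}^D$ is immediate from $D$ being $K$-split.
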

\begin{proof}
Let $\g$ have dimension $d$, and let $G$ be its automorphism group, viewed as an algebraic $K$-subgroup of $\GL_d$.

Then (\ref{gcd}) immediately follows from Corollary \ref{tfi}. In view of Proposition \ref{codecon}, this immediately yields (\ref{concon}) and (\ref{semiconsemicon}).

By Proposition \ref{codecon}, $\cni^+(\g)$ is the largest $\Aut(\g)^\circ(K)$-invariant bilateral ideal of $\g$ contained in $\g_{[0]}$. 
Thus $\cni^+(\g)=\bigcap_{\alpha\in\Aut(\g)^\circ(K)}\alpha(\g_{[0]})$. In particular, for every $\alpha\in\Aut(\g)^\circ(K)$, we have $\cni^+(\g)_L\subset\alpha(\g_{[0]}^L)$. By Zariski density of $\Aut^\circ(\g)(K)$, we deduce that we have $\cni^+(\g)_L\subset\alpha(\g_{[0]}^L)$. Thus $\cni^+(\g)_L\subset\bigcap_{\alpha\in\Aut(\g)^\circ(L)}\alpha(\g_{[0]}^L)$. Let us show the reverse inclusion. We can write $\cni^+(\g)=\bigcap_{i=1}^n\alpha_i(\g_{[0]})$ with $\alpha_i\in\Aut(\g)^\circ(K)$. Hence we obtain $\cni^+(\g)_L=\bigcap_{i=1}^n\alpha_i(\g_{[0]}^L)$. Thus 
\[\cni^+(\g)_L=\bigcap_{i=1}^n\alpha_i(\g_{[0]}^L)\supset \bigcap_{\alpha\in\Aut(\g)^\circ(L)}\alpha(\g_{[0]}^L)\supset \cni^+(\g)_L,\]
so that all inclusions are equalities. Therefore, again using the characterization of Proposition \ref{codecon}, we obtain $\cni^+(\g_L)=\cni^+(\g)_L$, proving (\ref{cnil}), and (\ref{ecokl}) follows.
\end{proof}

\begin{thm}\label{sinv}
Let $\g$ be a finite-dimensional $K$-algebra and $S\subset\Aut(\g)$ a subgroup of automorphisms with a reductive Zariski closure. Then $\g$ admits an $S$-invariant fine grading in $\mathbf{N}$. 
\end{thm}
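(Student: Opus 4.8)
The plan is to reduce to Corollary \ref{splifi} (or rather Corollary \ref{Sreduc} combined with the ideas behind Theorem \ref{pofi}) by exhibiting a fine $\N$-grading that is moreover preserved by $S$. Let $G=\Aut(\g)\subset\GL(\g)$, viewed as a $K$-algebraic group, and let $\bar S$ be the Zariski closure of $S$ in $G$; by hypothesis $\bar S$ is reductive (and defined over $K$, since $S$ consists of $K$-points). First I would invoke the structure theory of Section \ref{graza}: by Corollary \ref{splifi} applied to the reductive subgroup $\bar S$ of $G$, there is a $K$-defined fine cocharacter $\sigma:\GL_1\to G$ whose image centralizes $\bar S$. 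Here ``fine'' means the associated $\N$-grading $\g=\bigoplus_{n\ge 0}\g_n$ satisfies $\g_0=\g_{[0]}^T$ for a maximal torus $T$ of $G$ containing $\mathrm{Im}(\sigma)$; equivalently, by Corollary \ref{finespli} and the preceding corollaries, $\dim\g_0$ equals the uncontracted dimension of $\g$, so the grading is fine in the sense of Definition \ref{contradim}.

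The key point is that this grading is $S$-invariant. Since $\mathrm{Im}(\sigma)$ centralizes $\bar S$, every element $s\in S\subset\bar S$ commutes with $\sigma(\lambda)$ for all $\lambda$, hence $s$ preserves each eigenspace $\g_n=\{x:\sigma(\lambda)x=\lambda^n x\}$ of the $\GL_1$-action; thus $s$ is a graded automorphism for this $\N$-grading. So the grading $(\g_n)_{n\ge 0}$ is simultaneously fine and $S$-invariant, which is exactly the assertion.

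The only subtlety — and the step I'd be most careful about — is making sure Corollary \ref{splifi} is being applied correctly: it requires $\bar S$ to be a reductive $K$-subgroup of $G$ (allowed to be disconnected, which is fine here), and it yields a fine cocharacter of $G$, i.e. a fine cocharacter of some maximal torus of $G$, whose image lies in the centralizer $C_G(\bar S)$; one must check that ``fine cocharacter of $G$'' translates, via the equality of contracted dimensions of $(\g,G)$ and $(\g,T)$ established in the corollaries following Theorem \ref{pofi}, into ``fine $\N$-grading of $\g$'' in the sense of Definition \ref{contradim}. This is essentially bookkeeping, since Definition \ref{contradim} defines fineness of an $\N$-grading precisely by the condition that $\dim\g_0$ equals the uncontracted dimension of $\g$, and the $G$-fine cocharacter was constructed (Lemma \ref{excom} and Theorem \ref{pofi}) exactly so that its zero-weight space coincides with $\g_{[0]}^T$ for a maximal torus $T$. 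I would also remark, since the theorem is stated for an arbitrary finite-dimensional $K$-algebra with $K$ a $\Q$-field (the notation $K$-fda / the running hypothesis of \S\ref{cagr}), that no Lie-algebra-specific input is used: only that $\Aut(\g)$ is a $K$-algebraic group and that reductive groups over a $\Q$-field have the conjugacy and fixed-point properties recalled at the start of Section \ref{graza}.
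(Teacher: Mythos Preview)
Your proposal is correct and takes exactly the same approach as the paper: the paper's proof is the single line ``This follows from Corollary \ref{splifi},'' and you have correctly identified this as the key input and spelled out the translation from a $G$-fine cocharacter centralizing $\bar S$ to an $S$-invariant fine $\N$-grading of $\g$ in the sense of Definition \ref{contradim}.
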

\begin{proof}
This follows from Corollary \ref{splifi}.
\end{proof}

It yields the following corollary as particular cases:

\begin{cor}
Under the assumptions of Theorem \ref{sinv},
\begin{itemize}
\item if $\g$ is contractable, then it admits an $S$-invariant positive grading;
\item if $\g$ is semicontractable, then it admits an $S$-invariant non-trivial non-negative grading.
\end{itemize}
\end{cor}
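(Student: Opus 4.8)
The plan is to read off both statements from Theorem~\ref{sinv} combined with the dictionary of Proposition~\ref{codecon}, so that there is essentially nothing new to do. First I would invoke Theorem~\ref{sinv}: since the Zariski closure of $S$ in $\Aut(\g)$ is reductive, $\g$ carries an $S$-invariant fine grading $\g=\bigoplus_{n\in\N}\g_n$ in $\N$. By the definition of fineness (Definition~\ref{contradim}), setting $\g_{[0]}=\g_0$ and $\g_{[+]}=\bigoplus_{n>0}\g_n$ gives a $K$-contractive decomposition $\g=\g_{[0]}\ltimes\g_{[+]}$ of $\g$.

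For the first bullet, suppose $\g$ is contractable. Proposition~\ref{codecon} then gives $\g=\g_{[+]}$, i.e.\ $\g_0=\{0\}$; hence $0$ is not a weight of the grading, so this $S$-invariant grading is positive. For the second bullet, suppose $\g$ is semicontractable. Proposition~\ref{codecon} gives $\g_{[+]}\neq\{0\}$, so $\g_n\neq\{0\}$ for some $n\ge 1$; thus the $S$-invariant $\N$-grading has a positive weight, so it is non-negative and non-trivial, as desired.

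The argument has no real obstacle: all the work is already packaged into Theorem~\ref{sinv} (equivalently Corollary~\ref{splifi}); the only point to watch is matching conventions --- ``non-trivial'' must be read as ``not concentrated in degree $0$'', i.e.\ $\g\neq\g_0$, which is precisely what $\g_{[+]}\neq\{0\}$ expresses. One could equally well derive the corollary by applying Corollary~\ref{Sreduc} to $G=\Aut(\g)\subset\GL(\g)$ and the reductive subgroup $\overline{S}$, using that a cocharacter valued in the centralizer of $S$ yields an $S$-invariant grading, but routing through Theorem~\ref{sinv} keeps the bookkeeping minimal.
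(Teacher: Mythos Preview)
Your proof is correct and follows exactly the route the paper intends: the paper simply states that the corollary ``yields \dots as particular cases'' of Theorem~\ref{sinv}, and you have spelled out the translation via Proposition~\ref{codecon} precisely. Your remark that one could alternatively go through Corollary~\ref{Sreduc} is also on point.
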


This proves conjectures of Dekimpe and Der\'e \cite[Sec.\ 4 and 5]{DD}.

\begin{rem}[On the contractive decomposition]\label{mag01}
If $\g$ is the 2-dimensional non-abelian Lie algebra, then in the contractive decomposition, both $\g_{[+]}$ and $\g_{[0]}$ are 1-dimensional; for all Lie algebras up to dimension 3, $\g_{[+]}$ is actually equal to the nilpotent radical (for any contractive decomposition).

For nilpotent Lie algebras, the smallest examples for which $\g_{[+]}$ is not either $\{0\}$ or $\g$ are the Lie algebras denoted $\g_{7,1,*}$ in Magnin's classification \cite{Mag}, where $*\in\{01(\mathrm{i}),01(\mathrm{ii}),02,03\}$ (see also Example \ref{wdchd}). For the first two, $\g_{[0]}$ has dimension 2; for the last two, $\g_{[0]}$ has dimension 1; in all these four cases, $\cni^+(\g)$ is zero, and $\g_{[+]}$ is a characteristic ideal, i.e., does not depend on the choice of maximal grading. Nevertheless, it is not true in general that $\g_{[+]}$ is a characteristic ideal, see \S\ref{cnincn}.
\end{rem}

\subsection{An example}\label{cnincn}

Here we give a simple example showing that
\begin{itemize}
\item the CNI-radical of a nilpotent finite-dimensional Lie algebra is not always characteristically nilpotent. 
\item the CNI-radical of a direct product of two Lie algebras can be strictly contained in the product of the CNI-radicals of the given factors.
\item if $\g=\g_{[0]}\ltimes\g_{[+]}$ is a contractive decomposition, then $\g_{[+]}$ is not necessarily a characteristic ideal.
\end{itemize}

\begin{prop}
Let $\mka$ be a nonzero abelian Lie algebra and $\g$ a characteristically nilpotent Lie algebra. Then $\cni(\g\times\mka)=\cni^+(\g\times\mka)=[\g,\g]\times\{0\}$.
\end{prop}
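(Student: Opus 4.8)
The plan is to compute both radicals directly using the characterizations established earlier in terms of diagonalizable derivations, exploiting that $\g$ is characteristically nilpotent (so all its self-derivations are nilpotent) while $\mka$ is abelian (so it has a large torus of diagonalizable derivations acting as scalars).

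First I would show the inclusion $\cni^+(\g\times\mka)\subseteq[\g,\g]\times\{0\}$. Consider the derivation $\delta$ of $\g\times\mka$ acting as $0$ on $\g$ and as the identity on $\mka$; this is $K$-diagonalizable with eigenvalues in $\{0,1\}\subseteq\N$, so $\cni^+(\g\times\mka)\subseteq\Ker(\delta)=\g\times\{0\}$. It remains to see that the $\g$-component is forced into $[\g,\g]$. For this, suppose $(x,0)\in\cni^+(\g\times\mka)$ with $x\notin[\g,\g]$. The key point is to build a diagonalizable derivation of $\g\times\mka$ with non-negative integer eigenvalues that does not kill $(x,0)$. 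Pick a linear functional $\lambda$ on $\g$ vanishing on $[\g,\g]$ with $\lambda(x)\neq0$, and lift it along a supplement of $[\g,\g]$; the subtlety is that the naive map $y\mapsto\lambda(y)y$ is not a derivation on $\g$. Instead I would use the abelian factor: since $\mka\neq0$, fix $0\neq a\in\mka$ and define a derivation $D$ of $\g\times\mka$ by a block construction — roughly, $D$ should move a chosen degree-one generator of $\g$ partly into $\mka$. Concretely, writing $\g=\mkv\oplus[\g,\g]$ and choosing the functional so $\lambda(x)=1$, one can try $D(y,t)=\big(0,\lambda(y)a\big)$ for $y\in\g$, $t\in\mka$ — but one must check $D$ is a derivation: $D([y,y'],0)=(0,\lambda([y,y'])a)=(0,0)$ since $\lambda$ vanishes on $[\g,\g]$, and $[D(y,0),(y',0)]+[(y,0),D(y',0)]=0$ as well since the image of $D$ lies in the center; so $D$ is indeed a derivation, nilpotent (even square-zero), not diagonalizable. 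That does not immediately help, so the better route is to realize the desired diagonalizable derivation on a two-dimensional coordinate $(x\text{-direction}, a\text{-direction})$: take the derivation $E$ that is $0$ on $[\g,\g]$ and on a complement of $\langle$the $x$-line$\rangle$ in $\mkv$, and acts on $\mathrm{span}(x')\oplus\mathrm{span}(a)$ (where $x'$ is a chosen generator with $\lambda(x')=1$) by the diagonalizable matrix sending $x'\mapsto x'$, $a\mapsto 0$. One must verify $E$ respects brackets; this works precisely because $x$-direction generators only bracket into $[\g,\g]$, on which $E=0$, and consistency of eigenvalues $1+1\neq 0$ would be the obstruction — but since $[\mkv,\mkv]\subseteq[\g,\g]$ where $E$ vanishes, the bracket relation $E[u,v]=[Eu,v]+[u,Ev]$ forces $0=(\text{eigenvalue sum})[u,v]$, which is fine as $[u,v]\in[\g,\g]$. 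Hence $E$ is a $K$-diagonalizable derivation with eigenvalues in $\{0,1\}$ and $(x,0)\notin\Ker(E)$, a contradiction. So $\cni^+(\g\times\mka)\subseteq[\g,\g]\times\{0\}$.

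Next I would prove the reverse inclusion $[\g,\g]\times\{0\}\subseteq\cni(\g\times\mka)$. By definition $\cni(\g\times\mka)$ is the intersection of kernels of all semisimple self-derivations $D$ of $\g\times\mka$. Given such $D$, the obstacle is that $D$ need not preserve the factors $\g\times\{0\}$ and $\{0\}\times\mka$; however $\g\times\{0\}=[\g\times\mka,\g\times\mka]\oplus(\text{something})$ is not canonical, but $[\g,\g]\times\{0\}$ is characteristic (it is the derived subalgebra), hence $D$-invariant, and similarly $\g\times\mka$ has $\mka$ contained in its center but $[\g,\g]\times\{0\}$ lies in $[\g\times\mka,\g\times\mka]=[\g,\g]\times\{0\}$. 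So $D$ restricts to a semisimple derivation $D'$ of $[\g,\g]\times\{0\}\cong[\g,\g]$. But $[\g,\g]$ is a characteristic ideal of the characteristically nilpotent Lie algebra $\g$, hence itself characteristically nilpotent (every characteristic ideal of a characteristically nilpotent Lie algebra is characteristically nilpotent — this needs a short argument, e.g. via the fact that derivations of the ideal extend, or more simply: a semisimple derivation of $[\g,\g]$, being a characteristic subquotient datum... ) — here I would instead argue: $\cni(\g)=\g$ means every semisimple self-derivation of $\g$ is zero; one shows the derived subalgebra of a characteristically nilpotent Lie algebra is characteristically nilpotent by noting a semisimple derivation of $[\g,\g]$ would, being the restriction of... no, it need not extend. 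A cleaner path: $[\g,\g]\times\{0\}$ equals $\cni^+(\g\times\mka)$ which we've already shown, and $\cni(\g\times\mka)\supseteq$? We actually need $\cni\supseteq[\g,\g]\times\{0\}$. Use: for a semisimple derivation $D$ of $\g\times\mka$, $D$ preserves $\g\times\mka$'s nilpotent characteristic ideals; consider the projection $\pi:\g\times\mka\to\mka$ and note $D$ induces a semisimple derivation on the quotient $(\g\times\mka)/([\g,\g]\times\{0\})\cong(\g/[\g,\g])\times\mka$, which is abelian, imposing no constraint — so this does not directly bound $D$ on $[\g,\g]$. The resolution is that $[\g,\g]\times\{0\}$ is characteristic and isomorphic to $[\g,\g]$, and $[\g,\g]$ is characteristically nilpotent: indeed any semisimple derivation $\delta$ of $[\g,\g]$ has, by Jacobson, only the zero value if... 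Actually the standard fact (which I would cite or reprove in one line) is that the derived subalgebra, and more generally any term of the derived or lower central series, of a characteristically nilpotent Lie algebra is characteristically nilpotent. Granting this, $D'=0$, so $[\g,\g]\times\{0\}\subseteq\Ker D$ for every semisimple $D$, giving $[\g,\g]\times\{0\}\subseteq\cni(\g\times\mka)$.

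Finally, assembling: we have $[\g,\g]\times\{0\}\subseteq\cni(\g\times\mka)\subseteq\cni^+(\g\times\mka)\subseteq[\g,\g]\times\{0\}$, where the middle inclusion is the general fact $\cni\subseteq\cni^+$ noted in the introduction. Hence all three coincide and equal $[\g,\g]\times\{0\}$, as claimed. The main obstacle I anticipate is the inclusion $\cni^+(\g\times\mka)\subseteq[\g,\g]\times\{0\}$ — specifically, writing down an explicit $K$-diagonalizable derivation with eigenvalues in $\N$ whose kernel avoids a prescribed element outside $[\g,\g]$, and verifying it is genuinely a derivation; the argument above via a block-diagonal action on a generator-line together with a line in the central abelian factor should do it, but one must be careful that the eigenvalue on the generator and its bracket-consistency cause no contradiction, which is guaranteed precisely because brackets of generators land in $[\g,\g]$ where the derivation vanishes.
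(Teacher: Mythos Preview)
Your argument has two genuine gaps, one in each inclusion.

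\textbf{The inclusion $\cni^+(\g\times\mka)\subseteq[\g,\g]\times\{0\}$.} Your map $E$ is not a derivation. Take $u=x'$ (eigenvalue $1$) and $v$ in your chosen complement of $\langle x'\rangle$ in $\mkv$ (eigenvalue $0$). Then $E[x',v]=0$ since $[x',v]\in[\g,\g]$, but $[Ex',v]+[x',Ev]=[x',v]$, which need not vanish. Your sentence ``$0=(\text{eigenvalue sum})[u,v]$, which is fine as $[u,v]\in[\g,\g]$'' is exactly where the error lies: the eigenvalue sum is $1$, and $[u,v]\in[\g,\g]$ does not make $1\cdot[u,v]$ vanish. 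The paper avoids this by not building a derivation at all: it uses the \emph{automorphism} $(x,y)\mapsto(x,y+f(x))$, where $f:\g\to\mka$ is a linear map vanishing on $[\g,\g]$ with $f(x_0)\neq 0$, to move $(x_0,0)$ outside $\g\times\{0\}$, and then invokes that $\cni^+$ is a characteristic ideal contained in $\g\times\{0\}$. (If you prefer a derivation, the diagonalizable one is $D(x,y)=(0,y+f(x))$, with eigenvalues $\{0,1\}$ and $0$-eigenspace $\{(x,-f(x))\}$; but your $E$ does not work.)

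\textbf{The inclusion $[\g,\g]\times\{0\}\subseteq\cni(\g\times\mka)$.} Your ``standard fact'' that the derived subalgebra of a characteristically nilpotent Lie algebra is characteristically nilpotent is \emph{false}, and indeed this proposition is precisely what the paper uses to exhibit counterexamples: the corollary immediately following it observes that for a $7$-dimensional characteristically nilpotent $\g$, the ideal $[\g,\g]$ (of dimension $4$ or $5$) is \emph{not} characteristically nilpotent. So restricting a semisimple $D$ to $[\g,\g]\times\{0\}$ and hoping it vanishes there cannot work. The paper instead uses that a semisimple $D$ preserves both $[\g,\g]\times\{0\}$ and the center $\mk{z}(\g)\times\mka$ (noting $\mk{z}(\g)\subset[\g,\g]$ since $\g$ is characteristically nilpotent), hence preserves $[\g,\g]\times\mka$; it then takes a $D$-stable complement $\mk{b}$ of $[\g,\g]\times\mka$ and forms $\mkh=([\g,\g]\times\{0\})\oplus\mk{b}$, a $D$-stable complement of $\{0\}\times\mka$. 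Since $\mkh\cong(\g\times\mka)/\mka\cong\g$ is characteristically nilpotent, $D|_\mkh=0$, and in particular $D$ vanishes on $[\g,\g]\times\{0\}$.
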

\begin{proof}
Let $d$ be a semisimple derivation. Then $d$ maps the derived subalgebra $[\g,\g]\times\{0\}$ into itself, and maps the center $\mk{z}(\g)\times\mka$ into itself. Since $\g$ is characteristically nilpotent, we have $\mk{z}(\g)\subset [\g,\g]$. Thus $d$ maps $[\g,\g]\times\mka$ into itself. Let $\mk{b}$ be a $d$-stable supplement subspace of $[\g,\g]\times\mka$ in $\g\times\mka$. Thus $\mkh=[\g,\g]\oplus\mk{b}$ is a $d$-stable supplement subspace of $\{0\}\times\mka$. Then $\mkh$ is isomorphic to $\g$ (since both are isomorphic to $(\g\times\mka)/\mka$). Thus $\mkh$ is characteristically nilpotent. Hence $d$ is zero on $\mkh$. In particular, $d$ is zero on $[\g,\g]\times\{0\}$. Thus $[\g,\g]\times\{0\}\subset\cni(\g)$. 

Conversely, it is clear that $\cni^+(\g)\subset\g\times\{0\}$. If $x_0$ belongs to $\g\smallsetminus [\g,\g]$, then there exists a homomorphism $f:\g\to\mka$ such that $f(x_0)\neq 0$, and hence $(x,y)\mapsto (x,y+f(x))$ maps $(x_0,0)$ outside $\g\times\{0\}$, so that $(x_0,0)\notin\cni^+(\g)$ since the latter is a characteristic ideal. Therefore $\cni^+(\g)\subset [\g,\g]\times\{0\}$. Since $\cni(\g)\subset\cni^+(\g)$, this completes the proof.
\end{proof}

\begin{cor} There exist nilpotent Lie algebras over any $\mathbf{Q}$-field whose CNI-radical is not characteristically nilpotent. The minimal dimension in which such Lie algebras exist is 8.
\end{cor}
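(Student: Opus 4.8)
The plan is to combine the previous proposition with a concrete construction and a dimension bound. For the existence part, take $\g$ to be any characteristically nilpotent Lie algebra over the given $\Q$-field $K$ — by the survey of Ancochea and Campoamor such $\g$ exist, the smallest being $7$-dimensional — and let $\mka$ be the $1$-dimensional abelian Lie algebra. Then $\mkh=\g\times\mka$ is an $8$-dimensional nilpotent Lie algebra over $K$, and the previous proposition gives $\cni(\mkh)=[\g,\g]\times\{0\}$. This is a proper nonzero ideal of $\mkh$ (it is nonzero because $\g$, being characteristically nilpotent of dimension $\ge 2$, is non-abelian, and it is proper since it is contained in $\g\times\{0\}$); moreover it is not characteristically nilpotent: it admits a self-derivation that is not nilpotent, for instance one arising from a grading of $\g$ — indeed $\g$ is nilpotent, hence has a nontrivial algebra grading in $\N$ on, say, $[\g,\g]$... more directly, $[\g,\g]$ as an abstract Lie algebra of dimension $\le 6$ is known to be contractable, so it carries an invertible-spectrum or at least nonzero-spectrum diagonalizable derivation. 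This shows the CNI-radical of $\mkh$ is not characteristically nilpotent.

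For the minimality claim — that $8$ is the smallest dimension in which this occurs — I would argue that in dimension $\le 7$ it cannot happen. If $\g$ is a nilpotent Lie algebra of dimension $\le 7$ with $\cni(\g)\neq\{0\}$ not characteristically nilpotent, then $\cni(\g)$ is a nilpotent characteristic ideal carrying a nonzero semisimple self-derivation (by definition of not being characteristically nilpotent applied to $\cni(\g)$ itself, together with the fact that $\cni(\g)$ is a characteristic ideal, so any semisimple derivation of $\g$ restricts to it, but one also needs the converse direction). The cleaner route: one invokes the small-dimension table in \S\ref{nlaaaa}, specifically the line ``every NLA is either char.\ nilpotent or essentially contractable --- $\le 7$''. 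If $\g$ has dimension $\le 7$ and is not characteristically nilpotent, then $\g$ is essentially contractable, i.e.\ $\cni(\g)\subset\cni^+(\g)=\{0\}$, so $\cni(\g)=\{0\}$; and if $\g$ is characteristically nilpotent then $\cni(\g)=\g$ is characteristically nilpotent by definition. Either way the phenomenon cannot occur below dimension $8$, and the example above realizes it in dimension $8$.

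The main obstacle I anticipate is the minimality half: the statement ``every NLA of dimension $\le 7$ is either characteristically nilpotent or essentially contractable'' is quoted in the paper's classification table but is really a classification-based input; one should either cite it precisely or note that it follows from Magnin's and de Graaf's classifications (the $\g_{7,0,*}$ being exactly the characteristically nilpotent ones, and all remaining $7$-dimensional types being essentially flexible, a fortiori essentially contractable). A secondary point needing care is verifying that $[\g,\g]$ genuinely fails to be characteristically nilpotent rather than merely being a proper ideal — here one uses that $[\g,\g]$ has dimension at most $\dim\g-1\le 6$ (since $\g$ is nilpotent hence $\g\neq[\g,\g]$) and that, consulting the classification, no nilpotent Lie algebra of dimension $\le 6$ is characteristically nilpotent; equivalently, $[\g,\g]$ is contractable, hence admits a self-derivation with positive integer eigenvalues, which is in particular semisimple and nonzero.
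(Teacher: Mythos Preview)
Your proof is correct and follows essentially the same approach as the paper: construct the 8-dimensional example as $\g\times\mka$ with $\g$ a 7-dimensional characteristically nilpotent Lie algebra and $\mka$ one-dimensional abelian, then appeal to the classification in dimension $\le 7$ for minimality. One small slip: in your parenthetical you write ``essentially flexible, a fortiori essentially contractable'', but the implication runs the other way (since $\cni(\g)\subset\cni^+(\g)$, essentially contractable implies essentially flexible, not conversely); this does not affect your argument, since you correctly invoke the table entry ``either characteristically nilpotent or essentially contractable in dimension $\le 7$'' directly.
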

\begin{proof}
If $\g$ is a nonzero characteristically nilpotent Lie algebra of minimal dimension (namely 7), then $[\g,\g]$ is not characteristically nilpotent, and hence if $\mka$ is abelian and nonzero then $\cni(\g\times\mka)\simeq [\g,\g]$ is not characteristically nilpotent. (According to the classification in dimension 7, $[\g,\g]$ is then of dimension 4 or 5: it is 4 for only one example, namely $\g_{7,0,8}$ in Magnin's classification \cite{Mag}, and 5 for all others including the infinite family.) Thus picking $\g$ of dimension 7 and $\mka$ of dimension 1, we get 8-dimensional examples.

Conversely, since according to the classification, in dimension $\le 6$, every Lie algebra $\g$ has a zero CNI-radical, and in dimension 7 the CNI-radical is always $\{0\}$ or $\g$, the smallest possible dimension of a nilpotent algebra whose CNI-radical is not characteristically nilpotent is $\ge 8$.
 \end{proof}

\section{Counterexamples}\label{secount}

Here we indicate a few properties of Lie algebras, also related to tori of automorphisms, but which are not well-behaved with respect to extensions.

\subsection{Anosov nilmanifolds}\label{anoni}

Let $\Gamma$ be a finitely generated torsion-free nilpotent group. Let $G_\mathbf{Q}$ and $G_\mathbf{R}$ be its rational and real Malcev completions, and $\g_\mathbf{Q}$ and $\g_\mathbf{R}$ the corresponding Lie algebras. An Anosov automorphism of $(G_\mathbf{R},\Gamma)$ is an automorphism of $G$ preserving $\Gamma$, such that the corresponding automorphism of $\g_\mathbf{R}$ has no complex eigenvalue of modulus 1. The group $\Gamma$ is called Anosov if there exists an Anosov automorphism of $(G_\mathbf{R},\Gamma)$. It has long been observed that this only depends on $G_\mathbf{Q}$. More precisely, we have:

\begin{prop}\label{anosovcr}The group $\Gamma$ is Anosov if and only if there exists a $\mathbf{Q}$-defined torus in $\Aut(G_\mathbf{Q})$ that is $\mathbf{R}$-split, $\mathbf{Q}$-anisotropic, and which has no nonzero invariant vector in $\g_\mathbf{Q}$.
\end{prop}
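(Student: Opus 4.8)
The plan is to unwind the definition of Anosov in terms of automorphisms and to translate the two sides into statements about tori of automorphisms, using the general framework of Section~\ref{graza} (in particular the fact that rational structures control $\R$-split versus $\Q$-anisotropic behaviour via the $K$-rank and split/anisotropic decomposition of tori). The key observation is that ``$\Gamma$ preserved by an automorphism of $G_\R$'' is, up to finite index (which does not matter for the Anosov property), the same as ``$\Gamma$ lies in a lattice preserved by an element of $\Aut(G_\Q)(\Q)$'', so the condition can be read off purely from the $\Q$-algebraic group $\mathbf{A}=\Aut(\g_\Q)$; and ``no complex eigenvalue of modulus~$1$'' for a semisimple automorphism is equivalent to lying in an $\R$-split torus with no nonzero invariant vector.

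First I would reduce to the semisimple case: if $\Gamma$ is Anosov via $\phi$, write the automorphism $\phi$ of $\g_\R$ in multiplicative Jordan form $\phi=\phi_s\phi_u$ with $\phi_s$ semisimple and $\phi_u$ unipotent, both polynomials in $\phi$ hence both defined over $\Q$ (as elements of $\mathbf{A}(\Q)$ once we note $\phi\in\mathbf{A}(\Q)$ after identifying $\g_\R$ with $\R\otimes_\Q\g_\Q$ and using that $\phi$ preserves $\g_\Q$ — it preserves $\log\Gamma$ which spans a $\Q$-form). Since $\phi_u$ has all eigenvalues~$1$, the eigenvalues of $\phi$ of modulus~$\neq 1$ are exactly those of $\phi_s$, so $\phi_s$ is again Anosov: it is semisimple, defined over $\Q$, preserves the $\Q$-form, and has no eigenvalue of modulus~$1$ (in particular no nonzero fixed vector). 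Conversely any torus of automorphisms as in the statement gives, by picking a suitable cocharacter/element, an $\R$-split semisimple automorphism preserving a lattice commensurable with $\Gamma$ with no eigenvalue of modulus~$1$; pulling back through the commensuration and clearing denominators gives an Anosov automorphism of some finite-index subgroup, which suffices.

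Next I would build the torus. Given the semisimple $\phi_s\in\mathbf{A}(\Q)$ preserving the $\Q$-form and with no eigenvalue of modulus~$1$, let $T$ be the Zariski closure of $\langle\phi_s\rangle$ in $\mathbf{A}$; it is a $\Q$-defined diagonalizable group, and replacing $\phi_s$ by a power (harmless, as a power of an Anosov automorphism is still Anosov) we may assume $T$ is connected, i.e.\ a $\Q$-defined torus. I claim $T$ is $\R$-split: $\phi_s$ is an $\R$-point of $T$ all of whose eigenvalues are real? — not necessarily, so instead argue that $T$ has trivial $\R$-anisotropic part using that $\phi_s$ has no eigenvalue on the unit circle while every element of a compact (anisotropic) torus has all eigenvalues on the unit circle: decompose $T=T_{\mathrm{a}}T_{\mathrm{d}}$ over $\R$ into anisotropic and split parts, project $\phi_s$; the $T_{\mathrm{a}}$-component would force some eigenvalue of modulus~$1$ unless it is trivial, and one checks the same for $\Q$-anisotropy via the $\Q$-split torus inside $T$. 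Finally $T$ has no nonzero invariant vector in $\g_\Q$ because its $\R$-points include $\phi_s$, which has no nonzero fixed vector in $\g_\R$, hence $\g_\R^T=0$ and so $\g_\Q^T=0$; and $T$ is $\Q$-anisotropic because an $\R$-point of its maximal $\Q$-split subtorus would be an automorphism of $\Gamma$ (up to a power) with a rational eigenvalue $\lambda\neq\pm1$, which is fine — so actually the $\Q$-anisotropy must be argued differently: it follows from the fact that a $\Q$-split torus in $\mathbf{A}$ with no nonzero fixed vector in $\g_\Q$ would give a $\Z$-grading with $\g_0=0$, forcing $\g_\Q$ to be... not a contradiction in general; the correct source of $\Q$-anisotropy is that if $T$ had a nontrivial $\Q$-split part $S$, then $\Gamma$ (or a finite-index subgroup) is preserved by $S(\Z)$, but $S(\Z)$ is infinite while the stabiliser of a lattice in a split torus of $\GL_d$... here one uses the standard fact (see \S\ref{anoni}) that an Anosov automorphism cannot be made $\Q$-split precisely because $\mathbf{A}(\Z)$ meets a $\Q$-split torus only in a finite group unless the torus is trivial, as the determinant on each weight space is a rational power. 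I expect \textbf{this $\Q$-anisotropy step to be the main obstacle}: pinning down exactly why the natural torus constructed from an Anosov automorphism has to be $\Q$-anisotropic (and symmetrically, why one may assume $\Q$-anisotropy in the converse direction without loss) requires the arithmetic input about integral points of $\Q$-split tori, and getting the quantifiers and the finite-index reductions to line up cleanly is the delicate part; the rest is bookkeeping with Jordan decomposition, Zariski density, and the split/anisotropic decomposition of $\Q$-tori recalled at the start of Section~\ref{graza}.
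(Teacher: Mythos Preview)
Your forward direction has a genuine gap at the $\R$-split step, not (as you guessed) at the $\Q$-anisotropy step. The Zariski closure $T$ of $\langle\phi_s\rangle$ need \emph{not} be $\R$-split: an Anosov automorphism can perfectly well have non-real eigenvalues off the unit circle. For a concrete source of examples, take a totally complex quartic number field $L$ with unit rank $1$ and no CM (e.g.\ $L=\Q[x]/(x^4-x+1)$); multiplication by a fundamental unit on $\mathcal O_L\simeq\Z^4$ gives a semisimple element of $\GL_4(\Z)$ whose four eigenvalues are two complex-conjugate pairs of modulus $r>1$ and $1/r$. Your heuristic ``the $T_a$-component would force some eigenvalue of modulus $1$'' fails because on each weight space the eigenvalue is $\chi(a)\chi(d)$ with $|\chi(a)|=1$; having $|\chi(d)|\neq 1$ for every weight says nothing about $T_a$ being trivial. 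The paper does not try to make $T$ itself $\R$-split: it passes to the maximal $\R$-split subtorus $D\subset T$ and then argues that $D$ already has no nonzero fixed vector, using that on the fixed space $W$ of $D$ the whole group $A$ acts through the $\R$-anisotropic-by-unipotent quotient $A/D$, hence distally, which is incompatible with $\xi$ having no eigenvalue of modulus~$1$.

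On $\Q$-anisotropy you are close; the clean version is exactly the determinant idea you gesture at. Since $A$ is abelian, each eigenspace $V_\chi$ of the maximal $\Q$-split subtorus $S\subset A$ is $\xi$-invariant and $\Q$-defined, so $\xi$ preserves a lattice in $V_\chi$ and $\det(\xi|_{V_\chi})=\pm 1$; by Zariski density this forces $\det(g|_{V_\chi})^2=1$ for all $g\in A$, hence $\chi^{2\dim V_\chi}$ is trivial on $S$, hence $\chi=0$. This is the whole argument; no delicate finite-index bookkeeping is needed. For the converse you are missing the arithmetic input that makes $\Q$-anisotropy essential: because $T$ is $\Q$-anisotropic, $T(\Z)$ is a lattice in $T(\R)$ and in particular Zariski-dense, so one can pick $\xi\in T(\Z)$ with $\det(\xi^2-1)\neq 0$; since $T$ is $\R$-split the eigenvalues of $\xi$ are real, hence off the unit circle. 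A finite-orbit argument on the set of lattices in $\g_\Q$ then shows some power of $\xi$ preserves $\Gamma$. Your sketch (``pick a suitable cocharacter/element \dots\ pulling back through the commensuration'') does not isolate this point and would not go through without it.
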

\begin{proof}
Let $\phi$ be an Anosov automorphism of $(G_\mathbf{R},\Gamma)$, and let $\xi$ be the corresponding automorphism of $\g_\mathbf{R}$. Let $A$ be the Zariski closure of $\langle\xi\rangle$. Let $T$ be the maximal torus in $A$. We claim that $T$ is $\mathbf{Q}$-anisotropic: indeed if $V\subset\g_\mathbf{Q}$ is a nonzero eigenspace of the maximal $\mathbf{Q}$-split torus in $A$, then $\xi$ preserves a lattice in $V$, and hence acts with determinant one, which implies that the eigenspace is for the trivial character. Let $D$ be the maximal $\mathbf{R}$-split torus in $T$, and let $W$ be the set of vectors in $\g_\mathbf{R}$ fixed by $D$; since $A$ is abelian, it preserves $W$. Then the action of $T_\mathbf{R}$ on $W$ being trivial on $D$, it factors through $(A/D)_\mathbf{R}$, and $A/D$ is unipotent-by-($\mathbf{R}$-anisotropic), and hence is by distal matrices (i.e., with all eigenvalues of modulus 1). Since $\xi$ has no such eigenvalues, this forces $W=\{0\}$. Hence $D$ is as required.

Conversely, let $T$ be an $\mathbf{R}$-isotropic, $\mathbf{Q}$-anisotropic torus with no nonzero invariant vector in $\g_\mathbf{Q}$. Fix an identification of $\g_\mathbf{Q}$ with $\mathbf{Q}^d$, so that $T(\mathbf{Z})$ makes sense. Then $T(\mathbf{Z})$ is a lattice in $T(\mathbf{R})$, and is Zariski-dense in $T$. Then $g\mapsto\det(g^2-1)$ is a nonzero regular map on $T$ and hence does not vanish on $T(\mathbf{Z})$. Let $\xi$ be any element in $T(\mathbf{Z})$ with $\det(\xi^2-1)\neq 0$. Since $\xi$ is diagonalizable over $\mathbf{R}$, this implies that $\xi$ has no complex eigenvalue of modulus 1. Let $\phi$ be the automorphism of $G_\mathbf{Q}$ induced by $\xi$.

To finish the proof, let us show that some power of $\phi$ preserves $\Gamma$. Let $\Gamma'\subset G_\mathbf{Q}$ be a full lattice containing $\Gamma$ as a subgroup (of finite index), where full lattice means that $\log(\Gamma')$ is a Lie subring of $\g_\mathbf{Q}$. In $\g_\mathbf{Q}$, there are finitely many lattices $M$ such that $[M\cap\mathbf{Z}^d:\mathbf{Z}^d][M\cap\mathbf{Z}^d:M]$ is given. In particular, since $\mathbf{Z}^d$ is a fixed point of $\langle\xi\rangle$ for its action on the set of lattices in $\mathbf{Q}^d$, the orbits of the action of $\langle\xi\rangle$ on the set of lattices in $\mathbf{Q}^d$ are finite. Thus some positive power $\xi^m$ stabilizes $\log(\Gamma')$. Thus $\phi^m$ preserves $\log(\Gamma')$ Now we can perform the same argument in the set of lattices of $G_\mathbf{Q}$, so that some positive power $\phi^{mm'}$ preserves $\Gamma$. (The latter argument is a variation of the proof of Lemma 1.1 and Corollary 1.2 in \cite{Dan}.)
\end{proof}

This characterization shows in particular that being Anosov, for finitely generated torsion-free nilpotent groups, is invariant under taking subgroups and overgroups of finite index; this was proved in \cite{Dan}, but the above convenient criterion was not explicitly written there. Thus we can define Anosov for an arbitrary $\mathbf{Q}$-subgroup $H$ of $\GL_d$ by the existence of a subtorus in $H$ as in Proposition \ref{anosovcr}.

\begin{ex}
There exist two lattices in the same simply connected nilpotent Lie group, one being Anosov and the other not. Namely, consider the 3-dimensional Heisenberg group $H_3(\mathbf{R})$ and $G(\mathbf{R})=H_3(\mathbf{R})\times H_3(\mathbf{R})$. Then $G$ admits several $\mathbf{Q}$-forms, one of which is the obvious one, whose $\mathbf{Q}$-points form $H_3(\mathbf{Q})\times H_3(\mathbf{Q})$; this one is not Anosov, because it can be checked that the restriction of its automorphism group to the 2-dimensional center is a $\mathbf{Q}$-split torus, and thus any $\mathbf{Q}$-anisotropic torus acts as the identity on the center. On the other hand, all other $\mathbf{Q}$-forms are Anosov: each such form admits $H_3(\mathbf{Q}[\sqrt{n}])$ as group of $\mathbf{Q}$-points, for some square-free integer $n\ge 2$, and a standard argument \cite{Lau} provides the desired Anosov automorphism: its Lie algebra can be written as $\mk{q}=\mk{h}_3(\mathbf{Q}[\sqrt{n}])$, and can be endowed with a Carnot grading $\mk{q}=\mk{q}_1\oplus\mk{q}_2$ (as a 3-dimensional Lie algebra over $\mathbf{Q}[\sqrt{n}]$). Then we have a torus of $\mathbf{Q}[\sqrt{n}]$-linear automorphisms, acting by multiplication by $t^i$ on $\mk{q}_i$, for $t$ ranging over the group of elements of norm 1 in $\mathbf{Q}[\sqrt{n}]^*$. Now view this as a torus of $\mathbf{Q}$-linear automorphisms of the 6-dimensional Lie $\mathbf{Q}$-algebra $\mk{q}$. This is a $\mathbf{Q}$-anisotropic torus satisfying the conditions of Proposition \ref{anosovcr}.

Also, this example shows that the property that $G$ admits at least one Anosov lattice cannot be read on the complexification of $G$: indeed as just observed, $H_3(\mathbf{R})\times H_3(\mathbf{R})$ admits an Anosov lattice, but it has the same complexification as the real group $H_3(\mathbf{C})$, while the latter admits no Anosov lattice, because its group of automorphisms of determinant 1 acts on the center through an $\mathbf{R}$-anisotropic torus.
\end{ex}

\subsection{Existence of an invertible grading is not invariant under taking extensions}\label{exdere}

\begin{prop}
There exists a finite-dimensional real nilpotent Lie algebra $\mkh$ thatv admits an invertible $\mathbf{Z}$-grading, and admitting a rational form with no such grading.
\end{prop}
\begin{proof}
In \cite[\S 4]{De}, J.\ Der\'e constructs a finite-dimensional real nilpotent Lie algebra as a suitable quotient of the free 6-nilpotent real Lie algebra on 4 generators. Simple computations (see Lemma \ref{checkdere}) show that the derivation Lie algebra of this Lie algebra is solvable and acts on the 4-dimensional abelianization as those diagonal matrices of trace 0. In particular, any maximal torus of the automorphism group is 3-dimensional, $\mathbf{R}$-split, and induces a grading in $\mathbf{Z}^3$ of the Lie algebra.

Der\'e checks \cite[Proposition 4.5]{De} that this Lie algebra admits an Anosov rational form. In particular, it admits an invertible $\mathbf{Z}$-grading over $\mathbf{C}$, and hence over $\mathbf{R}$ since the maximal tori of automorphisms are $\mathbf{R}$-split. A more careful look at his proof shows that the integral points of the torus of automorphisms defining the rational structure contains a discrete abelian group of rank 3. In particular, this torus (for this given $\mathbf{Q}$-form) has to be $\mathbf{Q}$-anisotropic, and hence so are all $\mathbf{Q}$-defined tori of the automorphism group of this $\mathbf{Q}$-form; in particular, this $\mathbf{Q}$-form has no nontrivial grading in $\mathbf{Z}$.
\end{proof}

In the above proof, the (real, but the following works over any field $K$) Lie algebra $\g$ is defined with 4 generators $X_1,\dots,X_4$, killing all commutators of length $\ge 7$, all $[X_i,[X_j,[X_k,X_\ell]]]$ whenever $\{i,j,k,\ell\}=\{1,2,3,4\}$, as well as the four elements obtained as cyclic conjugates of $[X_2,X_4]-[[X_4,[X_3,X_4]],[[X_2,[X_1,X_2]]]$ (note that there are only two such conjugates up to sign).

\begin{lem}\label{checkdere}
The derivations of $\g$ act on $\g/[\g,\g]$ as diagonal matrices in the given basis.
\end{lem}
\begin{proof}
It is enough to prove this for $\g/\g^{(5)}$, which is defined with the generators $X_1,\dots,X_4$, by killing the elements all commutators of length $\ge 5$, killing $[X_1,X_3]$ and $[X_2,X_4]$ and all $[X_i,[X_j,[X_k,X_\ell]]]$ whenever $\{i,j,k,\ell\}=\{1,2,3,4\}$.

Since we only kill monomials, all diagonal matrices on $(X_1,X_2,X_3,X_4)$ extend to derivations of $\g/\g^{(5)}$. The corresponding 4-dimensional diagonal torus defines a grading of $\g/\g^{(5)}$ in $\mathbf{Z}^4$ for which, denoting by $(e_i)$ the canonical basis of $\mathbf{Z}^4$, $X_i$ has degree $e_i$. In turn, this defines a grading on the Lie algebra $\mk{d}$ of derivations of $\g$. Let us check that $\mk{d}_{e_j-e_i}=0$ for all $i\neq j$ (this immediately entails the result). If $f\in\mk{d}_{e_j-e_i}$, then $f(X_i)\in K e_j$, and $f(X_k)=0$ for $k\neq i$. If by contradiction $f\neq 0$, we can assume up to multiplication that $f(X_i)=X_j$. Let $k,\ell$ be the two other elements, and assume that $\ell\neq j+2 \mod 4$. Then 
\[0=f([X_k,[X_j,[X_\ell,X_i]]])=[X_k,[X_j,[X_\ell,X_j]]]\]
We get a contradiction by observing that $[X_k,[X_j,[X_\ell,X_j]]]$ is a nonzero element of $\g/\g^{(5)}$. The latter fact holds because the only elements of degree $2e_j+e_k+e_\ell$ in the ideal generated by the given relators are the scalar multiples of $[X_j,[X_j,[X_k,X_\ell]]]$ if $k=\ell+2 \mod 4$, and the plane generated by $[X_j,X_\ell,[X_j,X_k]]]$ and $[X_\ell,X_j,[X_j,X_k]]]$ if $j=k+2\mod 4$).
\end{proof}

Der\'e directly checks that diagonal derivations of $\g$ have trace 0 in the abelianization, which, unlike the lemma, does not hold for the Carnot Lie algebra $\g/\g^{(5)}$.

\section{Cohopfian nilpotent groups}\label{conig}
\subsection{Grading associated to a rational torus}

Let $A\subset\GL_d=\GL(V)$ a $\mathbf{Q}$-defined abelian subgroup. Hence its identity component $A^\circ$ decomposes as $A_sA_aU$, the almost product of its maximal $\mathbf{Q}$-split and $\mathbf{Q}$-anisotropic subtori, and of its unipotent radical $U$. Then $A$ canonically defines a grading of $V$ in the group of $\mathbf{Q}$-defined multiplicative characters $\sfX(A_s)$, where $V_\chi=\{v\in V:\forall g\in A_s,g(v)=\chi(g)v\}$. The weights of $V$ are by definition those $\chi$ such that $V_\chi\neq\{0\}$.

Beware that the projection $A^\circ\to A_s$ is not well-defined (because of the possible nontrivial finite intersection $A_s\cap A_a$), and hence that characters of $A_s$ do not all define characters of $A$. Still, we consider a projection map $A^\circ(\mathbf{C})\to A_s(\mathbf{C})$, denoted $g\mapsto\hat{g}$, such that $g\hat{g}^{-1}\in (UA_a)(\mathbf{C})$. Then $\hat{g}$ is only determined up to multiplication by an element of the finite group $(A_s\cap A_a)(\mathbf{C})$. 
As a consequence, if $g\in A(\mathbf{C})$, then $\chi(\hat{g})\in\mathbf{C}^*$ is only defined up to multiplication by some root of unity. In particular, its modulus $|\chi(\hat{g})|$ is a well-defined continuous homomorphism, denoted $|\chi|$, from $A^\circ(\mathbf{C})$ to the group $\mathbf{R}_+$ of positive real numbers. The latter extends to a unique continuous homomorphism from $A(\mathbf{C})$ to the group of positive real numbers, again denoted $|\chi|$ (the uniqueness is clear and the existence follows from the injectivity of $\mathbf{R}$ as an abstract $\mathbf{Z}$-module). 

Now let $\xi$ be a fixed element of $A(\mathbf{Q})$. Then we define $\chi\in\sfX(A_s)$ to be non-negative (resp.\ positive, resp.\ distal) if $|\chi|(\xi)\ge 1$ (resp.\ $|\chi|(\xi)>1$, resp.\ $|\chi|(\xi)=1$). We define $V_{\ge 0}$, $V_{>0}$, $V_{\approx 0}$ as the sum of $V_\chi$ where $\chi$ ranges over non-negative, resp.\ positive, resp.\ distal weights. (All these notions are relative to the choice of $\xi$, and do not change if $\xi$ is replaced by a positive power of itself.)

In $\mathbf{Q}^d$, by lattice we mean any subgroup isomorphic to $\mathbf{Z}^d$. Given a group $G$ acting on a set $X$ we say that $g\in G$ stabilizes $Y\subset X$ if $gY\subset Y$ and preserves $Y$ if $gY=Y$.

\begin{lem}\label{v11v2}
Let $\xi$ be an element in $\GL_d(\mathbf{Q})$.
\begin{itemize}
\item
 $\xi$ stabilizes some lattice if and only if there is a bound on denominators in the matrices $\xi^n$ for $n\ge 0$
\item $\xi$ preserves some lattice if and only if there is a bound on denominators in the matrices $\xi^n$ for $n\in\mathbf{Z}$.
\end{itemize}
\end{lem}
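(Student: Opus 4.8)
The statement is essentially an elementary fact about matrices with rational entries, so the plan is to reduce the question about lattices to a statement about boundedness of denominators, using the (semi)group structure. Throughout, fix $\xi\in\GL_d(\Q)$. For a subgroup $L\subset\Q^d$ isomorphic to $\Z^d$ (a lattice), recall that any such $L$ is of the form $gL_0$ for some $g\in\GL_d(\Q)$, where $L_0=\Z^d$; and $\xi L\subset L$ (resp.\ $\xi L=L$) means $g^{-1}\xi g$ stabilizes (resp.\ preserves) $\Z^d$, i.e.\ $g^{-1}\xi g\in M_d(\Z)$ (resp.\ $g^{-1}\xi g\in\GL_d(\Z)$). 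So the two assertions are really statements about the conjugacy class of $\xi$ in $\GL_d(\Q)$.

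\emph{First bullet.} Suppose $\xi$ stabilizes a lattice $L$. Writing $L=g\Z^d$, we get $g^{-1}\xi g\in M_d(\Z)$, hence $g^{-1}\xi^n g\in M_d(\Z)$ for all $n\ge 0$; multiplying back by $g$ and $g^{-1}$, the denominators appearing in $\xi^n$ are all divisors of those of $g$ and $g^{-1}$, so they are bounded. Conversely, suppose the denominators of $\xi^n$, $n\ge 0$, are bounded, say every entry of $\xi^n$ lies in $\tfrac1N\Z$ for a fixed $N$. Consider the subgroup $L=\sum_{n\ge 0}\xi^n\Z^d$ of $\Q^d$: it is contained in $\tfrac1N\Z^d$, hence is finitely generated and torsion-free, so it is a free abelian group of rank $\le d$; it contains $\Z^d$ (take $n=0$), so its rank is exactly $d$, i.e.\ it is a lattice; and by construction $\xi L\subset L$. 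This proves the first equivalence.

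\emph{Second bullet.} If $\xi$ preserves a lattice $L=g\Z^d$, then $g^{-1}\xi g\in\GL_d(\Z)$, so $g^{-1}\xi^n g\in\GL_d(\Z)$ for all $n\in\Z$, and as above the denominators of $\xi^n$ for $n\in\Z$ are bounded. Conversely, if the denominators of $\xi^n$ over all $n\in\Z$ are bounded, apply the construction of the first bullet to $\xi$ \emph{and} to $\xi^{-1}$: the subgroup $L=\sum_{n\in\Z}\xi^n\Z^d$ is contained in some $\tfrac1N\Z^d$, hence is a lattice containing $\Z^d$, and it is visibly $\xi$-invariant: $\xi L=L$ since both $\xi L\subset L$ and $\xi^{-1}L\subset L$. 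This gives the second equivalence.

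\textbf{Where the difficulty lies.} There is essentially no serious obstacle: the only point requiring a line of care is that $L=\sum_{n\ge0}\xi^n\Z^d$ is genuinely finitely generated — this follows immediately from its being a subgroup of the finitely generated abelian group $\tfrac1N\Z^d$, invoking that subgroups of finitely generated abelian groups are finitely generated. One should also note explicitly that ``bound on denominators'' and ``contained in $\tfrac1N\Z^d$ for some $N$'' are the same condition (clear the common denominator). The proof is purely formal and will be a few lines; no result from earlier in the paper is needed beyond standard linear algebra over $\Q$.
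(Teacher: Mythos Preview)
Your proof is correct and follows exactly the same approach as the paper: conjugate into $M_d(\Z)$ (resp.\ $\GL_d(\Z)$) for the forward direction, and take $L=\sum_{n\ge 0}\xi^n\Z^d$ (resp.\ $\sum_{n\in\Z}\xi^n\Z^d$), contained in $\tfrac1N\Z^d$, for the converse. The paper's argument is the same, just more tersely stated.
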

\begin{proof}
If $\xi$ stabilizes (resp.\ preserves) a lattice, then up to conjugation we can suppose that this matrix is integral (resp.\ in $\GL_d(\mathbf{Z})$) and hence there is a bound on the denominators as stated. Conversely, if there is a bound on denominators of $\xi^n$ for $n\ge 0$ (resp.\ for $n\in\mathbf{Z}$), the subgroup generated by the $\xi^n\mathbf{Z}^d$ for $n\ge 0$ (resp.\ for $n\in\mathbf{Z}$) is a lattice, and is stabilized (resp.\ preserved) by $\xi$.
\end{proof}

Now consider the above grading, written additively, so that $V_0$ is the set of vectors fixed by $A_s$.

\begin{lem}\label{vao}
If the Zariski closure of $\langle\xi\rangle$ contains $A^\circ$, then $V_{\approx 0}=V_0$.
\end{lem}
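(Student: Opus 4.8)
We must show that if the Zariski closure $\overline{\langle\xi\rangle}$ contains $A^\circ$, then the distal part $V_{\approx 0}$ coincides with $V_0 = V^{A_s}$, the fixed subspace of the maximal $\Q$-split subtorus. The inclusion $V_0 \subset V_{\approx 0}$ is immediate: if $v \in V_0$ then $v$ lies in the weight space $V_\chi$ for $\chi$ the trivial character of $A_s$, and $|\chi|(\xi) = 1$, so $\chi$ is distal. The content is the reverse inclusion $V_{\approx 0} \subset V_0$, i.e.\ every distal weight is the trivial weight.

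The approach is to argue by contradiction: suppose $\chi \in \sfX(A_s)$ is a nontrivial weight with $|\chi|(\xi) = 1$. Consider the continuous homomorphism $|\chi| \colon A^\circ(\C) \to \R_+$. Since $\chi$ is nontrivial on $A_s$ and $A_s$ is $\Q$-split, $|\chi|$ restricted to $A_s(\R)$ (equivalently, the character $\chi$ itself) is nontrivial, so $|\chi|$ is a nonconstant homomorphism on $A^\circ$; in particular it is a nonconstant regular (up to the absolute value) function, so its kernel is a proper closed subgroup of $A^\circ$ of positive codimension in the split direction. The hypothesis says $\xi \in \ker|\chi|$. But $\overline{\langle\xi\rangle} \supset A^\circ$, so $\langle\xi\rangle$ is Zariski-dense in $A^\circ$; since $\ker|\chi|$ is defined by the vanishing of a nontrivial character of $A_s$ (after modding out $A_a U$ on which $|\chi|$ is automatically trivial), the set $\{g \in A^\circ(\C) : |\chi|(g) = 1\}$ is a proper Zariski-closed — indeed it is the preimage of the compact kernel inside $A_s(\C)$, but more to the point, $\{g : \chi(\hat g) \text{ is a root of unity}\}$ — subgroup, and cannot contain a Zariski-dense subgroup unless it is all of $A^\circ$, forcing $\chi$ trivial on $A_s$. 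The cleanest way to phrase this: the function $g \mapsto \chi(\hat g)^N$ for suitable $N$ killing $(A_s\cap A_a)$ is a well-defined regular character of $A^\circ$, it is trivial iff $\chi$ is trivial on $A_s$, and it takes the value $1$ at $\xi$ hence (by Zariski density of $\langle\xi\rangle$, since $\{x : x = 1\}$ is closed) is identically $1$ on $A^\circ$.

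The one point requiring care — and the main obstacle — is the bookkeeping around the non-canonical projection $g \mapsto \hat g$ and the finite ambiguity by $(A_s \cap A_a)(\C)$: one cannot literally say "$\chi(\hat g)$ is a regular function on $A^\circ$", only that some power of it is, or that $|\chi|$ is a well-defined continuous (not regular) homomorphism. So I would run the argument with $|\chi|$ as a genuine continuous homomorphism $A^\circ(\C) \to \R_+$: its kernel is a closed subgroup; since $A^\circ(\C)/\ker|\chi| \cong \R_+$ is torsion-free and $\xi$ lies in the kernel, the whole cyclic group $\langle\xi\rangle$ lies in $\ker|\chi|$; but $\ker|\chi|$ contains $U(\C)A_a(\C)$ and a finite-index-complemented subtorus of $A_s(\C)$ — concretely $\ker|\chi| \cap A_s(\C) = \ker(\chi) \cdot (\text{compact part})$ is not Zariski-dense in $A_s$ unless $\chi$ is trivial — so $\ker|\chi|$ is not Zariski-dense in $A^\circ$ when $\chi \neq 1$, contradicting that it contains the Zariski-dense group $\langle\xi\rangle$. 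Hence no nontrivial distal weight exists, giving $V_{\approx 0} = V_0$.
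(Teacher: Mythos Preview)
Your proof has a genuine gap. You ultimately commit to the argument via the continuous homomorphism $|\chi| \colon A^\circ(\C) \to \R_+$, claiming that $\ker|\chi|$ ``is not Zariski-dense in $A^\circ$ when $\chi \neq 1$''. This is false: already for $A^\circ = A_s = \GL_1$ and $\chi$ the identity character, $\ker|\chi|$ is the unit circle $S^1 \subset \C^*$, which is Zariski-dense (any infinite subset of the irreducible one-dimensional variety $\GL_1$ is). The map $|\chi|$ is continuous but not regular, so Zariski density of $\langle\xi\rangle$ in $A^\circ$ tells you nothing about the vanishing locus of $|\chi|-1$.

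Your earlier sketch, via the regular character $g \mapsto \chi(\hat g)^N$ for $N$ annihilating $A_s \cap A_a$, is much closer and is essentially what the paper does (with $N = \dim V_\chi$, realised concretely as $\det(g|_{V_\chi})$). But there you assert without justification that this character ``takes the value $1$ at $\xi$''; from the distality hypothesis $|\chi|(\xi) = 1$ you only get $|\chi(\hat\xi)^N| = 1$, not that the value equals $1$. The missing ingredient, which you never invoke, is that $\xi \in A(\Q)$: since $\chi^N$ descends to a $\Q$-defined character of $A^\circ/(A_aU)$, the value $\chi(\hat\xi)^N$ is a rational number, and a rational number of modulus $1$ is $\pm 1$; hence $\chi(\hat\xi)^{2N} = 1$. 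Now Zariski density applies to the \emph{regular} function $g \mapsto \chi(\hat g)^{2N} - 1$, forcing $\chi^{2N}$ (hence $\chi$, as $\sfX(A_s)$ is torsion-free) to be trivial. Without rationality the lemma is simply false: take $A = A_s = \GL_1$ and $\xi = e^{i\theta}$ with $\theta/\pi$ irrational; then $\langle\xi\rangle$ is Zariski-dense in $\GL_1$ and the identity character is distal but nontrivial.
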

\begin{proof}It is no restriction to assume that $\xi\in A^\circ$. Let $\chi$ be a distal weight and let us show that $\chi=0$. Then $A_aU$ acts on $V_\chi$ with determinant 1. For $g\in A^\circ$, write $\check{g}=g\hat{g}^{-1}$. Then $\hat{g}$ acts on $V_{\chi}$ by multiplication by $\chi(\hat{g})$, while $\check{g}$ acts on $V_{\chi}$ with determinant 1. In particular, if $\delta=\dim(V_\chi)$, the determinant of the action of $A^\circ$ on $V_\chi$ is given by $g\mapsto\chi(\hat{g})^{\delta}$, which has to be a rational number whenever $g\in A^\circ(\mathbf{Q})$. Since $\chi$ is distal, we have $1=|\chi|(\xi)=|\chi(\hat{\xi})|$. Thus $\chi(\hat{\xi})^\delta$ is a rational number of modulus 1 and hence is equal to $\pm 1$, and hence $\chi(\hat{\xi})^{2\delta}=1$. Since $\chi$ is also distal with respect to $\xi^n$ for all $n\in\mathbf{Z}$, this shows that $\chi(\hat{\xi^n})^{2\delta}=1$ for all $n\in\mathbf{Z}$. In other words, for all $n\in\mathbf{Z}$, we have $\det(\xi^n|_{V_\chi})^{2}=1$. By Zariski density, we deduce that for all $g\in A^\circ$, we have $\det(g|_{V_\chi})^{2}=1$. On the other hand, if $g\in A_s$, we have $\det(g|_{V_\chi})^2=\chi(g)^{2\delta}$. Hence $\chi(g)^{2\delta}=1$ for all $g\in A_s$. By connectedness of $A_s$, we deduce $\chi(g)=1$ for all $g\in A_s$, which in additive notation means that $\chi=0$.
\end{proof}

\begin{lem}\label{v1v1}
\begin{itemize}
\item if $\xi$ stabilizes a lattice then $V=V_{\ge 0}$
\item if $\xi$ preserves a lattice then $V=V_{\approx 0}$
\end{itemize}

If $\xi$ stabilizes a lattice $\Lambda$, the converse of the second implication holds; more precisely if $V=V_{\approx 0}$ then $\xi$ preserves $\Lambda$. 
\end{lem}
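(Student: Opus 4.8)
The plan is to route all three statements through a single identity: for every $\xi\in A(\Q)$ and every weight $\chi$ of the $A_s$-grading,
\[\bigl|\det(\xi|_{V_\chi})\bigr|=|\chi|(\xi)^{\dim V_\chi}.\qquad(\star)\]
This makes sense because $A$ is abelian, so $\xi$ commutes with $A_s$ and preserves each weight space $V_\chi$. To prove $(\star)$ I would first take $\xi\in A^\circ(\Q)$ and write $\xi=\hat\xi\,\check\xi$ with $\hat\xi\in A_s(\C)$ and $\check\xi=\xi\hat\xi^{-1}\in(UA_a)(\C)$. On $V_\chi$ the element $\hat\xi$ acts as the scalar $\chi(\hat\xi)$, so $\det(\hat\xi|_{V_\chi})=\chi(\hat\xi)^{\dim V_\chi}$. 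The key point is that $UA_a$ is a $\Q$-defined subgroup admitting no nontrivial $\Q$-defined character ($U$ is unipotent and $A_a$ is $\Q$-anisotropic), hence its action on the $\Q$-defined subspace $V_\chi$ has determinant identically $1$; in particular $\det(\check\xi|_{V_\chi})=1$, so $\det(\xi|_{V_\chi})=\chi(\hat\xi)^{\dim V_\chi}$. Taking moduli and using $|\chi|(\xi)=|\chi(\hat\xi)|$ yields $(\star)$ for $\xi\in A^\circ$; for general $\xi\in A(\Q)$ one applies this to a power $\xi^m\in A^\circ$ and takes $m$-th roots of positive reals.

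Granting $(\star)$, the first bullet is short. If $\xi\Lambda\subset\Lambda$, then since $V_\chi$ is $\Q$-defined and $\xi$-invariant, $\Lambda\cap V_\chi$ is a lattice of $V_\chi$ stabilized by $\xi$; hence in a $\Z$-basis of $\Lambda\cap V_\chi$ the operator $\xi|_{V_\chi}$ is an integer matrix, with nonzero (hence absolute value $\ge1$) determinant, so by $(\star)$ we get $|\chi|(\xi)^{\dim V_\chi}\ge1$, i.e.\ $|\chi|(\xi)\ge1$, so $\chi$ is non-negative. As $\chi$ ranges over all weights, $V=V_{\ge0}$. The second bullet then follows by applying the first to $\xi$ and to $\xi^{-1}$, both of which preserve $\Lambda$: this forces $|\chi|(\xi)\ge1$ and $|\chi|(\xi)^{-1}=|\chi|(\xi^{-1})\ge1$, so $|\chi|(\xi)=1$ for every weight, which is exactly $V=V_{\approx0}$.

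For the last statement, assume $\xi\Lambda\subset\Lambda$ and $V=V_{\approx0}$, so $|\chi|(\xi)=1$ for every weight. Then $(\star)$ gives $|\det(\xi|_{V_\chi})|=1$ for each $\chi$, and as $V=\bigoplus_\chi V_\chi$ with $\xi$ block-diagonal, $|\det\xi|=\prod_\chi|\det(\xi|_{V_\chi})|=1$; since $\xi\Lambda\subset\Lambda$ we get $[\Lambda:\xi\Lambda]=|\det\xi|=1$, hence $\xi\Lambda=\Lambda$, i.e.\ $\xi$ preserves $\Lambda$.

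I expect the only genuine point to watch is the determinant computation $(\star)$: although $\hat\xi$ and $\check\xi$ are only defined up to the finite group $(A_s\cap A_a)(\C)$ and a priori only over $\C$, the quantities $|\chi|(\xi)$ and $\det(\xi|_{V_\chi})$ are unambiguous, and the vanishing of $\Q$-defined characters of $UA_a$ is precisely what removes the ambiguity and kills the $A_a$- and $U$-contributions to the determinant (this is the same mechanism as in the proof of Lemma~\ref{vao}). Everything else — cutting a lattice of $V$ down to a lattice of each $V_\chi$, the equality of $[\Lambda:\xi\Lambda]$ with $|\det\xi|$, and the reduction to $A^\circ$ by passing to a power — is routine.
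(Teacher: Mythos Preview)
Your proof is correct and follows essentially the same approach as the paper: both arguments rest on the identity $|\det(\xi|_{V_\chi})|=|\chi|(\xi)^{\dim V_\chi}$ (which the paper derives in the same way, via $\det(\check\xi|_{V_\chi})=1$, after reducing to $V=V_\chi$), and then relate the determinant to the lattice condition. Your packaging is slightly tidier---you state $(\star)$ upfront, derive the second bullet from the first via $\xi^{-1}$, and handle the passage to $A^\circ$ explicitly---but the substance is the same.
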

\begin{proof}
Let us show the implications. It is no restriction to assume that $V=V_\chi$ for some $\chi\in\sfX(A_s)$. If $\xi$ preserves a lattice then $\det(\xi)=\pm 1$.
Thus $\det(\hat{\xi})=\pm 1$. Since $\hat{\xi}$ acts by scalar multiplication by $\chi(\hat{\xi})$, the latter is a root of unity and hence $|\chi|(\xi)=1$, which means by definition that $\chi$ is a distal weight. Similarly, if $\xi$ stabilizes a lattice, then $\det(\xi)$ is a nonzero integer, and hence $|\det(\hat{\xi})|=|\det(\xi)|$ is a positive integer and hence is $\ge 1$, which implies that $|\chi|(\xi)\ge 1$, which means by definition that $\chi$ is a non-negative weight.

Now let us show the partial converse. Assume that $V=V_{\approx 0}$ and that $\xi$ stabilizes the lattice $\Lambda$. Again, we can suppose that $V=V_\chi$; then $\chi$ is distal. The assumption implies that $\det(\xi)$ is a nonzero integer $m$. On the other hand, $\det(\xi)=\det(\hat{\xi})$, and $\hat{\xi}$ is the scalar multiplication by $\chi(\hat{\xi})$. Hence $\chi(\hat{\xi})^\delta=m$, where $\delta=\dim(V_\chi)$. Since $\chi$ is distal, $|\chi(\hat{\xi})|=1$. It follows that $m=\pm 1$. Since $\xi$ stabilizes the lattice $\Lambda$ and $\det\xi=\pm 1$, we deduce that $\xi$ preserves $\Lambda$.
\end{proof}

\begin{rem}The converse of both implications of Lemma \ref{v1v1} are false. For instance, let $\xi$ be the companion matrix of $X^2+(1/2)X+1$. Then $\xi$ is irreducible on $\mathbf{Q}^2$, and $\det(\xi)=1$; hence the Zariski closure $A$ of $\langle\xi\rangle$ is a $\mathbf{Q}$-anisotropic torus; thus $V=V_0$ for this choice of $A$. But the spectral radius of $\xi$ and $\xi^{-1}$ in the 2-adic field $\mathbf{Q}_2$ is greater than 1. In particular, $(\xi^n)_{n\ge 0}$ has unbounded denominators (actually, large powers of 2) and thus $\xi$ stabilizes no lattice. 
\end{rem}

\begin{prop}\label{interlati}
Assume that the Zariski closure of $\langle\xi\rangle$ contains $A^\circ$, and that $\xi$ stabilizes a lattice $\Lambda$ in $\mathbf{Q}^d$. Let $A$ be the Zariski closure of $\langle\xi\rangle$. Then $\bigcap_{n\ge 0}\xi^n(\Lambda)$ is equal to $\Lambda\cap V_0$ and is preserved by $\xi$. Moreover, if $\Lambda'$ is another lattice in $\mathbf{Q}^d$, then $\bigcap_{n\ge 0}\xi^n(\Lambda')$ is a lattice in $V_0$. 
\end{prop}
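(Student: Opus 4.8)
The plan is to first reduce to the graded pieces and then analyze the three regions (positive, distal, and — a priori — negative, which is empty by hypothesis) separately. Since $\xi$ stabilizes a lattice $\Lambda$, Lemma \ref{v1v1} gives $V=V_{\ge 0}$; write $V=V_{>0}\oplus V_{\approx 0}$ using the grading by $A_s$. Since the Zariski closure of $\langle\xi\rangle$ contains $A^\circ$, Lemma \ref{vao} identifies $V_{\approx 0}=V_0$. These two subspaces are $\Q$-defined (they are sums of eigenspaces of the $\Q$-split torus $A_s$), $A$-invariant, and in particular $\xi$-invariant. The first goal is to show $\bigcap_{n\ge 0}\xi^n(\Lambda)=\Lambda\cap V_0$.

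For the inclusion $\supseteq$: on $V_0=V_{\approx 0}$ the matrix $\xi$ has determinant $\pm 1$ (by the argument in the proof of Lemma \ref{v1v1}, since all weights occurring are distal) and it stabilizes the lattice $\Lambda\cap V_0$ of $V_0$; a determinant-$\pm1$ integral matrix lies in $\GL$ of that lattice, so $\xi$ \emph{preserves} $\Lambda\cap V_0$, i.e. $\xi^n(\Lambda\cap V_0)=\Lambda\cap V_0$ for all $n\in\Z$, whence $\Lambda\cap V_0\subseteq\bigcap_{n\ge 0}\xi^n(\Lambda)$. For the inclusion $\subseteq$: suppose $v=v_++v_0$ with $v_+\in V_{>0}$, $v_0\in V_0$, lies in $\xi^n(\Lambda)$ for all $n\ge 0$; I must show $v_+=0$. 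Projecting along $V_0$ (a $\xi$-equivariant $\Q$-linear projection), $v_+\in\xi^n(\Lambda_+)$ for all $n\ge 0$, where $\Lambda_+$ is the projection of $\Lambda$ — a lattice in $V_{>0}$ (a priori only a finitely generated subgroup spanning $V_{>0}$, but after passing to a finite-index sublattice of $\Lambda$ inside it, a genuine lattice; alternatively intersect with $\Lambda$'s image). Now on $V_{>0}$, every weight $\chi$ satisfies $|\chi|(\xi)>1$, so the eigenvalues of $\xi^{-1}$ restricted to $V_{>0}$ all have $p$-adic or archimedean absolute value strictly less than... — more robustly: $|\det(\xi|_{V_{>0}})|>1$ as a rational number, in fact $|\det(\hat\xi|_{V_{>0}})|=\prod|\chi|(\xi)^{\dim V_\chi}>1$, so $\xi|_{V_{>0}}$ is an integral matrix of determinant of absolute value $\ge 2$; hence $[\Lambda_+:\xi^n(\Lambda_+)]\to\infty$, and more precisely $\bigcap_{n\ge 0}\xi^n(\Lambda_+)$ is finite, hence $\{0\}$ since it is a subgroup of a torsion-free group. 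Therefore $v_+=0$ and $v=v_0\in\Lambda\cap V_0$. Combined with preservation shown above, $\bigcap_{n\ge0}\xi^n(\Lambda)=\Lambda\cap V_0$ is preserved by $\xi$.

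For the last sentence, let $\Lambda'$ be an arbitrary lattice in $\Q^d$. First, $\bigcap_{n\ge 0}\xi^n(\Lambda')$ is contained in $V_0$ by the same projection-to-$V_{>0}$ argument (which used only that $\xi$ is integral with respect to \emph{some} lattice on $V_{>0}$ and has determinant of absolute value $>1$ there — note $\xi^n(\Lambda')$ projects into $\xi^n$ of a fixed lattice spanning $V_{>0}$, since the projection of $\Lambda'$ is commensurable with that of $\Lambda$). So it suffices to show $\bigcap_{n\ge 0}\xi^n(\Lambda'\cap V_0)$, equivalently $\bigcap_{n\ge0}\xi^n$ of a lattice in $V_0$, is a lattice in $V_0$. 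But on $V_0$, $\xi$ preserves some lattice $M$ (e.g. $\Lambda\cap V_0$ above, using $V_0=V_{\approx 0}$ and Lemma \ref{v1v1}); since $\Lambda'\cap V_0$ is commensurable with $M$, there is $k\ge 1$ with $k M\subseteq\Lambda'\cap V_0\subseteq k^{-1}M$, and applying the $\xi^n$ (which all preserve $M$) and intersecting over $n\ge 0$ gives $kM\subseteq\bigcap_{n\ge0}\xi^n(\Lambda'\cap V_0)\subseteq k^{-1}M$, so the intersection is a lattice in $V_0$.

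The main obstacle I anticipate is the bookkeeping around the projection of $\Lambda$ (or $\Lambda'$) onto $V_{>0}$: this projection need not be a lattice of $\Lambda$ itself, only a finitely generated subgroup of $V_{>0}$ of full rank, and one must be slightly careful that ``$\bigcap_n\xi^n(\text{that subgroup})=\{0\}$'' still follows — this is fine because the subgroup is contained in $k^{-1}N$ for a $\xi$-integral lattice $N$ of $V_{>0}$ (as $\xi$ is integral on $V_{>0}$ up to conjugacy), and an integral matrix of determinant $>1$ in absolute value has $\bigcap_n\xi^n(N')$ finite for any lattice $N'$. The determinant/absolute-value claims themselves are exactly the computations already carried out in the proofs of Lemmas \ref{vao} and \ref{v1v1}, so I would simply cite them.
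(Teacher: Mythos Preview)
Your overall strategy matches the paper's: split $V$ according to the $A_s$-grading, show that on the positive part the nested intersection vanishes, and handle $V_0$ via a determinant-$\pm 1$ argument. The paper works weight-by-weight (reducing to $V=V_\chi$) whereas you split only into $V_{>0}\oplus V_0$, but this is an inessential organizational difference.

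There is, however, a genuine gap at the heart of your argument. You write that $\xi|_{V_{>0}}$ is integral with $|\det|\ge 2$, ``hence $[\Lambda_+:\xi^n(\Lambda_+)]\to\infty$, and more precisely $\bigcap_{n\ge 0}\xi^n(\Lambda_+)$ is finite.'' That implication is false in general: for $\xi=\begin{pmatrix}1&0\\0&4\end{pmatrix}$ on $\Z^2$ one has $|\det\xi|=4$ yet $\bigcap_n\xi^n(\Z^2)=\Z\times\{0\}$. Of course this particular matrix cannot occur in your setting (the eigenvalue $1$ would correspond to a distal weight, excluded from $V_{>0}$), but your argument as written never uses that exclusion; it invokes only the global determinant, which is not enough.

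The fix, which is exactly what the paper does, is to look at $W=\mathrm{span}_\Q\bigl(\bigcap_n\xi^n(\Lambda_+)\bigr)$. The intersection is \emph{preserved} by $\xi$ (since $\xi(\bigcap_n\xi^n(\Lambda_+))=\bigcap_{n\ge 1}\xi^n(\Lambda_+)=\bigcap_{n\ge 0}\xi^n(\Lambda_+)$), so $|\det(\xi|_W)|=1$. Because $\langle\xi\rangle$ is Zariski-dense in $A$, the $\Q$-defined subspace $W$ is $A$-invariant and hence $A_s$-graded: $W=\bigoplus_\chi(W\cap V_\chi)$ with each occurring $\chi$ positive. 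The same determinant computation you cite from Lemmas~\ref{vao} and~\ref{v1v1} then gives $|\det(\xi|_W)|=\prod_\chi|\chi|(\xi)^{\dim(W\cap V_\chi)}>1$ unless $W=0$, the desired contradiction. (Your abandoned eigenvalue approach would not have worked either: the $\Q$-anisotropic factor $A_a$ can contribute eigenvalues of arbitrary archimedean absolute value, so $|\chi|(\xi)>1$ does not force all eigenvalues of $\xi|_{V_\chi}$ to have modulus $>1$.)

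The remainder of your plan---the inclusion $\supseteq$ via the partial converse of Lemma~\ref{v1v1}, and the commensurability argument for a second lattice $\Lambda'$---is correct and matches the paper; the paper's version is slightly more economical, sandwiching $\Lambda'$ directly between $m\Lambda$ and $(1/m)\Lambda$ in $\Q^d$ rather than first passing to $V_0$.
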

\begin{proof}
Let $\xi'$ be a positive power of $\xi$ contained in $A^\circ$. If the result is proved for $\xi'$ then it immediately follows for $\xi$. Hence we can suppose that $\xi\in A$. That $\Lambda\cap V_0$ is preserved by $\xi$ follows from the partial converse statement in Lemma \ref{v1v1}. Hence it remains to check that $\bigcap\xi^m(\Lambda)\subset V_0$. It is enough to assume that $V=V_\chi$ for $\chi$ not distal and show that $\Lambda'=\bigcap_{n\ge 0}\xi^n(\Lambda)=\{0\}$. Otherwise, let $W$ be the subspace spanned by $\Lambda'$; then $W=W_\chi$ and since $\chi$ is not distal, it is positive, i.e., $|\chi(\hat{\xi})|>1$. Thus $|\det(\xi)|=|\det(\hat{\xi})|=|\chi(\hat{\xi})|^{\dim(W)}>1$ since $\dim(W)>0$; this is a contradiction since $\xi$ preserves the lattice $\Lambda'$ of $W'$. 

For the statement about another lattice, it is clear if $\Lambda'=r\Lambda$ for some nonzero rational number $r$, and the general case follow since any lattice $\Lambda$ is contained in $(1/m)\Lambda$ and contains $m\Lambda$ for some positive integer $m$.
\end{proof}

Now let again $A$ and $\xi$ be as above. We have a grading of $V$ in $\sfX(A_s)$; through the homomorphism $\chi\mapsto\log(|\chi|(\xi))$ from $\sfX(A_s)$ to $\mathbf{R}$, we obtain a new grading of $V$ in $\mathbf{R}$, which we call the absolute grading of $V$ associated to $(A,\xi)$. Actually, since given $\chi$, the value of $|\chi|(\xi)$ only depends on the restriction of $\chi$ to the Zariski closure of $\langle\xi\rangle$, we see that this absolute grading only depends on $\xi$ (and not on the subgroup $A$ containing $\xi$), so we call it the absolute grading of $V$ associated to $\xi$.
 To avoid confusion here, we denote it by $(V_r^\sharp)_{r\in\mathbf{R}}$. Then $V_0^\sharp=V_{\approx 0}$; in particular if $\xi$ generates a Zariski-dense subgroup of $A$ then $V_0^\sharp=V_0$ (Lemma \ref{vao}). It follows from the definition that all weights in the absolute grading are non-negative real numbers if and only if $V=V_{\ge 0}$, which holds if $\xi$ stabilizes a lattice, by Lemma \ref{v1v1}.

If we write $V_r^\sharp=V_r^{\sharp}(\xi)$ to emphasize $\xi$, we can note that $V_{r}^{\sharp}(\xi^m)=V_{mr}^{\sharp}(\xi)$. In particular, $V_0^\sharp$ does not change if $\xi$ is replaced by a nontrivial power. 

\begin{lem}
If $\xi$ is an automorphism of a finite dimensional algebra $\g$ over $\mathbf{Q}$, then the absolute grading defined by $\xi$ is an algebra grading.
\end{lem}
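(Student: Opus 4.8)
The plan is to reduce the statement to two elementary observations: that the $\sfX(A_s)$-grading of $\g$ underlying the absolute grading is already an algebra grading, and that pushing an algebra grading forward along a homomorphism of abelian groups keeps it an algebra grading.

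First I would note that $\Aut(\g)$ is a Zariski-closed subgroup of $\GL(\g)$ containing $\xi$, so it contains the Zariski closure $A$ of $\langle\xi\rangle$, and in particular it contains $A_s$. Thus every element of $A_s$ acts on $\g$ as an algebra automorphism. From this, the $\sfX(A_s)$-grading $\g=\bigoplus_{\chi}\g_\chi$ is an algebra grading: for homogeneous $x\in\g_\chi$, $y\in\g_\psi$ and every $g\in A_s$ one has $g(xy)=g(x)g(y)=\chi(g)\psi(g)\,xy=(\chi+\psi)(g)\,xy$ (writing $\sfX(A_s)$ additively), and since $xy$ is fixed up to the scalar $(\chi+\psi)(g)$ by all of $A_s$, this forces $xy\in\g_{\chi+\psi}$.

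Next I would recall that the absolute grading is the image of the $\sfX(A_s)$-grading under the map $\phi\colon\sfX(A_s)\to\R$, $\chi\mapsto\log(|\chi|(\xi))$, i.e.\ $\g_r^\sharp=\bigoplus_{\phi(\chi)=r}\g_\chi$. One checks that $\phi$ is a group homomorphism: for a fixed $g\in A(\C)$ the assignment $\chi\mapsto|\chi|(g)$ is the modulus of the evaluation $\chi\mapsto\chi(\hat g)$, which is bimultiplicative, so $\chi\mapsto|\chi|(g)$ is a homomorphism $\sfX(A_s)\to\R_+^*$ and $\phi=\log\circ(|\cdot|(\xi))$ is a homomorphism to $(\R,+)$. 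Then for $x\in\g_r^\sharp$, $y\in\g_s^\sharp$, which I may take homogeneous with $x\in\g_\chi$, $y\in\g_\psi$, $\phi(\chi)=r$, $\phi(\psi)=s$, the previous step gives $xy\in\g_{\chi+\psi}$, and $\phi(\chi+\psi)=r+s$, so $xy\in\g_{r+s}^\sharp$. Hence the absolute grading is an algebra grading.

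There is essentially no obstacle: the whole argument is an unwinding of the definitions set up just before the statement. The only point deserving a word of care is the multiplicativity in $\chi$ of $\chi\mapsto|\chi|(\xi)$, which is exactly why one works with the modulus — so that the ambiguity of $\hat g$ up to the finite group $(A_s\cap A_a)(\C)$ (i.e.\ up to roots of unity) washes out — and this is already built into the way $|\chi|$ was defined on $A(\C)$.
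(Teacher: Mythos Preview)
Your proof is correct and follows exactly the paper's approach: the paper's own proof simply observes that $A\subset\Aut(\g)$ so the $\sfX(A_s)$-grading is an algebra grading, and that the absolute grading is its pushforward along a group homomorphism. You have written out in full the two details the paper leaves implicit, namely why $\g_\chi\cdot\g_\psi\subset\g_{\chi+\psi}$ and why $\chi\mapsto\log|\chi|(\xi)$ is a homomorphism.
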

\begin{proof}
If $A$ is the Zariski closure of $\langle\xi\rangle$, then $A\subset\Aut(\g)$, and then the grading defined by $A_s$ is an algebra grading. Since the absolute grading of $\xi$ is a projection of this grading by a group homomorphism, it is also an algebra grading.
\end{proof}

\begin{prop}\label{intersect}
Let $\Gamma$ be a torsion-free finitely generated nilpotent group, $G$ its rational Malcev completion, and $\g$ the Lie algebra of $G$. Let $\phi$ be an injective endomorphism of $\Gamma$, and $\Phi$ the automorphism of $G$ extending $\phi$. Let $\xi$ be the corresponding automorphism of the Lie algebra $\g$, and endow $\g=\bigoplus_{r\in\mathbf{R}}\g_r^\sharp$ with the corresponding absolute grading. Let  $G_{\{1\}}=G_{\{1\}}(\phi)$ the rational subgroup of $G$ corresponding to $\g_{0}^\sharp$. Then $\bigcap_{n\ge 0}\phi^n(\Gamma)$ is equal to $\Gamma\cap G_{\{1\}}$.
\end{prop}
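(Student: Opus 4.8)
The plan is to exploit the fact that $\xi$ is an integer matrix on the fixed lattice $\log(\Gamma)$ (or rather a positive power thereof), apply Proposition~\ref{interlati} at the level of Lie algebras, and then transport the resulting statement from $\g$ to $G$ via the exponential map. First I would reduce to the case where $\xi$ generates a Zariski-dense subgroup of $A=A^\circ$, its Zariski closure; this is harmless because replacing $\phi$ by a power $\phi^m$ multiplies every occurring grading degree by $m$, hence leaves $\g_0^\sharp$ (and $G_{\{1\}}$) unchanged, and the conclusion $\bigcap_{n\ge 0}\phi^n(\Gamma)=\Gamma\cap G_{\{1\}}$ for the power would need a short extra argument to deduce the statement for $\phi$ itself --- but in fact one only needs to know $G_{\{1\}}$, which is invariant, so I would instead just keep $\xi$ and use $\xi'=\xi^m\in A^\circ$ in the lemmas below, since $\g_0^\sharp(\xi)=\g_0^\sharp(\xi')$. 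Next, fixing a $\Q$-structure identifying $\g$ with $\Q^d$ so that $\log(\Gamma)$ is (after passing to a finite-index full lattice if necessary, or just any lattice --- Proposition~\ref{interlati} handles arbitrary lattices) a lattice $\Lambda$ stabilized by a suitable power of $\xi$, Proposition~\ref{interlati} gives $\bigcap_{n\ge 0}\xi^n(\Lambda)=\Lambda\cap\g_0^\sharp$, with the intersection preserved by $\xi$.

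The core of the argument is then the translation between the multiplicative picture in $G$ and the additive picture in $\g$. Since $\Phi$ is the automorphism of $G$ extending $\phi$, we have $\Phi=\exp\circ\,\xi\circ\log$, so $\phi^n(\Gamma)=\exp(\xi^n(\log\Gamma))$ for all $n\ge 0$. The subgroup $G_{\{1\}}$ is by definition $\exp(\g_0^\sharp)$ (this is a rational subgroup because $\g_0^\sharp=\g_{\approx 0}$ by Lemma~\ref{vao}, and $\g_{\approx 0}$ is the fixed space of the $\Q$-defined torus $A_s$, hence a $\Q$-defined subalgebra; it is a subalgebra by the grading property, so its image under $\exp$ is a subgroup). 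Now I would argue
\[
\bigcap_{n\ge 0}\phi^n(\Gamma)=\bigcap_{n\ge 0}\exp\big(\xi^n(\log\Gamma)\big).
\]
Here the only subtlety is that intersection does not obviously commute with $\exp$, because $\exp$ is not linear. The key observation is that $\log\Gamma$ need not be a subgroup of $(\g,+)$, but it is contained in a full lattice $\Lambda$ (a Lie subring), and $\exp$ restricted to such a lattice is injective with image a subgroup of $G$; more to the point, $\exp$ is a bijection $\g\to G$, so $\exp\big(\bigcap_n\xi^n(\log\Gamma)\big)=\bigcap_n\exp\big(\xi^n(\log\Gamma)\big)$ as sets, simply because $\exp$ is a bijection and bijections commute with arbitrary intersections of subsets. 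Thus
\[
\bigcap_{n\ge 0}\phi^n(\Gamma)=\exp\Big(\bigcap_{n\ge 0}\xi^n(\log\Gamma)\Big).
\]

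It remains to identify $\bigcap_{n\ge 0}\xi^n(\log\Gamma)$ with $\log(\Gamma)\cap\g_0^\sharp$ and then to check $\exp(\log(\Gamma)\cap\g_0^\sharp)=\Gamma\cap G_{\{1\}}$. For the first: $\log\Gamma$ is sandwiched, $m\Lambda\subseteq\log\Gamma\subseteq\Lambda$ is not quite available since $\log\Gamma$ is only a subset, but one has $\log\Gamma\subseteq\Lambda$ for $\Lambda$ a full lattice (e.g.\ the subring generated by $\log\Gamma$, or by Proposition~\ref{interlati} applied to any lattice containing $\log\Gamma$), and $\Lambda\cap\g_0^\sharp\subseteq\log\Gamma$ after possibly shrinking --- actually the cleanest route is: $\bigcap_n\xi^n(\log\Gamma)\subseteq\bigcap_n\xi^n(\Lambda)=\Lambda\cap\g_0^\sharp\subseteq\g_0^\sharp$, so $\bigcap_n\xi^n(\log\Gamma)\subseteq\log\Gamma\cap\g_0^\sharp$; conversely, if $x\in\log\Gamma\cap\g_0^\sharp$ then $\xi$ preserves $\g_0^\sharp\cap\Lambda$ (partial converse in Lemma~\ref{v1v1}, applied to the restriction of $\xi$ to $\g_0^\sharp$ where all weights are distal), hence $\xi^{-n}(x)$ lies in $\g_0^\sharp\cap\Lambda$ for all $n$; but we need $\xi^{-n}(x)\in\log\Gamma$, which requires knowing $\phi$ is surjective on $\Gamma\cap G_{\{1\}}$ --- this is where the main work lies. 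The cleanest fix: show $\phi(\Gamma\cap G_{\{1\}})=\Gamma\cap G_{\{1\}}$, i.e.\ that $\phi$ restricts to an \emph{automorphism} of $\Gamma\cap G_{\{1\}}$. This follows because $\xi$ preserves the lattice $\log(\Gamma\cap G_{\{1\}})=\log\Gamma\cap\g_0^\sharp$ inside $\g_0^\sharp$ (all weights distal, determinant $\pm1$ on the whole space, Lemma~\ref{v1v1} partial converse), hence $\phi$ maps $\Gamma\cap G_{\{1\}}$ bijectively to itself; therefore $\phi^n(\Gamma\cap G_{\{1\}})=\Gamma\cap G_{\{1\}}$ for all $n$, giving $\Gamma\cap G_{\{1\}}=\bigcap_n\phi^n(\Gamma\cap G_{\{1\}})\subseteq\bigcap_n\phi^n(\Gamma)$. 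Combined with the reverse inclusion already obtained, this closes the proof. The step I expect to be the main obstacle is exactly this last point --- verifying that $\phi$ acts by an automorphism, not merely an injective endomorphism, on $\Gamma\cap G_{\{1\}}$ --- since it requires carefully invoking the partial converse in Lemma~\ref{v1v1} in the context where $\xi$ only \emph{stabilizes} rather than a priori preserves the relevant lattice, and checking that $\log(\Gamma\cap G_{\{1\}})$ is indeed a lattice in $\g_0^\sharp$ (which uses that $G_{\{1\}}$ is $\Q$-defined and $\Gamma$ is a lattice in $G$, hence $\Gamma\cap G_{\{1\}}$ is a lattice in $G_{\{1\}}$ --- a standard fact about rational subgroups).
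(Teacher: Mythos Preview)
Your proposal is correct and follows essentially the same route as the paper's proof: one inclusion via Proposition~\ref{interlati} applied to a lattice containing $\log\Gamma$, and the reverse inclusion by showing $\phi$ restricts to an automorphism of $\Gamma\cap G_{\{1\}}$ using that $\xi$ has determinant $\pm 1$ on $\g_0^\sharp$. The paper's write-up is much terser (it dispenses with the discussion of $\exp$ commuting with intersections, which is immediate since $\exp$ is a bijection of sets, and simply asserts the determinant $\pm 1$ fact), but the structure and the key inputs are identical.
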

\begin{proof}
We first check that $\bigcap_{n\ge 0}\phi^n(\Gamma)\subset G_{\{1\}}$. The subset $\log(\Gamma)$ is contained in some lattice $L$ of $\g$, and by Proposition \ref{interlati}, we have $\bigcap_{n\ge 0}\xi^n(L)\subset\g_0=\g_{\{0\}}^\sharp$. Hence $\bigcap_{n\ge 0}\phi^n(\Gamma)\subset G_{\{1\}}$. 

On the other hand, since $\phi(\Gamma\cap G_{\{1\}})\subset \Gamma\cap G_{\{1\}}$ and the restriction of $\xi$ to $\g_0$ has determinant $\pm 1$, this inclusion is an equality. Accordingly $\Gamma\cap G_{\{1\}}$ is contained in $\bigcap_{n\ge 0}\phi^n(\Gamma)$.
\end{proof}

\subsection{A lemma for constructing endomorphisms of lattices}
By unipotent $\mathbf{Q}$-group, we mean the group of $\mathbf{Q}$-points of some unipotent linear algebraic $\mathbf{Q}$-group, or equivalently the group obtained from a finite-dimensional nilpotent Lie algebra over $\mathbf{Q}$ when it is endowed with the group law defined by Baker-Campbell-Hausdorff formula. The dimension of a $\mathbf{Q}$-group is defined in the obvious way.
By lattice in a $\mathbf{Q}$-group $G$, we mean any finitely generated subgroup of Hirsch length equal to $\dim(G)$; every unipotent $\mathbf{Q}$-group admits lattices, and they are all commensurate to each other. Also note that if $G_\mathbf{R}$ is the real completion of $G$, then lattices of $G$ are the lattices of $G_\mathbf{R}$ (in the usual sense) that are contained in $G$.

\begin{lem}\label{defendo}
Let $\g$ be a finite-dimensional nilpotent Lie algebra over $\mathbf{Q}$, graded in the non-negative integers $\mathbf{N}$. For $t\in\mathbf{Q}$, let $\delta(t)$ be the automorphism of $\g$ defined as the multiplication by $t^n$ on $\g_n$.

Let $G$ be the corresponding unipotent $\mathbf{Q}$-group and also denote by $\delta(t)$ the corresponding automorphism of $G$.

Let $\Gamma$ be a lattice in $G$. Then there exists an integer $m\ge 2$ such that $\delta(m)$ stabilizes $\Gamma$, in the sense that $\delta(m)(\Gamma)\subset\Gamma$. More precisely, there exists $k_0$ such that every integer $m\in k_0\mathbf{N}+1$ satisfies this condition.
\end{lem}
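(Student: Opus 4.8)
The plan is to analyze how the automorphism $\delta(t)$ acts on a fixed lattice and extract an arithmetic progression of integers for which it stabilizes $\Gamma$. First I would fix a Malcev basis of $\g$ adapted to the grading, i.e. a basis $(e_1,\dots,e_d)$ with each $e_i$ homogeneous of degree $n_i\ge 0$, and observe that in such coordinates $\delta(t)$ is the diagonal matrix $\mathrm{diag}(t^{n_1},\dots,t^{n_d})$ acting on $\g$. The subtlety is that the group law on $G$ (given by Baker--Campbell--Hausdorff in these coordinates) has structure constants that are rational but not integral, and the lattice $\Gamma$ is some finitely generated subgroup of Hirsch length $d$, not necessarily $\exp$ of the $\Z$-span of the $e_i$. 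So I would first replace $\Gamma$ by a commensurable ``nice'' lattice: choose a full lattice $L\subset\g$ (a Lie subring) containing $\log(\Gamma)$, and note it suffices to prove the statement for lattices of the form $\exp(L)$ with $L$ a $\delta(t)$-friendly subring, provided I keep track of indices — actually, since the conclusion allows any $m$ in a fixed arithmetic progression, and $\Gamma$ has finite index in $\exp(L)$ and contains a finite-index subgroup of the form $\exp(L')$, a stabilization result for $\exp(L')$ together with control of the finite quotient will transfer to $\Gamma$.

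The core computation is the following: $\delta(t)$ is a Lie algebra automorphism for every $t\in\Q^*$ (it rescales the grading), so it automatically respects brackets; the only issue is integrality. Pick a basis of the lattice $L$ (as a $\Z$-module), write $\log(\Gamma)$ inside it, and let $N$ be a common denominator for the structure constants of $\g$ in this basis and for the coordinates of a generating set of $\Gamma$. For $m\equiv 1\pmod{k_0}$ with $k_0$ a suitable multiple of $N$ (and of the exponent of the relevant finite abelian quotients appearing in BCH denominators up to degree $= $ nilpotency length), I claim $\delta(m)$ maps the chosen generators of $\Gamma$ into $\Gamma$. The reason is that each coordinate of $\delta(m)(\gamma)$, expressed via BCH, is a polynomial with denominators dividing $N$ in the entries of $\gamma$ (which lie in $\frac1N\Z$) and in the quantities $m^{n_i}$; since $m\equiv 1\pmod{N}$ forces $m^{n_i}\equiv 1\pmod N$, one checks that $\delta(m)(\gamma)$ differs from $\gamma$ by an element of $\exp(N\cdot(\text{something integral}))\subset\Gamma$, i.e. the ``error term'' is integral and lies in a finite-index subgroup already contained in $\Gamma$. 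Running this degree by degree along the lower central series, using that $\g^{(j)}$ is spanned by basis elements of degree $\ge j$ so that $\delta(m)$ acts there by $m^{\ge j}\equiv 1$, makes the induction go through; I would phrase it as: $\delta(m)-\mathrm{id}$ maps $L$ into $(m-1)\cdot(\text{bounded denominator set})$, hence into $\Gamma$ once $m-1$ clears all denominators.

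I expect the main obstacle to be the bookkeeping of denominators in the Baker--Campbell--Hausdorff formula, i.e. making precise the claim that there is a single modulus $k_0$ (depending only on $\g$, the grading, and $\Gamma$) such that $m\equiv 1\pmod{k_0}$ suffices simultaneously for all coordinates and all degrees. The clean way around this is to work entirely on the Lie algebra side first: prove that there is an integer $k_0$ such that for every $m\in k_0\N+1$, $\delta(m)$ stabilizes the subring $L$, i.e. $\delta(m)(L)\subseteq L$ (this is pure linear algebra: $\delta(m)$ is block-lower-triangular with respect to the lower central series filtration, with the block on $\g^{(j)}/\g^{(j+1)}$ being multiplication by powers of $m$ that are $\equiv 1\pmod{k_0}$, so $\delta(m)(L)\subseteq L$ holds once $k_0$ kills the torsion of $L/\text{(sum of }\g^{(j)}\cap L)$ and the denominators relating $L$ to the homogeneous basis). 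Then transport to the group via $\exp$: since $\log(\Gamma)$ and $\log(\delta(m)(\Gamma))=\delta(m)(\log\Gamma)$ both sit in $L$, and $\exp,\log$ restrict to mutually inverse bijections between $L$ and $\exp(L)$ up to a fixed finite index depending only on $\g$, one concludes $\delta(m)(\Gamma)\subseteq\Gamma$ after replacing $k_0$ by a further multiple. Finally I would remark that $m\ge 2$ is automatic once $k_0\ge 2$ and $m\in k_0\N+1$ with $m\neq 1$, and that $m=1$ is excluded by taking the progression $k_0\N+1$ minus its first term, or simply by noting $k_0\cdot 1+1\ge 3$.
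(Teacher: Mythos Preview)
Your core idea is the same as the paper's: the crucial observation is that $m^{n}-1$ is divisible by $m-1$ for every $n\ge 0$, so $(\delta(m)-\mathrm{id})$ applied to any element with bounded denominators produces something divisible by $m-1$, and choosing $m-1$ to clear all the relevant denominators lands the difference inside $\Gamma$. The paper carries this out directly and avoids the detour you propose through Lie subrings: it simply picks a homogeneous basis with integral structure constants, sandwiches $\Gamma$ additively as $k'\Z^d\subset\Gamma\subset k^{-1}\Z^d$ (no subgroup or Lie-subring structure needed on either side), and then computes the \emph{group-theoretic} difference $g*(\delta(m+1)(g))^{*-1}$ via BCH. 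Writing $\delta(m+1)(g)=g+mg'$ with $g'\in k^{-1}\Z^d$, every iterated bracket in the BCH expansion contains a factor $[g,\delta(m+1)(g)]=m[g,g']$, so the whole expression lies in $m\cdot s^{-1}k^{-d}\Z^d$ for a BCH denominator $s$; once $sk^dk'\mid m$ this lands in $k'\Z^d\subset\Gamma$, and we are done with $k_0=sk^dk'$.

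Your ``clean way around'' of first stabilizing a Lie subring $L$ and then transferring through finite index does not actually avoid the BCH bookkeeping: knowing $\delta(m)(L)\subset L$ at the Lie-algebra level does not give $\delta(m)(\Gamma)\subset\Gamma$ unless you can show that $\delta(m)$ acts trivially on the finite quotient $\exp(L)/\Gamma$, and that statement is precisely ``$\gamma^{-1}*\delta(m)(\gamma)\in\Gamma$ for all $\gamma\in\exp(L)$'', which is the group-level BCH computation you were trying to sidestep. (The sandwiching trick $\delta(m)(\exp L)\subset\exp L'$ also fails when $\g_0\neq 0$, since $\delta(m)$ is the identity there.) So the detour adds structure without saving work; the paper's single BCH estimate for an arbitrary $g\in\Gamma$ is both shorter and handles the general grading in $\N$ (including a nontrivial $\g_0$) uniformly.
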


\begin{proof}
It is convenient to identify the Lie group to the Lie algebra through the exponential map; thus the group law $*$ is given by the Baker-Campbell-Hausdorff (BCH) formula, and the group powers are given by $x^{*m}=mx$ for all $m\in\mathbf{Z}$. Moreover the identification of $\delta(t):\g\to\g$ and $\delta(t):G\to G$ is coherent with this identification.

Fix a basis $(e_i)$ of $\g$, with $e_i$ homogeneous of degree $n_i$; we can multiply these basis elements by large enough integers so as to ensure that all structural constants of the Lie algebra are integers.
Using this basis, identify $\g$ with $\mathbf{Q}^d$. There exist integers $k,k'\ge 1$ such that \[k'\mathbf{Z}^d\subset\Gamma\subset k^{-1}\mathbf{Z}^d.\]

Fix $g\in\g$ and an integer $m\ge 0$. We need to describe $g*(\delta(m+1)(g))^{*-1}=g*(-\delta(m+1)(g))$; it is given by the BCH-formula as a certain sum $g-\delta(m+1)(g)+h$, where $h$ is a $\mathbf{Z}[1/s]$-linear combination of iterated brackets $[c_1,\dots,c_\ell,[g,\delta(m+1)(g))]\dots]$, where $0\le\ell\le d-2$ (actually $\ell$ is at most the nilpotency length minus 2), and $s$ is a common denominator for terms in the BCH-formula of degree at most the nilpotency length of $\g$ (we can choose $s=d!$ although it is far from optimal), and the $c_i$ are in $\{g,\delta(m+1)(g)\}$.

We write $g=\sum_{n\ge 0}g_n$, according to the decomposition $\g=\g_0\oplus\left(\bigoplus_{n\ge 1}\g_n\right)$. Then

\[\delta(m+1)(g)=\bigoplus_{n\ge 0} (1+m)^ng_n=g+ \bigoplus_{n\ge 0} ((1+m)^n-1)g_n.\]
Since $m$ divides $((1+m)^n-1)$ for all $n\ge 0$, we can write $\delta(m+1)(g)=g+mg'$. Here $g'=\bigoplus_{n\ge 1}\frac{(1+m)^n-1}{m}g_n$. Hence $g-\delta(m+1)(g)=-mg'$, and $[g,\delta(m+1)(g)]=m[g,g']$.

Now assume that $g\in\Gamma$, so that $g$ belongs to $k^{-1}\mathbf{Z}^d$. It also follows that all $g_n$ belong to $k^{-1}\mathbf{Z}^d$, and hence $\delta(m+1)(g)$ and $g'$ belong to $k^{-1}\mathbf{Z}^d$ as well. Hence $[c_1,\dots,c_\ell,[g,\delta(m+1)(g))]\dots]$ belongs to $mk^{-d}\mathbf{Z}^d$. Therefore, if we assume that $sk^dk'$ divides $m$, we deduce that $g*(\delta(m+1)(g))^{*-1}$ belongs to $k'\mathbf{Z}^d$ and hence belongs to $\Gamma$. Since $g\in\Gamma$ as well, it follows that $\delta(m+1)(g)\in\Gamma$. Hence we get the conclusion with $k_0=sk^dk'$.
\end{proof}

\subsection{The main theorem about cohopfian properties}
\begin{thm}\label{maincoh}
Let $\Gamma$ be a torsion-free finitely generated nilpotent group, $G$ its rational Malcev completion, and $\g$ the Lie algebra of $G$. Let $k$ be the uncontracted dimension of $\g$ (Definition \ref{contradim}). Then the smallest possible Hirsch length for $\bigcap_{n\ge 0}\phi^n(\Gamma)$, when $\phi$ ranges over injective endomorphisms of $\Gamma$, is $k$. Actually, if $\g=\g_{[0]}\ltimes\g_{[+]}$ is a contractive decomposition of $\g$ and $G_{[0]}$ is the subgroup of $G$ corresponding to $\g_{[0]}$, then there exists an injective endomorphism $\phi$ of $G$ such that $\bigcap\phi^n(\Gamma)=G_{[0]}\cap\Gamma$.
\end{thm}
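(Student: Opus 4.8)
The plan is to produce a concrete injective endomorphism $\phi$ of $\Gamma$ realizing the lower bound, and then to argue that $k$ is indeed a lower bound. For the construction, I would start from a $K$-contractive decomposition $\g=\g_{[0]}\ltimes\g_{[+]}$ (Definition \ref{contradim}), which exists over $\Q$; by Theorem \ref{stabex}(\ref{gcd}) its uncontracted dimension $k=\dim\g_{[0]}$ is an honest invariant. By the definition of the contractive decomposition there is a $\Q$-defined fine cocharacter of $\Aut(\g)$, i.e.\ an $\N$-grading $\g=\bigoplus_{n\ge 0}\g_n$ with $\g_0=\g_{[0]}$ and $\bigoplus_{n\ge 1}\g_n=\g_{[+]}$ (this is exactly what Corollary \ref{tfi} gives us over $\Q$). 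Now apply Lemma \ref{defendo} to this $\N$-graded Lie algebra and to the lattice $\Gamma$: it yields an integer $m\ge 2$ such that the automorphism $\delta(m)$ of $G$ (multiplication by $m^n$ on $\g_n$) satisfies $\delta(m)(\Gamma)\subset\Gamma$. Set $\phi=\delta(m)|_\Gamma$; this is an injective endomorphism of $\Gamma$, being the restriction of an automorphism of $G$.

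Next I would compute $\bigcap_{n\ge 0}\phi^n(\Gamma)$ using the machinery of the absolute grading. The automorphism $\xi=\delta(m)$ of $\g$ is diagonalizable with eigenvalue $m^n$ on $\g_n$, so its absolute grading (in the sense preceding Proposition \ref{intersect}) has $\g_0^\sharp=\g_0=\g_{[0]}$: indeed $\log|m^n|=n\log m$, which is $0$ exactly when $n=0$. Hence, by Proposition \ref{intersect}, $\bigcap_{n\ge 0}\phi^n(\Gamma)=\Gamma\cap G_{\{1\}}$, where $G_{\{1\}}$ is the rational subgroup corresponding to $\g_0^\sharp=\g_{[0]}$, i.e.\ $G_{\{1\}}=G_{[0]}$. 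This gives $\bigcap_{n\ge 0}\phi^n(\Gamma)=\Gamma\cap G_{[0]}$, which has Hirsch length $\dim G_{[0]}=\dim\g_{[0]}=k$, establishing that $k$ is achieved.

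For the matching lower bound, let $\phi$ be an arbitrary injective endomorphism of $\Gamma$, $\Phi$ its extension to $G$, and $\xi=d\Phi$ the induced automorphism of $\g$. Since $\phi(\Gamma)\subset\Gamma$, the matrix $\xi$ stabilizes a lattice in $\g$ (namely a lattice containing $\log\Gamma$), so by Lemma \ref{v1v1} all weights of the absolute grading of $\xi$ are non-negative reals; i.e.\ $\g=\g_{\ge 0}^\sharp$. Write $A$ for the Zariski closure of $\langle\xi\rangle$, and pass to the identity component so that (after replacing $\xi$ by a power, which changes neither $\bigcap\phi^n(\Gamma)$ up to finite index nor $\g_0^\sharp$) $A$ is connected abelian; then $\g_0^\sharp=\g_0$, the fixed space of the maximal $\Q$-split subtorus $A_s$. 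By Proposition \ref{intersect}, $\bigcap_{n\ge 0}\phi^n(\Gamma)=\Gamma\cap G_{\{1\}}$ has Hirsch length $\dim\g_0^\sharp=\dim\g_0$. The point is then that the $\N$-grading induced by a $\Q$-defined fine cocharacter of $A_s$ is an algebra grading with zero-component $\g_0$, so $\g_0$ contains a $K$-contractive decomposition's $\g_{[0]}$ up to applying an automorphism — more precisely, by Theorem \ref{pofi}/Corollary \ref{finespli} applied to $A_s$ inside $\Aut(\g)$, we get $\dim\g_0\ge\dim\g_{[0]}=k$. Hence every $\phi$ gives $\bigcap\phi^n(\Gamma)$ of Hirsch length $\ge k$.

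The main obstacle is the lower-bound direction, specifically relating the fixed space $\g_0$ of the torus $A_s$ attached to a \emph{single} endomorphism to the uncontracted subspace $\g_{[0]}$ coming from a \emph{maximal} $K$-split torus of $\Aut(\g)$. One must be careful that $A_s$ need not be contained in a conjugate of the chosen maximal split torus in any way compatible with fixed spaces, and that "non-negative absolute weights" is genuinely weaker than "fine": the correct statement is that the $\N$-grading $\g=\bigoplus_{n\ge0}\g_n^\sharp$ produced by $\xi$ (which has $\g_0^\sharp=\g_0$ once $A$ is connected) is \emph{some} $\N$-grading of $\g$, and by Corollary \ref{finespli} (fineness of maximal split tori) any $\N$-grading has zero-component of dimension at least the uncontracted dimension $k$. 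Getting this comparison cleanly — i.e.\ that $\dim\g_0\ge k$ for the zero-component of an arbitrary $\N$-algebra-grading of $\g$ — is where I expect to spend the real effort; everything else is bookkeeping with the results of Sections \ref{graza} and \ref{graal}.
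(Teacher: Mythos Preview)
Your plan is correct and matches the paper's proof: construct $\phi=\delta(m)$ from a fine $\N$-grading via Lemma \ref{defendo} to realize $k$, and for the lower bound invoke Proposition \ref{intersect} to reduce to $\dim\g_0^\sharp\ge k$. One wrinkle to clean up: the absolute grading $(\g_r^\sharp)_{r\in\R}$ is valued in $\R_{\ge 0}$, not in $\N$, so it is not literally ``some $\N$-grading of $\g$''; the paper fixes this by choosing a homomorphism from the subgroup $M\subset\R$ generated by the absolute weights into $\Z$ sending positive weights to positive integers (your ``fine cocharacter of $A_s$'' amounts to the same thing once you note, via Lemmas \ref{v1v1} and \ref{vao}, that every nonzero $A_s$-weight is positive). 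After that the inequality $\g_0\supset\g_{[0]}$ is immediate from the containment of the resulting one-parameter torus in a maximal $K$-split torus of $\Aut(\g)$ together with $V_{[0]}^{T'}\supset V_{[0]}^{T}$ for $T'\subset T$; Corollary \ref{finespli} is not quite the right citation since $A_s$ need not be maximal.
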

\begin{proof}
Let $\phi$ be an injective endomorphism of $\Gamma$. Then the Hirsch length $h(\Gamma,\phi)$ of $\bigcap_{n\ge 0}\phi^n(\Gamma)$ is, in the notation of Proposition \ref{intersect} and using this proposition, the dimension of $\g_0^\sharp$. Let $M$ be the subgroup of $\mathbf{R}$ generated by weights of $\g$ in the absolute grading defined by $\xi$. This is a finitely generated torsion-free abelian group; hence it has a basis. Then by a small perturbation of those basis elements, we find a homomorphism $M\to\mathbf{R}$ mapping all positive weights of $\g$ to positive rational numbers, and then after multiplication we find a homomorphism $M\to\mathbf{R}$ mapping all weights to positive integers. Thus we have a Lie algebra grading of $\g$ in $\mathbf{N}$, denoted $(\g_n)_{n\in\mathbf{Z}}$, for which $\g_0=\g_0^\sharp$. This implies that there exists a contractive decomposition $\g=\g_{[0]}\ltimes\g_{[+]}$ of $\g$ such that $\g_0\supset\g_{[0]}$. Hence $h(\Gamma,\phi)=\dim(\g_0^\sharp)\ge k$.

To show that $k$ is achieved, let $\g=\g_{[0]}\ltimes\g_{[+]}$ be a contractive decomposition of $\g$: then there exists a Lie algebra grading of $\g$ in $\mathbf{N}$ such that $\g_0=\g_{[0]}$. Define $\delta(t)$ as in Lemma \ref{defendo}, namely to be the multiplication by $t^i$ on $\g_i$. Then this lemma asserts that there exists an integer $m\ge 2$ such that $\delta(m)$ maps $\log(\Gamma)$ into itself. Thus the corresponding automorphism $\Delta_m=\exp\circ\delta(m)\circ\log$ of $G$ maps $\Gamma$ into itself. Then $\bigcap_{n\ge 0}\Delta_m^n(\Gamma)=\Gamma\cap G_{[0]}$: although this is a particular case of Proposition \ref{intersect}, this can be seen directly since $\delta(m)$ is diagonalizable over $\mathbf{Q}$. Hence $h(\Gamma,\Delta_m)=k$.
\end{proof}

\begin{cor}\label{corno}
Let $\Gamma$ be a finitely generated torsion-free nilpotent group and $\g$ its rational Lie algebra. Then
\begin{itemize}
\item $\Gamma$ is non-cohopfian if and only if $\g$ is semi-contractable;
\item $\Gamma$ is dis-cohopfian if and only if $\g$ is contractable.
\end{itemize}
\end{cor}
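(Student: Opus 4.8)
The plan is to deduce both equivalences from Theorem \ref{maincoh}, using the dictionary recorded in Proposition \ref{codecon} between the uncontracted dimension $k$ of $\g$ (Definition \ref{contradim}) and the properties ``semi-contractable'' and ``contractable''. Write $h$ for the Hirsch length of $\Gamma$, which equals $\dim\g$. By Proposition \ref{codecon}, $\g$ is semi-contractable if and only if $\g_{[+]}\neq\{0\}$, that is, if and only if $k<h$; and $\g$ is contractable if and only if $\g_{[0]}=\{0\}$, that is, if and only if $k=0$. So it suffices to prove that $\Gamma$ is non-cohopfian if and only if $k<h$, and that $\Gamma$ is dis-cohopfian if and only if $k=0$.

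The first step is to rephrase the two cohopfian notions in terms of the descending chain $\big(\phi^n(\Gamma)\big)_{n\ge0}$ attached to an injective endomorphism $\phi$ of $\Gamma$. If $\phi$ is surjective it is an automorphism and $\bigcap_{n\ge0}\phi^n(\Gamma)=\Gamma$; conversely, if $\bigcap_{n\ge0}\phi^n(\Gamma)=\Gamma$ then $\phi(\Gamma)=\Gamma$ already. Hence $\Gamma$ is non-cohopfian if and only if some injective $\phi$ satisfies $\bigcap_{n\ge0}\phi^n(\Gamma)\subsetneq\Gamma$; and since $\Gamma$ is torsion-free, $\Gamma$ is dis-cohopfian if and only if some injective $\phi$ satisfies that $\bigcap_{n\ge0}\phi^n(\Gamma)$ has Hirsch length $0$.

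The second step is to translate properness of the intersection into a drop of Hirsch length, so that Theorem \ref{maincoh} applies. Here I would use that an injective endomorphism of a finitely generated nilpotent group has image of finite index, and that a subgroup of a finitely generated nilpotent group has finite index if and only if it has full Hirsch length. If $\phi$ is injective and not surjective, then $\ell:=[\Gamma:\phi(\Gamma)]\ge2$, and since $\phi^{n-1}$ is an isomorphism onto $\phi^{n-1}(\Gamma)$ carrying $\phi(\Gamma)$ onto $\phi^n(\Gamma)$, one gets $[\Gamma:\phi^n(\Gamma)]=\ell^n\to\infty$, so $\bigcap_{n}\phi^n(\Gamma)$ has infinite index and hence Hirsch length $<h$. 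Conversely, if $\bigcap_{n}\phi^n(\Gamma)$ has Hirsch length $<h$ then it is a proper subgroup of $\Gamma$, so $\phi$ is not surjective. Combining with the previous paragraph, $\Gamma$ is non-cohopfian if and only if the minimum over injective endomorphisms $\phi$ of the Hirsch length of $\bigcap_{n}\phi^n(\Gamma)$ is $<h$, and dis-cohopfian if and only if that minimum is $0$. By Theorem \ref{maincoh} this minimum equals $k$ — the optimal $\phi$ being the restriction to $\Gamma$ of the automorphism $\Delta_m$ constructed there, which preserves $\Gamma$ and realizes $\bigcap_{n}\phi^n(\Gamma)=G_{[0]}\cap\Gamma$ — so the two conditions become $k<h$ and $k=0$ respectively, which is exactly what was needed.

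I do not expect a genuine obstacle: the substance is entirely in Theorem \ref{maincoh}. The only points requiring a little care are the elementary facts about finitely generated torsion-free nilpotent groups used above — that injective endomorphisms have finite-index image, that index is multiplicative along the chain $\phi^n(\Gamma)$, and that finite index is equivalent to full Hirsch length — together with the bookkeeping that lets one pass freely between ``$\phi$ non-surjective'' and ``$\bigcap_n\phi^n(\Gamma)$ has smaller Hirsch length''.
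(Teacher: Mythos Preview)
Your proof is correct and follows essentially the same route as the paper's: both deduce the corollary from Theorem \ref{maincoh} by translating ``semi-contractable'' and ``contractable'' into the conditions $k<\dim\g$ and $k=0$ via Proposition \ref{codecon}, and then matching these with ``non-cohopfian'' and ``dis-cohopfian'' through the Hirsch length of $\bigcap_n\phi^n(\Gamma)$. The paper is terser --- it simply notes that $\bigcap_n\phi^n(\Gamma)$ cannot be a proper subgroup of finite index --- whereas you spell out the index computation $[\Gamma:\phi^n(\Gamma)]=\ell^n$; this is the same observation made explicit.
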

\begin{proof}
If $\phi$ is an injective endomorphism of $\Gamma$, define $h(\Gamma,\phi)$ as the Hirsch length of $\bigcap\phi^n(\Gamma)$, and $k$ as the uncontracted dimension of $\g$, so the theorem asserts that $k=\min_\phi(\Gamma,\phi)$. 

Then $\g$ is semi-contractable if and only if $k<\dim(\g)$, and is contractable if and only if $k=0$. On the other hand, we have $h(\Gamma,\phi)<\dim(\g)$ if and only if $\phi$ is non-surjective (observing that $\bigcap\xi^n(\Gamma)$ cannot be a proper subgroup of finite index), and $h(\Gamma,\phi)=0$ if and only if $\bigcap\phi^n(\Gamma)=\{1\}$. Thus the theorem implies the corollary.
\end{proof}

\begin{proof}[Proof of Theorem \ref{crcc}]
(\ref{nch2})$\Rightarrow$(\ref{nch3}) is clear, since $G$ admits a lattice.

(\ref{nch3})$\Rightarrow$(\ref{nch1}). Let $\Gamma$ be a non-cohopfian lattice. Let $G_\mathbf{Q}$ be the group of $\mathbf{Q}$-points for the $\mathbf{Q}$-form defined by $\Gamma$, and $\g_\mathbf{Q}$ its Lie algebra. Then by Corollary \ref{corno}, $\g_\mathbf{Q}$ is semi-contractable. Thus $\g\simeq\mathbf{R}\otimes_\mathbf{Q}\g_\mathbf{Q}$ is semi-contractable.

(\ref{nch1})$\Rightarrow$(\ref{nch2}) Let $\Gamma$ be a lattice. Let $G_\mathbf{Q}$ be the group of $\mathbf{Q}$-points for the $\mathbf{Q}$-form defined by $\Gamma$, and $\g_\mathbf{Q}$ its Lie algebra. Then since $\g\simeq\mathbf{R}\otimes_\mathbf{Q}\g_\mathbf{Q}$ and $\g$ is semi-contractable, by Theorem \ref{stabex} it follows that $\g_\mathbf{Q}$ is semi-contractable. Thus by Corollary \ref{corno}, $\Gamma$ is not cohopfian.
\end{proof}

\begin{proof}[Proof of Theorem \ref{crcd}]
The proof follows mutatis mutandis the same steps as the previous ones, replacing ``non-cohopfian" with ``dis-cohopfian" and ``semicontractable" with ``contractable".
\end{proof}

\subsection{Generalities about being weakly dis-cohopfian}\label{gwdh}

For short, call $\Gamma$-chain any descending sequence (not necessarily with strict inclusions) of subgroups $(\Gamma_n)$ of $\Gamma$, all isomorphic to $\Gamma$, with $\Gamma_0=\Gamma$. Note that for each such chain, there exists a sequence $(\phi_n)_{n\ge 1}$ of injective endomorphisms of $\Gamma$ such that $\Gamma_n=\phi_1\dots\phi_n(\Gamma)$ for all $n$.

\begin{lem}\label{sandwich}
Let $\Gamma$ be a countable group and $W$ a subgroup. Assume that
\begin{enumerate}
\item\label{fory} for every injective endomorphism $\phi$ of $\Gamma$, we have $\phi(W)=W$, and 
\item\label{exin} for every $\gamma\notin W$, there exists an injective endomorphism $\phi$ of $\Gamma$ such that $\gamma\notin\phi(\Gamma)$.
\end{enumerate}
Then the following two assertions hold
\begin{enumerate}[(a)]
\item\label{surrr} for every $\Gamma$-chain $(\Gamma_n)$ we have $W\subset\bigcap\Gamma_n$;
\item\label{exgc} there exists a $\Gamma$-chain $(\Gamma_n)$ such that $\bigcap\Gamma_n=W$.
\end{enumerate}
\end{lem}
\begin{proof}
(\ref{surrr}) is an immediate consequence of (\ref{fory}). Let us prove (\ref{exgc}). If $W=\Gamma$, there is nothing to do; otherwise, let $(g_n)_{n\ge 1}$ be a (possibly non-injective) enumeration of $\Gamma\smallsetminus W$, and define by induction a $\Gamma$-chain $(\Gamma_n)$ such that $g_n\notin\Gamma_i$ for all $n\ge 1$. By definition $\Gamma_0=\Gamma$. Assume $n\ge 1$ and $\Gamma_i$ is defined for $i<n$. If $g_n\notin\Gamma_{n-1}$, we choose $\Gamma_n=\Gamma_{n-1}$. Otherwise we apply (\ref{exin}) to $\Gamma_{n-1}$ and $\gamma=g_n$ to define $\Gamma_n$ as the image of an injective endomorphism of $\Gamma_{n-1}$ whose image does not contain $g_n$. Since all $\Gamma_n$ contain $W$ by (\ref{fory}) and no $g_n$ belongs to $\bigcap_i\Gamma_i$, we obtain $\bigcap_i\Gamma_i=W$. Accordingly, $(\Gamma_n)$ is a $\Gamma$-chain with the required property. 
\end{proof}

\begin{cor}\label{carwdisch}
A countable group $\Gamma$ is weakly dis-cohopfian if and only if for every $g\in\Gamma\smallsetminus\{1\}$ there exists a subgroup of $\Gamma$ isomorphic to $\Gamma$ and not containing $g$.
\end{cor}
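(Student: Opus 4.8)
The plan is to obtain this as a direct consequence of Lemma~\ref{sandwich} applied with $W=\{1\}$. The first thing I would observe is that a subgroup of $\Gamma$ isomorphic to $\Gamma$ is the same thing as $\phi(\Gamma)$ for some injective endomorphism $\phi$ of $\Gamma$; hence the condition in the statement --- that for every $g\in\Gamma\smallsetminus\{1\}$ there is a subgroup isomorphic to $\Gamma$ avoiding $g$ --- is exactly hypothesis~(\ref{exin}) of Lemma~\ref{sandwich} for the subgroup $W=\{1\}$. Hypothesis~(\ref{fory}) is automatic in this case, since every endomorphism maps $1$ to $1$.

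For the substantial (``if'') direction, I would then invoke conclusion~(b) of Lemma~\ref{sandwich}, which produces a $\Gamma$-chain $(\Gamma_n)$ with $\bigcap_n\Gamma_n=\{1\}$; by definition of a $\Gamma$-chain, each $\Gamma_n$ is isomorphic to $\Gamma$ and $\Gamma_{n+1}\subset\Gamma_n$, so this sequence witnesses that $\Gamma$ is weakly dis-cohopfian.

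For the converse, I would take a sequence $(\Gamma_n)$ of subgroups of $\Gamma$ isomorphic to $\Gamma$, decreasing, with trivial intersection (prepending $\Gamma_0=\Gamma$ if needed so as to have a $\Gamma$-chain, which changes nothing). Given $g\in\Gamma\smallsetminus\{1\}$, since $g\notin\bigcap_n\Gamma_n$ there is some index $n$ with $g\notin\Gamma_n$, and $\Gamma_n$ is then a subgroup isomorphic to $\Gamma$ not containing $g$, as desired.

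I do not anticipate any real obstacle here: all the work is already packaged in Lemma~\ref{sandwich}, and what remains is only the bookkeeping identifications above (images of injective endomorphisms versus subgroups isomorphic to $\Gamma$, and the cost-free normalization $\Gamma_0=\Gamma$ in the definition of a $\Gamma$-chain).
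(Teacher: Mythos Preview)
Your proposal is correct and follows exactly the approach of the paper: apply Lemma~\ref{sandwich} with $W=\{1\}$, observing that hypothesis~(\ref{fory}) is automatic. You have simply spelled out more of the details (the identification of subgroups isomorphic to $\Gamma$ with images of injective endomorphisms, and the trivial ``only if'' direction) than the paper's one-line proof does.
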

\begin{proof}
This is the particular case of Lemma \ref{sandwich}, when $W=\{1\}$, noting that (\ref{fory}) then holds automatically.
\end{proof}

\begin{rem}
A variant of Lemma \ref{sandwich} holds, namely when in both the assumptions and conclusions, ``injective endomorphism" is replaced with ``injective endomorphism with image of finite index" and ``$\Gamma$-chain" is replaced with ``$\Gamma$-chain $(\Gamma_n)$ such that all $\Gamma_n$ have finite index, the proof being the same.
\end{rem}

\subsection{Weakly dis-cohopfian nilpotent groups}
(Refer to \S\ref{gwdh} for the meaning of $\Gamma$-chain.)

\begin{thm}
Let $\Gamma$ be a finitely generated torsion-free nilpotent group, $G$ its rational Malcev completion, and $\g$ the Lie algebra of $G$. Let $\cni^+(G)$ be the subgroup corresponding to $\cni^+(\g)$. Then for every $\Gamma$-chain $(\Gamma_n)$, we have $\bigcap_n\Gamma_n\supset\cni^+(G)\cap\Gamma$, with equality for some choice of $(\Gamma_n)$.
\end{thm}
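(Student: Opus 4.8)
The plan is to derive both halves of the statement at once from Lemma~\ref{sandwich}, applied with the subgroup $W=\cni^+(G)\cap\Gamma$: conclusions (a) and (b) of that lemma then read precisely ``$\bigcap_n\Gamma_n\supset W$ for every $\Gamma$-chain'' and ``$\bigcap_n\Gamma_n=W$ for some $\Gamma$-chain''. So the whole argument reduces to verifying the two hypotheses of Lemma~\ref{sandwich} for this $W$.

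First, the stability hypothesis: $\phi(W)=W$ for every injective endomorphism $\phi$ of $\Gamma$. Let $\Phi$, resp.\ $\xi$, be the automorphism of $G$, resp.\ of $\g$, extending $\phi$. Since $\cni^+(\g)$ is a characteristic ideal of $\g$ (Proposition~\ref{codecon}), $\Phi$ preserves $\cni^+(G)$, so $\phi(W)\subset\Phi(\cni^+(G))\cap\Gamma=W$; the reverse inclusion is the point that needs work. Because $\phi$ maps $\Gamma$ into $\Gamma$, the automorphism $\xi$ stabilizes a lattice of $\g$, so by Lemma~\ref{v1v1} the absolute grading $\g=\bigoplus_r\g_r^\sharp$ attached to $\xi$ has only non-negative weights, and the construction in the proof of Theorem~\ref{maincoh} then provides a $\Q$-contractive decomposition $\g=\g_{[0]}\ltimes\g_{[+]}$ with $\g_{[0]}\subset\g_0^\sharp$. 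As $\cni^+(\g)$ is the largest characteristic ideal contained in $\g_{[0]}$ (Proposition~\ref{codecon}), this gives $\cni^+(\g)\subset\g_0^\sharp$, hence by Proposition~\ref{intersect} we get $W=\cni^+(G)\cap\Gamma\subset G_{\{1\}}\cap\Gamma=\bigcap_{n\ge 0}\phi^n(\Gamma)\subset\phi(\Gamma)$. Finally, given $w\in W$, write $w=\phi(\gamma)$ with $\gamma=\Phi^{-1}(w)\in\Gamma$; since $\Phi^{-1}$ also preserves $\cni^+(G)$ we have $\gamma\in\cni^+(G)\cap\Gamma=W$, so $w\in\phi(W)$. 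Thus $\phi(W)=W$.

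Next, the other hypothesis: for every $\gamma\in\Gamma\smallsetminus W$ there is an injective endomorphism of $\Gamma$ whose image avoids $\gamma$. Fix a $\Q$-contractive decomposition $\g=\g_{[0]}\ltimes\g_{[+]}$, attached to a maximal $\Q$-split torus $T$ of $\Aut(\g)$; as in the proof of Theorem~\ref{stabex} one may write $\cni^+(\g)=\bigcap_{i=1}^{n}\alpha_i(\g_{[0]})$ with $\alpha_i\in\Aut(\g)^\circ(\Q)$. Given $\gamma\in\Gamma\smallsetminus W$ we have $\log\gamma\notin\cni^+(\g)$, so $\log\gamma\notin\alpha_i(\g_{[0]})$ for some $i$. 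Now $\g=\alpha_i(\g_{[0]})\ltimes\alpha_i(\g_{[+]})$ is again a $\Q$-contractive decomposition (the one attached to the maximal $\Q$-split torus $\alpha_iT\alpha_i^{-1}$), so Theorem~\ref{maincoh} yields an injective endomorphism $\psi$ of $\Gamma$ such that $\bigcap_{m\ge 0}\psi^m(\Gamma)$ equals $\Gamma$ intersected with the subgroup of $G$ corresponding to $\alpha_i(\g_{[0]})$; since $\log\gamma$ lies outside that subalgebra, $\gamma\notin\psi^m(\Gamma)$ for some $m$, and $\phi=\psi^m$ works. Both hypotheses being verified, Lemma~\ref{sandwich} gives the theorem.

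The step I expect to be the main obstacle is the reverse inclusion $\phi(W)\supset W$ in the stability hypothesis: unlike $\phi(W)\subset W$ it is not formal, and it genuinely uses the location of $\cni^+(\g)$ inside the degree-zero part of the absolute grading of an arbitrary lattice-stabilizing automorphism (via Proposition~\ref{codecon} and the proof of Theorem~\ref{maincoh}) together with Proposition~\ref{intersect}. Everything else is routine transport along the Malcev correspondence between $\g$, $G$ and $\Gamma$.
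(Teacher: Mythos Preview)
Your proof is correct and follows the same overall architecture as the paper's: both reduce everything to Lemma~\ref{sandwich} with $W=\cni^+(G)\cap\Gamma$, and both verify hypothesis~(\ref{exin}) in the same way, by producing a contractive decomposition whose $\g_{[0]}$ misses $\log\gamma$ and then invoking Theorem~\ref{maincoh} (equivalently Lemma~\ref{defendo}).

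The only real difference is in how the stability hypothesis $\phi(W)=W$ is checked. The paper dispatches it in one line, asserting that the restriction of $\xi$ to $\cni^+(\g)$ has determinant $\pm 1$, so the inclusion $\phi(W)\subset W$ between lattices of equal rank is an equality. You instead route through Proposition~\ref{intersect}: you show $\cni^+(\g)\subset\g_{[0]}\subset\g_0^\sharp$, deduce $W\subset\bigcap_n\phi^n(\Gamma)\subset\phi(\Gamma)$, and then pull back via $\Phi^{-1}$. Your version is longer but more self-contained; in fact it supplies exactly the justification that the paper's determinant claim tacitly relies on, since that claim ultimately rests on the same inclusion $\cni^+(\g)\subset\g_0^\sharp$ combined with the partial converse in Lemma~\ref{v1v1}. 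So the two arguments are really the same idea unpacked to different depths.
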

\begin{proof}
Let $\phi$ be an injective endomorphism of $\Gamma$, $\hat{\phi}$ its unique extension to an automorphism of $G$, and $\xi=\log\circ\hat{\phi}\circ\exp$ the corresponding automorphism of $\g$. Since $\xi$ preserves $\cni^+(\g)$, $\hat{\phi}$ preserves $\cni^+(G)$. Hence $\phi$ stabilizes $\cni^+(G)\cap\Gamma$. Since the determinant of the restriction of $\phi$ to $\cni^+(\g)$ is $\pm 1$, it follows that $\phi$ preserves $\cni^+(G)\cap\Gamma$. Accordingly, $\cni^+(G)\cap\Gamma\subset\bigcap_n\Gamma_n$.

Let us now show the existence statement. Let $\g=\g_{[0]}\ltimes\g_{[+]}$ be a contractive decomposition of $\g$, and $G_{[0]}$ the subgroup corresponding to $\g_{[0]}$. Then by Theorem \ref{maincoh}, there exists an injective endomorphism $\phi$ of $\Gamma$ such that $\bigcap_{n\ge 0}\phi^n(\Gamma)$ is equal to $G_{[0]}\cap\Gamma$.

Define $W=\cni^+(G)\cap\Gamma$. By the previous verification, $\phi'(W)=W$ for every injective endomorphism $\phi'$ of $\Gamma$, which is the assumption (\ref{fory}) of Lemma \ref{sandwich}. Let us check (\ref{exin}) of the same lemma. Fix $x\in\Gamma\smallsetminus W$. Thus $x\notin\cni^+(G)$, that is, $\log(x)\notin\cni^+(\g)$. Thus there exists a contractive decomposition of $\g$ such that $\log(x)\notin\g_{[0]}$. Given a non-negative Lie algebra grading of $\g$ such that $\g_0=\g_{[0]}$, Lemma \ref{defendo} provides an injective endomorphism $\xi$ of $\Gamma$ such that $\bigcap\xi^n(\Gamma)\subset\exp(\g_0)$. In particular $x$ does not belong to the image of $\xi^n$ for some $n$. This means that (\ref{exin}) holds, and hence Lemma \ref{sandwich} implies that for some $\Gamma$-chain $(\Gamma_n)$ we have $\bigcap\Gamma_n=W$.
\end{proof}

\begin{cor}\label{corwdi}
Let $\Gamma$ be a finitely generated torsion-free nilpotent group, and $\g$ its rational Lie algebra. Then $\Gamma$ is weakly dis-cohopfian if and only if $\cni^+(\g)=\{0\}$.
\end{cor}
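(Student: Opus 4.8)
The plan is to deduce the corollary directly from the theorem just proved, which identifies the smallest possible intersection of a $\Gamma$-chain. First I would record the (essentially immediate) reformulation of the definition: $\Gamma$ is weakly dis-cohopfian if and only if there is a $\Gamma$-chain $(\Gamma_n)$ with $\bigcap_n\Gamma_n=\{1\}$. Indeed, given any witnessing sequence $(\Lambda_n)$ with $\Lambda_0\simeq\Gamma$, pick an isomorphism $\psi\colon\Gamma\to\Lambda_0$ and replace it by $(\psi^{-1}(\Lambda_n))_{n\ge 0}$, which is a $\Gamma$-chain with the same (trivial) intersection; the converse is tautological. This is the content of Corollary \ref{carwdisch} specialized to $W=\{1\}$.

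Next, by the preceding theorem every $\Gamma$-chain satisfies $\bigcap_n\Gamma_n\supset\cni^+(G)\cap\Gamma$, and this lower bound is attained for a suitable choice of chain. Combining this with the reformulation above, $\Gamma$ is weakly dis-cohopfian if and only if $\cni^+(G)\cap\Gamma=\{1\}$.

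It then remains to translate the group-theoretic condition $\cni^+(G)\cap\Gamma=\{1\}$ into the Lie-algebra condition $\cni^+(\g)=\{0\}$. Here $\cni^+(G)$ is, by definition, the unipotent rational subgroup of $G$ whose Lie algebra is $\cni^+(\g)$, and $\Gamma$ is a lattice in $G$; hence $\Gamma\cap\cni^+(G)$ is a lattice in $\cni^+(G)$. Since a nonzero unipotent $\Q$-group always admits infinite (in particular nontrivial) lattices, $\Gamma\cap\cni^+(G)=\{1\}$ forces $\cni^+(G)=\{1\}$, i.e.\ $\cni^+(\g)=\{0\}$; the reverse implication is trivial. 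Chaining the two equivalences yields the statement.

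I do not expect any real obstacle here: the substantive work — that $\cni^+(G)\cap\Gamma$ is exactly the minimal intersection of a $\Gamma$-chain — has already been carried out in the theorem, via Theorem \ref{maincoh}, Lemma \ref{defendo} and Lemma \ref{sandwich}. The only points needing (minor) care are the reduction of the definition of weak dis-cohopficity to the $\Gamma$-chain formulation and the remark that a lattice in a nonzero unipotent $\Q$-group is nontrivial; both are routine.
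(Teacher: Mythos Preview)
Your proposal is correct and is exactly the intended deduction: the paper states the corollary with no separate proof, leaving it as an immediate consequence of the preceding theorem, and your three steps (reformulate weak dis-cohopficity via $\Gamma$-chains, apply the theorem to identify the minimal intersection as $\cni^+(G)\cap\Gamma$, then observe that a lattice meets a nonzero rational subgroup nontrivially) are precisely how one unpacks that. One small remark: your reference to Corollary~\ref{carwdisch} is slightly off, since that corollary gives a different (elementwise) characterization rather than the $\Gamma$-chain reformulation; but the latter is indeed trivial, e.g.\ by simply prepending $\Gamma$ to any witnessing sequence.
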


\begin{proof}[Proof of Theorem \ref{crcw}]
Again, the proof follows mutatis mutandis from that of Theorem \ref{crcc}, replacing the use of Corollary \ref{corno} by Corollary \ref{corwdi}.
\end{proof}

\begin{ex}\label{wdchd}
Let $\g$ be one of the four complex 7-dimensional Lie algebras referred to in Remark \ref{mag01}. For instance, the Lie algebra $\g_{7,1.02}$ can be defined from a basis $(Z_1,A_2,A_3,A_4,B_5,B_6,C_7)$ by the nonzero brackets
$$[Z_1,A_2]=A_3, \quad [Z_1,A_3]=A_4, \quad [A_2,A_3]=B_5, \quad [Z_1,B_5]=B_6,$$ $$[A_2,A_4]=B_6,\quad [A_2,B_5]=C_7,\quad [A_2,B_6]=C_7,\quad [A_3,B_5]=-C_7.$$

(The grading on $\mathbf{N}$ for which $Z_1$ has degree 0, $A_i$ has degree 1, $B_i$ has degree 2, and $C_7$ has degree 3, is a maximal grading.)

These Lie algebras being defined using rational coefficients, we can find a lattice $\Gamma$ in the real group. Then $\cni^+(\g)=\{0\}$ but $\g$ is not contractable. Thus $\Gamma$ is weakly dis-cohopfian but not dis-cohopfian.
\end{ex}

\section{Systolic growth and geometry of lattices in nilpotent groups}\label{gelani}

\subsection{Generalities on the systolic growth}

If $f,g$ are non-negative functions defined on the integers or reals, we say that $f\preceq g$, or that $f$ is asymptotically bounded above by $g$, if there exists a positive constant $C$ such that $f(x)\le Cg(Cx)+C$ for all $x$ large enough. If $f\preceq g\preceq f$, we say that $f,g$ are asymptotically 
equivalent and write $f\simeq g$.

Also, we write $f\sim g$ if $f=O(g)$ and $g=O(f)$ (for instance, $2^n\simeq 3^n$ but $2^n\nsim 3^n$).

We first generalize the notion of systolic growth from the introduction to compactly generated locally compact groups.

\begin{defn}
Let $G$ be a locally compact group. We say that $G$ is {\em residually systolic} if for every compact neighborhood $S$ of $1$, there exists a lattice $\Gamma$ in $G$ such that $\Gamma\cap S=\{1\}$.
\end{defn}

When $G$ is discrete, residually systolic just means residually finite. In general, residually systolic implies the existence of lattices, and in particular implies unimodular. If $G$ admits a residually finite lattice, then it is residually systolic.

Let now $G$ be a locally compact, compactly generated group, endowed with the word length $|\cdot|$ with respect to some compact generating subset, or any equivalent length. We fix a left Haar measure on $G$, so that every discrete subgroup has a well-defined covolume (possibly infinite).

\begin{defn}
If $\Gamma\subset G$, we define its {\em systole} as 
\[\sys(\Gamma)=\inf\{|\gamma|:\gamma\in\Gamma\smallsetminus\{1\}\},\]
with $\inf\emptyset=+\infty$. The {\em systolic growth} of $G$ is the function $\sigma=\sigma_{G,|\cdot|}$ mapping $r$ to the infimum of covolumes of discrete subgroups of $G$ of systole $\ge r$ ($+\infty$ if there is no such subgroup).  
\end{defn}

Note that $\sigma(r)<\infty$ for all $r$ if and only if $G$ is residually systolic, i.e., $G$ admits lattices of arbitrary large systole. The growth is a lower bound for the systolic growth: if $v(r)$ is the volume of the open $r$-ball, then $\sigma(2r)\ge v(r)$. 

Note that an alternative definition would restrict to cocompact lattices; if we focus on nilpotent groups or discrete groups, of course this makes no difference. 

\begin{defn}
If $\Gamma\subset G$, define its {\em normal systole} as
\[\sys^\lhd(\Gamma)=\inf\{|g\gamma g^{-1}|:\gamma\in\Gamma\smallsetminus\{1\},\;g\in G\}.\]
The {\em normal systolic growth} of $G$ is the function $\sigma^\lhd=\sigma^\lhd_{G,|\cdot|}$ mapping $r$ to the infimum of covolumes of discrete subgroups of $G$ of systole $\ge r$ ($+\infty$ if there is no such subgroup).
\end{defn}

Obviously, we have $\sys^\lhd\le\sys$, and $\sigma^\lhd\ge\sigma$. For standard reasons, the asymptotic behaviors of $\sigma$ and $\sigma^\lhd$ do not depend on the choice of word length and Haar measure.

\begin{lem}\label{nsgcoi}
The $\simeq$-asymptotic growth of the normal systolic growth is a commensurability invariant for finitely generated groups.
\end{lem}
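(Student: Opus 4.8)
The plan is to reduce the statement to a single finite-index inclusion and prove it by two essentially symmetric estimates. Since the asymptotic ($\simeq$) class of $\sigma^\lhd$ does not depend on the chosen finite generating set (a standard verification, exactly as for the ordinary systolic growth) and is obviously preserved by group isomorphisms, and since abstract commensurability is generated by the relation ``being a finite-index subgroup'', it is enough to prove: if $\Gamma'\subset\Gamma$ has finite index $m$ and both are finitely generated, then $\sigma^\lhd_{\Gamma'}\simeq\sigma^\lhd_\Gamma$. Fix finite generating sets $S$ of $\Gamma$ and $S'$ of $\Gamma'$. I would first record the two elementary facts about the two word metrics on $\Gamma'$: there are constants $K,C\ge 1$ with $|y|_S\le K|y|_{S'}$ for all $y\in\Gamma'$ (each element of $S'^{\pm 1}$ has bounded $S$-length) and $|y|_{S'}\le C|y|_S$ for all $y\in\Gamma'$ (Reidemeister--Schreier rewriting along a Schreier transversal, or the \v{S}varc--Milnor lemma). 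Also fix a finite left transversal $T$ of $\Gamma'$ in $\Gamma$ and put $D=2\max_{t\in T}|t|_S$; recall that $\sigma^\lhd$ is nondecreasing and that an infimum of indices is attained.

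Easy direction ($\sigma^\lhd_{\Gamma'}\preceq\sigma^\lhd_\Gamma$): given a subgroup $\Lambda\subset\Gamma$, set $\Lambda'=\Lambda\cap\Gamma'$. The coset inclusion $\Gamma'/\Lambda'\hookrightarrow\Gamma/\Lambda$ is injective, so $[\Gamma':\Lambda']\le[\Gamma:\Lambda]$; and for $\gamma\in\Lambda'\smallsetminus\{1\}$ and $h\in\Gamma'$ one has $|h\gamma h^{-1}|_{S'}\ge\frac1K|h\gamma h^{-1}|_S\ge\frac1K\sys^\lhd_{S,\Gamma}(\Lambda)$, whence $\sys^\lhd_{S',\Gamma'}(\Lambda')\ge\frac1K\sys^\lhd_{S,\Gamma}(\Lambda)$. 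Applying this to a subgroup $\Lambda$ of normal systole $\ge Kn$ and index equal to $\sigma^\lhd_\Gamma(Kn)$ gives $\sigma^\lhd_{\Gamma'}(n)\le\sigma^\lhd_\Gamma(Kn)$ (the inequality being trivial when the right-hand side is $+\infty$).

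Main direction ($\sigma^\lhd_\Gamma\preceq\sigma^\lhd_{\Gamma'}$): here the content is that, for a subgroup $\Lambda'\subset\Gamma'$ regarded inside $\Gamma$, its normal systole \emph{in $\Gamma$} drops by at most a bounded affine amount relative to its normal systole in $\Gamma'$. Indeed, let $\gamma\in\Lambda'\smallsetminus\{1\}$ and $g\in\Gamma$, and write $g=th$ with $t\in T$, $h\in\Gamma'$. Then $g\gamma g^{-1}=t(h\gamma h^{-1})t^{-1}$, and $h\gamma h^{-1}$ is a $\Gamma'$-conjugate of a nontrivial element of $\Lambda'$, so $|h\gamma h^{-1}|_{S'}\ge\sys^\lhd_{S',\Gamma'}(\Lambda')$; therefore
\[
|g\gamma g^{-1}|_S\ \ge\ |h\gamma h^{-1}|_S-D\ \ge\ \tfrac1C\,\sys^\lhd_{S',\Gamma'}(\Lambda')-D .
\]
Taking the infimum over $\gamma,g$ yields $\sys^\lhd_{S,\Gamma}(\Lambda')\ge\frac1C\sys^\lhd_{S',\Gamma'}(\Lambda')-D$. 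The single point to be careful about is that this argument does \emph{not} require $\Lambda'$ to be normal in $\Gamma'$: it uses only that the normal systole is measured over all conjugates, which keeps $\gamma$ moving rather than staying inside $\Lambda'$. Since $[\Gamma:\Lambda']=m\,[\Gamma':\Lambda']$, applying this to a subgroup $\Lambda'$ of normal systole $\ge\lceil C(n+D)\rceil$ and index equal to $\sigma^\lhd_{\Gamma'}(\lceil C(n+D)\rceil)$ gives a subgroup of $\Gamma$ of normal systole $\ge n$, so $\sigma^\lhd_\Gamma(n)\le m\cdot\sigma^\lhd_{\Gamma'}(\lceil C(n+D)\rceil)$ (trivially when the right-hand side is $+\infty$). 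Combining the two directions gives $\sigma^\lhd_\Gamma\simeq\sigma^\lhd_{\Gamma'}$, hence the commensurability invariance.

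There is no substantial obstacle beyond the transversal computation in the main direction; everything else is the routine bi-Lipschitz bookkeeping already implicit in the (stated) analogous fact for the ordinary systolic growth, and the same proof applies verbatim without any residual-finiteness assumption, since $+\infty$ values are handled by the conventions on $\preceq$.
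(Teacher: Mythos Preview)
Your proof is correct and follows essentially the same approach as the paper's: reduce to a finite-index inclusion $\Gamma'\subset\Gamma$, get the easy direction by intersecting a subgroup of $\Gamma$ with $\Gamma'$, and get the main direction via a transversal decomposition $g=th$ to compare $\Gamma$-conjugates with $\Gamma'$-conjugates up to an additive constant. The only cosmetic difference is that the paper works with the restriction to $\Gamma'$ of the word length on $\Gamma$ (so your constants $K,C$ disappear), which streamlines the bookkeeping slightly.
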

\begin{proof}
Suppose $\Gamma'\subset\Gamma$ has finite index. We endow $\Gamma$ with a word length $|\cdot|$, and endow $\Gamma'$ with the restriction of this word length, which is equivalent to the word length on $\Gamma'$, and define the systoles with these lengths.

If $\Lambda$ is a finite index subgroup in $\Gamma$, then $[\Gamma':\Lambda\cap\Gamma']\le [\Gamma:\Lambda]$ and $\sys^\lhd_{\Gamma'}(\Lambda\cap\Gamma')\ge\sys^\lhd_{\Gamma}(\Lambda)$. This implies that $\sigma^\lhd_{\Gamma'}\le\sigma^\lhd_{\Gamma}$.

Let us now prove an inequality in the other direction.
Write $\Gamma=F\Gamma'$ with $F$ finite. If $g\in\Gamma$, define $[g]_\Gamma=\inf_{h\in\Gamma}|hgh^{-1}|$, and $[g]_{\Gamma'}=\inf_{h\in\Gamma'}|hgh^{-1}|$. Then
\[[g]_{\Gamma'}-k\le [g]_\Gamma\le [g]_{\Gamma'},\qquad k=2\sup_{h\in F}|h|\]
It follows that for every finite index subgroup $\Lambda$ of $\Gamma'$, we have
\[\sys^\lhd_{\Gamma'}(\Lambda)-k\le \sys^\lhd_{\Gamma}(\Lambda)\le \sys^\lhd_{\Gamma'}(\Lambda),\]
and this implies $\sigma^\lhd_\Gamma(n-k)\le [\Gamma:\Gamma']\sigma^\lhd_{\Gamma'}(n)$ for all $n\ge k$, thus $\sigma^\lhd_\Gamma\preceq \sigma^\lhd_{\Gamma'}$. Finally we deduce $\sigma^\lhd_\Gamma\simeq \sigma^\lhd_{\Gamma'}$.
\end{proof}

\subsection{The main theorem on systolic growth and first part of the proof}

\begin{thm}\label{corlat2}
Let $G$ be a connected unipotent $\mathbf{Q}$-group. Let
\[\delta=\sum_{i\ge 1}i\dim(G^{(i)}/G^{(i+1)})\] be the growth rate of the simply connected nilpotent Lie group $G_\mathbf{R}$. Let $\g_\mathbf{Q}$ and $\g_\mathbf{R}=\mathbf{R}\ot_\mathbf{Q}\g_\mathbf{Q}$ be the corresponding Lie algebras.  
Let $\sigma$ and $\sigma^\lhd$ be the systolic growth and normal systolic growth of $G_\mathbf{R}$. Equivalences:
\begin{enumerate}[(i)]
\item\label{xcarr} the Lie algebra $\g_\mathbf{R}$ is Carnot over $\mathbf{R}$;
\end{enumerate}
\begin{enumerate}[(i$^+$)]
\item\label{xcarrq} the Lie algebra $\g_\mathbf{Q}$ is Carnot over $\mathbf{Q}$
\end{enumerate}
\begin{enumerate}[(i$^-$)]
\addtocounter{enumi}{1}
\item\label{eliem} either $G=\{1\}$, or every lattice in $G_\mathbf{Q}$ (i.e., lattice of $G_\mathbf{R}$ contained in $G_\mathbf{Q}$) admits an injective endomorphism $\phi$ such that, for some integer $m\ge 2$, we have $\sys(\mathrm{Im}(\phi^n))\sim\sys^\lhd(\mathrm{Im}(\phi^n))\sim m^{n}$ and $\mathrm{covol}(\mathrm{Im}(\phi^n))=\lambda m^{\delta n}$ for some $\lambda>0$ and all $n\in\mathbf{N}$;
\end{enumerate}
\begin{enumerate}[(i)]
\addtocounter{enumi}{1}
\item\label{xelasy} every lattice in $G_\mathbf{Q}$ has systolic growth $\simeq n^\delta$;
\item\label{xslasy} some lattice in $G_\mathbf{Q}$ has systolic growth $\simeq n^\delta$;
\end{enumerate}
\begin{enumerate}[(i$^-$)]
\addtocounter{enumi}{3}
\item\label{sird} $\sigma(r)\simeq r^\delta$; 
\end{enumerate}
\begin{enumerate}[(i)]
\addtocounter{enumi}{3}
\item\label{xecovol} $\liminf\sigma(r)/r^\delta<\infty$; 
\item\label{elnsg} every lattice in $G_\mathbf{Q}$ has normal systolic growth $\simeq n^\delta$;
\item\label{gnsg} $G_\mathbf{R}$ has normal systolic growth $\simeq r^\delta$.
\end{enumerate}
\end{thm}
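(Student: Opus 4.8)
The plan is to organize the many equivalences around three pivots: the algebraic statements (\ref{xcarr}) and (\ref{xcarrq}), the "single-sequence" statements (\ref{eliem}) and (\ref{xecovol}), and the "all-lattices" statements (\ref{xelasy}), (\ref{elnsg}), (\ref{gnsg}). First I would dispose of the purely algebraic equivalence (\ref{xcarr})$\Leftrightarrow$(\ref{xcarrq}): this is exactly the Carnot case of Theorem \ref{beingca}, i.e.\ Theorem \ref{main2} applied to the extension $\Q\subset\R$, together with the observation that arithmeticity of lattices in $G_\R$ means that a $\Q$-structure exists (so the hypothesis "$\g$ definable over $\Q$" is in force throughout and lattices of $G_\Q$ exist). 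Note also the trivial implications coming "for free": (\ref{xelasy})$\Rightarrow$(\ref{xslasy}), (\ref{elnsg})$\Rightarrow$(\ref{gnsg}) (by Lemma \ref{nsgcoi}), $\sigma^\lhd\ge\sigma$ so (\ref{gnsg})$\Rightarrow$(\ref{sird})$\Rightarrow$(\ref{xecovol}) is not quite automatic but $\sigma\preceq\sigma^\lhd$ gives (\ref{gnsg})$\Rightarrow$(\ref{sird}), and the growth lower bound $\sigma(2r)\ge v(r)\simeq r^\delta$ shows that "$\simeq r^\delta$" is the smallest possible systolic growth, so $\sigma(r)\simeq r^\delta$ is equivalent to $\liminf\sigma(r)/r^\delta<\infty$ only after we know the converse inequality is automatic — which it is not a priori, so (\ref{sird})$\Rightarrow$(\ref{xecovol}) is the trivial direction and (\ref{xecovol})$\Rightarrow$(\ref{xcarr}) is the substantial one.

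The constructive half, (\ref{xcarrq})$\Rightarrow$(\ref{eliem}) and (\ref{xcarrq})$\Rightarrow$(\ref{xelasy})/(\ref{elnsg}), is where Theorem \ref{main2}/Proposition \ref{recarno} does its work: since $\g_\Q$ is Carnot, fix a Carnot grading defined over $\Q$, i.e.\ a $\Q$-diagonalizable derivation $\delta$ with $\g_n$ the $n$-eigenspace and $\g$ generated by $\g_1$. Given any lattice $\Gamma$ in $G_\Q$, Lemma \ref{defendo} produces an integer $m\ge 2$ such that the automorphism $\Delta_m=\exp\circ\delta(m)\circ\log$ stabilizes $\Gamma$, where $\delta(t)$ acts by $t^n$ on $\g_n$. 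One then checks that $\Gamma_n=\Delta_m^n(\Gamma)$ has index (hence covolume, up to the fixed normalizing constant) equal to $\det(\delta(m)^n)=m^{\delta n}$, since $\delta=\sum_i i\dim\g_i=\operatorname{tr}\delta$ (using that the lower central series filtration of a Carnot-graded Lie algebra is exactly the grading filtration, so this $\delta$ agrees with the growth exponent defined in the statement). The systole estimate $\sys(\Gamma_n)\sim m^n$ and $\sys^\lhd(\Gamma_n)\sim m^n$ comes from the fact that $\Delta_m$ expands the homogeneous quasi-norm on $G_\R$ by a factor $m$ on the degree-one part and hence by at least $m$ overall on the Carnot--Carathéodory-type length, combined with the standard comparison between word length on $\Gamma$ and the homogeneous norm on $G_\R$ (Pansu/Guivarc'h); this gives (\ref{eliem}). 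Feeding the family $(\Gamma_n)$ into the intermediate-dilation trick — inserting the subgroups generated between $\Gamma_n$ and $\Gamma_{n+1}$, or more simply using $\Delta_m$ with all sufficiently large $m\equiv 1$ mod $k_0$ as Lemma \ref{defendo} allows — interpolates to get a lattice of systole $\ge r$ and covolume $\preceq r^\delta$ for every $r$, yielding $\sigma_\Gamma(r)\preceq r^\delta$; with the reverse growth bound this is (\ref{xelasy}), and the same construction controls the normal systole, giving (\ref{elnsg}).

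The hard direction, and the main obstacle, is (\ref{xecovol})$\Rightarrow$(\ref{xcarr}): from a sequence of lattices $\Gamma_n\subset G_\R$ with $\sys(\Gamma_n)=u_n\to\infty$ and $\covol(\Gamma_n)\le C u_n^\delta$, deduce that $\g_\R$ is Carnot. The plan, following the sketch in \S\ref{apl}, is to rescale: let $\tau_{u_n}$ be the one-parameter family of dilations of $G_\R$ coming from any fixed positive grading of $\g_\R$ (not necessarily Carnot — these always exist for a nilpotent Lie algebra), and consider $\Xi_n=\tau_{u_n}^{-1}(\Gamma_n)$, which has systole bounded below by a positive constant and covolume bounded above (here one must be careful: dilations rescale Haar measure by $u_n^{\delta'}$ where $\delta'=\operatorname{tr}$ of the chosen grading derivation, which equals $\delta$ only in the Carnot case, so one instead works with the Carnot dilations on the \emph{asymptotic cone}). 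Precisely: pass to a Gromov--Hausdorff/Chabauty limit $\Xi$ of the $\Xi_n$ inside the asymptotic cone $\operatorname{Cone}(G_\R)=\Car(G_\R)_\R$, show using the uniform systole and covolume bounds (a Mahler-type compactness argument for lattices, plus the lower bound on covolume coming from the volume of balls) that $\Xi$ is a genuine lattice in the Carnot group $\Car(G_\R)_\R$, and that the marked groups $\Xi_n$ converge to $\Xi$ in the space of marked groups; since finitely generated nilpotent groups of bounded Hirsch length form a set that is discrete-ish in the space of marked groups (a lattice cannot be a non-trivial limit of isomorphic-to-itself-but-different lattices with controlled complexity), conclude $\Xi_n\cong\Xi$ for $n$ large. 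But $\Xi_n\cong\Gamma_n$, and $\Gamma_n$ is a lattice in $G_\R$ while $\Xi$ is a lattice in $\Car(G_\R)_\R$; by Malcev rigidity the Malcev completions agree, so $\g_\R\cong\Car(\g_\R)$, i.e.\ $\g_\R$ is Carnot. The delicate points — which I expect to consume most of the argument — are (a) ensuring the limit $\Xi$ is a lattice and not merely a discrete subgroup of infinite covolume or a group of lower rank, which needs the covolume upper bound $\covol\le C u_n^\delta$ to match exactly the volume growth rate $r^\delta$ of balls in $G_\R$; (b) the convergence in the space of marked groups, requiring uniform control (independent of $n$) on the word length of a generating set of $\Xi_n$ in terms of the systole and covolume; and (c) the rescaling bookkeeping relating covolumes in $G_\R$, in its rescaled copies, and in the cone. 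These are exactly the preliminaries alluded to in the introduction and would be carried out in the body of \S\ref{gelani}.
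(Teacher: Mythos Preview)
Your proposal is essentially correct and follows the same architecture as the paper: the cycle of implications routed through Theorem \ref{main2} for (\ref{xcarr})$\Leftrightarrow$(\ref{xcarrq}$^+$), Lemma \ref{defendo} plus Guivarc'h's length estimates for (\ref{xcarrq}$^+$)$\Rightarrow$(\ref{eliem}$^-$), and a rescaling/asymptotic-cone/marked-groups argument with Malcev rigidity for (\ref{xecovol})$\Rightarrow$(\ref{xcarr}); the delicate points (a)--(c) you flag are exactly those handled by the paper's Lemmas \ref{page}--\ref{palb}.

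One small inaccuracy to correct: you write that a positive grading of $\g_\R$ ``always exist[s] for a nilpotent Lie algebra''. This is false---characteristically nilpotent Lie algebras (existing from dimension $7$ on) admit no nontrivial grading whatsoever, and the paper devotes considerable space to the distinction between nilpotent and contractable. You immediately realize the dilation approach on $G$ itself is problematic and pivot to the asymptotic cone, which is the right move and what the paper does (it simply divides the metric by $u_n$ and invokes Pansu's theorem that $(G,(1/u_n)d)\to\Car(G)_\R$ in Gromov--Hausdorff); but your stated reason for the pivot (mismatch $\delta'\neq\delta$) is not the fundamental obstruction---nonexistence is.
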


Note that Theorem \ref{corlat} holds as a consequence. We are going to prove the equivalences of all properties except (\ref{gnsg}) by proving the cycle of implications

(\ref{xcarr})$\stackrel{\bullet}\Rightarrow$(\ref{xcarrq}$^+$)$\stackrel{\bullet}\Rightarrow$(\ref{eliem}$^-$)$\stackrel{\circ}\Rightarrow$(\ref{elnsg})$\stackrel{\circ}\Rightarrow$(\ref{xelasy})$\stackrel{\circ}\Rightarrow$(\ref{xslasy})$\stackrel{\circ}\Rightarrow$(\ref{sird}$^-$)$\stackrel{\circ}\Rightarrow$(\ref{xecovol})$\stackrel{\bullet}\Rightarrow$(\ref{xcarr}),

\noindent which along with the implications (\ref{eliem})$\stackrel{\circ}\Rightarrow$(\ref{gnsg})$\stackrel{\circ}\Rightarrow$(\ref{sird}$^-$) proves all the equivalences.

Here $\stackrel{\bullet}\Rightarrow$ means an implication requiring a proof, that is part of the work of the paper, while $\stackrel{\circ}\Rightarrow$ means an implication which are trivial or only require a few comments, which follow here:

\begin{itemize}
\item (\ref{eliem}$^-$)$\Rightarrow$(\ref{elnsg}) and (\ref{eliem}$^-$)$\Rightarrow$(\ref{gnsg}): formally speaking, (\ref{eliem}$^-$) implies that $n^\delta$ is an asymptotic lower bound for the normal systolic growth of both $G$ and its lattices. Since conversely the growth of both $G$ and its lattices is $\simeq n^\delta$ and is an asymptotic lower bound for the normal systolic growth, this yields the conclusion.
\item (\ref{elnsg})$\Rightarrow$(\ref{xelasy}) and (\ref{gnsg})$\Rightarrow$(\ref{sird}$^-$): just use that the systolic growth is asymptotically trapped between the growth and the systolic growth; similarly, for (\ref{xslasy})$\Rightarrow$(\ref{sird}$^-$), just use that the systolic growth of $G_\mathbf{R}$ is asymptotically trapped between the growth of $G_\mathbf{R}$ and the systolic growth of any of its lattices.
\item (\ref{xelasy})$\Rightarrow$(\ref{xslasy}): this follows from the existence of a lattice, which is known to be equivalent to the assumption that $\g$ is definable over $\mathbf{Q}$.
\end{itemize}

It remains to consider the implications $\stackrel{\bullet}\Rightarrow$. 
Since (\ref{xcarr})$\Rightarrow$(\ref{xcarrq}$^+$) is part of Theorem \ref{main2}, the only two remaining are
(\ref{xcarrq}$^+$)$\Rightarrow$(\ref{eliem}$^-$) and (\ref{xecovol})$\Rightarrow$(\ref{xcarr}).

\begin{proof}[Proof of Theorem \ref{corlat2}, (\ref{xcarrq}$^+$)$\Rightarrow$(\ref{eliem}$^-$)]

Write the Carnot grading as $\g_\mathbf{Q}=\bigoplus (\g_i)_\mathbf{Q}$.
Write $\g=\g_\mathbf{R}$ and $\g_i=(\g_i)_\mathbf{R}$. Fix norms on the $\g_i$, and define, for $x=\sum x_i\in\g=\bigoplus\g_i$, the ``length" $\ell(x)=\sup\|x_i\|^{1/i}$. By Guivarch's estimates \cite{Guiv}, the word length on $G$ and on its lattices are equivalent to $\ell\circ\log$. Therefore in the sequel of the proof, we use $|\cdot|$ to denote $\ell\circ\log$, although it is not necessarily a length (it may fail to be sub-additive, but this does not matter), and use it in the definition of systole. If $x\in\g\smallsetminus\{0\}$, write $i(x)=\min\{i:x_i\neq 0\}$.

We can assume that $\g\neq\{0\}$. Let $\Gamma$ be a lattice in $G_\mathbf{Q}$, and define $\Gamma'=\log(\Gamma)$. 
Define $\delta(m)$ as the automorphism of $\g$ (and $\g_\mathbf{Q}$) defined to be the multiplication by $m^i$ on $\g_i$. 
By Lemma \ref{defendo}, there exists an integer $m\ge 2$ such that $\delta(m)$ maps $\Gamma'$ into itself. Let $\phi=\exp\circ\delta(m)\circ\log$ be the corresponding automorphism of $G$.
Then $\phi(\Gamma)$ has index $m^\delta$ in $\Gamma$, and it follows that $\mathrm{covol}(\phi^n(\Gamma))=m^{n\delta}\mathrm{covol}(\Gamma)$ for all $n\in\mathbf{Z}$. 

It remains to estimate the systole and normal systole of $\phi^n(\Gamma)$. We have $\ell(\delta(m)h)=m\ell(h)$ for all $h\in\g$, and thus $\sys(\phi^n(\Gamma))=m^n\sys(\Gamma)$. This also yields $\sys^\lhd(\phi^n(\Gamma))\le m^n\sys(\Gamma)$. To obtain a lower bound for $\sys^\lhd(\phi^n(\Gamma))$, consider any $g\in\Gamma'\smallsetminus\{0\}$ and write $i=i(g)$, and define $\pi_i$ as the projection on $\g_i$. Then for every $h\in G_\mathbf{R}$, we have $\pi_i(hgh^{-1})=\pi_i(g)$ (we identify the group and the Lie algebra in order to avoid cumbersome notation), and thus $\ell(hgh^{-1})\ge \|g_i\|^{1/i}$. Thus for any $n\ge 0$, any $g\in\Gamma'\smallsetminus\{0\}$, we have $\ell(h\phi^n(g)h^{-1})\ge m^n\|g_i\|^{1/i}$. If we define $k=\inf_{g\in\Gamma'\smallsetminus\{0\}}\|g_{i(g)}\|^{1/i(g)}$, then $k>0$ (the infimum is attained, by an easy argument using that the projection of $\Gamma$ on $G^{(i)}/G^{(i+1)}$ is discrete for all $i$), and we thus have  $\ell(h\phi^n(g)h^{-1})\ge km^n$ for all $h\in G_\mathbf{R}$ and $g\in\Gamma'\smallsetminus\{0\}$. Thus $\sys^\lhd(\phi^n(\Gamma))\ge km^n$ for all $n\in\mathbf{N}$.
\end{proof}

Before proceeding to the last and most difficult implication (\ref{xecovol})$\Rightarrow$(\ref{xcarr}) in \S\ref{gpart}, we provide a few auxiliary results.

\begin{cor}
Let $\Gamma$ be a finitely generated, virtually nilpotent group, of growth $\simeq n^\delta$. Suppose that the rational Lie algebra of some/any torsion-free nilpotent finite index subgroup is Carnot. Then the systolic growth and normal systolic growth of $\Gamma$ are $\simeq n^\delta$.
\end{cor}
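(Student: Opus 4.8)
The plan is to reduce the corollary to Theorem~\ref{corlat2}, applied to a torsion-free nilpotent subgroup of finite index in $\Gamma$.

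\emph{Reduction.} Fix a torsion-free nilpotent subgroup $\Gamma'$ of finite index in $\Gamma$ whose rational Lie algebra is Carnot over $\Q$. This hypothesis does not depend on the choice of $\Gamma'$: if $\Gamma''$ is another torsion-free nilpotent subgroup of finite index, then $\Gamma'\cap\Gamma''$ has finite index in both, so the rational Malcev completions of $\Gamma'$, $\Gamma''$ and $\Gamma'\cap\Gamma''$ are canonically identified, and hence ``Carnot over $\Q$'' is the same condition for all of them (this is the meaning of ``some/any''). The degree of polynomial growth is a finite-index invariant, so $\Gamma'$ has growth $\simeq n^\delta$ as well; the systolic growth is a finite-index invariant by the elementary observation recalled in \S\ref{iisg}, and the normal systolic growth is one by Lemma~\ref{nsgcoi}. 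It therefore suffices to show that $\Gamma'$ has systolic growth and normal systolic growth $\simeq n^\delta$.

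\emph{Application of Theorem~\ref{corlat2}.} Let $G$ be the rational Malcev completion of $\Gamma'$; it is a connected unipotent $\Q$-group, and $\Gamma'$ is a lattice of $G_\R$ contained in $G_\Q$ (by construction of the $\Q$-form defined by $\Gamma'$), i.e.\ a lattice in $G_\Q$ in the sense of Theorem~\ref{corlat2}. Put $\delta'=\sum_{i\ge 1}i\dim(G^{(i)}/G^{(i+1)})$; by Guivarc'h's formula for the degree of polynomial growth \cite{Guiv}, $\delta'$ equals the growth degree of $\Gamma'$, so $\delta'=\delta$. By hypothesis the Lie algebra $\g_\Q$ of $G$ is Carnot over $\Q$, which is exactly assertion~(\ref{xcarrq}$^+$) of Theorem~\ref{corlat2}. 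Along the chain of implications proved there, $(\ref{xcarrq}^+)\Rightarrow(\ref{eliem}^-)\Rightarrow(\ref{elnsg})\Rightarrow(\ref{xelasy})$, so assertions~(\ref{xelasy}) and~(\ref{elnsg}) hold: every lattice in $G_\Q$ has systolic growth $\simeq n^\delta$ and normal systolic growth $\simeq n^\delta$. Applying this to the lattice $\Gamma'$ and transporting the conclusion back to $\Gamma$ through the two invariance statements of the reduction step completes the proof.

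\emph{On the main obstacle.} There is no genuine obstacle here: all the substance is contained in Theorem~\ref{corlat2}, and in particular in its implications $(\ref{xcarrq}^+)\Rightarrow(\ref{eliem}^-)$ and $(\ref{eliem}^-)\Rightarrow(\ref{elnsg}),(\ref{xelasy})$. The only points requiring a little care are bookkeeping ones: that passing to $\Gamma'$ affects neither the exponent $\delta$ nor the $\simeq$-classes of $\sigma$ and $\sigma^\lhd$, and that the subgroup $\Gamma'$ defining the $\Q$-structure is literally a lattice of $G_\R$ lying inside $G_\Q$, so that Theorem~\ref{corlat2} applies to it verbatim.
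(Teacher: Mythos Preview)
Your proof is correct and follows essentially the same route as the paper: reduce to a torsion-free nilpotent finite-index subgroup using the commensurability invariance of the systolic growth (noted in \S\ref{iisg}) and of the normal systolic growth (Lemma~\ref{nsgcoi}), then invoke the already-proved implication (\ref{xcarrq}$^+$)$\Rightarrow$(\ref{xelasy}) of Theorem~\ref{corlat2}, which passes through (\ref{elnsg}) and so yields both conclusions. Your write-up simply unpacks a few details (the ``some/any'' clause, Guivarc'h's formula for $\delta$) that the paper leaves implicit.
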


This follows from the theorem (namely the already-proved implication (\ref{xcarrq}$^+$)$\Rightarrow$(\ref{xelasy})), along with the commensurability invariance of the systolic and normal systolic growth, the latter being checked in Lemma \ref{nsgcoi}.

In general, let us provide an upper bound on the systolic (and normal systolic growth) of an arbitrary finitely generated torsion-free nilpotent group.

\begin{prop}\label{uppersys}
Let $\Gamma$ be a finitely generated torsion-free nilpotent group, of growth $\simeq n^\delta$ and nilpotency length $c$. Let $\g$ be its rational Lie algebra. Then the systolic and normal systolic growth of $\Gamma$ are $\preceq n^D$, where $D=c\dim([\g,\g])+\dim(\g/[\g,\g])\le c\dim(\g)$.
\end{prop}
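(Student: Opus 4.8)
The plan is to exhibit, for each target systole $n$, an explicit finite-index subgroup of $\Gamma$ whose systole (indeed normal systole) is $\ge n$ and whose index is $\preceq n^D$. Fix a basis $(e_1,\dots,e_d)$ of $\g$ adapted to the lower central series, with $e_1,\dots,e_k$ projecting to a basis of $\g/[\g,\g]$ (so $k=\dim(\g/[\g,\g])$) and $e_{k+1},\dots,e_d$ spanning $[\g,\g]$, each $e_j$ homogeneous of some weight $w_j\in\{1,\dots,c\}$ with $w_j=1$ for $j\le k$. Rescaling, assume the structure constants are integers and that $\Lambda_0=\bigoplus\Z e_j$ is a Lie subring with $\Gamma\subset\exp(\Lambda_0)$ of finite index; it then suffices to bound the systolic growth of the lattice $\Gamma_0=\exp(\Lambda_0)$, since systolic growth is a commensurability invariant. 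For an integer $q\ge 1$, consider the sublattice $\Lambda_q=\bigoplus_j q^{w_j}\Z e_j$. Using the fact that the $e_j$ are homogeneous, one checks via the Baker–Campbell–Hausdorff formula that $\exp(\Lambda_q)$ is a subgroup: a bracket $[q^{w_i}e_i,q^{w_j}e_j]$ lands in $q^{w_i+w_j}[\g,\g]$, and since every basis element appearing in $[e_i,e_j]$ has weight $\ge w_i+w_j$, the BCH terms have coefficients divisible by the required power of $q$ (one should phrase this as: $\delta(q)$, the automorphism multiplying $e_j$ by $q^{w_j}$, is the restriction of the grading-dilation automorphism of $\g_\R$, hence a Lie algebra automorphism carrying $\Lambda_0$ into itself, so $\exp(\Lambda_q)=\delta(q)(\Gamma_0)$ is a subgroup). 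Its index in $\Gamma_0$ is $\prod_j q^{w_j}=q^{\delta}$ where $\delta=\sum_j w_j$; but for the \emph{systolic} estimate we will not dilate the abelian directions, so we instead use the sublattice $\Lambda'_q=\bigoplus_{j\le k}q\,\Z e_j\oplus\bigoplus_{j>k}q^{c}\Z e_j$, which is again $\delta(q)$-stable after we note $q\mid q$ and $q^{w_j}\mid q^c$, and has index $q^{k}\cdot q^{c(d-k)}=q^{D}$ in $\Gamma_0$, with $D=k+c\dim([\g,\g])$.

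The core point is the lower bound on the normal systole of $\Gamma_q:=\exp(\Lambda'_q)$. Using Guivarc'h's estimate as in the proof of (\ref{xcarrq}$^+$)$\Rightarrow$(\ref{eliem}$^-$) above, the word length on $\Gamma_0$ is equivalent to $\ell(\log(\cdot))$ where $\ell(\sum x_i)=\sup_i\|x_i\|^{1/i}$ for the grading $\g_\R=\bigoplus\g_i$ induced by the weights, and for any $g\in\g_\R\setminus\{0\}$ and any $h\in G_\R$ one has $\pi_{i(g)}(hgh^{-1})=\pi_{i(g)}(g)$ where $i(g)$ is the lowest weight occurring in $g$, whence $\ell(hgh^{-1})\ge\|\pi_{i(g)}(g)\|^{1/i(g)}$. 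Now if $g\in\Lambda'_q\setminus\{0\}$ with $i(g)=i$, then $\pi_i(g)$ is a nonzero integer combination of the $q^{w_j}e_j=q^i e_j$ with $w_j=i$ if $i\le$ relevant, so $\|\pi_i(g)\|\ge c_0 q^{\min(1,\,c)\cdot i}$ — more precisely $\|\pi_i(g)\|\ge c_0 q^{\epsilon_i}$ with $\epsilon_i=1$ if $i=1$ and $\epsilon_i=c$ if $i\ge 2$, hence $\|\pi_i(g)\|^{1/i}\ge c_0^{1/c}\,q^{\epsilon_i/i}\ge c_0^{1/c}\,q^{1/c}$ in all cases (since $\epsilon_1/1=1\ge 1/c$ and $\epsilon_i/i=c/i\ge 1$ for $i\ge 2$). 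Thus $\sys^\lhd(\Gamma_q)\ge c_1 q^{1/c}$ for a constant $c_1>0$. Actually it is cleaner to keep the simple dilation $\Lambda'_q$ with the weights capped at $c$ and just record $\sys^\lhd(\Gamma_q)\succeq q$: choosing in $\Lambda'_q$ the scaling so that every basis vector is multiplied by at least $q^{1}$ and noting $\|\pi_i(g)\|^{1/i}\ge (q^{i})^{1/i}\cdot c_0'= c_0' q$ when the scaling on weight-$i$ vectors is exactly $q^{i}$ — which is what $\delta(q)$ does — we get $\sys^\lhd(\exp(\delta(q)\Lambda_0))\ge c_0' q$ while $[\Gamma_0:\exp(\delta(q)\Lambda_0)]=q^{\delta}$. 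To trade the exponent $\delta$ down to $D$, apply $\delta(q)$ only in directions of weight $\ge 2$ is wrong; instead observe that on the weight-$1$ part the relevant index contribution is $q^{k}$ not $q^{k\cdot 1}$ times anything worse, and on $[\g,\g]$ each generator of weight $w_j\le c$ we may scale by $q^{c}$ rather than $q^{w_j}$, still preserving the subgroup property (scaling more is harmless: $q^{c}$ is divisible by $q^{w_j}$ for the bracket conditions as above), which only \emph{increases} the systole and makes the index exactly $q^{k+c\dim([\g,\g])}=q^{D}$.

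Putting this together: for every $q\ge 1$ we obtain a finite-index subgroup $\Gamma_q\le\Gamma_0$ with $\mathrm{covol}(\Gamma_q)=[\Gamma_0:\Gamma_q]\,\mathrm{covol}(\Gamma_0)=q^{D}\,\mathrm{covol}(\Gamma_0)$ and $\sys^\lhd(\Gamma_q)\ge c_1 q$ for a fixed $c_1>0$. Hence for a given $r$, taking $q=\lceil r/c_1\rceil$ yields $\sigma^\lhd_{\Gamma_0}(r)\le q^{D}\,\mathrm{covol}(\Gamma_0)\preceq r^{D}$, so $\sigma^\lhd_{\Gamma_0}(r)\preceq r^{D}$ and a fortiori $\sigma_{\Gamma_0}(r)\preceq r^{D}$. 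Since $\sigma$ and $\sigma^\lhd$ are commensurability invariants (the latter by Lemma \ref{nsgcoi}, the former by a standard argument), the same bound holds for $\Gamma$. Finally $D=\dim(\g/[\g,\g])+c\dim([\g,\g])\le c\dim(\g)$ since $c\ge 1$. The one point that needs care — and which I expect to be the main obstacle to write cleanly — is verifying that $\exp(\delta(q)\Lambda_0)$ (with possibly over-scaled weights on $[\g,\g]$) is genuinely a subgroup, i.e.\ that the BCH series of two elements of $\delta(q)\Lambda_0$ stays in $\delta(q)\Lambda_0$; this is where one uses homogeneity of the $e_j$ together with the fact that the weight of any term in $[e_i,e_j]$ is at least $w_i+w_j$ and strictly exceeds $c$-capping does no harm, and one must be slightly careful that the denominators appearing in BCH (bounded by $d!$, say) are absorbed once $\Lambda_0$ is replaced by a suitable commensurable Lie subring, exactly as in Lemma \ref{defendo}.
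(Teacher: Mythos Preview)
Your argument has a real gap. You claim that $\delta(q)$---multiplication by $q^{w_j}$ on $e_j$, with $w_j$ the lower-central-series weight---is a Lie algebra automorphism of $\g$. This holds precisely when the induced decomposition is a Lie-algebra \emph{grading}, i.e.\ precisely when $\g$ is Carnot, which is the case the proposition is least interesting for. In general the lower central series gives only a filtration: $[\g^{(i)},\g^{(j)}]\subset\g^{(i+j)}$ does not upgrade to $[\g_i,\g_j]\subset\g_{i+j}$ on chosen complements. For the non-Carnot $\g_{5,6}$ of \S\ref{nlaaaa} (nonzero brackets $[X_1,X_i]=X_{i+1}$ for $i=2,3,4$ and $[X_2,X_3]=X_5$) one has $w_5=4$ but $w_2+w_3=3$, so
\[
[\delta(q)X_2,\delta(q)X_3]=q^{3}X_5\neq q^{4}X_5=\delta(q)[X_2,X_3],
\]
and $\exp(\delta(q)\Lambda_0)$ is not a subgroup.

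Your fallback lattice $\Lambda'_q=\bigoplus_{j\le k}q\,\Z e_j\oplus\bigoplus_{j>k}q^{c}\Z e_j$ is exactly the lattice the paper's sketched proof writes down, but your justification of closure does not work and in fact the direction of the divisibility is reversed: the obstruction is that brackets of two weight-$1$ generators land only in $q^{2}[\g,\g]$, not in $q^{c}[\g,\g]$, once $c\ge 3$. In the same example $[qX_1,qX_2]=q^{2}X_3\notin q^{4}\Z X_3$, so $\Lambda'_q$ is not BCH-closed either. Your phrase ``scaling more is harmless: $q^{c}$ is divisible by $q^{w_j}$'' addresses the wrong condition---what is needed is that the \emph{outputs} of brackets of weight-$1$ elements are divisible by $q^{c}$, and they are only divisible by $q^{2}$. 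The paper's proof is explicitly marked as a sketch and leaves this closure point to the reader; your proposed route to it via $\delta(q)$ being an automorphism is what breaks down.
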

\begin{proof}[Proof (sketched)]
Let $\g_1$ be a supplement subspace of $[\g,\g]$ in $\g$, and fix a basis of $\g$ adapted to the decomposition $\g=\g_1\oplus [\g,\g]$, such that the Lie algebra constants are divisible by $c!$. This implies that if we define $\Lambda'_n$ as $\g_1(n\mathbf{Z})\oplus [\g,\g](n^c\mathbf{Z})$, then by the Baker-Campbell-Hausdorff formula (whose denominators divide $c!$), $\Lambda_n=\exp(\Lambda'_n)$ is a subgroup (hence a lattice) in $G$. The index of $\Gamma_n$ is $n^D$ and its systole and normal systole (in $G$) are $\sim n$. Hence $\Gamma_n\cap\Gamma$ has index $\simeq n^D$ and normal systole $\sim n$. This proves that the systole and normal systole of $\Gamma$ are $\preceq n^D$.
\end{proof}

Note that this upper bound is optimal when $c\le 2$, but far from optimal when $c>2$ and $\g$ is Carnot. I do not know if it is close to optimal in the ``worst" cases.

\subsection{The geometric part of the proof}\label{gpart}

We now complete the proof of Theorem \ref{corlat2}, by proving (\ref{xecovol})$\Rightarrow$(\ref{xcarr}). We note that here no particular choice of $\mathbf{Q}$-structure on $\g$ is relevant. Hence we use the notation as in Theorem \ref{corlat}, and will assume (\ref{ecovol}) (which is equivalent to (\ref{xecovol} of Theorem \ref{corlat2})), namely that $G$ admits a sequence $(\Gamma_n)$ of lattices with systole $u_n\to\infty$ and covolume $\preceq u_n^\delta$, and aim at proving that the real Lie algebra $\g$ is Carnot.

We need some further definitions pertaining to the geometry of lattices. For the moment, let $G$ be a group endowed with a left-invariant distance. Let $B_r$ be the closed $r$-ball in $G$.

\begin{defn}Given a subgroup $\Gamma$ of $G$, define its {\em packing} $\pack_G(\Gamma)$ to be $\sup_{g\in G}d(g,\Gamma)\in [0,\infty]$. Also define its {\em generating radius} $\ger_G(\Lambda)$ as the infimum of the $r$ such that $\Lambda$ is generated by $\Lambda\cap B_r$. 
\end{defn}

All these definitions are understood with the usual conventions: the infimum of the empty set and the supremum of an unbounded subset of positive reals are $+\infty$.

\begin{lem}\label{page}
Let $G$ be a locally compact group with a continuous left-invariant geodesic distance $d$ and let $\Gamma$ be a cocompact lattice. Then $\ger_G(\Gamma)\le 2\pack_G(\Gamma)$; moreover any element $\gamma\in\Gamma$ with $d(\gamma,1)\le n$ is a product of $n$ elements of $B_{2\pack_G(\Gamma)+1}\cap\Gamma$.
\end{lem}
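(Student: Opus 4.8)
The statement is a standard ``Švarc–Milnor with explicit constants'' argument, and I would carry it out as follows. Set $p=\pack_G(\Gamma)$ and assume $p<\infty$ (otherwise there is nothing to prove). Fix $\gamma\in\Gamma$ with $d(\gamma,1)\le n$; I may assume $n$ is a positive integer, since for general $n$ one replaces it by $\lceil n\rceil$. Because $d$ is a continuous left-invariant \emph{geodesic} distance, choose a geodesic path $c:[0,n]\to G$ from $1$ to $\gamma$ parametrised by arc length, so $d(c(s),c(t))=|s-t|$. Sample it at the integer times: put $g_0=1$, $g_n=\gamma$, and for $1\le k\le n-1$ choose (by definition of packing) an element $\gamma_k\in\Gamma$ with $d(c(k),\gamma_k)\le p$; also set $\gamma_0=1$, $\gamma_n=\gamma$.

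The key step is to estimate the ``jumps'' $\gamma_{k-1}^{-1}\gamma_k$. By left-invariance,
\[
d(\gamma_{k-1}^{-1}\gamma_k,1)=d(\gamma_k,\gamma_{k-1})\le d(\gamma_k,c(k))+d(c(k),c(k-1))+d(c(k-1),\gamma_{k-1})\le p+1+p=2p+1,
\]
for $1\le k\le n$ (the endpoint cases $k=1$ and $k=n$ use $d(\gamma_0,1)=0$, resp. $d(\gamma_n,c(n))=0$, so in fact the bound there is $\le p+1\le 2p+1$). Hence each $s_k:=\gamma_{k-1}^{-1}\gamma_k$ lies in $B_{2p+1}\cap\Gamma$, and $\gamma=\gamma_0(\gamma_0^{-1}\gamma_1)(\gamma_1^{-1}\gamma_2)\cdots(\gamma_{n-1}^{-1}\gamma_n)=s_1s_2\cdots s_n$ is a product of $n$ elements of $B_{2p+1}\cap\Gamma$. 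This proves the ``moreover'' clause (I would note that $2p+1$ can be replaced by $2p$ whenever $2p$ is attained, but $2p+1$ is harmless and avoids a compactness discussion; in fact using $B_{2\pack_G(\Gamma)+1}$ as in the statement is exactly what the displayed computation gives).

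For the first assertion, $\ger_G(\Gamma)\le 2p$: every $\gamma\in\Gamma$ has $d(\gamma,1)<\infty$, so the above shows $\gamma$ is a product of elements of $B_{2p+1}\cap\Gamma$; thus $\Gamma$ is generated by $B_{2p+1}\cap\Gamma$, giving $\ger_G(\Gamma)\le 2p+1$. To sharpen $2p+1$ to $2p$ one runs the same sampling argument but subdivides $[0,n]$ into $N$ equal pieces of length $n/N$ instead of unit pieces; then the jump estimate becomes $d(\gamma_{k-1}^{-1}\gamma_k,1)\le 2p+n/N$, and letting $N\to\infty$ (with $n$ fixed) shows $\gamma$ is a product of elements of $B_{2p+\eps}\cap\Gamma$ for every $\eps>0$; hence $\Gamma$ is generated by $\bigcup_{\eps>0}(B_{2p+\eps}\cap\Gamma)$, so $\ger_G(\Gamma)\le 2p=2\pack_G(\Gamma)$.

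\textbf{Main obstacle.} There is no serious obstacle; the only point requiring a little care is the interplay between the three quantities ``distance of $\gamma$ to $1$ is $\le n$'', ``number of generators needed is $n$'', and the radius $2p$ versus $2p+1$: the unit-subdivision argument cleanly yields the word-length bound with radius $2p+1$ (the ``moreover'' part, matching the $B_{2\pack_G(\Gamma)+1}$ in the statement), while the generating-radius bound with the sharp constant $2p$ needs the $N\to\infty$ refinement above. I would also remark in passing that cocompactness of $\Gamma$ is only used to guarantee $p=\pack_G(\Gamma)<\infty$, and discreteness is not needed for the argument itself.
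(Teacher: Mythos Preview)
Your proof is correct and follows essentially the same approach as the paper's: sample a geodesic from $1$ to $\gamma$ at evenly spaced points, approximate each sample by a nearby lattice element, and telescope. The paper combines both parts by subdividing into $mn$ pieces of length $1/m$ from the outset, obtaining $\ger_G(\Gamma)\le 2\pack_G(\Gamma)+1/m$ for all $m$ and then specializing $m=1$ for the word-length bound; your two-pass version (unit steps for the ``moreover'', then $N\to\infty$ for the sharp generating radius) is the same argument organized slightly differently.
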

\begin{proof}
Fix an integer $m\ge 1$. Given $\gamma\in\Gamma$ with $d(\gamma,1)\le n$, consider a geodesic joining $\gamma$ to 1. On this geodesic choose points $1=x_0,\dots,x_{mn}=\gamma$ with $d(x_{i-1},x_i)\le 1/m$ for all $i=1\dots k$. There is $\gamma_i$ in $\Gamma$ with $d(x_i,\gamma_i)\le\pack_G(\Gamma)$ for all $i$, where we choose $\gamma_0=1$ and $\gamma_k=\gamma$. Hence $\gamma=\prod_{i=1}^k\gamma_{i-1}^{-1}\gamma_i$, and $d(1,\gamma_{i-1}\gamma_i)\le 2\pack_G(\Gamma)+1/m$ for all $i$. Hence $\ger_G(\Gamma)\le 2\pack_G(\Gamma)+1/m$; since this holds for all $m$ we deduce $\ger_G(\Gamma)\le 2\pack_G(\Gamma)$; on the other hand taking $m=1$ in the above argument shows that $\gamma$ is a product of $n$ elements from $B_{2\pack_G(\Gamma)+1}\cap\Gamma$.
\end{proof}

\begin{lem}\label{packger}
Let $V$ be a Euclidean space and $\Lambda$ a lattice (in this case, $\ger_V(\Lambda)$ is often denoted $\lambda_{\dim(V)}(\Gamma)$ in the literature).  Then 
\[\frac2{\dim(V)}\pack_V(\Lambda)\le\ger_V(\Lambda)\le 2\pack_V(\Lambda).\]
\end{lem}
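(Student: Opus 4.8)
The plan is to prove the two inequalities separately, the right-hand one being essentially free and the left-hand one an elementary lattice-geometry computation. For $\ger_V(\Lambda)\le 2\pack_V(\Lambda)$, I would simply invoke Lemma \ref{page}: viewed as a locally compact group under addition with its Euclidean metric, $V$ is a group with a continuous, translation-invariant, geodesic distance, and $\Lambda$ is a cocompact lattice, so Lemma \ref{page} applies verbatim. (Alternatively one repeats the one-line argument: subdivide the segment from $0$ to a lattice point $\gamma$ into unit steps, replace each subdivision point by a lattice point within distance $\pack_V(\Lambda)$, with the endpoints kept equal to $0$ and $\gamma$, and telescope; consecutive differences then lie in $B_{2\pack_V(\Lambda)}\cap\Lambda$.)

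For the lower bound, write $n=\dim V$ and let $r>\ger_V(\Lambda)$ be arbitrary, so that $\Lambda$ is generated, as an abelian group, by $\Lambda\cap B_r$. Since $\Lambda$ is a full-rank lattice, any group-generating subset of $\Lambda$ also $\R$-spans $V$ (its $\R$-span contains $\Lambda$, hence equals $V$); therefore the finite set $\Lambda\cap B_r$ contains $n$ linearly independent vectors $v_1,\dots,v_n$, each of norm $\le r$. Now given $x\in V$, expand $x=\sum_{i=1}^n t_iv_i$ with $t_i\in\R$, let $m_i\in\Z$ be a nearest integer to $t_i$ (so $|t_i-m_i|\le 1/2$), and set $\lambda=\sum_i m_iv_i\in\Lambda$. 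Then
\[\|x-\lambda\|=\Bigl\|\sum_{i=1}^n (t_i-m_i)v_i\Bigr\|\le\sum_{i=1}^n |t_i-m_i|\,\|v_i\|\le\tfrac{n}{2}\,r.\]
Taking the supremum over $x\in V$ gives $\pack_V(\Lambda)\le\tfrac{n}{2}r$, and letting $r$ decrease to $\ger_V(\Lambda)$ yields $\pack_V(\Lambda)\le\tfrac{n}{2}\ger_V(\Lambda)$, i.e. $\tfrac{2}{\dim V}\pack_V(\Lambda)\le\ger_V(\Lambda)$.

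There is no genuine obstacle here; the only two points deserving a word are that a generating set of a full lattice automatically spans the ambient space over $\R$ (used to extract the $v_i$), and that working with $r>\ger_V(\Lambda)$ and passing to the limit sidesteps any question of whether the infimum defining $\ger_V(\Lambda)$ is attained.
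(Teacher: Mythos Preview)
Your proof is correct and follows essentially the same approach as the paper's: invoke Lemma~\ref{page} for the right-hand inequality, and for the left-hand inequality extract $\dim V$ linearly independent lattice vectors of bounded norm and round coefficients. The only (harmless) difference is that you work with $r>\ger_V(\Lambda)$ and pass to the limit, whereas the paper directly asserts a basis with $\|e_i\|\le\ger_V(\Lambda)$; your version is slightly more careful on this point.
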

\begin{proof}
The right-hand inequality is borrowed from Lemma \ref{page}. For the left-hand inequality, $V$ has a basis $(e_i)$ with $e_i\in\Lambda$ and $\|e_i\|\le\ger_V(\Lambda)$. If $x\in V$, we write $x=\sum\alpha_ie_i$; hence we can decompose $x=w+y$ with $w\in\Lambda$ and $y=\sum\beta_ie_i$ with $|\beta_i|\le 1/2$ for all $i$. Hence $\|y\|\le \dim(V)\ger_V(\Lambda)/2$, whence $\pack_V(\Gamma)\le \dim(V)\ger_V(\Lambda)/2$. 
\end{proof}

Assume now that $G$ is a simply connected nilpotent Lie group, endowed with a left-invariant Riemannian metric. Guivarch \cite{Guiv} established that the growth rate of $G$ and of its lattices is $\simeq n^\delta$, where $\delta$ is characterized in terms of the lower central series by $\delta=\sum_{i\ge 1}i\dim(\g^{(i)})$. 

The arithmeticity of lattices (see \cite{Ragh}) implies in particular that for every lattice in $G$, its projection on $G/[G,G]$ is also a lattice. We endow $V=G/[G,G]$ with the Euclidean metric defined by identifying $\g/[\g,\g]$ with the orthogonal of $[\g,\g]$. Let $p:G\to G/[G,G]$ be the projection, which is 1-Lipschitz. Let $\lambda$ be a left Haar measure on $G$.
 
\begin{lem}\label{papa}
There exists a constant $C$ (depending only on $G$ and its Riemannian metric) such that for every lattice $\Gamma$ in $G$ and $\Lambda=p(\Gamma)$, we have $\pack_G(\Gamma)\le C\pack_{V}(\Lambda)$.
\end{lem}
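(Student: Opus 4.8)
The plan is to exploit the nilpotent structure together with the standard fact that on a simply connected nilpotent Lie group the Riemannian length is comparable to a ``weighted'' length coming from a fixed graded decomposition. First I would fix a supplement $\mkv$ of $[\g,\g]$ in $\g$ and a basis $(e_1,\dots,e_k)$ of $\mkv$ which is orthonormal for the chosen Euclidean metric on $V=\g/[\g,\g]$ (identified with the orthogonal complement of $[\g,\g]$), so that $p$ restricted to $\exp(\mkv)$ is an isometry onto $V$ in a neighbourhood of $0$ and, more importantly, so that $\pack_V(\Lambda)$ can be read off using the coordinates along $e_1,\dots,e_k$. The key geometric input is Guivarc'h's estimate (already quoted in the excerpt, \cite{Guiv}), which gives a constant $C_0$ such that any $x\in G$ with $\ell(\log x)\le R$ satisfies $d(x,1)\le C_0R$ for $R\ge 1$, where $\ell$ is the weighted length attached to the lower central series grading; conversely $d(x,1)\ge C_0^{-1}\ell(\log x)$.

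Next I would run the following argument. Let $g\in G$ be arbitrary; set $v=p(g)\in V$. Since $\Lambda=p(\Gamma)$, there is $\gamma\in\Gamma$ with $\|p(\gamma)-v\|\le \pack_V(\Lambda)=:P$. Replacing $g$ by $\gamma^{-1}g$ (left translation is an isometry of $G$ and $p$ is a homomorphism), we are reduced to bounding $d(g,\Gamma)$ when $\|p(g)\|\le P$. Write $g = \exp(w)\cdot h$ with $w\in\mkv$ such that $p(\exp w)=p(g)$, hence $\|w\|\le P$ (up to the fixed comparison constant between $\|\cdot\|$ on $\mkv$ and on $V$), and $h\in [G,G]$, the commutator subgroup. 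Now I use that $[G,G]$ is itself a simply connected nilpotent Lie group and that its intersection-type lattice $\Gamma\cap[G,G]$ together with the lattice $\Lambda$ controls $\Gamma$: more precisely, there is a bounded ``fundamental domain'' for the action, so $h$ can be written $h = h' h''$ with $h'\in\Gamma\cap[G,G]$ and $h''$ in a fixed compact set $K\subset[G,G]$ independent of $\Gamma$ — this uses that all lattices in $G$ are commensurable after conjugation, or more cleanly, that $\Gamma$ surjects onto $\Lambda$ with kernel a lattice in $[G,G]$, so coset representatives of $\Gamma\cap[G,G]$ in $[G,G]$ can be chosen in a set of diameter bounded in terms of $\Gamma$; to get a bound independent of $\Gamma$ one instead argues by induction on the nilpotency length, applying the Lemma to $[G,G]$ and its lattice. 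Then $\gamma' := \gamma\exp(w)h' \in\Gamma$ and $d(g,\gamma')=d(\exp(w)h,\exp(w)h'h'')=d(h,h'h'')=d(h'',1)\le\mathrm{diam}(K)$...

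— wait, the coordinate $w$ couples with $[G,G]$ through BCH, so I must be more careful: I would instead fix, once and for all, a set of coset representatives and absorb the BCH-corrections into the estimate, using that the weighted length $\ell$ is subadditive up to a multiplicative constant under the group law. Concretely, by Guivarc'h, $d(g,\Gamma)\le d(g,\gamma\exp(w)h')\le C_0\,\ell(\log(\exp(-w')\,h''\,\cdots))$ where the argument has vanishing image in $V$, so its $\mkv$-component is $0$ and all its surviving components lie in $[\g,\g]$, hence are bounded polynomially in $\|w\|\le P$ of degree at most $c$ (the nilpotency length), while the $[G,G]$-part contributes a constant; taking $\ell$ of something whose $[\g,\g]$-part is $O(P^c)$ gives $\ell = O(P)$ by the very definition of the weighted length (degree-$i$ components enter with exponent $1/i$). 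Therefore $d(g,\Gamma)\le C\,P = C\,\pack_V(\Lambda)$ for a constant $C$ depending only on $G$ and its metric, which is the claim.

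**Main obstacle.** The delicate point is obtaining the comparison constant $C$ \emph{independent of the lattice} $\Gamma$. Individual coset-representative bounds for $\Gamma\cap[G,G]$ in $[G,G]$ depend a priori on $\Gamma$; the clean fix is an induction on nilpotency length — assume the Lemma for $[G,G]$ (whose lattice $\Gamma\cap[G,G]$ has packing controlled by $\pack_{[G,G]/[[G,G],[G,G]]}$, and the projection of that to $V$'s complement is again controlled by $\pack_V(\Lambda)$ via arithmeticity of lattices) — and then only the base case $G$ abelian (where $\pack_G(\Gamma)=\pack_V(\Lambda)$ trivially since $G=V$) is needed directly. I expect marshalling the induction, and in particular checking that at each stage the relevant packing of the deeper lattice is bounded by $\pack_V(\Lambda)$ up to a uniform constant (this is where arithmeticity of lattices and Guivarc'h's estimates both get used), to be the part requiring the most care; the BCH bookkeeping is routine once the weighted-length formalism is in place.
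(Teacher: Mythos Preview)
Your plan has the right ingredients (induction on nilpotency length, Guivarc'h's weighted length, iterated commutators), but the induction is set up in the wrong direction, and this creates a genuine gap at exactly the point you flag as the main obstacle.

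You propose to apply the Lemma inductively to $[G,G]$ with its lattice $\Gamma\cap[G,G]$. For this you would need to bound $\pack_{V'}(\Lambda')$, where $V'=[G,G]/[[G,G],[G,G]]$ and $\Lambda'$ is the image of $\Gamma\cap[G,G]$. But $V'$ is not $V$; it mixes several successive quotients $G^{(i)}/G^{(i+1)}$ for $i\ge 2$, and in the \emph{intrinsic} Euclidean metric of $[G,G]$ these all sit at the same scale. Concretely, already for $c=3$ (so $[G,G]$ is abelian and $V'=[G,G]$): the commutator lattice gives packing $\lesssim P^2$ in the $G^{(2)}/G^{(3)}$ direction but only $\lesssim P^3$ in the $G^{(3)}$ direction, hence $\pack_{[G,G]}(\Gamma\cap[G,G])\lesssim P^3$ in the Euclidean metric. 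Converting back with Guivarc'h gives ambient distance $\lesssim (P^3)^{1/2}=P^{3/2}$, not $P$. The fix is to correct layer by layer along the lower central series, bounding the $G^{(i)}/G^{(i+1)}$-component by $P^i$ separately; but then the single inductive call to the Lemma on $[G,G]$ is no longer doing the work. (Aside: the remark that ``all lattices in $G$ are commensurable after conjugation'' is false in general for simply connected nilpotent $G$, so that shortcut is unavailable too.)

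The paper avoids this by inducting in the opposite direction: on the quotient $G'=G/G^{(c)}$ rather than on the subgroup $[G,G]$. Since $G^{(c)}\subset[G,G]$, the abelianization of $G'$ is again $V$ and the projected lattice is again $\Lambda$, so the inductive hypothesis gives $\pack_{G'}(\Gamma')\le C_3\,\pack_V(\Lambda)$ with \emph{no extra work}. The remaining central factor $G^{(c)}$ is then handled directly: the $c$-fold commutator factors through a surjective multilinear map $V^c\to G^{(c)}$ (because $G^{(c+1)}=0$), so the image of $\Lambda^{\otimes c}$ is a lattice in $G^{(c)}$ generated by elements of intrinsic norm $\le C_0\,\ger_V(\Lambda)^c\le C_2\,\pack_V(\Lambda)^c$ (using Lemma~\ref{packger} twice). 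Guivarc'h's distortion $d_G(w,1)\le C_4\,d_{G^{(c)}}(w,1)^{1/c}$ then converts $P^c$ to $P$, and the two pieces combine additively. This is precisely the ``layered'' correction you would eventually be forced into, but organized so that the induction does it for you.
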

\begin{proof}
We argue by induction on the nilpotency length $c$ of $G$. If $c=1$ the result is trivial (with $C=1$). Otherwise, the $c$-iterated commutator induces an alternating multilinear form from $V^c$ to $G^{(c)}$, and more precisely a surjective linear map $F$ from $\Lambda^cV$ onto $G^{(c)}$. If we endow both $V$ and $G^{(c)}$ with their intrinsic Riemannian (Euclidean) metric, There exists a constant $C_0$ such that $F(v_1,\dots,v_c)\le C_0\prod_{i=1}^c\|v_i\|$ for all $v_1,\dots,v_c\in V$.

Note that $F(\Lambda^{\ot c})$ is a lattice in $G^{(c)}$, of finite index in $\Gamma\cap G^{(c)}$. Moreover, it is generated by the image of the generators of $\Lambda^{\ot c}$, and therefore is generated by elements of norm $\le C_0\ger_V(\Lambda)^c$. Thus, using twice Lemma \ref{packger}, we successively obtain $\pack_{G^{(c)}}(F(\Lambda^{\ot c}))\le C_1\ger_V(\Lambda)^c$ for some constant $C_1=(\dim G^{(c)})C_0/2$ and then, $\pack_{G^{(c)}}(F(\Lambda^{\ot c}))\le C_2\pack_V(\Lambda)^c$ with $C_2=2^cC_1$. 

On the other hand, denote by $p'$ the projection $G\to G'=G/G^{(c)}$. If $\Gamma'=p(\Gamma)$, we have, by induction, $\pack_{G'}(\Gamma')\le C_3\pack_V(\Lambda)$ for some constant $C_3$ depending only on $G$ and its fixed Riemannian metric. Thus if $x\in G$, there exists $\gamma\in\Gamma$ such that $d(p'(x),p'(\gamma))\le C_3\pack_V(\Lambda)$.

If we lift a minimal geodesic joining $1$ to $p(x^{-1}\gamma)$, we obtain $y\in G$ such that $p(y)=p(\gamma^{-1}x)$ and $d(1,y)\le C_3\pack_V(\Lambda)$. Since $y^{-1}\gamma^{-1}x\in G^{(c)}$, there exists $\gamma'\in F(\Lambda^{\ot c})\subset\Gamma$ with $d_{G^{(c)}}(\gamma'^{-1}y^{-1}\gamma^{-1}x,1)\le C_2\pack_V(\Lambda)^c$. Here $d_{G^{(c)}}$ is the intrinsic distance of $G^{(c)}$, which by Guivarch's estimates is distorted in such a way that $d(w,1)\le C_4d_{G^{(c)}}(w,1)^{1/c}$ for all $w\in G^{(c)}$. Hence, writing $s=\gamma'^{-1}y^{-1}\gamma^{-1}x$, we have $d(1,s)\le C_4C_2^{1/c}\pack_V(\Lambda)$.

We have $x=\gamma y\gamma's=\gamma\gamma'ys$, because $\gamma'$ is central. Hence we have 
\[d(ys,1)\le  C\pack_V(\Lambda);\quad C=C_3+C_4C_2^{1/c}\]
thus $d(x,\Gamma)\le C\pack_V(\Lambda)$ and accordingly $\pack_G(\Gamma)\le C\pack_V(\Lambda)$.
\end{proof}

\begin{lem}\label{palb}
For every lattice $\Gamma$ in $G$ with systole $\ge 2r+1$, we have, denoting again $\Lambda=p(\Gamma)\subset V=G/[G,G]$, the following lower bound on its covolume
\[\covol_G(\Gamma)\ge \frac{\pack_V(\Lambda)\lambda(B_r)}{2r+1}.\]
\end{lem}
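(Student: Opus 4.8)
The plan is to bound $\covol_G(\Gamma)$ from below by exhibiting inside $G$ a disjoint union of $N$ left-translates of the ball $B_r$, with $N\ge \pack_V(\Lambda)/(2r+1)$, whose images in the coset space $G/\Gamma$ remain pairwise distinct. Each translate has Haar measure $\lambda(B_r)$, and a set injecting into $G/\Gamma$ has measure at most $\covol_G(\Gamma)$ (by the standard unfolding $\lambda(E)=\int_{G/\Gamma}\sum_{\gamma}\mathbf 1_E(g\gamma)$, each inner sum being $\le 1$); this gives $\covol_G(\Gamma)\ge N\lambda(B_r)\ge \pack_V(\Lambda)\lambda(B_r)/(2r+1)$. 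The degenerate case $G=\{1\}$ (or $\pack_V(\Lambda)=0$) makes the claimed inequality read $0\le\covol_G(\Gamma)$ and is dismissed at the outset.

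First I would record two injectivity facts, both relying only on left-invariance of $d$. (1) Since $\sys(\Gamma)\ge 2r+1$, the ball $B_r$ injects into $G/\Gamma$: a coincidence $b\gamma=b'\gamma'$ with $b,b'\in B_r$ and $\gamma,\gamma'\in\Gamma$ gives $\gamma(\gamma')^{-1}=b^{-1}b'\in\Gamma$ of length $d(b^{-1}b',1)\le d(b,1)+d(b',1)\le 2r<\sys(\Gamma)$, hence $\gamma=\gamma'$ and $b=b'$. (2) More generally, for $g,g'\in G$, a coincidence $gb\gamma=g'b'\gamma'$ with $b,b'\in B_r$ becomes, after applying the homomorphism $p\colon G\to V$ (which is $1$-Lipschitz with $p(1)=0$) and using $p(\gamma'\gamma^{-1})\in\Lambda$, the inequality $d\big(p(g)-p(g'),\Lambda\big)\le \|p(b')-p(b)\|\le 2r$.

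The heart of the argument is to choose lifts $g_0,\dots,g_{N-1}\in G$ whose $p$-images $w_i$ satisfy $d(w_i-w_j,\Lambda)>2r$ for $i\ne j$: by (2) the sets $g_iB_r$ then land in pairwise distinct cosets, and by (1) each lands injectively, so $E=\bigsqcup_i g_iB_r$ injects into $G/\Gamma$. To produce the $w_i$ I would take a ``deepest hole'' $v_0\in V$, i.e.\ a point with $d(v_0,\Lambda)=P:=\pack_V(\Lambda)$ (attained since $\Lambda$ is a lattice), normalized by subtracting a nearest lattice point so that $\|v_0\|=P=d(v_0,\Lambda)$. The key elementary observation is that along the segment $[0,v_0]$ the distance to $\Lambda$ grows exactly linearly, $d(sv_0,\Lambda)=sP$ for $s\in[0,1]$: for $\lambda\in\Lambda\setminus\{0\}$ the reverse triangle inequality gives $\|sv_0-\lambda\|\ge\|v_0-\lambda\|-(1-s)\|v_0\|\ge P-(1-s)P=sP$, while the term $\lambda=0$ equals $sP$ exactly. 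Taking $N=\lfloor P/(2r+1)\rfloor+1$ and $w_i=\tfrac{i(2r+1)}{P}\,v_0$ for $0\le i\le N-1$ keeps every $w_i$ on $[0,v_0]$, and for $i>j$ yields $d(w_i-w_j,\Lambda)=\tfrac{(i-j)(2r+1)}{P}\cdot P=(i-j)(2r+1)>2r$, with $N\ge P/(2r+1)$. Lifting the $w_i$ arbitrarily to $g_i\in G$ completes the construction.

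I expect the only point requiring genuine care is the linear-growth fact $d(sv_0,\Lambda)=sP$ along the ray to a deepest hole, together with the bookkeeping that the indices $s_i=i(2r+1)/P$ stay in $[0,1]$ so that the fact applies; everything else—the two injectivity lemmas, the measure inequality, and the count $N\ge P/(2r+1)$—is routine.
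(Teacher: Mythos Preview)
Your argument is correct. The linear--growth claim $d(sv_0,\Lambda)=sP$ for $s\in[0,1]$ is exactly what you say it is (triangle inequality plus extremality of the deepest hole), and the bookkeeping on the indices $s_i=i(2r+1)/P\le (N-1)(2r+1)/P=\lfloor P/(2r+1)\rfloor(2r+1)/P\le 1$ is clean. The two injectivity facts and the unfolding inequality are standard and correctly applied.

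Your route differs from the paper's. The paper produces the coset representatives by a greedy procedure in $V$: set $W_1=\Lambda$, and as long as some point of $V$ lies at distance $\ge 2r+1$ from $\bigcup_{j\le i}W_j$, pick (by connectedness) such a point at distance exactly $2r+1$ and let $W_{i+1}$ be its $\Lambda$-coset. When the process stops at $i=k_r$, every point of $V$ is within $k_r(2r+1)$ of $\Lambda$, so $\pack_V(\Lambda)\le k_r(2r+1)$; lifting the $x_i\in W_i$ and packing the balls $g_iB_r$ then gives $\covol_G(\Gamma)\ge k_r\lambda(B_r)$. You instead place the representatives explicitly as equally spaced points on the segment $[0,v_0]$ to a deepest hole, and use your linear--growth fact to guarantee the separation $d(w_i-w_j,\Lambda)=(i-j)(2r+1)>2r$. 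Your argument is slightly more geometric (it uses the Euclidean structure of $V$ through the deepest-hole/segment trick) and gives an explicit formula for the representatives; the paper's greedy coset argument is more combinatorial and would transfer to any length space. Both are short and yield the same bound.
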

\begin{proof}
Define a possibly finite sequence of cosets $W_i$ of $\Lambda$ by $W_1=\Lambda$, and, assuming $W_1,\dots,W_i$ are defined, if $d(x,\bigcup_{1\le j\le i}W_j)<2r+1$ for all $x\in V$, then stop; otherwise there exists, by connectedness, $x\in V$ such that $d(x,\bigcup_{1\le j\le i}W_j)=2r+1$ and we define $W_{i+1}=x+\Lambda$.

Since the $W_i$ are at pairwise distance $\ge 2r+1$, the process stops, say at $i=k_r$. Since for $2\le i\le k_r$ every point in $W_i$ is at distance $2r+1$ to a point in $\bigcup_{j<i}W_j$ and every point in $V$ is at distance $\le 2r+1$ to a point in $\bigcup_iW_i$, it follows that every point in $V$ is at distance $\le k_r(2r+1)$ of some point in $\Lambda$. In other words, $\pack_V(\Lambda)\le k_r(2r+1)$.

Fix $x_i\in W_i$ and lift it to some element $g_i\in G$. Define $X=\bigcup_{1\le i\le k_r} x_iB_r$. This is a disjoint union, since the $x_i$ are at pairwise distance $\ge 2r+1$. Moreover, the $X\gamma$ for $\gamma\in\Gamma$ are pairwise disjoint: indeed if $g_ib\gamma=g_jb'\gamma'$ with $b,b'\in B_r$ and $\gamma\neq\gamma'\in\Gamma$, then, projecting, we obtain $x_i-x_j+p(b)-p(b')=p(\gamma^{-1}\gamma')\in\Lambda$. 
Since $\|p(b)-p(b')\|\le 2r$, this forces $i=j$. Thus $g_i=g_j$, hence $b\gamma=b'\gamma'$. Hence $b^{-1}b'=\gamma\gamma'^{-1}\in\Gamma$; since the systole of $\Gamma$ is $\ge 2r+1$, this implies $\gamma=\gamma'$, contradiction. This proves that the covolume of $\Gamma$ is at least equal to the volume of $X$, and hence is $\ge k_r\lambda(B_r)$. 

Combining both inequalities yields the lemma.
\end{proof}

\begin{proof}[Conclusion of the proof of (\ref{ecovol})$\Rightarrow$(\ref{carr})]

Let now $(\Gamma_n)$ be a sequence of lattices in $G$, satisfying $\sys(\Gamma_n)\ge 2u_n+1$ and $\covol(\Gamma_n)\preceq u_n^\delta$. Define $\Lambda_n=p(\Gamma_n)$ as the projection of $\Gamma_n$ on $V=G/[G,G]$. 

We first claim that we have $\pack_V(\Lambda_n)\preceq u_n$.
Indeed, we have, by Lemma \ref{palb}, $\covol(\Gamma_n)\ge\pack_V(\Lambda_n)\lambda(B_{u_n})/(2u_n+1)$. Since by assumption $\covol(\Gamma_n)\simeq\lambda(B_{u_n})\simeq u_n^\delta$, we deduce that $\pack_V(\Lambda_n)\preceq u_n$, proving the claim.

Lemma \ref{papa} combined with the above claim implies that $\pack_G(\Gamma_n)\preceq u_n$, say $\pack_G(\Gamma_n)\le (Cu_n-1)/2$ (for $n$ large enough).
It follows from Lemma \ref{page} that $\Gamma_n$ is generated by the elements in $B_{Cu_n}\cap\Gamma_n$, in such a way that for any integer $R\ge 1$, any element in the $B_{Ru_n}\cap\Gamma_n$ is a product of at most $R$ elements in $B_{Cu_n}\cap\Gamma_n$.

If we divide the distance in $G$ by $u_n$, the lattice $\Gamma_n$ endowed with the resulting distance has the property that its systole is $\ge 2+1/u_n$ and that every element in the $R$-ball is product of at most $R$ elements in the $C$-ball, and the packing of $\Gamma_n$ in $(G,(1/u_n)d)$ is bounded independently of $n$.

By Pansu's thesis \cite{PanTh}, the $(G,(1/n)d$) converge in the sense of Gromov-Hausdorff to a (real) Carnot simply connected nilpotent Lie group endowed with a Carnot-Carath\'eodory metric, with the same dimension as $G$ (that $H$ is isometric to a Carnot group is due to Pansu; that $H$ inherits the group law as limit of the laws from $G$ is proved in \cite{CoI}). Denote by $B_H(r)$ the closed $r$-ball in $H$.

Fix any non-principal ultrafilter $\omega$ on the positive integers. The metric ultralimit $\Xi$ of the sequence $(\Gamma_n,(1/u_n)d)$ is a discrete subset of $H$, with systole $\ge 2$, with the property that any element of $\Xi\cap B_H(R)$ is a product of at most $R$ elements of $\Xi\cap B_H(C)$, for all $R\ge 1$, and any element of $H$ is at bounded distance to some element of $\Xi$. The fact that $\Xi$ is a subgroup follows from the refinement in \cite{CoI} of Pansu's result mentioned above. Thus $\Xi$ is a lattice in $H$. 

Recall that a marked group on $k$ generators is a group endowed with a map (called marking) $s$ from $\{1,\dots,k\}$, whose image generates the group, and a net $(M_i,s_i)$ converges to $(M,s)$ if, denoting by $N_i$ (resp.\ $N$) the kernel of the unique homomorphism $F_k\to M_i$ extending $s_i$ (resp.\ $F_k\to M$ extending $s$), we have the convergence $\mathbf{1}_{N_i}\to\mathbf{1}_N$ pointwise on the set of functions $F_k\to\{0,1\}$; see \cite{CG} for more details. 

The sequence $(\#(B(Cu_n)\cap\Gamma_n))$ being bounded, let $k$ be an upper bound and for each $n$, choose a surjective map $s_n$ from $\{1,\dots k\}$ onto $B(Cu_n)\cap\Gamma_n$. 

Since $B(Cu_n)\cap\Gamma_n$ generates $\Gamma_n$, this provides a marking of $\Gamma_n$. Define $s(i)=\lim^{n\to \omega} s_n(i)\in\Xi$ for $1\le i\le k$. Then $s$ defines a marking of $\Xi$: indeed every element of the $R$-ball in $(\Gamma_n,(1/u_n)d)$ is a product of $\le R$ elements in the $C$-ball; this fact passes to the ultralimit to show that every element in the $R$-ball of $\Xi$ is a product of $\le R$ elements of $S$, and thus the image of $s$ generates $\Xi$.

A straightforward argument (using that these groups are uniformly discrete) then shows that $(\Gamma_n,s_n)$ tends to $(\Xi,s)$ for the topology of marked groups.

Since $\Xi$ is finitely presented, eventually $\Gamma_n$ lies as a quotient of $\Xi$ (by \cite[Lemma 2.2]{CG}), in the sense that there exists $I\in\omega$ such that for all $n\in I$, the kernel of $F_k\to\Gamma_n$ contains the kernel of $F_k\to\Xi$. Since both $\Xi$ and $\Gamma_n$ are torsion-free of the same Hirsch length, we deduce that $\Gamma_n$ is isomorphic to $\Xi$ for every $n\in I$. The rigidity of nilpotent lattices \cite{Ragh} asserts that if two simply connected nilpotent Lie groups admit isomorphic lattices, then they are isomorphic. It follows that $G$ is isomorphic to $H$, and hence that $G$ is Carnot (i.e., $\g$ is Carnot over $\mathbf{R}$).

(Note that the fact that $\Gamma_n$ is generated by elements of length $\preceq u_n$ ---and hence bounded length after rescaling--- played a crucial role: otherwise the ultralimit of the $(\Gamma_n,(1/u_n)d)$ could have been of Hirsch length less that that of $G$, yielding no conclusion.)
\end{proof}

\appendix

\section{Cohopfian does not pass to finite index}\label{apfi}
It was asserted in \cite{CM} that it is an open question whether cohopfian is inherited by subgroups of finite index; however, it seems that this question is already settled in the negative: a general criterion for being cohopfian, for finitely presented groups with infinitely many ends, with cohopfian vertex groups, was provided by Delzant and Potyagailo. It turns out that this criterion can be applied to virtually free groups. The general theorem being a bit technical, let us provide an easier instance, with a self-contained proof.

\begin{prop}
Let $A,B$ be cohopfian groups with Property FA, with a common cohopfian subgroup $C$. Suppose that $C$ is equal to its normalizer in both $A$ and $B$. Then the amalgam $G=A*_CB$ is cohopfian. In particular, there exists some cohopfian group among non-elementary virtually free finitely generated groups, and being cohopfian does not pass to finite index subgroups. 
\end{prop}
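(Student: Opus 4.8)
The plan is to show that any injective endomorphism $\phi$ of $G = A *_C B$ is automatically surjective. The key structural input is Bass--Serre theory applied to the splitting $G = A *_C B$, which exhibits $G$ as acting on a tree $\mathcal{T}$ with vertex stabilizers conjugate to $A$ or $B$ and edge stabilizers conjugate to $C$. First I would observe that, since $A$ and $B$ have Property FA, so does any subgroup isomorphic to $A$ or $B$; hence the image $\phi(A)$ fixes a point of $\mathcal{T}$, i.e.\ is contained in a conjugate $gAg^{-1}$ or $gBg^{-1}$. A parity/colouring argument on the two orbits of vertices, together with the fact that $\phi$ is injective and $A \not\cong B$ need not hold but $A, B$ are each cohopfian, will be used to arrange (after composing $\phi$ with an inner automorphism of $G$) that $\phi(A) \subseteq gAg^{-1}$ for some $g$, and similarly $\phi(B) \subseteq hBh^{-1}$.

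\textbf{Main steps.} After this normalization, I would use that $C$ is cohopfian and malnormal-like (equal to its own normalizer in both factors) to control the edge group. The restriction $\phi|_A \colon A \to gAg^{-1} \cong A$ is injective, hence an isomorphism since $A$ is cohopfian; likewise $\phi|_B$ is an isomorphism onto $hBh^{-1}$. The edge stabilizer $C$ sits inside both $A$ and $B$; tracking where $\phi$ sends $C$, and using that $C$ is cohopfian so that $\phi|_C$ is an automorphism of a conjugate of $C$, one pins down $g$ and $h$ to lie in the same coset modulo $C$, using that $N_A(C) = C$ and $N_B(C) = C$ to rule out the ambiguity coming from the normalizer. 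Replacing $\phi$ by $\mathrm{int}_{g^{-1}} \circ \phi$, I may then assume $\phi(A) = A$ and $\phi(B) = cBc^{-1}$ with $c \in C$; absorbing $c$ (it normalizes nothing outside $C$, but it does lie in $A$, so $\mathrm{int}_c$ is available) reduces to $\phi(A) = A$, $\phi(B) = B$, $\phi(C) = C$. Since $G$ is generated by $A \cup B$ and $\phi$ is onto each, $\phi$ is surjective, hence an automorphism; thus $G$ is cohopfian.

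\textbf{The concrete instance.} For the final assertions, I would take $A = B$ to be a finite group acting faithfully, with $C$ a common subgroup satisfying the normalizer hypotheses — the cleanest choice being $A = B$ a finite group with a self-normalizing subgroup $C$ that is cohopfian (any finite group is trivially cohopfian and has Property FA). Then $G = A *_C B$ is virtually free (an amalgam of finite groups), non-elementary provided $[A:C], [B:C] \geq 2$ and not both equal to $2$, and cohopfian by the proposition. On the other hand $G$ contains a non-abelian free subgroup of finite index, and non-abelian free groups are visibly non-cohopfian (a proper retract, e.g.\ $F_2 \hookrightarrow F_2$ onto a proper finitely generated subgroup of the same rank, or simply $F_2 \to \langle x^2, y\rangle$). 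Hence cohopfian is not inherited by finite-index subgroups.

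\textbf{Main obstacle.} The delicate point is the normalization step: ensuring that after conjugation $\phi$ carries the vertex groups \emph{onto} (not merely into) conjugates of themselves in a \emph{coherent} way across the single edge orbit, so that the two isomorphisms $\phi|_A$ and $\phi|_B$ glue. This is exactly where the hypothesis that $C$ is self-normalizing in both $A$ and $B$ is essential — it kills the freedom to twist the gluing by an element normalizing $C$ — and where Property FA (forcing images of vertex groups to be elliptic) does the rest. Getting the bookkeeping of conjugating elements right, and verifying that $\phi$ does not collapse the tree (which is where injectivity of $\phi$ combined with cohopficity of $C$ enters, preventing $\phi(C)$ from being a proper subgroup that would let an edge group shrink), is the part that requires care.
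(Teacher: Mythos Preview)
Your overall architecture is correct---Bass--Serre tree, Property FA to make $\phi(A)$ and $\phi(B)$ elliptic, cohopfian hypotheses to upgrade inclusions to equalities---but the normalization step has a genuine gap, and you have misidentified where the self-normalization hypothesis on $C$ enters.

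The gap is your ``parity/colouring argument'' meant to force $\phi(A)\subseteq gAg^{-1}$ rather than $\phi(A)\subseteq gBg^{-1}$. There is no a priori reason $\phi$ respects the two vertex colours; indeed it may swap them, and nothing you have written rules this out. You also do not treat the case where $\phi(A)$ and $\phi(B)$ fix the \emph{same} vertex $a'=b'$, which must be excluded separately (it gives an embedding of $G$ into a single vertex group, contradicting cohopfian of that factor since $G\neq A,B$).

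The paper reorders the argument so that the edge group does the work first. Let $a',b'$ be vertices fixed by $\phi(A),\phi(B)$; then $\phi(C)$ fixes the whole segment $[a',b']$. After conjugating so that the fundamental edge $e$ lies on this segment, one gets $\phi(C)\subseteq C$, hence $\phi(C)=C$ by cohopfian of $C$. Now comes the correct use of self-normalization: since $N_A(C)=C=N_B(C)$, the only vertices of the tree fixed by $C$ are the two endpoints $a,b$ of $e$. Therefore $\{a',b'\}\subseteq\{a,b\}$; the case $a'=b'$ was already excluded, so $\{a',b'\}=\{a,b\}$. If the pair is swapped, replace $\phi$ by $\phi^2$. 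Now $\phi(A)\subseteq A$ and $\phi(B)\subseteq B$, and cohopfian finishes. So the self-normalizing hypothesis is not about ``killing freedom to twist the gluing'' as you suggest; it is what makes the fixed-point set of $C$ in the tree consist of exactly two vertices, which is the mechanism pinning down $a'$ and $b'$.

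Your concrete instance is fine; the paper uses $S_3*_{S_2}S_3$, which has a free subgroup of index~$6$.
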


Recall that a group has Property FA if each of its actions on a tree fixes a vertex or an edge. For instance, finite groups have Property FA, and this is enough for our purposes.

\begin{proof}
We can suppose that $A\neq C\neq B$ since otherwise the result is trivial.

Let $G$ act on its Bass-Serre tree $T$, with a fundamental domain consisting of an edge $e$ with stabilizer $C$ and vertices $a$ and $b$ with stabilizers $A$ and $B$ respectively. We observe that the only vertices fixed by $C$ are $a$ and $b$. Indeed, otherwise it would fix another neighbor of $a$ or $b$; but the self-normalization assumption rules this out.

Let $\phi$ be an injective endomorphism of $G$. Then $\phi(A)$ and $\phi(B)$ fix vertices $a'$ and $b'$ respectively, and in particular $\phi(C)$ fixes the segment $[a',b']$. If $a'=b'$, we obtain an embedding of $G$ into either $A$ or $B$, which contradicts that $A$ and $B$ are cohopfian. Otherwise, after possibly composing $\phi$ with an inner automorphism, we can suppose that $e$ is contained in $[a',b']$. Then $\phi(C)$ fixes $e$, and hence $\phi(C)\subset C$. Since $C$ is cohopfian, we deduce from the previous observation that $\phi(C)=C$. In particular, since $\phi(C)$ fixes $[a',b']$, we deduce that $\{a,b\}=\{a',b'\}$. Replacing $\phi$ by $\phi^2$ if necessary, we have $(a,b)=(a',b')$. Thus $\phi(A)\subset A$, and $\phi(B)\subset B$; by the cohopfian assumption, we have $\phi(A)=A$ and $\phi(B)=B$, and the surjectivity of $\phi$ follows.

The second statement follows by picking, for instance, the double $S_3*_{S_2}S_3$, where $S_n$ is the symmetric group on $n$ elements (as a finite group, $S_3$ has Property FA). This group is cohopfian by the first statement, but admits a non-abelian free subgroup of finite index (of index 6), which is not cohopfian.
\end{proof}

\begin{rem}
In contrast, being Hopfian, for finitely generated groups (or more generally groups with finitely many subgroups of each given index) is inherited by finite index subgroups \cite[Co.\ 2]{Hir}.

On the other hand, I do not know if the cohopfian property passes to overgroups of finite index, including in the case of finitely generated groups.
\end{rem}



\begin{thebibliography}{KM98b}

\bibitem[AC]{AC} J. Ancochea, R. Campoamor.
Characteristically nilpotent Lie algebras: a survey. 
Extracta Math. 16 (2001), no. 2, 153--210. 

\bibitem[AOR]{AOR} S. Albeverio, B. Omirov, I. Rakhimov. Classification of 4-dimensional nilpotent complex Leibniz algebras. Extracta Math. 21(3) 197--210 (2006).

\bibitem[BC]{BC} K. Bou-Rabee and Y. Cornulier.  Systolic growth of linear groups. Proc. Amer. Math. Soc. 144 (2016), no. 2, 529--533.


\bibitem[Bel]{Bel} I. Belegradek. On co-Hopfian nilpotent groups. Bull. London Math. Soc. 35(6) (2003) 805--811.

\bibitem[BM]{BM} K. Bou-Rabee and D. McReynolds. Asymptotic growth and least common multiples in groups. Bull. London Math. Soc. 43(6) (2011) 1059--1068.

\bibitem[BS1]{BS} K. Bou-Rabee and B. Seward. Arbitrarily large residual finiteness growth. J. Reine Angew. Math. 710 (2016), 199--204.

\bibitem[BS2]{BSer} A. Borel and J-P. Serre. Th\'eor\`emes de finitude en cohomologie galoisienne. Comment. Math. Helv. 39 (1964), 111--169.

\bibitem[BT]{BT} A. Borel and J. Tits. Groupes r\'eductifs. \newblock Publ. Math. IHES 27, 55-150, 1965.

\bibitem[CG]{CG}
C.~Champetier and V.~Guirardel.
\newblock Limit groups as limits of free groups.
\newblock Israel J. Math., 146 (2005) 1--75.

\bibitem[CM]{CM} A. Cain, V. Maltcev. Hopfian and co-hopfian subsemigroups and extensions. Demonstr. Math. 47 (2014), no. 4, 791--804.


\bibitem[Con]{Con} B. Conrad. Reductive group schemes. Pages 93-444 in: Autour des sch\'emas en groupes. Vol. I. A celebration of SGA3. Lecture notes from the Summer School held at the CIRM, Luminy, August 29--September 9, 2011. Panoramas et Synth\`eses, 42/43. SMF, Paris, 2014.

\bibitem[Cor1]{CoI} Y. Cornulier. Asymptotic cones of Lie groups and cone equivalences. Illinois J. Math. 55(1) (2011), 237--259.

\bibitem[Cor2]{CorInd} Y. Cornulier. Commability and focal locally compact groups. (2013) To appear in Indiana Univ. Math. J. ArXiv:1306.4194.

\bibitem[Dan]{Dan} S.D. Dani. Nilmanifolds with Anosov automorphism. J. London Math. Soc. (2) 18 (1978) 553--559.

\bibitem[DD]{DD} K. Dekimpe and J. Der\'e. Expanding maps and non-trivial self-covers on infra-nilmanifolds. Topol. Methods Nonlinear Anal. 47, Number 1 (2016), 347--368.

\bibitem[Der]{De} J. Der\'e. Gradings on Lie algebras with applications to
infra-nilmanifolds. ArXiv 1410.3713v3 (2016)

\bibitem[DL]{DL} K. Dekimpe and K.B. Lee. Expanding maps on infra-nilmanifolds of homogeneous type.
Trans. Amer. Math. Soc., 2003, 355 (3), pp. 1067--1077.

\bibitem[DP]{DP} T. Delzant and L. Potyagailo. Endomorphisms of Kleinian groups. Geom. Funct. Anal. 13 (2003) 396--436.

\bibitem[Fav]{Fav} G. Favre.
Syst\`eme de poids sur une alg\`ebre de Lie nilpotente.
Manuscripta Math. 9 (1973), 53--90. 

\bibitem[Gra]{graaf} W. de Graaf. Classification of 6-dimensional nilpotent Lie algebras over fields of characteristic not 2. Journal of Algebra 309 (2007) 640--653.


\bibitem[Gro]{Gro} M. Gromov. Systoles and intersystolic inequalities. Actes de la table ronde de g\'eom\'etrie diff\'erentielle (Luminy, 1992), 291--362, S\'emin. Congr., 1, Soc. Math. France, Paris, 1996.

\bibitem[Gui]{Guiv} Y. Guivarc'h. Croissance polynomiale
et p\'eriodes des fonctions harmoniques. Bull. Soc. Math. France
101 (1973) 333--379.

\bibitem[Hir]{Hir} R. Hirshon. Some theorems on hopficity. Trans. Amer. Math.
Soc. 141 (1969) 229--244.

\bibitem[Joh]{Joh} R.W. Johnson. Homogeneous Lie Algebras and Expanding Automorphisms. Proc. Amer. Math. Soc. 48(2) (1975), 292--296.

\bibitem[Lau]{Lau}
J. Lauret. Examples of Anosov diffeomorphisms. 
J. Algebra 262 (2003), no. 1, 201--209; corrigendum J. Algebra 268 (2003), no. 1, 371--372. 


\bibitem[Leg]{Le} G. Leger. Derivations of Lie algebras III. Duke Math. J.
Volume 30, Number 4 (1963), 637--645.

\bibitem[Mag]{Mag} L. Magnin. 
Determination of 7-dimensional indecomposable nilpotent complex Lie algebras by adjoining a derivation to 6-dimensional Lie algebras
Algebras and Representation Theory, vol.13, Number 6, p. 723--753, 2010.
 
\bibitem[NP]{NP} V. Nekrashevych and G. Pete.
Scale-invariant groups. 
Groups Geom. Dyn. 5 (2011), no. 1, 139--167. 

\bibitem[Pan1]{PanTh} P. Pansu. Croissance
des boules et des g\'eod\'esiques ferm\'ees dans les
nilvari\'et\'es. Ergodic Theory Dyn. Syst. 3 (1983) 415--445. 
 
\bibitem[Pan2]{Pan} P. Pansu. M\'etriques de Carnot-Carath\'eodory et quasiisom\'etries des espaces sym\'etriques de rang un, Ann. of Math. 129(1) (1989), 1--60. 
 
\bibitem[Rag]{Ragh} 
M.S. Raghunathan.
Discrete subgroups of Lie groups. 
Ergebnisse der Mathematik und ihrer Grenzgebiete 68. Springer-Verlag, 1972.  
 
\bibitem[Sch]{Sch} R. Schafer. An introduction to nonassociative Algebras. Dover, 1966.
 
\bibitem[Sul]{Sul} D. Sullivan. Infinitesimal computation in topology. Publ. IHES 47 (1977) 269--331.
 
\bibitem[Ver]{Ver} M. Vergne. Cohomologie des alg\`ebres de Lie nilpotentes. Applications a l'\'etude de la
vari\'et\'e des alg\`ebres de Lie nilpotentes, Bull. Soc. Math. France 98 (1970), 81--116.
\end{thebibliography}
\end{document}